\documentclass[11pt]{amsart}
\usepackage[latin1]{inputenc}   
\usepackage[T1]{fontenc}
\usepackage{amsmath}
\usepackage{amsthm}
\usepackage{thmtools}
\usepackage{amssymb}
\usepackage{amscd}
\usepackage{enumitem}
\usepackage{bm}
\usepackage{caption}
\captionsetup{position=below}
\usepackage[dvipsnames]{xcolor}
\usepackage[pdftex,colorlinks, linkcolor = MidnightBlue, citecolor = Mahogany, urlcolor = BrickRed, pagebackref]{hyperref}

\usepackage{todonotes}
\usepackage[letterpaper,left=3cm,right=2.75cm,top = 2.75cm, bottom = 2.75cm]{geometry}
\usepackage{geometry}

\usepackage{setspace}
\onehalfspacing

\usepackage{tikz}

\def\Im{\operatorname{Im}}
\def\Ker{\operatorname{Ker}}

\def\dim{\operatorname{dim}}

\def\Fuk{\operatorname{Fuk}}
\def\DFuk{\operatorname{DFuk}}
\def\Gimm{\operatorname{\Omega_{\text{cob}}^{\text{imm}}} \left (S_g \right)}
\def\Gimmunob{\operatorname{\Omega_{\text{cob}}^{\text{imm,unob}}} \left (S_g \right)}
\def\Diff{\operatorname{Diff}}

\def\Int{\operatorname{Int}}
\def\Id{\operatorname{Id}}

\def\Ind{\operatorname{Ind}}

\def\Hom{\operatorname{Hom}}
\def\deg{\operatorname{deg}}
\def\Mod{\operatorname{Mod}}
\def\Sign{\operatorname{Sign}}
\def\Cone{\operatorname{Cone}}
\def\Hol{\operatorname{Hol}}
\def\Spann{\operatorname{Span}}

\newcommand\norm[1]{\left \vert #1 \right \vert}
\newcommand\ens[2]{\left \{ #1 \middle \vert #2 \right \}}

\newcommand \Z{\mathbb{Z}}
\newcommand \R {\mathbb{R}}
\newcommand \N {\mathbb{N}}
\newcommand \C {\mathbb{C}}
\newcommand \D {\mathbb{D}}
\newcommand \HP {\mathbb{H}}

\theoremstyle{plain}
\newtheorem{Theo}{Theorem}[section]
\newtheorem{prop}[Theo]{Proposition}
\newtheorem{lemma}[Theo]{Lemma}
\newtheorem{coro}[Theo]{Corollary}
\newtheorem{defi}[Theo]{Definition}

\theoremstyle{definition}
\newtheorem{rk}[Theo]{Remark}

\let\originalleft\left
\let\originalright\right
\def\left#1{\mathopen{}\originalleft#1}
\def\right#1{\originalright#1\mathclose{}}

\author{Alexandre Perrier}
\email{perrier@dms.umontreal.ca}
\title{Lagrangian cobordism groups of higher genus surfaces}

\setcounter{tocdepth}{1}

\begin{document}

\begin{abstract}
We study Lagrangian cobordism groups of oriented surfaces of genus greater than two. We compute the immersed oriented Lagrangian cobordism group of these surfaces. We show that a variant of this group, with relations given by \emph{unobstructed} immersed Lagrangian cobordisms computes the Grothendieck group of the derived Fukaya category. The proof relies on an argument of Abouzaid \cite{ab07}.  	
\end{abstract}

\maketitle

\tableofcontents

\section{Introduction}
\label{section:Introduction}
\subsection{Immersed Lagrangians and cobordisms}

In this paper, we consider a (Riemann) surface $S_g$ of genus $g \geqslant 1$ equipped with an area form $\omega$.

We recall the definition of a Lagrangian cobordism following Biran-Cornea (\cite{BC13}).

\begin{defi}
\label{defi:Lagrangiancobordism}
Let $\gamma_0, \ldots, \gamma_N : S^1 \looparrowright S_g$ and $\tilde{\gamma}_0, \ldots \tilde{\gamma}_M : S^1 \looparrowright S_g$ be immersed curves. Let $F : V \looparrowright \C \times S_g$ be a Lagrangian immersion.

We say that $F$ is an \emph{immersed Lagrangian cobordism} from $\gamma_1, \ldots, \gamma_N$ to $\tilde{\gamma}_1, \ldots \tilde{\gamma}_M$ if

\begin{enumerate}[label=(\roman*)]
	\item there is $\varepsilon > 0$ such that outside $[- \varepsilon, \varepsilon ] \times \R$, $F$ is an embedding with image 
	\[ \coprod_{i= 1 \ldots N} (-\infty, - \varepsilon] \times \gamma_i \cup \coprod_{j = 1 \ldots M} [\varepsilon, \infty) \times \tilde{\gamma}_j, \]
	\item the set $F^{-1} \left ([- \varepsilon, \varepsilon] \times \R \right )$ is compact.	
\end{enumerate}
Such a cobordism will be denoted by $V : \left (\gamma_1, \ldots, \gamma_N \right ) \leadsto \left ( \tilde{\gamma}_1, \ldots, \tilde{\gamma}_N \right) $.
\end{defi}

\begin{rk}
\begin{enumerate}
	\item If $V$ is oriented and its orientation agrees with the natural orientations of $(-\infty, - \varepsilon] \times \gamma_i$ and $[\varepsilon, \infty) \times \tilde{\gamma}_j$, we say that $V$ is an \emph{oriented} immersed Lagrangian cobordism.
	\item When the curves $\gamma_i$, $\tilde{\gamma}_i$ and the surface $F$ are embedded, we will say that $V$ is a \emph{Lagrangian cobordism}.
	\item The definition goes back to Arnold (\cite{Ar80}). The reader should be aware that the definition in \cite{Ar80} is slightly different from ours although equivalent (see Lemma \ref{lemma:ArnoldcobordantequalscobordantBC}). 
\end{enumerate}
\end{rk}

Now, consider the set $\mathcal{L}_{Imm}$ of Lagrangian immersions from an arbitrary number of copies of $S^1$ to $S_g$. Define an equivalence relation $\sim$ on $\mathcal{L}_{Imm}$ by 
\[ \gamma_1 \sim \gamma_2 \]
if and only if there is an immersed Lagrangian cobordism from $\gamma_1$ to $\gamma_2$.	

Here $\gamma_1$ and $\gamma_2$ are two elements of $\mathcal{L}_{Imm}$.

\begin{defi}
\label{defi:immersedcobgroup}
The \emph{immersed Lagrangian cobordism group} of $S_g$ is the quotient
\[ \mathcal{L}_{Imm} / \sim . \]
We will denote it by $\Gimm$
\end{defi}

The set $\Gimm$ an abelian group whose sum is given by disjoint union. The neutral element is the void set. The inverse of a generator $\gamma : S^1 \to S_g$ is the curve $\gamma^{-1}$ obtained by reversing the orientation of $\gamma$. 

The following Lemma shows that this group effectively detects the cobordism class of a curve.

\begin{lemma}
Let $\gamma_1, \ldots, \gamma_n : S^1 \looparrowright S_g$ be immersed curve in $S_g$. Their classes in $\Gimm$ satisfy
\[ [\gamma_1] + \ldots + [\gamma_n] = 0 \]
if and only if there is an oriented immersed Lagrangian cobordism
\[ V: (\gamma_1, \ldots, \gamma_n ) \leadsto \emptyset. \]
\end{lemma}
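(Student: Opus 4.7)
The $(\Leftarrow)$ direction is immediate: the existence of an oriented immersed Lagrangian cobordism $V \colon (\gamma_1, \ldots, \gamma_n) \leadsto \emptyset$ is, by Definition \ref{defi:immersedcobgroup}, precisely the statement $\gamma_1 \sqcup \cdots \sqcup \gamma_n \sim \emptyset$, which gives $[\gamma_1] + \cdots + [\gamma_n] = 0$ in $\Gimm$.

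For the $(\Rightarrow)$ direction, the hypothesis $\sum_i [\gamma_i] = 0$ unpacks to $\gamma_1 \sqcup \cdots \sqcup \gamma_n \sim \emptyset$, that is, to a zig-zag chain $\gamma_1 \sqcup \cdots \sqcup \gamma_n = L_0, L_1, \ldots, L_k = \emptyset$ such that for each $i$ there is an immersed Lagrangian cobordism either $L_{i-1} \leadsto L_i$ or $L_i \leadsto L_{i-1}$. My plan is to collapse this chain into a single cobordism from $\gamma_1 \sqcup \cdots \sqcup \gamma_n$ to $\emptyset$ by combining two standard manipulations. The first is \emph{concatenation}: two cobordisms $A \leadsto B$ and $B \leadsto C$ can be glued along their shared $B$-end, after translating the second in $\C$ so that their cylindrical ends coincide, yielding a cobordism $A \leadsto C$ (as in the embedded setting of \cite{BC13}). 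The second is \emph{bending}: a cobordism $V \colon A \leadsto B$ is converted into a cobordism $(A, B^{-1}) \leadsto \emptyset$ by attaching at each outgoing end $[\varepsilon, \infty) \times \{s_j\} \times \tilde{\gamma}_j$ the product of a U-shaped arc in $\C$ with $\tilde{\gamma}_j$; this U-arc turns around in $\C$ and re-enters as a new incoming end at a fresh height, carrying the reversed orientation $\tilde{\gamma}_j^{-1}$.

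With these operations in hand, I process the zig-zag inductively: each backward cobordism $L_i \leadsto L_{i-1}$ in the chain, after bending, becomes a cobordism with only incoming ends ($L_i$ and $L_{i-1}^{-1}$), which can then be reorganised with its neighbours and repeatedly concatenated until all the reversals are eliminated. The outcome is a single oriented immersed Lagrangian cobordism from $\gamma_1 \sqcup \cdots \sqcup \gamma_n$ to $\emptyset$.

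The main obstacle is the bending operation. Because $\C \times S_g$ has real dimension four, a U-shape in $\C$ times a curve in $S_g$ is a product of two $1$-manifolds and is therefore automatically Lagrangian; the delicate issue is orientation. I expect the main work to lie in verifying that (i) the cylindrical-end form of Definition \ref{defi:Lagrangiancobordism} is restored after smoothing the corner where the U-turn is grafted onto $V$, and (ii) the oriented-cobordism convention on the bent surface forces the new incoming end to carry precisely the reversed orientation $\tilde{\gamma}_j^{-1}$. This orientation bookkeeping at the U-turn is the subtle step I expect to require the most care.
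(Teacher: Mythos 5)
The $(\Leftarrow)$ direction and the overall plan of collapsing a zig-zag are fine, and concatenation is unproblematic; but there is a genuine gap in how you dispose of the backward arrows. Bending, exactly as you describe it, turns $W\colon L_i\leadsto L_{i-1}$ into a cobordism $(L_i,L_{i-1}^{-1})\leadsto\emptyset$, and this object cannot be "reorganised with its neighbours and repeatedly concatenated". Concatenation glues a positive end labelled by an oriented curve $\delta$ to a negative end labelled by the \emph{same} oriented curve $\delta$; after bending, the free end of $W$ is a negative end labelled $L_{i-1}^{-1}$, while the adjacent forward cobordism $L_{i-2}\leadsto L_{i-1}$ offers a positive end labelled $L_{i-1}$. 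These never match, and no further bending or insertion of trivial/bent cylinders helps. To see this is not just a bookkeeping nuisance, assign to any cobordism the formal sum (negative-end labels) minus (positive-end labels) in the free abelian group on oriented curves modulo $\delta+\delta^{-1}=0$: this quantity is additive under disjoint union and under gluing of matching ends, is unchanged by bending, and vanishes on trivial and bent cylinders, yet it equals $A-B$ for $A\leadsto B$ and $B-A$ for $B\leadsto A$. Hence your toolkit can never convert a backward arrow into a forward one; already for the length-two chain $\gamma\leadsto L_1$, $\emptyset\leadsto L_1$ it cannot produce $\gamma\leadsto\emptyset$.

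The missing ingredient is the \emph{reversal} of a cobordism: given $V\colon A\leadsto B$, apply the symplectomorphism $(z,x)\mapsto(-z,x)$ of $\C\times S_g$ (rotation by $\pi$ in the plane, which exchanges the two types of ends) and equip the image surface with the \emph{opposite} of the pushed-forward orientation; the ends then carry $A$ and $B$ with their original orientations on the opposite sides, yielding an oriented immersed Lagrangian cobordism $B\leadsto A$. With this operation every backward arrow can be reversed and the whole chain collapsed by concatenation alone; bending is not actually needed for this lemma (it is what proves $[\gamma]+[\gamma^{-1}]=0$, i.e., that $\Gimm$ has inverses). For what it is worth, the paper states this lemma without proof, so there is no argument of the author's to compare against; the route just described is the standard one.
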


Due to Gromov's h-principle for Lagrangian immersions, topological invariants determine the Lagrangian cobordism group of the surface $S_g$.

\begin{Theo}
\label{Theo:immersedcobordismgroup}
We denote by $\chi(S_g)$ the Euler characteristic of $S_g$. There is an isomorphism
\[ \Gimm \to H_1 \left (S_g, \Z \right ) \oplus \Z / \chi(S_g) \Z. \]	
\end{Theo}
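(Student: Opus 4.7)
My plan is to define an explicit isomorphism $\Phi: \Gimm \to H_1(S_g, \Z) \oplus \Z/\chi(S_g)\Z$ via the tangent lift into the unit tangent bundle, and then establish bijectivity using Gromov's h-principle for Lagrangian immersions. To an oriented immersed curve $\gamma: S^1 \looparrowright S_g$ I associate the tangent lift $\tilde\gamma: t \mapsto (\gamma(t), \dot\gamma(t)/|\dot\gamma(t)|)$ into $STS_g$, and take $[\tilde\gamma] \in H_1(STS_g, \Z)$. Since $STS_g \to S_g$ is a circle bundle with Euler number $\chi(S_g)$, the Gysin sequence identifies $H_1(STS_g) \cong H_1(S_g) \oplus \Z/\chi(S_g)\Z$, and I extend $\Phi$ additively to multi-curves. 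The first component recovers the homology class, while the second measures total rotation modulo $\chi(S_g)$.

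For well-definedness, given a cobordism $V:(\gamma_i)\leadsto(\tilde\gamma_j)$, the projection $\pi_{S_g}(V)$ provides a singular $2$-chain witnessing $\sum[\gamma_i]=\sum[\tilde\gamma_j]$ in $H_1(S_g)$. For the $\Z/\chi(S_g)\Z$ component, I would use that $V$ is an oriented compact surface whose cylindrical ends are of the form $\R \times \gamma_i$, and apply a Gauss--Bonnet / Maslov-class argument for the Lagrangian $V \subset \C \times S_g$ to show that the total rotation of the boundary agrees up to multiples of $\chi(S_g)$. Surjectivity is constructive: classes in $H_1(S_g)$ are realized by embedded simple closed curves, and small null-homologous figure-eight loops in a coordinate chart contribute $\pm 1 \in \Z/\chi(S_g)\Z$.

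The key is injectivity, for which I invoke Gromov's h-principle: the inclusion of Lagrangian immersions into formal Lagrangian immersions is a weak equivalence, with the same statement holding relatively for cobordisms with cylindrical ends. Formal Lagrangian data on a $1$-manifold in $S_g$ is precisely a lift to $STS_g$, so formal Lagrangian cobordism classes are oriented bordism classes of $1$-cycles in $STS_g$, i.e.\ elements of $H_1(STS_g)$. Consequently, two multi-curves with equal $\Phi$-image are formally cobordant, and the h-principle promotes a formal cobordism to a genuine immersed Lagrangian cobordism. The main obstacle I expect is the cobordism invariance step---extracting the $\Z/\chi(S_g)\Z$ invariant cleanly from the Lagrangian structure of $V$---together with the careful application of the cobordism h-principle in the cylindrical-end setup required by Definition~\ref{defi:Lagrangiancobordism}.
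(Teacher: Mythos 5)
Your plan takes a genuinely different route from the paper. The paper never invokes the h-principle for the injectivity step; instead it shows that embedded curves generate $\Gimm$, defines $\mu$ as a class in $H^1(TS_g \setminus \{0\}, \Z/\chi(S_g)\Z)$ using an isomorphism $TS_g \cong Z^{\otimes \chi(S_g)}$, and then pins down the group using the Mapping Class Group action (Lickorish generators and explicit surgery cobordisms, following Abouzaid). Your h-principle strategy is closer in spirit to the Eliashberg--Audin computations the paper cites in passing, so it is not \emph{a priori} a wrong road, but as written it has a concrete gap.

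The gap is the sentence asserting that ``formal Lagrangian cobordism classes are oriented bordism classes of $1$-cycles in $STS_g$.'' This does not follow merely from the (correct) observation that formal Lagrangian data on a curve in $S_g$ is a lift to $STS_g \cong \Lambda^{or}(TS_g)$. A formal Lagrangian cobordism $V \looparrowright \C \times S_g$ carries a Lagrangian bundle monomorphism $TV \to f^*T(\C\times S_g)$, i.e.\ a section of the oriented Lagrangian Grassmannian bundle with fiber $\Lambda^{or}(2) = U(2)/SO(2)$, which is \emph{not} a circle. The sublocus of ``split'' planes $\R \oplus \ell$ (the ones a bordism in $STS_g$ would produce, and the ones forced at the cylindrical ends) is a copy of $\Lambda^{or}(1) = S^1$ sitting inside $\Lambda^{or}(2)$, and the relative homotopy group obstructing a deformation retraction of a general formal cobordism onto one of split form (rel ends) is $\pi_2\big(\Lambda^{or}(2), \Lambda^{or}(1)\big) \cong \Z$, giving a potentially nonzero class in $H^2(V, \partial V; \Z)$. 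So the identification of the formal Lagrangian cobordism group with $\Omega_1^{SO}(STS_g) = H_1(STS_g)$ is not automatic: you must either compute the actual Thom-space bordism group (as Eliashberg does) and then match it with $H_1(STS_g)$, or argue directly that this obstruction vanishes or is already encoded in the $\Z/\chi(S_g)\Z$ invariant. In addition, the step that promotes a formal cylindrical-ended cobordism to a genuine one requires the \emph{relative, parametric} form of the Lagrangian h-principle with prescribed boundary data; the closed-manifold cohomological condition $[f^*\omega]=0$ is indeed automatic here, but citing the relative version is what actually does the work, and that step is only flagged, not carried out. Until the formal cobordism group is correctly identified, the injectivity argument is incomplete.
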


Here, the map $\Gimm \to H_1 \left (S_g, \Z \right )$ is the homology class. Meanwhile, the map $\Gimm \to \Z / \chi(S_g)$ is a variant of a topological index defined by Chillingworth (see \cite{Chi72}, see also \cite[Appendix A]{ab07}). Along the way, we give an alternate definition of this index in line with the usual definition of the Maslov index in symplectic topology.

\begin{rk}
We can find many computations of Lagrangian cobordism groups in the literature. In \cite{Ar80}, Arnold computed the Lagrangian cobordism groups of $\R^2$ and of the cotangent bundle $T^{*} S^1$. 

Eliashberg showed in \cite{E84} that some of these groups are isomorphic to fundamental groups of some Thom spaces. Audin used these results to compute the generators of some other cobordism groups (\cite{A85} and \cite{A87}).
\end{rk}

\subsection{Floer theory and cobordism groups}
Let $L$ be a Lagrangian submanifold of a symplectic manifold $(M,\omega)$. The Maslov index induces a morphism 
\[ \mu_L : \pi_2(M,L) \to \Z. \] 
On the other hand, symplectic area induces a morphism 
\[\omega : \pi_2(M,L) \to \R. \] 
The lagrangian submanifold $L$ is \emph{monotone} if there is $\lambda > 0$ such that 
\[ \omega_L = \lambda \mu_L. \]

In this case, there is a well-defined Fukaya category $\Fuk(M,\omega)$ whose objects are monotone Lagrangian submanifolds satisfying a topological condition (see \cite{BC14} or \cite{She16}). The $A_\infty$-category $Fuk(M,\omega)$ has a \emph{derived category} $DFuk(M,\omega)$ (defined in \cite{Sei08}). Note that this category is \emph{not} the split-completion of $\DFuk(M,\omega)$. The category $\DFuk(M,\omega)$ is triangulated, so one can speak of its \emph{Grothendieck group} 
\[K_0(\DFuk(M,\omega)). \] 
Recall that this is the abelian group generated by the objects of $\DFuk(M,\omega)$ with relations given by 
\[ Y = Z + X \] 
whenever there is an exact triangle
\[ X \to Y \to Z \to X[1]. \]
Now, Biran and Cornea proved that there is a natural surjective group morphism (\cite[Corollary 1.2.1]{BC14})
\begin{equation}
 \Theta_{BC} : \Omega_{\text{cob}}^{\text{emb}} (M,\omega) \twoheadrightarrow K_0(\DFuk(M,\omega)).
 \label{eqn:Birancorneamap}	
\end{equation}

There are several results on this map. In \cite{Hau15}, Haug shows that the map \ref{eqn:Birancorneamap} is an isomorphism when $(M,\omega)$ is a torus of dimension $2$ and the Lagrangians are equipped with local systems. 

In \cite{He17}, Hensel gives algebraic conditions under which the map \ref{eqn:Birancorneamap} is an isomorphism. These are, in particular, verified for the torus.

More recently, in \cite{SS18}, Sheridan and Smith use Mirror symmetry to prove the existence of certain Maslov $0$ Lagrangian tori in $K3$ surfaces. In \cite{SS18-2}, they study Lagrangian cobordism group in Lagrangian torus fibrations over tropical affine manifolds.
 
The main Theorem of our paper is a generalization of Haug's result to surfaces of genus $g \geqslant 2$. For this, we use a version of the cobordism group which takes a broader class of cobordisms into account.

\begin{Theo}
\label{theo:theBiranCorneamapisanisomorphism}
Assume that the genus $g$ of $S_g$ is greater or equal than $2$. There is a natural isomorphism
\[ \Gimmunob \to K_0(\DFuk(S_g)). \]
\end{Theo}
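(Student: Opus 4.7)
I plan to construct the natural map $\Theta : \Gimmunob \to K_0(\DFuk(S_g))$ following the Biran-Cornea framework, prove surjectivity from a generation statement, and then reduce injectivity, via Abouzaid's argument, to the topological computation of Theorem \ref{Theo:immersedcobordismgroup}. For the construction of $\Theta$, any immersed curve $\gamma$ can be resolved at each self-intersection via Lagrangian surgery to yield an iterated cone decomposition by embedded simple closed curves; this defines a class $\Theta([\gamma]) \in K_0(\DFuk(S_g))$, independence from the resolution choices being verified by interpolating between two resolutions with an explicit cobordism. More generally, an unobstructed immersed Lagrangian cobordism $V : (\gamma_1, \ldots, \gamma_N) \leadsto (\tilde{\gamma}_1, \ldots, \tilde{\gamma}_M)$ yields an iterated cone decomposition in $\DFuk(S_g)$ relating the $\gamma_i$ and the $\tilde{\gamma}_j$, producing the relation $\sum_i \Theta([\gamma_i]) = \sum_j \Theta([\tilde{\gamma}_j])$ in $K_0$. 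The unobstructedness hypothesis is precisely what is needed for the Floer theory of $V$ to be well-defined.

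Surjectivity follows from the fact that embedded simple closed curves generate $\DFuk(S_g)$ as a triangulated category, a standard result for Fukaya categories of surfaces; each such curve lies in the image of $\Theta$, so every class of $K_0(\DFuk(S_g))$ is hit.

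For injectivity, I would invoke the argument of Abouzaid \cite{ab07}, which produces a homomorphism
\[ \phi : K_0(\DFuk(S_g)) \to H_1(S_g, \Z) \oplus \Z / \chi(S_g) \Z \]
and shows it is an isomorphism for $g \geqslant 2$. By construction of $\Theta$, the composition $\phi \circ \Theta$ records the homology class and the Maslov-variant Chillingworth index of an immersed curve; in particular it factors through $\Gimm$ and agrees, via the isomorphism of Theorem \ref{Theo:immersedcobordismgroup}, with the natural surjection $\Gimmunob \twoheadrightarrow \Gimm$. Since $\phi$ is injective, injectivity of $\Theta$ thus reduces to showing that the map $\Gimmunob \to \Gimm$ is itself an isomorphism.

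The main obstacle is precisely this last reduction: promoting a general immersed cobordism to an unobstructed one, or equivalently producing explicit unobstructed cobordisms that realize each topological cobordism relation appearing in the proof of Theorem \ref{Theo:immersedcobordismgroup}. I would proceed by exhibiting unobstructed cobordisms implementing each elementary move (small Hamiltonian isotopy, monotone Lagrangian surgery, creation and cancellation of double points), using the hypothesis $\chi(S_g) < 0$ and the absence of holomorphic spheres in $S_g$ to control disc bubbling in $\C \times S_g$. Verifying that these model cobordisms can be chosen unobstructed is what I expect to be the most delicate technical step of the proof.
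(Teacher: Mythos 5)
Your construction of the map and your surjectivity argument match the paper's (Corollary \ref{coro:theBiranCorneamapexists}), but the injectivity step contains a genuine error. You assert that Abouzaid's homomorphism $\phi : K_0(\DFuk(S_g)) \to H_1(S_g,\Z) \oplus \Z/\chi(S_g)\Z$ is injective and that injectivity of $\Theta$ therefore reduces to showing that $\Gimmunob \to \Gimm$ is an isomorphism. Neither claim is true. Abouzaid's computation gives $K_0(\DFuk(S_g)) \cong \R \oplus H_1(S_g,\Z) \oplus \Z/\chi(S_g)\Z$, the extra $\R$ summand being detected by holonomy (symplectic area), so $\phi$ has kernel $\R$. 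Correspondingly $\Gimmunob$ also contains an $\R$ summand (Corollary \ref{coro:compututionofGimmunob}) which dies under the natural surjection $\Gimmunob \to \Gimm$: two isotopic embedded curves with different holonomies are immersed-Lagrangian cobordant by Lemma \ref{lemma:isotopiccurvesareimmlagcob}, yet they represent distinct classes in $\Gimmunob$. Carrying out your reduction would prove $\Gimmunob \cong H_1(S_g,\Z)\oplus\Z/\chi(S_g)\Z$, contradicting the very statement you are proving; and your plan to promote \emph{every} relation in the proof of Theorem \ref{Theo:immersedcobordismgroup} to an unobstructed cobordism must fail for exactly the isotopy relation, which is the point of introducing the unobstructed group in the first place.

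The repair is the content of Theorem \ref{Theo:Computationofunobgroup}: one establishes a split exact sequence
\[ 0 \to \R \xrightarrow{i} \Gimmunob \xrightarrow{\pi\oplus\mu} H_1(S_g,\Z)\oplus\Z/\chi(S_g)\Z \to 0, \]
where $i(x)$ is realized by differences of isotopic curves whose holonomies differ by $x$ and the splitting is the holonomy morphism $\Hol_A$. Injectivity of $\Theta$ then follows: if $\Theta(x)=0$, then $\pi\oplus\mu(x)=0$ because these maps descend to $K_0(\DFuk(S_g))$, hence $x\in\Im(i)$; since holonomy also descends to $K_0(\DFuk(S_g))$ (Abouzaid) and $\Hol_A\circ i=\mathrm{id}_\R$, we get $x=0$. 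The hard geometric work, which your outline anticipates only in part, is the inclusion $\Ker(\pi\oplus\mu)\subset\Im(i)$: it requires showing that the surgery cobordisms realizing the Dehn-twist relations are unobstructed (minimal position plus a teardrop analysis), and that isotopic curves of equal holonomy are Hamiltonian isotopic, so that only the area-preserving part of the isotopy relation survives in $\Gimmunob$.
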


In \cite{ab07}, Abouzaid showed that $K_0(\DFuk(S_g))$ is isomorphic to $ \R \oplus H_1(S_g,\Z) \oplus \Z / \chi(S_g) \Z$. Therefore we have the following

\begin{coro}
\label{coro:compututionofGimmunob}
There is an isomorphism
	\[ \Gimmunob \to \R \oplus H_1(S_g,\Z) \oplus \Z / \chi(S_g) \Z .\]	
\end{coro}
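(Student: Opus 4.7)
The plan is to combine Theorem \ref{theo:theBiranCorneamapisanisomorphism} with Abouzaid's computation of $K_0(\DFuk(S_g))$ from \cite{ab07}. Since the theorem (proved earlier in the paper) produces a natural isomorphism
\[ \Gimmunob \xrightarrow{\sim} K_0(\DFuk(S_g)), \]
the content of the corollary is to identify the right-hand side with $\R \oplus H_1(S_g,\Z) \oplus \Z/\chi(S_g)\Z$, and then to compose.

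I would therefore appeal directly to \cite{ab07}, where Abouzaid constructs three invariants on objects of $\DFuk(S_g)$ that are additive on exact triangles and hence descend to $K_0$: a holonomy/area invariant valued in $\R$, the homology class in $H_1(S_g,\Z)$, and a Maslov-type topological index (the Chillingworth index \cite{Chi72}) valued in $\Z/\chi(S_g)\Z$. Abouzaid's main theorem asserts that the resulting map
\[ K_0(\DFuk(S_g)) \longrightarrow \R \oplus H_1(S_g,\Z) \oplus \Z/\chi(S_g)\Z \]
is an isomorphism. Composing with the isomorphism of Theorem \ref{theo:theBiranCorneamapisanisomorphism} produces the map claimed in the corollary.

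At this stage the argument is purely formal; the serious content has been pushed into Theorem \ref{theo:theBiranCorneamapisanisomorphism} and into \cite{ab07}. The only thing I would take care to check is that the $\R$ factor, which is invisible at the level of $\Gimm$ (compare Theorem \ref{Theo:immersedcobordismgroup}, where no $\R$ appears), is indeed detected in the \emph{unobstructed} cobordism group: because unobstructedness forces cobordisms to preserve the area/holonomy class along the cylindrical ends, the area invariant passes to a well-defined $\R$-valued class on $\Gimmunob$, which matches Abouzaid's $\R$-factor under the isomorphism of Theorem \ref{theo:theBiranCorneamapisanisomorphism}. The main obstacle in the entire chain is of course Theorem \ref{theo:theBiranCorneamapisanisomorphism} itself; once it is in hand, the corollary is immediate.
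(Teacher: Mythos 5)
Your proposal is correct and is exactly the paper's derivation: the corollary is stated immediately after recalling Abouzaid's isomorphism $K_0(\DFuk(S_g)) \cong \R \oplus H_1(S_g,\Z) \oplus \Z/\chi(S_g)\Z$ and is obtained by composing it with the isomorphism of Theorem \ref{theo:theBiranCorneamapisanisomorphism}. (The paper also establishes the split exact sequence of Theorem \ref{Theo:Computationofunobgroup}, which yields the same conclusion directly, but the formal composition you describe is precisely how the corollary is deduced in the text.)
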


 \emph{Unobstructed Lagrangian cobordisms} give the relations of $\Gimmunob$. These are immersed Lagrangian cobordism which satisfy a technical condition. We postpone the actual definition to section \ref{section:immersedcobordismandcones}.

We shall consider a variant of the Fukaya category whose objects are defined below.

\begin{defi}
\label{defi:unobimmersion}
An immersion $\gamma : S^1 \looparrowright S_g$ is \emph{unobstructed} if it satisfies the following assumptions. 
\begin{enumerate}[label=(\roman*)]
	\item It has no triple points and all its double points are transverse.
	\item Let $\tilde{S}_g$ be the universal cover of $S_g$, the curve $\gamma$ lifts to a curve $\tilde{\gamma} : \R \to \tilde{S}_g$. We assume that $\tilde{\gamma}$ is properly embedded.   	
\end{enumerate}
\end{defi}

\begin{rk}
\label{rk:definitiongeneric}
When $(i)$ holds, we say that $\gamma$ is \emph{generic}.	
\end{rk}

\begin{figure}
\captionsetup{justification=centering,margin=2cm}

\begin{tikzpicture}[y=0.80pt, x=0.80pt, yscale=-1.000000, xscale=1.000000, inner sep=0pt, outer sep=0pt]
  \path[draw=black,line join=miter,line cap=butt,even odd rule,line width=1.053pt]
    (360.8860,23.9490) .. controls (322.2881,23.7390) and (318.6022,60.6645) ..
    (318.6022,79.6718) .. controls (318.6022,98.6792) and (387.8807,206.8456) ..
    (415.9533,206.8456) .. controls (444.0259,206.8456) and (456.2705,189.1844) ..
    (456.2705,169.0979) .. controls (456.2705,149.0113) and (403.7437,98.6982) ..
    (361.8694,99.4444) .. controls (319.9950,100.1906) and (277.3017,156.0886) ..
    (277.3017,170.8954) .. controls (277.3017,170.8954) and (276.3960,205.8672) ..
    (308.7687,206.8456) .. controls (341.1414,207.8240) and (407.1032,99.8850) ..
    (407.1032,79.6718) .. controls (407.1032,59.4586) and (399.4840,24.1591) ..
    (360.8860,23.9490) -- cycle;
  \path[draw=black,fill=black,line join=round,line cap=round,miter limit=4.00,fill
    opacity=0.156,line width=0.286pt] (327.8405,101.7729) .. controls
    (326.2362,98.7283) and (323.4125,93.1576) .. (322.0725,89.3943) --
    (319.6679,82.6406) -- (318.5376,73.0550) .. controls (318.0206,68.6702) and
    (323.4340,27.8694) .. (351.0536,24.6448) .. controls (361.2515,23.4542) and
    (374.6370,24.6856) .. (383.0354,29.1558) .. controls (388.4700,32.0485) and
    (396.2877,40.1791) .. (399.3989,46.1745) .. controls (404.2174,55.4596) and
    (409.0625,73.2758) .. (407.8906,81.1817) .. controls (407.2042,85.8120) and
    (399.2537,104.8628) .. (397.1309,107.2121) .. controls (397.0014,107.3555) and
    (395.0770,107.4749) .. (392.1998,106.2771) .. controls (381.7767,101.9379) and
    (376.0060,99.5364) .. (364.5893,99.5838) .. controls (355.2233,99.6226) and
    (353.8297,99.9972) .. (348.3393,101.8105) .. controls (345.0000,102.9134) and
    (340.6074,104.8316) .. (338.3596,105.9640) .. controls (336.1119,107.0964) and
    (332.1640,108.0658) .. (332.0687,108.3801) .. controls (331.4724,110.3443) and
    (329.4447,104.8176) .. (327.8405,101.7729) -- cycle;
  \path[draw=black,fill=black,line join=round,line cap=round,miter limit=4.00,fill
    opacity=0.156,line width=0.286pt] (359.2953,153.7375) .. controls
    (354.2880,146.9454) and (342.0655,128.7811) .. (337.5547,121.1479) .. controls
    (335.4652,117.6122) and (333.3334,113.4136) .. (332.5713,112.1617) --
    (330.8512,109.3362) -- (334.9819,106.5323) .. controls (339.2474,103.6370) and
    (344.9770,101.3648) .. (351.2857,100.8447) .. controls (355.0073,100.5379) and
    (357.9797,99.1080) .. (365.2143,99.3830) .. controls (375.0386,99.7565) and
    (383.3684,101.0387) .. (391.2638,106.1972) -- (396.0439,108.1739) --
    (392.6420,114.7055) .. controls (386.2961,126.8898) and (365.6770,158.5164) ..
    (363.8454,159.6484) .. controls (363.5434,159.8350) and (361.2503,156.3894) ..
    (359.2953,153.7375) -- cycle;

\end{tikzpicture}

\caption{An obstructed immersed curve and a teardrop (shaded)}
\label{figure:teardrop}

\end{figure}
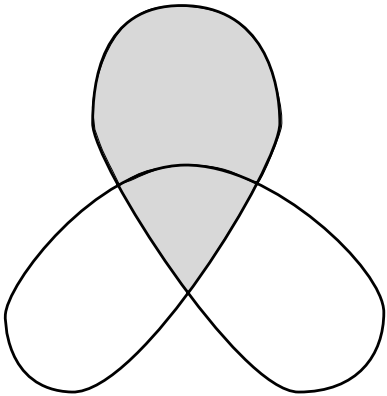

At this point, there is only one thing the reader needs to keep in mind. Unobstructed objects do not bound \emph{teardrops} which are polygons with a unique corner (see figure \ref{figure:teardrop}). It is indeed well-known that these give an obstruction to the definition of Floer theory of immersed objects. See the work of Akaho and Joyce (\cite{AJ10}), Abouzaid (\cite{ab07}), Alston and Bao (\cite{AB14}).

\subsection{Relation with \cite{Hau15}}
In \cite{Hau15}, Haug actually showed that there is a (split) exact sequence
\begin{equation} 
0 \to \R / \Z \xrightarrow[]i{} \Omega_{\text{cob}}^{\text{emb}} (M,\omega) \to H_1(T^2,\Z) \to 0. 
\label{eqn:exactsequenceforthetorus}
\end{equation}
The proof requires \emph{Mirror symmetry}. More precisely, using geometric arguments, Haug proves that the kernel of $\Omega_{\text{cob}}^{\text{emb}} (M,\omega) \to H_1(T^2,\Z)$ is the image of $i$. On the other hand, Mirror Symmetry for the torus yields an equivalence $\DFuk(T^2) \simeq D^b(X)$ between the derived Fukaya category of the torus, whose objects are curves equipped with local systems, and the bounded derived category of Coherent sheaves of an elliptic curve $X$ over the Novikov field $\Lambda$. Haug uses this to show that the application $i$ is injective.

In our paper, we show that there is an analog of the exact sequence \ref{eqn:exactsequenceforthetorus} for the group $\Gimmunob$ (see Theorem \ref{Theo:Computationofunobgroup} for the precise statement). However, the main difference is as follows. We \emph{do not use} Mirror symmetry for the proof. Moreover, we do not take local systems into account. Therefore, our main result is purely geometric.

This is in contrast with all the results above which use ideas coming from mirror symmetry to study Lagrangian cobordism groups.

\subsection{Outline of the paper}
 The proofs of both Theorems \ref{Theo:immersedcobordismgroup} and \ref{Theo:Computationofunobgroup} use the action of the Mapping Class Group of $S_g$ to find generators of $\Gimm$ and $\Gimmunob$. This idea is due to Abouzaid \cite{ab07}.
 
 In the first section, we study the immersed Lagrangian cobordism group $\Gimm$ and give the proof of Theorem \ref{Theo:immersedcobordismgroup}. Most of the results and definitions are not new (some even date back to Arnold). However, we tried to give details which we did not find in the literature.
 
In the second section, we define the Fukaya category of unobstructed curves following Seidel's book \cite{Sei08} and Alston and Bao's paper \cite{AB14}. We also give a combinatorial description of this category.

In the third section, we give the definition of an unobstructed Lagrangian cobordism. We explain why Biran-Cornea's map \ref{eqn:Birancorneamap} extends to this setting.
 
In the fourth section, we prove Theorem \ref{theo:theBiranCorneamapisanisomorphism} and \ref{coro:compututionofGimmunob}. To do this, we describe the action of the Mapping Class Group on $\Gimmunob$ using unobstructed Lagrangian cobordisms.
 
 \subsection{Acknowledgements} This work is part of my doctoral thesis at Université de Montréal under the direction of Octav Cornea. I wish to thank him for proposing me this project and his very thoughtful guidance over these years!
 
 I also wish to thank Jordan Payette for explaining the proof of Lemma \ref{lemma:holonomyequalshamiltonianinvariant} to me as well as Jean-Philippe Chassé for reading the text and pointing out some mistakes. 
 
\section{Computation of the immersed cobordism group}

In this section, we give the proof of Theorem \ref{Theo:immersedcobordismgroup}. In the first subsection, we show that embedded curves generate $\Gimm$. In the second subsection, we define a map 
\[ \Gimm \to H_1(S_g,\Z) \oplus \Z / \chi(S_g)\Z . \]
We check that it is well-defined and surjective. At last, we modify a geometric argument of Abouzaid (\cite{ab07}) to show that this map is injective.

\subsection{Properties of the immersed cobordism group}

\subsubsection{Lagrangian cobordism and isotopy}
\label{subsubsection:isotopyandcobordism}
We will use the following Lemma.
\begin{lemma}
\label{lemma:isotopiccurvesareimmlagcob}
Assume that $\gamma_-: S^1 \looparrowright S_g$ and $\gamma_+ : S^1 \looparrowright S_g$ are two isotopic immersed curves, then there is an immersed Lagrangian cobordism from $\gamma_-$ to $\gamma_+$.
\end{lemma}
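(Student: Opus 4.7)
The plan is to construct the cobordism as the Lagrangian suspension of the isotopy. Let $\{\gamma_t\}_{t \in [0,1]}$ be a smooth isotopy through immersions from $\gamma_-$ to $\gamma_+$. After reparametrization in $t$, I would arrange that $\gamma_t = \gamma_-$ for $t \leq 0$ and $\gamma_t = \gamma_+$ for $t \geq 1$, with all $t$-derivatives vanishing near the endpoints. The cobordism will have the form $F(t,s) = (\phi(t,s), \gamma_t(s))$ for a smooth $\phi : \mathbb{R} \times S^1 \to \mathbb{C}$ satisfying $\phi(t,s) = t$ for $|t|$ large, so that $F$ has the prescribed cylindrical ends lying in the real axis of $\mathbb{C}$.

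The Lagrangian condition $F^*(\omega_\mathbb{C} + \omega) = 0$ pulls back to the prescribed-Jacobian PDE
\[ \mathrm{Jac}(\phi)(t,s) = -\omega(\partial_t \gamma_t, \partial_s \gamma_t)(t,s). \]
In the Hamiltonian case, where $\gamma_t = \phi^{H_t}(\gamma_-)$ is generated by a time-dependent Hamiltonian $H_t$, the classical Biran--Cornea ansatz $\phi(t,s) = t - i H_t(\gamma_t(s))$, suitably cut off near $t = 0$ and $t = 1$, solves the PDE; periodicity in $s$ is automatic because $\omega(\partial_t \gamma_t, \partial_s \gamma_t) = -dH_t(\gamma_t')$ integrates to zero around $\gamma_t$. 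For a general isotopy, the instantaneous flux $\int_{S^1} \omega(\partial_t \gamma_t, \partial_s \gamma_t)\,ds$ need not vanish, and the naive ansatz $\phi(t,s) = t + i v(t,s)$ does not close up periodically in $s$.

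To handle the general case I would exploit the fact that $F$ need only be an immersion: the map $\phi$ may itself fail to be injective and is allowed to ``wind'' in $\mathbb{C}$. A concrete ansatz such as $\phi(t,s) = t + i v(t,s) + r(t) e^{i s}$ contributes an additional term $2\pi r(t) r'(t)$ to $\int_{S^1} \mathrm{Jac}(\phi)\,ds$, which can be arranged to cancel the instantaneous flux by integrating a suitable ODE for $r(t)^2$, and then the remaining pointwise PDE is solved for $v$. Equivalently---and more cleanly---one can appeal to Gromov's h-principle for Lagrangian immersions: the tautological map $(t,s) \mapsto (t, \gamma_t(s))$ is a formal Lagrangian immersion, since the Lagrangian Grassmannian of $T(\mathbb{C} \times S_g)$ is connected and so the tangent 2-plane of the suspension can be homotoped through 2-planes to a Lagrangian one, and the h-principle then deforms the formal datum to a genuine Lagrangian immersion rel cylindrical ends, which are already Lagrangian. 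The main obstacle is the non-vanishing flux of a general (non-Hamiltonian) isotopy; the immersed setting provides the flexibility needed to resolve it, either via the explicit winding construction or via the h-principle.
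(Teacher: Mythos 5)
Your proposal is correct, and the h-principle route you describe is exactly the paper's argument: the paper forms the tautological suspension $f(s,t)=(s,\gamma_s(t))$, covers it by the explicit isotropic bundle map $F(\partial_s)=(1,0)$, $F(\partial_t)=(0,\tfrac{d\gamma_s(t)}{dt})$, notes that the cohomological hypothesis $[f^*(dx\wedge dy+\omega)]=0$ holds automatically because $H^2(\R\times S^1,\R)=0$, and applies the h-principle for Lagrangian immersions \cite[16.3.2]{EM02} relative to the already-Lagrangian ends. The only point you leave implicit is that cohomological hypothesis (your flux discussion concerns the explicit suspension ansatz, not the h-principle's hypotheses), but it is vacuous here, so your argument goes through.
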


\begin{proof}
Choose an isotopy $(\gamma_s(t))_{s \in \R}$ such that 
\[\left\{ \begin{array}{c}
	\gamma_s(t) = \gamma_-(t) \text{ for s <  0 } \\
	\gamma_s(t) = \gamma_+(t) \text{ for s > 1 }
\end{array}
\right.
\]

Now, consider the following immersion
\[ f : \begin{array}{ccc}
 \R \times S^1 & \mapsto & \C \times S_g \\
 (s,t) & \mapsto & \left (s, \gamma_s(t) \right)	.
 \end{array}
\]
This map is covered by an isotropic bundle map 
\[ F : T\R \times TS^1 \to T \C \times T S_g \]
defined by
\[ F \left(\partial_s \right) = (1,0), \ F \left(\partial_t \right)= \left(0, \frac{d \gamma_s(t)}{dt}  \right). \] 
Moreover, since $H^2(\R \times S^1,\R) = 0$, we have $f^*(dx \wedge dy +  \omega) = 0$ in $H^2(\R \times S^1,\R)$.

We can now apply \cite[16.3.2]{EM02}	 to find an immersion $\tilde{f}: S^1 \times \R \to \C \times S_g$ whose ends coincide with $f$. The map $\tilde{f}$ is the relevant Lagrangian cobordism.
\end{proof}

\subsubsection{Resolution of double points}
\label{subsubsection:surgeryofdoublepoints}
We will use a variant of the Weinstein neighborhood Theorem for Lagrangian immersions constantly throughout this section. 

Recall that there is a canonical identification between the fiber bundle $\pi : T^* S^1 \to S^1$ and the product bundle $S^1 \times \R \to S^1$. We denote by $T^*_\varepsilon S^1$ the set $\ens{(q,p) \in T^* S^1}{\norm{p} < \varepsilon} $. 

\begin{lemma}
\label{lemma:Weinsteinnbhdthmforimmersions}
Let $\gamma : S^1 \to S_g$ be an immersed curve. There are $\varepsilon > 0$ and a local embedding
\[ \psi : T^*_\varepsilon S^1 \to S_g \]	
such that
\begin{itemize}
	\item $\psi$ restricted to the zero section is equal to $\gamma$,
	\item $\psi^* \omega$ is the standard symplectic form $\omega_{\text{std}}$ on $T^*S^1$. 	
\end{itemize}

\end{lemma}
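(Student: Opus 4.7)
The plan is to adapt the standard Weinstein neighborhood construction to the immersed setting; the only essential change is that the resulting map is a local embedding rather than an embedding, at points of the zero section lying over double points of $\gamma$.

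First I would produce a Lagrangian splitting of $\gamma^* TS_g$. This pullback is a rank-two symplectic vector bundle over $S^1$, orientable (since $S_g$ is) and hence trivial. The subbundle $d\gamma(TS^1)$ is automatically Lagrangian, since any rank-one subbundle of a rank-two symplectic bundle is, and is itself trivial. Choose any complementary rank-one subbundle $E \subset \gamma^*TS_g$; as a quotient of trivial orientable bundles, $E$ is trivial and Lagrangian, and we can identify it symplectically with $T^*S^1$ so that the pairing $d\gamma(TS^1) \times E \to \R$ induced by $\omega$ matches the canonical pairing $TS^1 \times T^*S^1 \to \R$.

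Next, using any Riemannian metric on $S_g$, I would define the model map
\[ \psi_0 : T^*S^1 \to S_g, \quad (q,p) \mapsto \exp_{\gamma(q)}\bigl(v(q,p)\bigr), \]
where $v(q,p) \in E_q \subset T_{\gamma(q)} S_g$ is the image of $p$ under the identification $E \cong T^*S^1$ from the previous paragraph. Then $\psi_0$ restricts to $\gamma$ on the zero section, and its differential along the zero section is a symplectic isomorphism by the construction of the splitting. Applying the inverse function theorem pointwise on the compact manifold $S^1$ yields an $\varepsilon > 0$ such that $\psi_0$ restricted to $T^*_\varepsilon S^1$ is a local embedding.

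Finally, I would apply Moser's trick to correct the symplectic form. Because $\psi_0^*\omega$ and $\omega_{\text{std}}$ agree on the zero section, a relative Poincar\'e lemma gives $\psi_0^*\omega - \omega_{\text{std}} = d\beta$ on a neighborhood of $S^1$ with $\beta$ vanishing on $S^1$. The Moser isotopy generated by $\omega_t := \omega_{\text{std}} + t\,d\beta$ produces a diffeomorphism $\phi_1$ of a possibly smaller $T^*_\varepsilon S^1$, fixing the zero section pointwise, with $\phi_1^* \psi_0^*\omega = \omega_{\text{std}}$. Setting $\psi := \psi_0 \circ \phi_1$ gives the required local embedding. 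The main subtlety is arranging the symplectic identification of $E$ with $T^*S^1$ in the first step so that $\psi_0^*\omega$ and $\omega_{\text{std}}$ agree on the zero section; once this is done, compactness of $S^1$ and the local nature of the Moser argument make the immersed case run no differently from the embedded one.
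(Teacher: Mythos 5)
Your proof is correct, and it carries out precisely the standard Weinstein-neighborhood argument with the minimal change required by the immersed setting (everything is done over the compact base $S^1$ rather than over the image in $S_g$, so the conclusion naturally weakens from "embedding" to "local embedding"). The paper declares this result "a simple exercise" and gives no argument; what you wrote is the expected exercise solution.

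One small point worth making explicit: the use of the compactness of $S^1$ appears twice, once to obtain a uniform $\varepsilon$ for the local-diffeomorphism step, and again to guarantee nondegeneracy of the interpolating family $\omega_t = \omega_{\mathrm{std}} + t\,d\beta$ and time-$1$ existence of the Moser flow on a full neighborhood $T^*_{\varepsilon}S^1$ of the zero section; you only mention it in the first place, but the second is just as essential and works identically because the forms agree along the zero section and the Moser field vanishes there.
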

The proof is a simple exercise.

\begin{lemma}
\label{lemma:immersedcurvescobordanttogenericcurves}
Let $\gamma : S^1 \looparrowright S_g$ be an immersed curve. Then $\gamma$ is Lagrangian cobordant to a generic\footnote{See \ref{rk:definitiongeneric} for the definition} immersed curve $\tilde{\gamma} : S^1 \looparrowright S_g$.	
\end{lemma}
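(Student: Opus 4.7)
The plan is to reduce the statement to the isotopy lemma (Lemma \ref{lemma:isotopiccurvesareimmlagcob}) via a small perturbation argument. The key observation is that in the two-dimensional setting, every immersed curve is automatically Lagrangian for dimension reasons, so we never leave the class of Lagrangian immersions when we perturb $\gamma$. This removes the usual subtlety that would accompany perturbations in higher dimensions.

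First I would invoke a standard transversality / general-position result: inside the space $\mathrm{Imm}(S^1, S_g)$ of $C^\infty$ immersions, the subset of \emph{generic} immersions (those whose self-intersections consist of finitely many transverse double points, with no triple points) is open and dense. This follows from multijet transversality applied to the difference map $S^1 \times S^1 \setminus \Delta \to S_g \times S_g$, or equivalently from Whitney's theorem on generic immersions of curves into surfaces. Hence one can find a generic immersion $\tilde{\gamma}: S^1 \looparrowright S_g$ which is arbitrarily $C^1$-close to $\gamma$.

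Next, since $\mathrm{Imm}(S^1, S_g)$ is locally path-connected in the $C^1$ topology, a sufficiently small $C^1$-perturbation $\tilde{\gamma}$ of $\gamma$ can be joined to $\gamma$ by a smooth path $(\gamma_s)_{s \in [0,1]}$ of immersions (for instance by a convex combination in a tubular neighborhood of $\gamma$). Each $\gamma_s$ is an immersion of $S^1$ into the surface $S_g$, and therefore automatically Lagrangian; so $(\gamma_s)$ is an isotopy of immersed curves from $\gamma$ to $\tilde{\gamma}$ in the sense of Lemma \ref{lemma:isotopiccurvesareimmlagcob}.

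Finally, applying Lemma \ref{lemma:isotopiccurvesareimmlagcob} to the isotopy $(\gamma_s)$ yields an immersed Lagrangian cobordism from $\gamma$ to the generic curve $\tilde{\gamma}$, which is exactly the statement. The only potentially delicate point is the transversality claim that generic immersions form a dense subset of $\mathrm{Imm}(S^1, S_g)$; this is classical, but it is the one step that requires actual verification rather than a direct appeal to an earlier lemma.
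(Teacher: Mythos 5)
Your proposal is correct, and it follows the same overall strategy as the paper (perturb $\gamma$ into general position, then show the perturbation is cobordant to $\gamma$), but the two implementations differ in an instructive way. For the perturbation, you invoke multijet transversality as a black box, whereas the paper reproves exactly this density statement by hand: it works in the immersed Weinstein neighborhood of Lemma \ref{lemma:Weinsteinnbhdthmforimmersions} and applies Sard's theorem chart by chart over a finite cover of $S^1$. For the cobordism, you reduce to Lemma \ref{lemma:isotopiccurvesareimmlagcob}, which is legitimate since your interpolation is a path of immersions, but that lemma rests on Gromov's h-principle; the paper instead writes down an explicit Lagrangian suspension $(t,x)\mapsto \left(t,\beta(t)f(x),\psi(x,-\beta(t)f'(x))\right)$, so no h-principle is needed for this step. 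Your route is shorter and cleaner; the paper's buys an explicit, h-principle-free cobordism, which matters later when one needs to control double points and teardrops of the cobordisms (as in Section \ref{section:immersedcobordismandcones}). One small point to state carefully: since $\gamma$ is only immersed, it has no tubular neighborhood in $S_g$, so the ``convex combination in a tubular neighborhood'' must be performed in the domain via the local embedding $\psi: T^*_\varepsilon S^1\to S_g$ of Lemma \ref{lemma:Weinsteinnbhdthmforimmersions}, writing $\tilde{\gamma}(t)=\psi(t,g(t))$ for a $C^1$-small $g$ and interpolating $s\mapsto\psi(t,sg(t))$; this is exactly what openness of the immersion condition in the $C^1$ topology requires, and it closes the argument.
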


\begin{rk}
\label{rk:Lagrangiansuspension}
The proof uses a variant of the \emph{Lagrangian suspension} (\cite[2.1.2]{ALP92}). If $\left (\phi^t_H \right)_{t \in [0,1]}$ is a Hamiltonian isotopy of $S_g$ and $\gamma$ an immersed curve, then the immersion
\[ (t,x) \in [0,1] \times S^1 \mapsto \left (t,H_t \circ\phi^t_H \circ \gamma(x), \phi^t_H \circ \gamma (x) \right) \]
is a Lagrangian cobordism between $\gamma$ et $\phi^1_H (\gamma)$. Notice that this cobordism is embedded if $\gamma$ is.	
\end{rk}

\begin{proof}
We call $\psi$ the local embedding given by Lemma \ref{lemma:Weinsteinnbhdthmforimmersions}. We let $x \in S^1$. Choose a disk neighborhood $U_x \subset S^1$ containing $x$ such that $\psi_{\vert \pi^{-1}(U_x)}$ is an embedding which we denote by $\Phi$. Moreover we let $U = \gamma^{-1} \left (\psi(U_x) \right ) \backslash U_x$

Let $\eta > 0$. We claim that there is a function $f_x : S^1 \to \R$ such that the following holds.
\begin{itemize}[label= $\bullet$]
	\item The derivatives of $f_x$ satisfy $\norm{f_x'} < \eta$ and $\norm{f_x''} < \eta$.
	\item Denote by $\gamma_x $ the immersion $ t \mapsto \psi (t,-f_x'(t))$. If $\gamma_x(t_1) = \gamma_x(t_2)$ with $t_1 \in U_x$, then $\gamma_x'(t_1)$ and $\gamma_x'(t_2)$ are transverse. 
\end{itemize}
 
The proof is an application of Sard's Theorem. Consider the map $F = \pi_\R \circ \Phi^{-1} \circ \gamma_{\vert U}$. For every $\alpha >0$, there is a regular value $z$ of $F$ satisfying $\norm{z} < \alpha$. Now we let $f_x$ be a smooth function such that $-f'$ is constant equal to a regular value over $U_x$ and $\norm{f_x'}, \norm{f_x''} < \eta$. The reader may easily check that this is the desired function.

The curves $\gamma$ and $\gamma_x$ are cobordant. Indeed, choose a smooth cutoff function $\beta : \R \to \R$ such that $\beta(t) = 0$ for $t \leqslant 0$ and $\beta(t) = 1$ for $t \geqslant 1$. The relevant cobordism is the image of the map 
\[ \begin{array}{ccc} 
 	\R \times S^1 & \to & \C \times S_g \\
 	(t,x) & \mapsto & \left (t, \beta(t) f(x), \psi(x,- \beta(t)f'(x)) \right )	
 \end{array}.
\]

Now choose $x_1, \ldots, x_N$ such that $U_{x_1}, \ldots, U_{x_N}$ is a covering of $S^1$. We use the construction above iteratively to get an immersed curve which is cobordant to $\gamma$ and with transverse double points.
\end{proof}

We can solve any double point of a generic immersion through a cobordism (\cite[page 9]{Ar80}, see also \cite[1.4]{ALP92}). For the convenience of the reader, we will summarize the proof of this fact and fill in some details.

First, we recall the (standard) procedure for solving the double point of a generic immersion. This is a particular instance of the Lagrangian surgery (see \cite{LS91} and \cite{Pol91}). Let $\gamma : S^1 \looparrowright S_g$ be a generic immersed curve and $x = \gamma (p) = \gamma(q)$ a double point with $p \neq q \in S^1$. There are 
\begin{itemize}
\item an open neighborhood $U$ of $x$, 
\item a real number $r >0$,
\item a symplectomorphism $\phi : U \to B(0,r) \subset \C$ 
\end{itemize}
such that $\phi \circ \gamma$ parameterizes the real axis near $p$ and parameterizes the imaginary axis near $q$.

Pick a smooth path $c : \R \to \C$ such that
\begin{enumerate}
	\item for $t \leqslant - 1$, $c(t) = t$,
	\item for $t \geqslant 1$, $c(t) = it$,
	\item the derivatives $x'$ and $y'$ satisfy $x'> 0$ and $y' > 0$,
	\item for all $t \in \R$, $(x(-t),y(-t)) =(- y(t), -x(t) )$.
\end{enumerate}

The \emph{surgery} of $\gamma$ at the point $(p,q)$ with parameter $\varepsilon > 0$ is the curve obtained by replacing the image of $\gamma$ by the images of the curves $\varepsilon c$ and $- \varepsilon c$ in the chart $\phi$. We denote it by $\gamma_{(p,q),\phi,\varepsilon}$. Note that this curve depends on the \emph{ordered} pair $(p,q)$.

Notice in particular that all surgeries at a given double point are isotopic to one another, hence Lagrangian cobordant by Lemma \ref{lemma:immersedcurvescobordanttogenericcurves}.

We shall prove the following Proposition.

\begin{prop}
\label{prop:resolutionofdoublepointthroughimmersedcob}
Let $\gamma : S^1 \looparrowright S_g$ be a generic immersed curve and $x = \gamma (p) = \gamma(q)$ a double point with $p \neq q \in S^1$.

There are a chart $\phi$ and a real number $\varepsilon > 0$ such that $\gamma$ is cobordant to $\gamma_{(p,q),\phi,\varepsilon}$.
\end{prop}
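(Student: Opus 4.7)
The plan is to build the cobordism by inserting a local Lagrangian surgery trace near the double point $x$ and extending by the trivial cylinder $\R \times \gamma$ elsewhere. I begin with the Darboux chart $\phi : U \to B(0, r)$ already provided in the setup: in this chart $\gamma$ is the union of segments of the real and imaginary axes. Shrinking $r$ and picking $\varepsilon > 0$ small, I arrange that both $\varepsilon c$ and $-\varepsilon c$ lie in $B(0, r)$ and coincide with the four rays of $\R \cup i\R$ outside a compact subset.

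The heart of the argument is the construction of an immersed Lagrangian surface $V_{\mathrm{loc}} \subset \C_s \times B(0, r)$ whose cylindrical ends are $(-\infty, -1] \times ((\R \cup i\R) \cap B(0, r))$ and $[1, \infty) \times ((\varepsilon c \cup -\varepsilon c) \cap B(0, r))$, and which coincides with the trivial cylinder $\R \times (\R \cup i\R)$ near the lateral boundary. This is the two-dimensional instance of the Polterovich Lagrangian surgery trace (cf.\ \cite[1.4]{ALP92}). I realize $V_{\mathrm{loc}}$ as the union of two smooth sheets $V^1 \cup V^2$: the first interpolates between the real axis and the arc $\varepsilon c$, while the second interpolates between the imaginary axis and $-\varepsilon c$. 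Each sheet is produced by Lagrangian suspension of a smooth isotopy of properly embedded arcs in $\C$: pick families $\alpha^i_\sigma$ of arcs interpolating between the relevant straight line (for $\sigma \leqslant -1$) and the bent curve (for $\sigma \geqslant 1$), parameterize each arc by a variable $t$, and set
\[ V^i = \bigl\{ (\sigma + i H^i(\sigma, t),\, \alpha^i_\sigma(t)) : \sigma \in \R,\ t \in \R \bigr\}, \]
where $H^i$ is determined (up to an irrelevant function of $\sigma$) by the Lagrangian condition $\partial_t H^i = -\Im(\overline{\partial_\sigma \alpha^i_\sigma(t)}\, \partial_t \alpha^i_\sigma(t))$. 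At the negative end the two sheets meet transversally along $(-\infty, -1] \times \{0\}$, exactly reproducing the self-intersection of $\R \times \gamma$ over $x$; at the positive end they are disjoint.

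To conclude, I glue $(\mathrm{id}_\C \times \phi^{-1})(V_{\mathrm{loc}})$ with the trivial cylinder $\R \times \bigl(\gamma \setminus \phi^{-1}(K)\bigr)$ for a compact $K \subset B(0, r)$ containing the non-trivial part of the modification. The two pieces agree on the overlap since $V_{\mathrm{loc}}$ reduces to $\R \times (\R \cup i\R)$ outside $K$. The resulting $V$ is an immersed Lagrangian cobordism in $\C \times S_g$ from $\gamma$ to some smooth resolution $\gamma'$ of the double point. Since any two resolutions at a fixed double point are isotopic through curves coinciding with $\gamma$ off the chart, Lemma \ref{lemma:isotopiccurvesareimmlagcob} upgrades this isotopy to a cobordism, and composing with $V$ produces the required cobordism to $\gamma_{(p,q), \phi, \varepsilon}$ (for the appropriate $\phi$ and $\varepsilon$, as allowed by the statement).

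The main obstacle is constructing the individual sheets $V^i$: because the asymptotic rays of $\alpha^i_\sigma$ rotate by a right angle as $\sigma$ varies from $-\infty$ to $+\infty$, the generating Hamiltonian cannot be compactly supported on $\C$, and some care is required to ensure that each sheet is properly embedded with the prescribed cylindrical ends. The conceptual point that makes the construction work is the decoupling of $V_{\mathrm{loc}}$ into two independent smooth sheets that intersect only along the self-intersection trace of $\gamma$ at $x$; this bypasses the singular slice $\R \cup i\R$ that would otherwise obstruct a naive Lagrangian suspension.
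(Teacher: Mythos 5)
There is a genuine gap, and it is not the technical point you flag at the end about non-compactly-supported Hamiltonians: the decomposition of the local surgery trace into two disjoint suspension sheets $V^1 \cup V^2$ is topologically impossible, and with it the gluing to the trivial cylinder collapses. Write $R_-, R_+, I_-, I_+$ for the four rays along which $\gamma$ leaves the chart. Before surgery the two strands of $\gamma$ pair them as $\{R_-\text{--}R_+,\ I_-\text{--}I_+\}$; after surgery, $\varepsilon c$ pairs $R_-$ with $I_+$ and $-\varepsilon c$ pairs $R_+$ with $I_-$. Your sheet $V^1$ is the suspension of a family of properly embedded arcs $\alpha^1_\sigma$ running from the real axis to $\varepsilon c$. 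Since the pairing of asymptotic rays changes from $R_-\text{--}R_+$ to $R_-\text{--}I_+$, no such family can be constant near $\partial B(0,r)$: the point where $\alpha^1_\sigma$ meets the circle must sweep a quarter turn from $R_+$ to $I_+$ as $\sigma$ increases. Hence $V^1$ does not reduce to $\R \times (\R \cup i\R)$ on the annulus $B(0,r) \setminus K$ --- for intermediate $\sigma$ it contains points lying over none of the four rays --- so the claimed agreement with $\R \times \bigl(\gamma \setminus \phi^{-1}(K)\bigr)$ on the overlap is false and the glued object is not a surface (equivalently, the intermediate slices of your cobordism do not close up into the curve $\gamma$ outside the chart). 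No re-pairing of the sheets helps, because the pairings $\{R_-\text{--}R_+, I_-\text{--}I_+\}$ and $\{R_-\text{--}I_+, R_+\text{--}I_-\}$ differ, so at least one sheet must rotate an end.

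The correct local model is \emph{connected}: it is the saddle $\Sigma = \ens{(t,x,y)}{y^2 - x^2 + t = 0}$, which the paper realizes as the critical locus of the generating family $f_{t,x}(y) = \frac{y^3}{3} - yx^2 + ty$ and then rotates by $\frac{\pi}{4}$ so that its asymptotes become the coordinate axes. Its slices reconnect the four rays as $t$ crosses $0$, which is exactly what a disjoint union of two suspension cylinders cannot do; outside a compact set this connected model genuinely is a union of graphs over the fixed rays, so it can be cut off (via primitives $h$ of the closed forms defining the outer branches) and spliced into $[-1,1] \times \gamma$. One further point your route skips: the resulting object is a cobordism in Arnold's sense, fibered over $[-1,1] \times \R$ rather than having horizontal cylindrical ends, and Lemma \ref{lemma:ArnoldcobordantequalscobordantBC} is needed to convert it into a cobordism in the sense of Definition \ref{defi:Lagrangiancobordism}. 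Your closing step --- identifying different resolutions at the same double point via Lemma \ref{lemma:isotopiccurvesareimmlagcob} --- is fine, but it cannot rescue the local construction.
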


\begin{proof}
First, we need the following.

\begin{lemma}
\label{lemma:ArnoldcobordantequalscobordantBC}
Let $\Sigma$ a compact surface with boundary and let 
\[ F : (\Sigma, \partial \Sigma) \looparrowright ([-1,1] \times \R \times S_g, \partial [-1,1] \times \R \times S_g) \]
be a Lagrangian immersion transverse to $\partial [-1,1] \times \R \times \C$ along $\partial \Sigma$.

Then, the projection of $F_{\vert \partial \Sigma}$ to $S_g$ is the union of two immersions $\gamma_-$ and $\gamma_+$ lying over $\{-1\} \times \R \times S_g$ and $\{1\} \times \R \times S_g$ respectively.

There is an immersed Lagrangian cobordism from $\gamma_-$ to $\gamma_+$.
\end{lemma}

\begin{rk}
\label{rk:DefArnoldcobordant}
Such immersions are what Arnold called Lagrangian cobordisms in his original paper(\cite{Ar80}). Therefore, we will call these objects \emph{Lagrangian cobordisms in Arnold's sense}.
\end{rk}

\begin{proof}[Proof of Lemma \ref{lemma:ArnoldcobordantequalscobordantBC}]
Denote by $\partial^+ \Sigma$ the union of connected components of $\partial \Sigma$ which projects to $\{1\} \times \R$ in the $\C$ factor. We identify $\partial^+ \Sigma$ with a disjoint union of copies of $S^1$ : $\partial^+ \Sigma = \sqcup_{i=1 \ldots N} S^1$. On $\partial^+ \Sigma$, $F$ is of the form $t \mapsto \left(1,f(t),\gamma_+ (t) \right)$ with $f : \partial^+ \Sigma \to \R$ a smooth function.

By Lemma \ref{lemma:Weinsteinnbhdthmforimmersions}, we can extend $\gamma_+$ to a local symplectomorphism 
\[ \psi : \coprod_{i=1 \ldots N} S^1 \times (-\varepsilon,\varepsilon) \to S_g .\]
We choose a smooth function $\tilde{f}$ with
\[ \tilde{f}: \begin{array}{ccc}
 \coprod_{i=1 \ldots N} S^1 \times (-\varepsilon,\varepsilon) & \to & S_g \\
 (s,t) & \mapsto & \left\{
 \begin{array}{c}
 0 \text{ if } \norm{t} > \frac{2 \varepsilon}{3}, \\
 f(s,t) if \norm{t} < \frac{\varepsilon}{3}
 \end{array}  
 \right.	
 \end{array}.
\]
Notice that its hamiltonian flow $ \left (\phi_{\tilde{f}}^t \right )_{t \in [0,1]}$ satisfies $\phi_{\tilde{f}}^t (S^1 \times (- \varepsilon, \varepsilon)) = S^1 \times (- \varepsilon, \varepsilon) $. 

We define a Lagrangian cobordism in Arnold sense as follows
\[ G : \begin{array}{ccc}
 		[0,1] \times S^1 & \to & \C \times S_g \\
 		(t,z) & \to & \left (t,\psi \circ \phi_{\tilde{tf}}^t (\gamma_+(z)) \right )	
 \end{array}. \]
Now, we consider the union of the maps $F$ and $G+(1,0)$. A Lagrangian smoothing of the resulting cobordism is the desired Lagrangian cobordism.
\end{proof}

We build a local model for the resolution of the double point. The quartic
	\[ \Sigma = \ens{(t,x,y) \in [-1,1] \times \R^2}{y^2-x^2+t = 0}. \]
is the set of critical points of the generating families
\[ f_{t,x} : \begin{array}{ccc}
 \R & \to & \R \\
 y & \mapsto & \frac{y^3}{3} - yx^2 + ty
 \end{array}.
\]
We rotate the Lagrangian immersion associated to this by an angle of $\frac{\pi}{4}$ to obtain
\[ F : \begin{array}{ccc} 
\Sigma & \to & \C \times \C \\
(t,x,y) & \mapsto & \left (t,y,\frac{x + 2xy}{ \sqrt{2} }, \frac{x-2xy}{\sqrt{2}} \right)
\end{array}. \]
Notice also that the map $F$ a Lagrangian cobordism in Arnold sense between a double point and its resolution.

We now modify $F$ so that it is equal to $\R \times \R \sqcup \R \times i\R$ outside a neighborhood of $\R \times \{ 0 \}$.

The set $\Im(F) \backslash \left ( [-1,1] \times B(0,1) \right )$ is an embedded manifold with four connected components. Two of them are contained in the quadrant 
\[ [-1,1] \times \ens{e^{i\theta}}{\theta \in [- \frac{\pi}{4}, \frac{\pi}{4}] \mod \pi }. \] 
We denote them by $L_1$. Two of them are contained in the quadrant 
\[[-1,1] \times \ens{e^{i\theta}}{\theta \in [- \frac{\pi}{4}, \frac{\pi}{4}] \mod \pi }.\] 
We denote them by $L_2$.

Notice that the linear projection $\pi_\R : [-1,1] \times \C \to \R \times \R$ which maps $(t,s,u,v)$ to $(t,u)$ restricts to a diffeomorphism from $L_1$ to the band $[-1,1] \times (\R \backslash [-1,1])$. From this, we deduce that $L_1$ is of the form
	\[ \ens{\left ( t,f(x,t),x,g(x,t) \right )}{(t,x) \in [-1,1] \times (\R \backslash [-1,1])}. \]
Since $L_1$ is Lagrangian, the form $f dt + g dx$ is closed. Furthermore, the set $[-1,1] \times (\R \backslash [-1,1])$ is homotopy equivalent to two points. Hence, there is a smooth function $h$ such that $f = \partial_t h$ and $g = \partial_t h$.

Now, choose a bump function $\beta : [-1,1] \times (\R \backslash [-1,1]) \to \R$ such that $\beta(t,x) = 1$ on $[-1,1] \times [- \frac{5}{4}, \frac{5}{4}]$ and $\beta(t,x) = 0$ outside $[-1,1] \times [-2,2]$. Define $\tilde{L}_1$ to be the following embedded Lagrangian 
\[ \tilde{L}_1 := \ens{\left ( t,\partial_t(\beta h),x, \partial_x(\beta h) \right )}{(t,x) \in [-1,1] \times (\R \backslash [-1,1])}. \]

We define $\tilde{L}_2$ in the same manner. The projection $\pi_{i\R} : [-1,1] \times \C \to \R \times \R$ which maps $(t,s,u,v)$ to $(t,v)$ restricts to a diffeomorphism $L_2 \to [-1,1] \times (\R \backslash [-1,1] )$. We deduce that $L_2$ is of the form 
\[ \ens{\left ( t,\partial_t h, \partial_y h, y \ \right)}{(t,y) \in [-1,1] \times (\R \backslash [-1,1])}. \]
and we put 
\[ \tilde{L}_2 := \ens{\left ( t,\partial_t(\beta h), \partial_y(\beta h),y \right )}{(t,y) \in [-1,1] \times (\R \backslash [-1,1])}. \]

Now, the map $F$ restricted to the set $\Sigma \cap [-1,1]^3$ and the two embedded submanifolds $\tilde{L_1}, \tilde{L_2}$ yield an immersion 
\[ H : \Sigma \to \C \times \C \]
equal to $[-1,1] \times \R \cup [-1,1] \times i\R$ outside $\Sigma \cap [-1,1]^3$.

Consider the immersion $i := [-1,1] \times \gamma$. Recall that we chose a chart $\phi : U \to B(0,r)$ around $x$. In the chart $\Id \times \phi$, the immersion $i$ is equal to $[-1,1] \times \R \cup [-1,1] \times i\R$. We replace this by $\varepsilon H$ for $\varepsilon$ small enough and smooth the resulting immersion. The result is a Lagrangian cobordism in Arnold sense (see Remark \ref{rk:DefArnoldcobordant}) between $\gamma$ and its surgery at $x$. By Lemma \ref{lemma:ArnoldcobordantequalscobordantBC}, we obtain that $\gamma$ and its surgery are cobordant.
\end{proof}

All of this allows us to deduce the following result due to Arnold (\cite{Ar80}).

\begin{prop}
The classes of Lagrangian embeddings generate the immersed Lagrangian cobordism group $\Gimm$.
\end{prop}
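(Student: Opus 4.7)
The plan is to combine the two preceding results (Lemmas \ref{lemma:immersedcurvescobordanttogenericcurves} and Proposition \ref{prop:resolutionofdoublepointthroughimmersedcob}) and induct on the number of double points. Since the abelian group operation on $\Gimm$ is disjoint union, it suffices to show that the class of any single immersed curve $\gamma : S^1 \looparrowright S_g$ can be written as a sum of classes of Lagrangian embeddings.

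First I would apply Lemma \ref{lemma:immersedcurvescobordanttogenericcurves} to replace $\gamma$ by a cobordant generic immersed curve $\tilde\gamma$, i.e.\ one whose double points are transverse and whose triple points are absent. Since $S^1$ is compact and all self-intersections of $\tilde\gamma$ are transverse, the number $n(\tilde\gamma)$ of double points is finite. The proof then proceeds by induction on $n(\tilde\gamma)$.

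The base case $n(\tilde\gamma) = 0$ is immediate: $\tilde\gamma$ is already a Lagrangian embedding, and $[\gamma] = [\tilde\gamma]$ is the class of an embedded curve. For the inductive step, pick any double point $x = \tilde\gamma(p) = \tilde\gamma(q)$ and apply Proposition \ref{prop:resolutionofdoublepointthroughimmersedcob} to produce an immersed Lagrangian cobordism from $\tilde\gamma$ to the surgered curve $\tilde\gamma_{(p,q),\phi,\varepsilon}$. The surgery is a local modification in a small neighborhood of $x$: outside that neighborhood the image is unchanged, and inside it the two transverse branches are replaced by two disjoint embedded arcs. Consequently $\tilde\gamma_{(p,q),\phi,\varepsilon}$ is again generic (possibly with more connected components, i.e.\ parameterized by a disjoint union of circles rather than a single $S^1$) and has exactly $n(\tilde\gamma) - 1$ double points. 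By induction each component of the surgered curve is cobordant to an embedded curve, and summing these relations in $\Gimm$ yields the result.

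The only subtlety I anticipate is bookkeeping the change in the domain: after surgery the parameterizing $1$-manifold may acquire an additional circle, so one must use the fact that $\Gimm$ is built from immersions of an \emph{arbitrary} number of copies of $S^1$ (so disjoint unions give a well-defined sum) and that Proposition \ref{prop:resolutionofdoublepointthroughimmersedcob} is proved irrespective of whether the surgery splits one component into two. No further geometric input is needed; the argument is a clean induction on the number of double points.
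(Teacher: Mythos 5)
Your proof is essentially identical to the paper's: apply Lemma \ref{lemma:immersedcurvescobordanttogenericcurves} to pass to a generic representative, then invoke Proposition \ref{prop:resolutionofdoublepointthroughimmersedcob} repeatedly to eliminate double points one at a time. The paper phrases the iteration simply as "repeated applications" while you make the induction on the number of double points and the domain bookkeeping explicit, but there is no substantive difference.
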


\begin{proof}
Let $\gamma$ be a Lagrangian immersion. By Lemma \ref{lemma:immersedcurvescobordanttogenericcurves}, $\gamma$ is Lagrangian cobordant to a generic curve $\tilde{\gamma}$. Repeated applications of Lemma \ref{prop:resolutionofdoublepointthroughimmersedcob} show that $\tilde{\gamma}$ is Lagrangian cobordant to an embedding.
\end{proof}

\subsubsection{Resolution of intersection points}
\label{subsubsection:surgeryofintersectionpoints}
Let $\gamma_1 : S^1 \to S_g$ and $\gamma_2 : S^1 \to S_g$ be two transverse generic immersed curves. Let $x = \gamma_1 (p) = \gamma_2 (q)$ be an intersection point of $\gamma_1$ and $\gamma_2$. We can perform the Lagrangian surgery of $\gamma_1$ and $\gamma_2$ at $x$ (as defined in \cite{Pol91} and \cite{LS91}) to obtain a curve $\gamma_1 \#_x \gamma_2$.

It is an observation of Biran and Cornea that in the embedded case, the curves $\gamma_1$ and $\gamma_2$ are cobordant to their surgery $\gamma_1 \#_{x,\varepsilon} \gamma_2$ (\cite[Lemma 6.1.1]{BC13} ). We explain how to adapt their argument to the case of oriented immersed curves.

We say that $x$ is of degree $1 \in \Z / 2 \Z$ if the oriented basis $(\gamma_1'(p), \gamma_2'(q))$ is positive with respect to the orientation of $T_x S_g$ and that the degree is $0$ otherwise. 

\begin{prop}
\label{prop:Lagrangiansurgeryoftwocurves}
In the above setting, assume that the intersection point $x$ is of degree $1$. Then, there is an immersed oriented Lagrangian cobordism 
\[ V : (\gamma_1, \gamma_2) \leadsto \gamma_1 \#_{x,\varepsilon} \gamma_2 . \]	
\end{prop}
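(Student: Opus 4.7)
My plan is to adapt the embedded surgery cobordism of Biran--Cornea \cite{BC13} to the oriented immersed setting by producing an explicit local Lagrangian cobordism near the intersection point $x$ and gluing it to the trivial cobordisms $[-1,1] \times \gamma_i$, in the spirit of Proposition \ref{prop:resolutionofdoublepointthroughimmersedcob}.

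First, I use Lemma \ref{lemma:Weinsteinnbhdthmforimmersions} together with the transversality of $\gamma_1$ and $\gamma_2$ to produce a symplectic chart $\phi : U \to B(0,r) \subset \C$ centered at $x$ such that $\phi \circ \gamma_1$ parameterizes the real axis near $p$ and $\phi \circ \gamma_2$ parameterizes the imaginary axis near $q$. The assumption that $x$ has degree $1$ means that, up to composing $\phi$ with an orientation-preserving symplectic involution if necessary, the orientation of $\gamma_1$ matches the positive real direction and that of $\gamma_2$ the positive imaginary direction.

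Second, I construct an oriented local Lagrangian cobordism $W$ in $[-1,1]_s \times B(0,r) \subset [-1,1]_s \times \C$ by reusing the quartic model
\[ \Sigma = \ens{(t,x,y) \in [-1,1] \times \R^2}{y^2 - x^2 + t = 0} \]
together with the rotation by $\pi/4$ and the Lagrangian flattening argument of Proposition \ref{prop:resolutionofdoublepointthroughimmersedcob}. The key difference is that here the four non-compact arcs on the $s = -1$ end of the modified $\Sigma$ are regrouped into two distinct incoming boundary components parameterizing $\R$ and $i\R$ (rather than four arcs of a single immersed curve, as in Proposition \ref{prop:resolutionofdoublepointthroughimmersedcob}); on the $s = +1$ end, the two branches parameterize the smoothed surgery profile $\varepsilon c$.

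Third, I glue: outside $[-1,1] \times U$, use the product cobordisms $[-1,1] \times \gamma_1$ and $[-1,1] \times \gamma_2$; inside $[-1,1] \times U$, replace by the pushforward of the rescaled $\varepsilon W$ through $\Id \times \phi^{-1}$. Since $W$ has been flattened to agree with $[-1,1] \times (\R \cup i\R)$ outside the saddle region, the two pieces match smoothly. The result is a Lagrangian cobordism in Arnold's sense from $\gamma_1 \sqcup \gamma_2$ to $\gamma_1 \#_{x,\varepsilon} \gamma_2$, which Lemma \ref{lemma:ArnoldcobordantequalscobordantBC} then upgrades to a cobordism in the sense of Definition \ref{defi:Lagrangiancobordism}.

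The main obstacle I expect is the orientation bookkeeping. The surface $W$ is orientable, and one orients it so that its $s = -1$ end carries the given orientations of $\gamma_1 \sqcup \gamma_2$; the induced orientation on the $s = +1$ end then agrees with the natural orientation of $\gamma_1 \#_{x,\varepsilon} \gamma_2$ exactly when $(\gamma_1'(p), \gamma_2'(q))$ is a positive basis of $T_x S_g$. This is precisely the degree $1$ hypothesis; reversing the degree would flip the induced orientation on one outgoing end and obstruct closing the cobordism up coherently. Verifying this sign carefully via the local saddle picture and the rotation-by-$\pi/4$ convention of Proposition \ref{prop:resolutionofdoublepointthroughimmersedcob} is the delicate step.
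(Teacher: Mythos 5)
Your approach is genuinely different from the paper's. The paper builds the cobordism directly as an immersed pair of pants, adapting Biran--Cornea's handle construction from \cite[Lemma 6.1.1]{BC13}: it defines an oriented immersion $i^-$ of a pair of pants $P$ whose boundary circles sit on $\alpha\times\gamma_1$, $\beta\times\gamma_2$, and $\{0\}\times(\gamma_1\#_{x,\varepsilon}\gamma_2)$, doubles $P$ to a four-holed sphere $S$, extends $i$ to a Weinstein-type local embedding $\iota : T^*_\varepsilon S \to \C\times S_g$, and uses a generating-function cutoff to bend the middle boundary circle into an honest outgoing end. You instead propose to recycle the quartic saddle model from Proposition \ref{prop:resolutionofdoublepointthroughimmersedcob} and graft it onto the product cobordisms $[-1,1]\times\gamma_1$ and $[-1,1]\times\gamma_2$. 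This would produce the same underlying pair-of-pants topology and is arguably more elementary in that it bypasses the Weinstein extension and the generating-function bending.

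There are, however, two concrete gaps. First, gluing both product cobordisms at the same height in the $\R$-factor of $\C$ is incompatible with Definition \ref{defi:Lagrangiancobordism}: $\gamma_1$ and $\gamma_2$ intersect in $S_g$, so the two negative ends $(-\infty,-\varepsilon]\times\gamma_1$ and $(-\infty,-\varepsilon]\times\gamma_2$ at the same height fail to be disjoint, and the cobordism cannot be an embedding near its ends. This is exactly why the paper introduces the auxiliary path $\beta$ (which starts at height $+1$ for $t<-1$, descends through the origin where the surgery handle sits, and exits at height $-1$): it lets the two incoming ends begin at different heights while still meeting the handle at the same level. Lemma \ref{lemma:ArnoldcobordantequalscobordantBC} straightens ends but does not separate overlapping ones for you, so you need to bend one of the product cobordisms by hand. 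Second, the orientation verification that you flag as "the delicate step" is precisely where the degree-$1$ hypothesis carries all the load, and it cannot be left unproved: one must check that the orientation of the saddle that induces the given orientations of $\gamma_1$ and $\gamma_2$ on the crossing at $t=-1$ also induces the conventional orientation of $\gamma_1\#_{x,\varepsilon}\gamma_2$ on the resolved end at $t=+1$, and that this is exactly where positivity of $(\gamma_1'(p),\gamma_2'(q))$ enters. In the paper's construction this is visible immediately from how $i^-$ is assembled; with the quartic the sign must be traced through the $\pi/4$ rotation and the flattening of $L_1, L_2$, which is precisely the bookkeeping you have deferred.
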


\begin{proof}
First, we introduce some notations. We choose a Darboux chart $\phi : U \ni x \to B(0,r)$ (with $0 < r < \frac{1}{2}$) such that $\phi \circ \gamma_1$ parameterizes $\R \cap B(0,r)$ from left to right and such that $\phi \circ \gamma_2$ parameterizes $i\R \cap B(0,r)$ from bottom to top. We let $\psi : B(0,r) \times S_g \to B(0,r) \times B(0,r)$ be the Darboux chart given by $\Id \times \phi $.

Moreover, let $\alpha : \R \to \C$ be the path given by $\alpha(t) = t$ and $\beta = (x,y) : \R \to \C$ be a smooth path satisfying the following conditions
\begin{itemize}
	\item $\beta(t) = t + i$ for $t < -1$,
	\item $\beta(t) = t - i$ for $t > 1$,
	\item $\beta(t) = -it$ for $t \in (-r, r)$,
	\item $x'(t) \geqslant 0$ and $y'(t) \leqslant 0$ for all $t$.
\end{itemize}
We also define $P$ to be a smooth oriented pair of pants with boundary components labeled by $C_1$, $C_2$ and $C_3$ (see Figure \ref{figure:immersionofthesurgery}).

\input{Figures/FigureA.tex}

We give the handle that is used to resolve the intersection point $x$. Let $c: \R \to \C$ be a path such as in subsection \ref{subsubsection:surgeryofdoublepoints}. For $\varepsilon > 0$, put 
\[ H_\varepsilon^+ = \ens{\varepsilon c(t) z}{t \in \R, z = (x,y) \in S^1, \Re(c(t)x) \leqslant 0}. \]
Define an new immersion as follows. Consider the immersion given by $(\alpha_{\vert \R_-} \times \gamma_1 ) \coprod ( \beta_{\vert \R_-} \times \gamma_2)$. Remove its intersection with $B(0,r) \times U$ and replace it with $\psi^{-1}(H_\varepsilon^+)$. This yields an oriented Lagrangian immersion $i^- : P \to \C \times S_g$ such that $i^-$ coincides with $\alpha \times \gamma_1$ on a neighborhood of $C_1$, $i^{-1}$ coincides with $\beta \times \gamma_2$ on a neighborhood of $C_2$ and $i^-$ is the immersion $\{ 0 \} \times \gamma_1 \#_{x,\varepsilon} \gamma_2$ over $C_3$. (The immersion $i-$ is oriented because of the assumption on the degree of $x$). Moreover, the outward pointing direction to $C_3$ maps through $di^-$ to a vector pointing into fourth quadrant.

Notice that the double points of the immersion $i$ are of three types,
\begin{itemize}
	\item those given by the cartesian product of $\alpha_{\vert \R_-}$ and the double points of $\gamma_1$,
	\item those given by the cartesian product of $\beta_{\vert \R_-}$ and the double points of $\gamma_2$,
	\item the intersection points between $\gamma_1$ and $\gamma_2$ different from $x$ at the point $0$.	
\end{itemize}

We now extend this immersion so that it becomes an actual Lagrangian cobordism. We explain this following the procedure of \cite{BC13}.

We consider the genus $0$ surface with four boundary components $S$ obtained by the gluing of two copies of $P$ along the boundary $C_3$ and call its two new boundary components $\tilde{C}_1, \tilde{C}_2$ (see Figure \ref{figure:immersionofthesurgery}). Moreover, we let $i^+$ be the immersion $P \to \C \times S_g$ given by the composition of $i^-$ with the reflexion $(z,x) \in \C \times S_g \mapsto (-z,x)$. Their union yields a Lagrangian immersion $i : S \to \C \times S_g$ which is a Lagrangian cobordism $(\gamma_1,\gamma_2) \leadsto (\gamma_2, \gamma_1)$. We extend this to a local embedding $\iota : T^*_\varepsilon S \to \C \times S_g$ such that its restriction to the zero section coincides with $i$ and the pullback of the symplectic form coincides with standard one. 

For $\alpha >0$ small enough, the immersion $j : (-\alpha,\alpha) \times S^1 \to \C \times S_g$ given by $j(s,t) = (s(1-i), \gamma_1 \#_{x,\varepsilon} \gamma_2(t))$ lifts to a Lagrangian embedding $\tilde{j} : (-\alpha,\alpha) \times S^1 \to T^*_\varepsilon S $ through $\iota$. This is a consequence of the homotopy lift Theorem for covers. Indeed, $j$ coincides with $\{0\} \times \gamma_1 \# \gamma_2$ on $\{0 \} \times S^1$.

Reducing $\alpha$ if necessary, we can assume that $\tilde{j}$ is the graph of a closed one-form $\lambda$ (because the tangent space of $\tilde{j}$ at a point $(0,x)$ is transversal to the fiber). Since $(-\alpha,\alpha) \times S^1$ is homotopy equivalent to $\{0\} \times S^1$ and $\lambda_{\vert \{0\} \times S^1}$ is zero, there is a smooth function $F : (-\alpha,\alpha) \times S^1 \to \R$ such that $\lambda = dF$ and $F_{\vert \{0 \} \times S^1} = 0$. Let $\beta: (-\alpha,\alpha)$ be a smooth function such that $\beta(t) = 0$ for $t < 0$, $\beta(t) = 1$ for $t > \frac{\alpha}{2}$ and $\beta'(t) >0$. We replace the graph of $\lambda$ by the graph of $d (\beta F)$. Composing with $\iota$, we get an immersion of the pair of pants with coincides with $i$ and with $j$ at its ends.
\end{proof} 

We give a precise description of the double points of the cobordism $(\gamma_1, \gamma_2) \leadsto \gamma_1 \#_x \gamma_2$. In order to do this, we first describe some relevant charts near the double points of $i^+ \sqcup i^-$.

In what follows, we identify the restriction of $i^+ \sqcup i^-$ to $C$ (see Figure \ref{figure:immersionofthesurgery}) with the immersion $\{0\} \times (\gamma_1 \#_x \gamma_2) $. We let $\varepsilon$ be a positive real smaller than $\frac{2}{3}$ that we may reduce if necessary.

\paragraph{\bf{Chart near a self-intersection point of $\gamma_1$}} Let $y$ be a self-intersection point of $\gamma_1$. Call $s \neq t  \in C$ its pre-images by $\{0\} \times (\gamma_1 \#_x \gamma_2)$.

We choose a Darboux chart $\phi_{1,y}: U_{1,y} \to B(0,r_{1,y}) \subset \C $ near $y$ such that $\phi_{1,y}(\gamma_1)$ parameterizes the real line $\R$ (resp. $i \R$) near $s$ (resp. near $t$).

We can consider the following maps,
\begin{align} 
\psi_{s} : (x,y,a,b) \in (-\varepsilon,\varepsilon)^4 & \mapsto (x,y,a,b) \in \C \times U_{i,y}, \\
\psi_t : (x,y,a,b) \in (-\varepsilon,\varepsilon)^4 & \mapsto (x,y,-b,a) \in \C \times U_{i,y}, 
\end{align}
which is expressed in the chart $\Id \times \phi_{1,y}$. These are Darboux embedding (the domain is equipped with the symplectic form $dx \wedge dy + da \wedge db$).

Since $i^+ \sqcup i^-$ coincides with $\R \times \R$ near $s$, $\psi_s$ restricted to $(-\varepsilon,\varepsilon) \times \{0\} \times (-\varepsilon,\varepsilon) \times \{0\}$ yields coordinates of $S$ near $s$. So the map $\psi_s$ is actually an embedding of a neighborhood of $s$ in $T^*_\varepsilon S$.

Similarly, the map $\psi_t$ is an embedding of a neighborhood of $t$ in $T^*_\varepsilon S$.

\paragraph{\bf{Chart near a self-intersection point of $\gamma_2$}}
We let $y$ be a self-intersection point of $\gamma_2$ and call $s \neq t$ its pre-images by $\{0\} \times \gamma_1 \#_x \gamma_2$.
We choose a Darboux chart $\phi_{2,y} : U_{2,y} \to B(0,r_{2,y}) \subset \C$ such that $\phi_{2,y} (\gamma_2)$ parameterizes the line $\R$ (resp. $i\R$) near $s$ (resp. near $t$).
We consider the following maps,
\begin{align} 
\psi_{s} : (x,y,a,b) \in (-\varepsilon,\varepsilon)^4 & \mapsto (-y,x,a,b) \in \C \times U_{i,y}, \\
\psi_t : (x,y,a,b) \in (-\varepsilon,\varepsilon)^4 & \mapsto (-y,x,-b,a) \in \C \times U_{i,y}, 
\end{align}
which is read in the chart $\Id \times \phi_{1,y}$. These are Darboux embedding (the domain is equipped with the symplectic form $dx \wedge dy + da \wedge db$).

Since $i^+ \sqcup i^-$ coincides with $\R \times \R$ near $s$, $\psi_s$ restricted to $(-\varepsilon,\varepsilon) \times \{0\} \times (-\varepsilon,\varepsilon) \times \{0\}$ yields coordinates of $S$ near $s$. So the map $\psi_s$ is actually an embedding of a neighborhood of $s$ in $T^*_\varepsilon S$.

Similarly, the map $\psi_t$ is an embedding of a neighborhood of $t$ in $T^*_\varepsilon S$.
 
\paragraph{\bf{Chart near an intersection point of $\gamma_1$ and $\gamma_2$}}
We let $y \neq x$ be an intersection point of $\gamma_1$ and $\gamma_2$ different from the surgered point above. We choose a Darboux chart $\phi_y : U_y \to B(0,r_y) \subset \C$ such that $\phi_y(\gamma_1) \subset \R$ and $\phi_y(\gamma_2) \subset i \R$.

We consider the following maps
\begin{align}
	\psi_s : (x,y,a,b) \in (-\varepsilon,\varepsilon)^4 & \mapsto (x,y,a,b) \in \C \times U_y \\
	\psi_t : (x,y,a,b) \in (-\varepsilon,\varepsilon)^4 & \mapsto (-y,x,-b,a) \in \C \times U_y ,
\end{align}
which is read in the chart $\phi_y$. These are Darboux embedding when the domain is equipped with the symplectic form $dx \wedge dy + da \wedge db$.

Call $s \in C$ (resp. $t \in C$) the preimage of $(0,y)$ by $i^+ \sqcup i^-$ such that a small neighborhood of $s$ (resp. $t$) is mapped to $\R \times \R$ (resp. $i\R \times i\R$). The map $\psi_s$ (resp. $\psi_t$) yields local coordinates of $S$ near (resp. $t$). Hence, the map $\psi_s$ (resp. $\psi_t$) is a local embedding of a neighborhood of $s$ (resp. $t$) in $T^*_\varepsilon S$.

An easy Moser argument shows that the maps above extends to a local Weinstein embedding $ \Psi : V \subset T^*_\varepsilon \to \C \times S_g$. Here $V$ is a neighborhood of $C$ in $T^*_\varepsilon$.

Recall that near $\{0\} \times \gamma_1 \#_x \gamma_2$ the immersion $\{y = -x \} \times \gamma_1 \#_x \gamma_2$ is the image of $Gr(dF)$ by $\Psi$. 

\begin{itemize}
	\item Choose a self-intersection point $y = \gamma_1(s) = \gamma_1(t)$ of $\gamma_1$. We see that, in the coordinates $(x,a)$ near $s$, the function $F$ is given by $(x,a) \mapsto - \frac{x^2}{2}$. In the coordinates $(x,a)$ near $t$, $F$ is given by $(x,a) \mapsto - \frac{x^2}{2} $
	\item Similarly, consider a self-intersection point $y = \gamma_2(s) = \gamma_2(t)$ of $\gamma_2$. In the coordinates $(x,a)$ near $s$, the function $F$ is given by $(x,a) \mapsto \frac{x^2}{2}$. Near $t$, $F$ coincides with $(x,a) \mapsto \frac{x^2}{2}$.
	\item Lastly, choose an intersection point $y = \gamma_1(s) = \gamma_2(s)$ between $\gamma_1$ and $\gamma_2$. In the coordinates $(x,a)$ near $s$, the function $F$ is given by $(x,a) \mapsto - \frac{x^2}{2}$. In the coordinates $(x,a)$ near $t$, the function $F$ is given by $(x,a) \mapsto \frac{x^2}{2}$.
\end{itemize}

We choose the function cutoff function $\beta$ so that it depends only on $x$ in each of the coordinate patches above. Moreover $\beta$ satisfies the following hypotheses with $\frac{1}{2} > \alpha > 0$ and $0 < \eta < \alpha$.

\begin{itemize}
	\item $\beta = 0$ for $t \leqslant \frac{\alpha}{2}$,
	\item $\beta = 1$ for $t \geqslant \frac{\alpha}{2}$,
	\item $\beta' \geqslant 0$ and $\beta' \leqslant \frac{1}{\varepsilon - \alpha + \eta}$. 	
\end{itemize}
In particular this implies, for $x \in (-\varepsilon,\varepsilon)$, $x \beta + \frac{x^2}{2}\beta' \leqslant \frac{3 \varepsilon}{2} < 1$.

\begin{figure}

\captionsetup{justification=centering,margin=2cm}

\begin{tikzpicture}[y=0.80pt, x=0.80pt, yscale=-1.000000, xscale=1.000000, inner sep=0pt, outer sep=0pt]
  \path[draw=black,line join=round,line cap=round,miter limit=4.00,line
    width=0.100pt,rounded corners=0.0000cm] (131.1738,70.2304) rectangle
    (257.2430,196.2996);
  \path[draw=black,line join=miter,line cap=butt,miter limit=4.00,even odd
    rule,line width=0.300pt] (131.1147,133.3895) .. controls (131.1147,133.3895)
    and (175.8223,132.8984) .. (194.1680,133.2536) .. controls (212.5138,133.6089)
    and (217.7572,139.5867) .. (223.1128,147.5222) .. controls (228.4684,155.4576)
    and (229.6819,168.6910) .. (239.1559,178.3262) .. controls (248.6299,187.9614)
    and (253.7147,192.6990) .. (257.2214,196.3069);
  \path[draw=black,line join=round,line cap=round,miter limit=4.00,line
    width=0.100pt,rounded corners=0.0000cm] (289.4335,70.2304) rectangle
    (415.5027,196.2996);
  \path[draw=black,line join=miter,line cap=butt,miter limit=4.00,even odd
    rule,line width=0.300pt] (352.4310,70.1358) .. controls (352.4310,70.1358) and
    (351.9398,114.8434) .. (352.2951,133.1891) .. controls (352.6503,151.5348) and
    (358.6282,156.7783) .. (366.5636,162.1338) .. controls (374.4990,167.4895) and
    (387.7325,168.7030) .. (397.3677,178.1770) .. controls (407.0028,187.6510) and
    (411.7404,192.7358) .. (415.3484,196.2425);
  \path[draw=black,line join=round,line cap=round,miter limit=4.00,line
    width=0.100pt,rounded corners=0.0000cm] (442.0594,70.2304) rectangle
    (568.1286,196.2996);
  \path[draw=black,line join=miter,line cap=butt,miter limit=4.00,even odd
    rule,line width=0.300pt] (505.0569,70.1358) .. controls (505.0569,70.1358) and
    (504.5657,114.8434) .. (504.9210,133.1891) .. controls (505.2762,151.5348) and
    (511.2541,156.7783) .. (519.1895,162.1338) .. controls (527.1249,167.4895) and
    (540.3584,168.7030) .. (549.9936,178.1770) .. controls (559.6287,187.6510) and
    (564.3663,192.7358) .. (567.9743,196.2425);
  \path[draw=black,line join=miter,line cap=butt,miter limit=4.00,even odd
    rule,line width=0.300pt] (442.0003,133.3895) .. controls (442.0003,133.3895)
    and (486.7079,132.8984) .. (505.0536,133.2536) .. controls (523.3994,133.6089)
    and (528.6428,139.5867) .. (533.9984,147.5222) .. controls (539.3540,155.4576)
    and (540.5675,168.6910) .. (550.0415,178.3262) .. controls (559.5155,187.9614)
    and (564.6003,192.6990) .. (568.1070,196.3069);

\end{tikzpicture}
\caption{The projections of the surgery cobordism near the double points.}
\label{figure:projectionsurgerycobordism}
\end{figure}
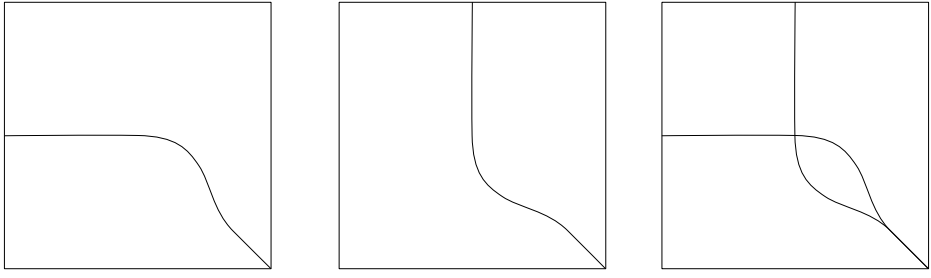

We deduce the following. 
\begin{itemize}
	\item Near $y = \gamma_1(s) = \gamma_1(t)$, the immersion is given by the two embeddings
	\begin{align*}
		(x,a) & \mapsto \left(x, -x\beta(x) - \frac{x^2}{2} \beta'(x), a ,0 \right) \\
		(x,a) & \mapsto \left(x,-x\beta(x)-\frac{x^2}{2} \beta'(x), 0,a \right).	
	\end{align*}
	There is a segment of double point which projects to the line $x \mapsto (x,-x\beta-\frac{x^2}{2} \beta')$(see Figure \ref{figure:projectionsurgerycobordism}).
	
	\item Similarly, near $y = \gamma_2(s) = \gamma_2(t)$, the immersion is given by the two embeddings
	\begin{align*}
		(x,a) & \mapsto \left( -x\beta(x) - \frac{x^2}{2} \beta'(x),x, a,0 \right) \\
		(x,a) & \mapsto \left(-x\beta(x) - \frac{x^2}{2} \beta'(x),x,0,a \right).	
	\end{align*}
 	There is a segment of double points which projects to the line $x \mapsto (- x\beta(x) - \frac{x^2}{2} \beta'(x),x)$ (see Figure \ref{figure:projectionsurgerycobordism}).
 	\item Near $y = \gamma_1(s) = \gamma_2(t)$, the immersion is given by the two embeddings
 	\begin{align*}
 		(x,a) & \mapsto \left(x, -x\beta(x) - \frac{x^2}{2} \beta'(x), a, 0 \right) \\	
 		(x,a) & \mapsto \left(-x\beta(x) - \frac{x^2}{2} \beta'(x),x,a,0 \right).
 	\end{align*}
	There is a double point at $(0,0)$ and a segment of double points which projects to the line $y = -x$ (see Figure \ref{figure:projectionsurgerycobordism}).
\end{itemize}

In what follows, we will call $DP_1$ (resp. $DP_2$) the set of double points coming from the double points of $\gamma_1$ (resp.$\gamma_2$). We will call $DP$ the set of double points along the line $y = -x$ and $DP_0$ the set of double points which project to $(0,0) \in \C$. 

\begin{lemma}
\label{lemma:degenerationofilambda}
There is a family $(i_\lambda)_{\lambda \in [0,1]}$ of immersion $S \looparrowright \C \times S_g$ such that
\begin{enumerate}[label =(\roman*)]
	\item we have $i_1 = i$, $i_0$ is the piecewise smooth immersion $S \to S_g$ given by $i^+ \sqcup j$,
	\item for any compact set $K \subset S \backslash C_3$, the map $i_\lambda$ is constant for $\lambda$ small enough,
	\item $i_\lambda $ converges uniformly to $i_0$ as $\lambda$ goes to $0$.
	\item The maps $i_\lambda$ are constant in a neighborhood of $DP_1$, $DP_2$ and $DP$.
\end{enumerate}
\end{lemma}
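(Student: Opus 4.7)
The plan is to construct $(i_\lambda)$ by rescaling the cutoff function $\beta$ used to smooth $i$ near $C_3$. For $\lambda \in (0,1]$, set $\beta_\lambda(s) := \beta(s/\lambda)$; then $\beta_\lambda$ vanishes for $s \leqslant 0$, equals $1$ for $s \geqslant \lambda\alpha/2$, and satisfies $|\beta_\lambda'(s)| \leqslant \frac{1}{\lambda(\varepsilon-\alpha+\eta)}$. Define $i_\lambda$ by setting $i_\lambda = i$ outside the Weinstein neighborhood $V \subset T^*_\varepsilon S$ of $C_3$, and by letting $i_\lambda$ equal $\iota$ applied to the graph of $d(\beta_\lambda F)$ on $V$. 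The limit $i_0$ is the piecewise smooth map equal to the zero section (that is, $i^+$ extended into $V$) on $\{s \leqslant 0\}$ and to $\tilde{j}$ (projecting to $j$) on $\{s \geqslant 0\}$; this is continuous across $C_3 = \{s = 0\}$ because $F$ vanishes on $C_3$.

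Conditions (i)--(iii) are then elementary. For $\lambda = 1$ we have $\beta_1 = \beta$, so $i_1 = i$ and $i_0$ is by construction $i^+ \sqcup j$. For (ii), on any compact $K \subset S \setminus C_3$ one has $|s| \geqslant \delta$ on $K$ for some $\delta > 0$, so once $\lambda < 2\delta/\alpha$ the cutoff $\beta_\lambda$ takes only the values $0$ or $1$ on $K$, and $i_\lambda|_K$ is independent of $\lambda$. For (iii), $i_\lambda$ already coincides with $i_0$ outside the shrinking strip $\{0 \leqslant s \leqslant \lambda\alpha/2\}$; on that strip the estimates $|F(s,a)| = O(s) = O(\lambda)$ (from $F|_{C_3} = 0$) and $|s\beta_\lambda(s) + \frac{s^2}{2}\beta_\lambda'(s)| \leqslant \frac{\lambda\alpha}{2} + \frac{\lambda\alpha^2}{8(\varepsilon-\alpha+\eta)}$ imply that the image of $i_\lambda$ lies within $O(\lambda)$ of the single curve $\{0\} \times \gamma_1 \#_x \gamma_2$. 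By continuity of $i_0$ the same holds for $i_0$ on this strip, so $\sup_S |i_\lambda - i_0| = O(\lambda) \to 0$.

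For (iv), the explicit Darboux charts on $S$ constructed before the lemma show that each of the segments $DP_1$, $DP_2$ and $DP$ admits a generic portion whose preimage lies in the region $\{|s| \geqslant \alpha/2\}$ of the relevant chart, where $\beta_\lambda = \beta$ and $\beta_\lambda' = 0$ hold identically for every $\lambda \in (0,1]$. On small open neighborhoods of these preimages contained in $\{|s| \geqslant \alpha/2\}$ we therefore have $i_\lambda = i$ for all $\lambda$. The main obstacle is condition (iii): the fiber component of the section $d(\beta_\lambda F)$ does not tend to zero pointwise in the transition strip, so the uniform convergence rests on the geometric observation that although the section does not degenerate continuously, the base strip itself collapses to $C_3$ at rate $O(\lambda)$, confining the image of $i_\lambda$ in $\C \times S_g$ to a shrinking tubular neighborhood of the curve $\{0\} \times \gamma_1 \#_x \gamma_2$ which is exactly where $i_0$ takes its values along $C_3$.
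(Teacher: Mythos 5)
Your construction is exactly the paper's: the entire proof there consists of replacing $\beta$ by $\beta_\lambda(x)=\beta(x/\lambda)$ and the graph of $d(\beta F)$ by that of $d(\beta_\lambda F)$, and your verifications of (i)--(iii) fill in details the paper omits. Those verifications are essentially sound. One small precision for (iii): you quote $F=O(s)$, but that alone makes the term $F\,d\beta_\lambda$ only $O(1)$ on the transition strip; what saves the estimate is that \emph{both} $F$ and $dF=\lambda$ vanish along $C_3$, so $F=O(s^2)$ and $dF=O(s)$, whence $d(\beta_\lambda F)=O(\lambda)$ there. Your explicit bound on $s\beta_\lambda+\tfrac{s^2}{2}\beta_\lambda'$ uses this quadratic vanishing implicitly, so the conclusion stands, but the stated reason is too weak.

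The genuine gap is in (iv). Exhibiting ``a generic portion'' of each double-point segment whose preimage lies where $\beta_\lambda=\beta$ does not produce a neighborhood of $DP_1$, $DP_2$ and $DP$. In fact the segment $DP_1$ projects to the whole curve $x\mapsto\bigl(x,-x\beta(x)-\tfrac{x^2}{2}\beta'(x)\bigr)$ and therefore passes over the support of $\beta'$, where $\beta_\lambda\neq\beta$ and $i_\lambda$ genuinely depends on $\lambda$; the same holds for $DP_2$. So the literal independence of $i_\lambda$ from $\lambda$ fails on part of these sets, and your argument cannot be completed as written (the paper's two-line proof does not address this either). What is true, and what Proposition \ref{prop:degenerationofthehandle} actually uses, is a persistence statement: every pair $(p,q)$ forming a double point of $i$ in $DP\cup DP_1\cup DP_2$ satisfies $i_\lambda(p)=i_\lambda(q)$ for all $\lambda$, because in the charts $\psi_s,\psi_t$ both branches pass through the same image point $\bigl(x,-x\beta_\lambda(x)-\tfrac{x^2}{2}\beta_\lambda'(x),0,0\bigr)$ whatever $\lambda$ is. You should either prove this persistence property in place of (iv), or restrict (iv) to the portions of the double-point sets lying outside the support of $\beta'$ and supply the persistence argument for the remainder.
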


\begin{proof}
We consider the family of cutoff functions $\beta_\lambda(x) = \beta(\frac{x}{\lambda})$. The family $(i_\lambda)_{\lambda \in [0,1]}$ is obtained by replacing $\Phi ( Gr(d (\beta F)))$ by $\Phi ( Gr(d (\beta_\lambda F)) )$ for $\lambda \in [0,1]$.
\end{proof}

\subsection{Computation of the cobordism group}

\subsubsection{The applications $\pi$ and $\mu$}
\label{subsubsection:definitionofpiandmu}
Let $\gamma_1, \ldots, \gamma_N$ be immersed curves. We assume that there is an oriented immersed Lagrangian cobordism $V : \left (\gamma_1, \ldots, \gamma_n \right) \leadsto \emptyset$. In this case, we say that $\gamma_1, \ldots, \gamma_N$ are \emph{immersed Lagrangian cobordant}.

It is easy to see that the classes of theses curves in $H_1 \left (S_g,\Z \right)$ must satisfy
\[ \sum_{i = 1}^N \left [ \gamma_i \right ] = 0 .\]
Therefore, the map which associates to an immersed curve $\gamma$ its homology class $[\gamma]$ induces a well-defined group morphism
\[ \pi : \Gimm \to H_1 \left(S_g,\Z \right). \]

As stated in the introduction, there is a morphism
\[ \mu : \Gimm \to \Z / \chi \left(S_g \right) \Z, \]
which is a variant of the Maslov index. We define it following Seidel's paper (\cite[2.b.]{Sei00}). An alternate definition as a winding number appears in a paper of Chillingworth (\cite{Chi72}). 

We fix a complex structure $j$ on $S_g$, so that $TS_g$ is a complex line bundle. Choose another line bundle $Z \to S_g$ of degree $1$ over $S_g$ and a complex isomorphism 
\begin{equation} \Phi : TS_g \tilde{\longrightarrow} Z^{\otimes \chi(S_g)} .
\label{eqn:definitionofPhi} 
\end{equation}
Denote by $TS_g \backslash \{ 0 \}$ the tangent bundle of $S_g$ minus the zero section. We let $(\gamma, \tilde{\gamma} ) : S^1 \to T S_g \backslash \{0 \} $ be a nowhere vanishing curve in $T S_g$. We also let $v : S^1 \to Z$ be a nowhere vanishing section of the fiber bundle $\gamma^* Z$. There is a function $\lambda_v : S^1 \to \C^*$ such that for all $t \in S^1$
\[ \tilde{\gamma}(t) = \lambda_v(t) \Phi(v(t))^{\otimes \chi(S_g)}. \]
If $w$ is another nowhere vanishing section of $\gamma^*Z$, denote by $\lambda_w : S^1 \to \C^*$ the function such that
\[ \forall t \in S^1, \ \tilde{\gamma}(t) = \lambda_w(t) \Phi(w(t))^{\otimes \chi(S_g)}. \]
There is a function $\mu : S^1 \to \C^*$ such that
\[ \forall t \in S^1, \ v(t) = \mu(t) w(t) . \]
So
\[ \forall t \in S^1, \ \tilde{\gamma}(t) = \lambda_v(t) \mu(t)^{\chi(S_g)} \phi\left (w(t) \right)^{\otimes \chi(S_g)} .\]
Therefore, we have $\deg \left (\lambda_v \right) = \deg \left(\lambda_w \right)$ modulo $\chi(S_g)$. So it makes sense to define the Maslov index $\mu_\Phi \left(\tilde{\gamma} \right) \in \Z / \chi(S_g)$ by
\[ \mu_\Phi \left(\tilde{\gamma} \right) = \deg(\lambda_v) \mod \chi(S_g). \]
Let $(\gamma_1, \tilde{\gamma_1}) : S^1 \to TS_g \backslash \{ 0 \}$ be another nowhere vanishing curve. We assume that $\tilde{\gamma}$ and $\tilde{\gamma_1}$ are homotopic. Then, it is easy to check that
\[ \mu_\Phi \left(\tilde{\gamma} \right) = \mu_\Phi \left(\tilde{\gamma_1} \right). \]
We conclude that there is a well defined morphism 
\[ \mu_\Phi \in \Hom \left (\pi_1 (TS_g \backslash \{ 0\}), \Z / \chi(S_g) \Z \right ) = H^1 \left (TS_g \backslash \{ 0\}, \Z / \chi(S_g) \Z \right ). \]
An immersed curve $\gamma : S^1 \to S_g$ has a canonical lift $\tilde{\gamma}$ to $T S_g \backslash \{ 0 \}$ given by 
\[ \tilde{\gamma} :
\begin{array}{ccc}
S^1 & \to & TS_g \backslash \{ 0 \} \\
t & \mapsto & (\gamma(t), \gamma'(t))
\end{array}.
\]
 We put,
\[ \mu_\Phi(\gamma) := \mu_\Phi ([\tilde{\gamma}]) \in \Z / \chi(S_g) \Z, \]
where $\left [\tilde{\gamma} \right]$ is the class of $\tilde{\gamma}$ in the homology group $H_1 \left(TS_g \backslash \{0 \}, \Z / \chi(S_g) \Z \right)$.

\begin{prop}
\label{prop:invarianceofmuundercobordism}
Let 
\[ \gamma_1, \ldots, \gamma_N : S^1 \looparrowright S_g\] 
be immersed Lagrangian cobordant curves. In $\Z / \chi(S_g) \Z$, we have the relation
\[ \sum_{i=1}^N \mu_\Phi(\gamma_i) = 0 . \]
\end{prop}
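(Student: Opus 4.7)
The plan is to extend the definition of $\mu_\Phi$ from immersed curves in $S_g$ to oriented Lagrangian surfaces in $\C \times S_g$, so that $\mu_\Phi(\gamma_i)$ appears as the winding number of a distinguished section at the $i$-th end, and then to conclude via Stokes' theorem. Let $F : V \looparrowright \C \times S_g$ be an oriented immersed Lagrangian cobordism from $(\gamma_1, \ldots, \gamma_N)$ to $\emptyset$, and set $F_2 := \pi_{S_g} \circ F$. The key point is that $T(\C \times S_g) \simeq \C \oplus TS_g$ has complex determinant canonically isomorphic to $TS_g$; composing with $\Phi$ from (\ref{eqn:definitionofPhi}) yields a canonical complex isomorphism $\deter_\C T(\C \times S_g) \simeq Z^{\otimes \chi(S_g)}$ over $S_g$.

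The first step is to build a nowhere-vanishing section $\sigma_V$ of $F_2^* Z^{\otimes \chi(S_g)}$ by the recipe
\[
\sigma_V(v) := \Phi(u_1 \wedge_\C u_2),
\]
where $(u_1,u_2)$ is any oriented basis of the Lagrangian plane $T_v V \subset T_{F(v)}(\C \times S_g)$. Two oriented bases of the same Lagrangian plane differ by a matrix of positive real determinant, so $\sigma_V(v)$ is well-defined up to a positive real scaling, and is non-zero because the $\C$-span of a real Lagrangian plane is the full complex tangent space. At a point $(s, \gamma_i(t))$ of the end $(-\infty, -\varepsilon] \times \gamma_i$, the canonical oriented basis $(\partial_s, \gamma_i'(t))$ maps, under the identification $\deter_\C(\C \oplus T_{\gamma_i(t)} S_g) \simeq T_{\gamma_i(t)} S_g$, to $\gamma_i'(t)$. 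Consequently $\sigma_V$ restricts on the $i$-th end to $\Phi \circ \gamma_i'$, precisely the section whose winding number modulo $\chi(S_g)$ is $\mu_\Phi(\gamma_i)$ by the construction of Subsection~\ref{subsubsection:definitionofpiandmu}.

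I would then truncate: for $T$ large enough, $V_T := F^{-1}([-T, \infty) \times \R)$ is a compact oriented surface whose boundary is the disjoint union of the loops $\{-T\} \times \gamma_i$ (the case $N = 0$ being vacuous), and $H^2(V_T, \Z) = 0$, so the line bundle $F_2^* Z|_{V_T}$ is trivial. Choose a trivialization $\tau$; taking its $\chi(S_g)$-th power turns $\sigma_V|_{V_T}$ into a smooth map $f : V_T \to \C^*$. Applying Stokes' theorem to the pullback of the closed $1$-form $\tfrac{1}{2\pi}\, d\arg$ on $\C^*$ yields
\[
\sum_{i=1}^N \deg\bigl(f|_{\{-T\} \times \gamma_i}\bigr) \;=\; \int_{\partial V_T} f^*\tfrac{1}{2\pi}\, d\arg \;=\; \int_{V_T} d\!\left(f^*\tfrac{1}{2\pi}\, d\arg\right) \;=\; 0.
\]
Since $\tau$ restricts to a trivialization of $\gamma_i^* Z$, each boundary degree equals $-\mu_\Phi(\gamma_i)$ modulo $\chi(S_g)$, the sign being uniform across $i$ and coming from the outward normal $-\partial_s$. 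Summing yields the desired identity $\sum_{i=1}^N \mu_\Phi(\gamma_i) \equiv 0 \pmod{\chi(S_g)}$.

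The main technical obstacle is the very first step: one must check cleanly that the oriented-Lagrangian $\wedge_\C$ construction produces a well-defined nowhere-vanishing section of the correct line bundle, and that the positive-real ambiguity in the oriented basis is invisible to the subsequent argument-winding computation. Once this is set up, the remainder is bookkeeping plus a standard Stokes application; a secondary care-point is to track the boundary-orientation convention uniformly so that all $N$ contributions carry the same sign.
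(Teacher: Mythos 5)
Your proposal is correct and follows essentially the same route as the paper: both extend $\mu_\Phi$ to oriented Lagrangian planes in $\C \times S_g$ via the canonical isomorphism $\Lambda^2 T(\C \times S_g) \simeq \pi_{S_g}^* TS_g$ composed with $\Phi$, check that this restricts to $\mu_\Phi(\gamma_i)$ on the cylindrical ends, and conclude because $\partial W$ is null-homologous in the cobordism. Your explicit truncation-plus-Stokes computation is just the concrete form of the paper's pairing $\langle i_V^*\mu, [\partial W]\rangle = 0$, and the positive-real ambiguity you flag is harmless for exactly the reason you give.
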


\begin{proof}
First, we generalize the above construction of $\mu$. Denote by $\pi_{\C} : \C \times S_g \to \C$ and $\pi_{S_g} : \C \times S_g \to S_g$ the projection on the first and second factor respectively. There is a canonical isomorphism 
\[ \pi_{S_g}^* \Lambda^1 TS_g \tilde{\to} \Lambda^2 T(\C \times S_g). \] 
We compose this with the map \ref{eqn:definitionofPhi} to obtain an isomorphism
\[ \Psi : (\pi_{S_g}^*Z)^{\otimes \chi(S_g)} \tilde{\longrightarrow} \Lambda^2 T(\C \times S_g). \]
Let $\gamma : S^1 \to \C \times S_g$ be a smooth loop. We let 
\[ \Lambda(t) \subset T_{\gamma(t)} (\C \times S_g), \ t \in S^1 \] 
be a smooth loop of oriented lagrangian subspaces over $\gamma$. For each $t \in S^1$, we let $(e_1(t),e_2(t))$ be a (real) basis of the vector space $\Lambda(t)$. We assume that the family $(e_1,e_2)$ is smooth. We let $v$ be a trivialization of the complex line bundle $\left ( \pi_{S_g} \circ \gamma \right )^* Z$. 

The function 
\[ \begin{array}{ccc}
S^1 & \mapsto & \Lambda^2 T\left(\C \times S_g \right) \\
t & \mapsto & e_1(t) \wedge e_2(t)
\end{array},\]
is nowhere vanishing. So there is a smooth function $\lambda : S^1 \to \C^*$ such that 
\[ e_1 \wedge e_2 = \lambda(t) \Psi(v(t)) . \]
Now, we put
\[ \mu_\Phi(\Lambda) = \deg(\lambda) .\]
As before, one can easily check that this does not depend on the homotopy class of $\Lambda$ and does not depend on the choice of the section $v$. Thus, this induces a well-defined class 
\[ \mu_\Phi \in H^1(\mathcal{GL}^{or}(T(\C \times S_g)), \Z / \chi(S_g) \Z), \]
in the first cohomology group of the oriented Lagrangian Grassmannian.

Let $\gamma : S^1 \to S_g$ be an immersed curve and $x \in \R$. For $t \in S^1$, we let 
\[ \Lambda(t) = \Spann \left( (1,0), (0,\gamma'(t)) \right) \subset T_{(x,\gamma(t))} (\C \times S_g). \] 
Then, for any $t \in S^1$, the space $\Lambda(t)$ is Lagrangian. It is an easy exercise to check
\[ \mu_\Phi( \Lambda ) = \mu(\gamma) . \]

Let $i : W \looparrowright \C \times S_g$ be an oriented immersed Lagrangian cobordism between the immersed curves $\gamma_1, \ldots, \gamma_N$. Then by the discussion above
\[ \mu_\Phi (\gamma_1) + \ldots + \mu_\Phi(\gamma_N) = \langle i_V^*\mu, [\partial W] \rangle . \]
The class $\partial W$ is a boundary in $H_1 \left (W, \Z / \chi(S_g) \Z \right )$, so the left term is $0$.	
\end{proof}
 
We conclude that there is a well-defined morphism
\[ \mu_\Phi : \Gimm \to \Z / \chi(S_g) \Z . \]

\begin{rk}
\label{rk:dependanceofmu}
The map $\mu_\Phi$ \emph{depends} on the choice of the isomorphism $\Phi$. In fact, two such maps differ by the morphism induced by a cohomology class $p^*\alpha$ with $\alpha \in H^1(S_g, \Z / \chi(S_g) \Z)$ and $p$ the projection $TS_g \backslash \{ 0 \} \to S_g$. 

From now on, we fix once and for all one such $\Phi$. We will, therefore, denote $\mu_\Phi$ by $\mu$.
\end{rk}

\subsubsection{Action of the mapping class group on $\Gimm$}
\label{subsubsection:actionoftheMCGonGimm}
As usual, the \emph{Mapping Class Group} of the surface $S_g$ is the quotient of the group of orientation preserving diffeomorphisms by its identity component,
\[ \Mod \left( S_g \right) = \Diff^+ \left( S_g \right) / \Diff_0\left( S_g \right). \]
The Mapping Class Group has a natural left action on $\Gimm$. Given two classes $[\phi] \in \Mod\left(S_g \right)$ and $[\gamma] \in \Gimm$, the action is given by $[\phi] \cdot [\gamma] = [\phi \circ \gamma]$.

We recollect a few well-known facts on the Mapping Class Group. The reader may find proofs and statements in the book by Farb and Margalit \cite{FM012}.

A particular class of elements of the Mapping Class groups are given by \emph{Dehn twists}, which we now define. Let $\alpha : S^1 = \R / \Z \to S_g$ be an embedded curve. Choose a Weinstein embedding $\psi : [0,1] \times S^1 \to S_g$ such that $\psi_{\vert \{ \frac{1}{2} \} \times S^1 } = \alpha$. We let $f : [0,1] \to \R$ be an increasing smooth function equal to $1$ in a neighborhood of $1$ and equal to $0$ in a neighborhood of $0$. The map 
\[ \begin{array}{ccc} [0,1] \times S^1 & \to & [0,1] \times S^1 \\ (t,\theta) & \mapsto & \left(t, \theta + f(t) \right) \end{array} \]
 extends by the identity to a symplectomorphism 
 \[ T_\alpha : S_g \to S_g \] 
 which is called the \emph{Dehn twist about $\alpha$}. Notice that its class in $\Mod(S_g)$ does not depend on the choice of $\phi$ and $f$.
 
It is a well-known fact that these transformations generate the Mapping Class Group. More precisely, we let $\alpha_1, \ldots, \alpha_g$, $\beta_1, \ldots, \beta_g$ and $\gamma_1, \ldots, \gamma_{g-1}$ be the embedded curves represented in Figure \ref{figure:Lickorishgenerators}.
\begin{figure}
\captionsetup{justification=centering,margin=2cm}

\definecolor{cff000e}{RGB}{255,0,14}
\definecolor{c005100}{RGB}{0,81,0}
\definecolor{c003cb2}{RGB}{0,60,178}

\begin{tikzpicture}[y=0.80pt, x=0.80pt, yscale=-1.000000, xscale=1.000000, inner sep=0pt, outer sep=0pt]
  \path[draw=cff000e,line join=round,line cap=round,miter limit=4.00,fill
    opacity=0.357,line width=0.400pt] (220.2988,198.7895) ellipse (0.5510cm and
    0.4289cm);
  \path[draw=c005100,line join=round,line cap=round,miter limit=4.00,fill
    opacity=0.357,line width=0.400pt]
    (220.6536,256.3137)arc(90.000:150.000:5.853188 and
    25.604)arc(150.000:210.000:5.853188 and 25.604)arc(210.000:270.000:5.853188
    and 25.604);
  \path[draw=c005100,dash pattern=on 1.60pt off 1.60pt,line join=round,line
    cap=round,miter limit=4.00,fill opacity=0.357,line width=0.400pt]
    (220.2101,205.1055)arc(-90.000:0.000:5.853188 and
    25.604)arc(0.000:90.000:5.853188 and 25.604);
  \path[draw=cff000e,line join=round,line cap=round,miter limit=4.00,fill
    opacity=0.357,line width=0.400pt] (295.1487,198.7611) ellipse (0.5510cm and
    0.4289cm);
  \path[draw=c005100,line join=round,line cap=round,miter limit=4.00,fill
    opacity=0.357,line width=0.400pt]
    (295.8582,256.7669)arc(90.000:150.000:5.853188 and
    25.604)arc(150.000:210.000:5.853188 and 25.604)arc(210.000:270.000:5.853188
    and 25.604);
  \path[draw=c005100,dash pattern=on 1.60pt off 1.60pt,line join=round,line
    cap=round,miter limit=4.00,fill opacity=0.357,line width=0.400pt]
    (295.4147,205.5587)arc(-90.000:0.000:5.853188 and
    25.604)arc(0.000:90.000:5.853188 and 25.604);
  \path[draw=c003cb2,line join=round,line cap=round,miter limit=4.00,fill
    opacity=0.357,line width=0.400pt] (258.3167,201.1958) ellipse (0.8327cm and
    0.2591cm);
  \path[draw=c003cb2,line join=round,line cap=round,miter limit=4.00,fill
    opacity=0.357,line width=0.400pt]
    (228.8126,201.1958)arc(180.000:270.000:29.504171 and
    9.180)arc(270.000:360.000:29.504171 and 9.180);
  \path[draw=black,line join=miter,line cap=butt,miter limit=4.00,even odd
    rule,line width=0.400pt] (287.7786,200.7310) .. controls (287.7786,200.7310)
    and (292.2385,197.3034) .. (295.4682,197.3034) .. controls (299.2447,197.3034)
    and (303.1578,200.7310) .. (303.1578,200.7310)(284.4821,195.9961) .. controls
    (284.4821,195.9961) and (289.1564,205.1555) .. (296.0111,204.8329) .. controls
    (301.8814,204.5567) and (307.5401,195.9961) ..
    (307.5401,195.9961)(213.0288,200.8912) .. controls (213.0288,200.8912) and
    (217.4888,197.4637) .. (220.7184,197.4637) .. controls (224.4949,197.4637) and
    (228.4080,200.8912) .. (228.4080,200.8912)(209.7323,196.1564) .. controls
    (209.7323,196.1564) and (214.4067,205.3157) .. (221.2613,204.9932) .. controls
    (227.1317,204.7170) and (232.7903,196.1564) ..
    (232.7903,196.1564)(329.4401,159.7151) .. controls (329.4401,159.7151) and
    (308.2011,144.5339) .. (296.3312,144.5339) .. controls (284.6642,144.5339) and
    (270.1522,159.1958) .. (258.4924,159.7151) .. controls (243.8159,160.3687) and
    (235.6076,144.5339) .. (220.6536,144.5339) .. controls (199.2795,144.5339) and
    (176.9958,171.7807) .. (176.6660,197.7908) .. controls (176.3254,224.6533) and
    (200.6174,256.4668) .. (220.6536,256.4668) .. controls (235.5336,256.4668) and
    (245.3409,242.1920) .. (258.4924,242.1920) .. controls (271.6438,242.1920) and
    (284.7744,256.4668) .. (296.3312,256.4668) .. controls (303.4960,256.4668) and
    (329.4401,242.1920) .. (329.4401,242.1920);
  \path[draw=cff000e,line join=round,line cap=round,miter limit=4.00,fill
    opacity=0.357,line width=0.400pt] (458.0980,197.6821) ellipse (0.5510cm and
    0.4289cm);
  \path[draw=c005100,line join=round,line cap=round,miter limit=4.00,fill
    opacity=0.357,line width=0.400pt]
    (458.4527,255.2063)arc(90.000:150.000:5.853188 and
    25.604)arc(150.000:210.000:5.853188 and 25.604)arc(210.000:270.000:5.853188
    and 25.604);
  \path[draw=c005100,dash pattern=on 1.60pt off 1.60pt,line join=round,line
    cap=round,miter limit=4.00,fill opacity=0.357,line width=0.400pt]
    (458.0093,203.9981)arc(270.000:360.000:5.853188 and
    25.604)arc(-0.000:90.000:5.853188 and 25.604);
  \path[draw=cff000e,line join=round,line cap=round,miter limit=4.00,fill
    opacity=0.357,line width=0.400pt] (532.9479,197.6538) ellipse (0.5510cm and
    0.4289cm);
  \path[draw=c005100,line join=round,line cap=round,miter limit=4.00,fill
    opacity=0.357,line width=0.400pt]
    (533.6573,255.6595)arc(90.000:150.000:5.853188 and
    25.604)arc(150.000:210.000:5.853188 and 25.604)arc(210.000:270.000:5.853188
    and 25.604);
  \path[draw=c005100,dash pattern=on 1.60pt off 1.60pt,line join=round,line
    cap=round,miter limit=4.00,fill opacity=0.357,line width=0.400pt]
    (533.2139,204.4513)arc(270.000:360.000:5.853188 and
    25.604)arc(-0.000:90.000:5.853188 and 25.604);
  \path[draw=black,line join=miter,line cap=butt,miter limit=4.00,even odd
    rule,line width=0.400pt] (465.4706,199.7696) .. controls (465.4706,199.7696)
    and (461.0107,196.3421) .. (457.7810,196.3421) .. controls (454.0045,196.3421)
    and (450.0914,199.7696) .. (450.0914,199.7696)(468.7671,195.0348) .. controls
    (468.7671,195.0348) and (464.0928,204.1942) .. (457.2381,203.8716) .. controls
    (451.3678,203.5954) and (445.7091,195.0348) ..
    (445.7091,195.0348)(540.2204,199.9299) .. controls (540.2204,199.9299) and
    (535.7604,196.5023) .. (532.5308,196.5023) .. controls (528.7543,196.5023) and
    (524.8412,199.9299) .. (524.8412,199.9299)(543.5169,195.1950) .. controls
    (543.5169,195.1950) and (538.8426,204.3544) .. (531.9879,204.0319) .. controls
    (526.1175,203.7556) and (520.4589,195.1950) ..
    (520.4589,195.1950)(423.8091,158.7537) .. controls (423.8091,158.7537) and
    (445.0481,143.5725) .. (456.9181,143.5725) .. controls (468.5850,143.5725) and
    (483.0970,158.2345) .. (494.7568,158.7537) .. controls (509.4333,159.4073) and
    (517.6416,143.5725) .. (532.5956,143.5725) .. controls (553.9697,143.5725) and
    (576.2534,170.8194) .. (576.5832,196.8294) .. controls (576.9239,223.6920) and
    (552.6318,255.5055) .. (532.5956,255.5055) .. controls (517.7156,255.5055) and
    (507.9083,241.2307) .. (494.7568,241.2307) .. controls (481.6054,241.2307) and
    (468.4748,255.5055) .. (456.9181,255.5055) .. controls (449.7532,255.5055) and
    (423.8091,241.2307) .. (423.8091,241.2307);
  \path[draw=c003cb2,line join=round,line cap=round,miter limit=4.00,fill
    opacity=0.357,line width=0.400pt] (494.9871,199.7680) ellipse (0.8327cm and
    0.2591cm);
  \path[draw=c003cb2,line join=round,line cap=round,miter limit=4.00,fill
    opacity=0.357,line width=0.400pt]
    (465.4830,199.6945)arc(180.000:270.000:29.504171 and
    9.180)arc(-90.000:0.000:29.504171 and 9.180);
  \path[draw=c005100,line join=miter,line cap=butt,miter limit=4.00,even odd
    rule,line width=0.400pt] (213.6475,232.9586) -- (214.8447,230.6362) --
    (216.0420,232.9586);
  \path[draw=c003cb2,line join=miter,line cap=butt,miter limit=4.00,even odd
    rule,line width=0.400pt] (256.7942,190.5028) -- (258.6123,192.0322) --
    (256.7942,193.5617);
  \path[draw=cff000e,line join=miter,line cap=butt,miter limit=4.00,even odd
    rule,line width=0.400pt] (199.6005,200.1205) -- (200.7978,197.7980) --
    (201.9950,200.1205);
  \path[draw=c005100,line join=miter,line cap=butt,miter limit=4.00,even odd
    rule,line width=0.400pt] (526.6310,230.1141) -- (527.8283,227.7916) --
    (529.0255,230.1141);
  \path[draw=c005100,line join=miter,line cap=butt,miter limit=4.00,even odd
    rule,line width=0.400pt] (451.3725,228.6957) -- (452.5698,226.3733) --
    (453.7670,228.6957);
  \path[draw=c005100,line join=miter,line cap=butt,miter limit=4.00,even odd
    rule,line width=0.400pt] (288.8521,231.1460) -- (290.0493,228.8235) --
    (291.2466,231.1460);
  \path[draw=cff000e,line join=miter,line cap=butt,miter limit=4.00,even odd
    rule,line width=0.400pt] (512.2379,199.5063) -- (513.4352,197.1838) --
    (514.6324,199.5063);
  \path[draw=cff000e,line join=miter,line cap=butt,miter limit=4.00,even odd
    rule,line width=0.400pt] (437.4078,199.5185) -- (438.6050,197.1960) --
    (439.8023,199.5185);
  \path[draw=cff000e,line join=miter,line cap=butt,miter limit=4.00,even odd
    rule,line width=0.400pt] (274.4130,200.9216) -- (275.6103,198.5991) --
    (276.8075,200.9216);
  \path[draw=c003cb2,line join=miter,line cap=butt,miter limit=4.00,even odd
    rule,line width=0.400pt] (256.7942,190.5028) -- (258.6123,192.0322) --
    (256.7942,193.5617);
  \path[draw=c003cb2,line join=miter,line cap=butt,miter limit=4.00,even odd
    rule,line width=0.400pt] (495.0124,188.9556) -- (496.8304,190.4850) --
    (495.0124,192.0145);
  \path[draw=black,dash pattern=on 0.80pt off 2.40pt,line join=miter,line
    cap=butt,miter limit=4.00,even odd rule,line width=0.400pt]
    (334.3473,159.4885) -- (418.0657,159.4885);
  \path[draw=black,dash pattern=on 0.80pt off 2.40pt,line join=miter,line
    cap=butt,miter limit=4.00,even odd rule,line width=0.400pt]
    (334.3473,241.0591) -- (418.0657,241.0591);
    
    \draw (190,200) node {\tiny $\beta_1$};
    \draw (205,230) node {\tiny $\alpha_1$};
    \draw (257,185) node {\tiny $\gamma_1$};
    \draw (280,230) node {\tiny $\alpha_2$};
    \draw (323,200) node {\tiny $\beta_2$};
    \draw (423,200) node {\tiny $\beta_{g-1}$};
    \draw (560,200) node {\tiny $\beta_g$};
    \draw (440,230) node {\tiny $\alpha_{g-1}$};
    \draw (520,230) node {\tiny $\alpha_g$};
	\draw (495,185) node {\tiny $\gamma_g$};

\end{tikzpicture}

\caption{The Lickorish generators of the Mapping Class Group}
\label{figure:Lickorishgenerators}
\end{figure}
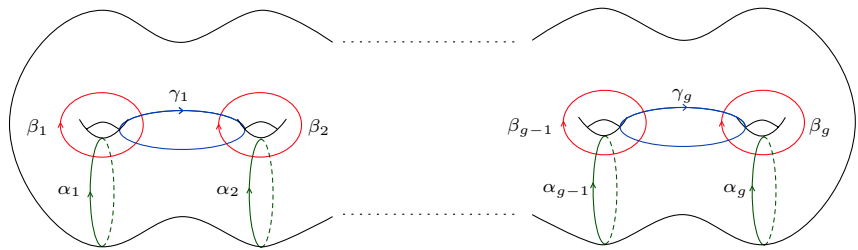

\begin{Theo}[Lickorish,1964, \cite{L64}]
\label{Theo:Lickorishgeneratorsofthe MCG}
The Dehn twists about the curves $\alpha_1, \ldots, \alpha_g$, $\beta_1, \ldots, \beta_g$ and $\gamma_1, \ldots, \gamma_{g-1}$ generate the Mapping Class Group.

In particular, any orientation-preserving diffeomorphism $\phi$ is the product of a symplectomorphism $\psi$ and a diffeomorphism $\chi$ isotopic to the identity. 	
\end{Theo}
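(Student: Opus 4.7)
The plan is to handle the two sentences separately. The \emph{in particular} statement follows formally from the first: each Dehn twist $T_\alpha$ is constructed via a Weinstein embedding $\psi : [0,1]\times S^1 \hookrightarrow S_g$ with $\psi^*\omega = dt\wedge d\theta$, and the explicit formula $(t,\theta)\mapsto(t,\theta+f(t))$ preserves $dt\wedge d\theta$, so each $T_\alpha$ lies in $\Symp(S_g,\omega)$. Granting the generation statement, for any $\phi\in\Diff^+(S_g)$ one picks a product $\psi$ of the listed Dehn twists with $[\phi]=[\psi]$ in $\Mod(S_g)$, sets $\chi := \psi^{-1}\circ\phi \in \Diff_0(S_g)$, and obtains the required factorization $\phi = \psi\circ\chi$.

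For Lickorish's generation theorem itself, my plan follows the classical ``change of coordinates'' strategy combined with an induction on genus. The first step is to prove the key transitivity lemma: for any non-separating embedded simple closed curve $\delta\subset S_g$, there exists a product $\Psi$ of Dehn twists about the Lickorish curves such that $\Psi(\delta)$ is isotopic to $\alpha_1$. This rests on the elementary identity $T_a T_b T_a(a)=b$ whenever $a,b$ meet transversely in a single point, applied inductively along a chain of single-intersection neighbours built from the $\alpha_i$, $\beta_i$, and $\gamma_i$. The geometric intersection number $i(\delta,\alpha_1)$ serves as the induction parameter, with the bigon criterion used to certify each reduction step.

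The second step uses this transitivity to reduce an arbitrary class $[\phi]\in\Mod(S_g)$ to one fixing the isotopy class of $\alpha_1$. After an additional isotopy arranging $\phi(\alpha_1)=\alpha_1$ setwise and then pointwise, one cuts open along $\alpha_1$ to obtain a surface of genus $g-1$ with two boundary circles; the Birman exact sequence together with the inductive hypothesis then reduces to lower complexity, and the contribution of the boundary components is handled by the twists $T_{\alpha_1}$ themselves. The base cases are treated by hand: $\Mod(S^2)$ is trivial, and $\Mod(T^2)\cong SL(2,\Z)$ is generated by $T_{\alpha_1}$ and $T_{\beta_1}$ via the standard matrix computation.

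The main obstacle is the transitivity lemma: it is not a priori clear that the specific curves $\alpha_i,\beta_i,\gamma_i$ assemble into a sufficiently interconnected configuration to transport every non-separating isotopy class onto $\alpha_1$. The verification is a combinatorial case analysis exploiting the fact that each $\gamma_i$ crosses the chain $\beta_i\cup\alpha_{i+1}\cup\beta_{i+1}$ exactly once at the crucial places, allowing one to migrate intersections between consecutive handles. This is the technical heart of Lickorish's original argument, which we would follow essentially verbatim.
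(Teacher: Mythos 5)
The paper does not prove the first assertion at all: it is quoted as a classical result of Lickorish, with Farb--Magalit's book \cite{FM012} given as the reference for proofs. So there is no ``paper's proof'' to compare your induction against. Your outline of Lickorish's theorem is the standard modern argument from that book (transitivity of the Lickorish twists on nonseparating curves via the relation $T_aT_bT_a(a)=b$ and induction on geometric intersection number, then reduction of a general mapping class by cutting along $\alpha_1$ and invoking the Birman exact sequence), and it is correct in structure; just be aware that it compresses genuine work --- the induction really requires first treating surfaces with boundary, handling the case where $\phi$ preserves $\alpha_1$ setwise but reverses its orientation, and the transitivity lemma is itself a careful case analysis.

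The part of the statement the paper actually needs and implicitly argues is the ``in particular'' clause, and your derivation of it is exactly the intended one: the Dehn twists $T_\alpha$ as constructed in the preceding paragraph of the paper are supported in a Weinstein collar and visibly preserve $dt\wedge d\theta$, hence are symplectomorphisms; given the generation statement one writes $\phi=\psi\circ\chi$ with $\psi$ a product of these twists representing $[\phi]$ and $\chi=\psi^{-1}\circ\phi\in\Diff_0(S_g)$. Nothing is missing there.
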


In particular, any positive homeomorphism is isotopic to a symplectomorphism As a corollary of this and Lemma \ref{lemma:isotopiccurvesareimmlagcob}, we obtain the following.

\begin{lemma}
\label{lemma:theactionoftheMCGiswelldefined}
The map
\[ \begin{array}{ccc} \Mod(S_g) \times \Gimm & \to & \Gimm \\
\left( [\phi], [\gamma_1] + \ldots + [\gamma_N] \right) & \mapsto & [\phi \circ \gamma_1] + \ldots + [\phi \circ \gamma_N] \end{array} \]
is well-defined and is a group action on $\Gimm$.	
\end{lemma}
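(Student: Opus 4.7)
My plan is to reduce well-definedness to Lemma \ref{lemma:isotopiccurvesareimmlagcob} combined with the factorization afforded by Theorem \ref{Theo:Lickorishgeneratorsofthe MCG}. First I would check independence from the representative of the mapping class. If $\phi_0, \phi_1 \in \Diff^+(S_g)$ are isotopic through $(\phi_s)_{s \in [0,1]}$, then for any immersed curve $\gamma : S^1 \looparrowright S_g$ the family $s \mapsto \phi_s \circ \gamma$ is a smooth isotopy between $\phi_0 \circ \gamma$ and $\phi_1 \circ \gamma$. Applying Lemma \ref{lemma:isotopiccurvesareimmlagcob} componentwise, the corresponding classes in $\Gimm$ coincide, so the image of $([\phi],[\gamma_1]+\cdots+[\gamma_N])$ is independent of the chosen $\phi \in [\phi]$.

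Next I would check independence on the representative of the cobordism class. Given $\phi \in \Diff^+(S_g)$ and an oriented immersed Lagrangian cobordism $V : (\gamma_1, \ldots, \gamma_N) \leadsto (\gamma_1', \ldots, \gamma_M')$, the goal is to build an oriented immersed Lagrangian cobordism between $(\phi \circ \gamma_1, \ldots, \phi \circ \gamma_N)$ and $(\phi \circ \gamma_1', \ldots, \phi \circ \gamma_M')$. The main obstacle is that $\phi$ is not assumed to be symplectic, so the straightforward attempt $\Id_\C \times \phi$ need not preserve the Lagrangian condition in $\C \times S_g$. This is exactly where Theorem \ref{Theo:Lickorishgeneratorsofthe MCG} is used: one can write $\phi = \psi \circ \chi$ with $\psi \in \Symp(S_g)$ and $\chi \in \Diff_0(S_g)$. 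Since $\Id_\C \times \psi$ is a symplectomorphism of $\C \times S_g$ that respects the cylindrical ends, its postcomposition with $V$ is an oriented immersed Lagrangian cobordism between $(\psi \circ \gamma_i)_i$ and $(\psi \circ \gamma_j')_j$. An isotopy from $\chi$ to $\Id_{S_g}$ induces isotopies from $\phi \circ \gamma_i = \psi \circ \chi \circ \gamma_i$ to $\psi \circ \gamma_i$, and similarly from $\psi \circ \gamma_j'$ to $\phi \circ \gamma_j'$. Lemma \ref{lemma:isotopiccurvesareimmlagcob} then supplies cobordisms which, concatenated with the cobordism above (using transitivity of $\sim$), give the desired cobordism from $(\phi \circ \gamma_i)_i$ to $(\phi \circ \gamma_j')_j$.

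Finally, the group action axioms $[\Id_{S_g}] \cdot [\gamma] = [\gamma]$ and $[\phi_1 \circ \phi_2] \cdot [\gamma] = [\phi_1] \cdot ([\phi_2] \cdot [\gamma])$ already hold at the level of representatives, since $(\phi_1 \circ \phi_2) \circ \gamma = \phi_1 \circ (\phi_2 \circ \gamma)$, so they descend to $\Gimm$. The only subtle point is thus the nonsymplectic part of $\phi$; Lickorish's theorem isolates it as an isotopy, which Lemma \ref{lemma:isotopiccurvesareimmlagcob} absorbs into the cobordism relation.
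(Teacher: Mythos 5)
Your proposal is correct and follows essentially the same route as the paper: both split $\phi = \psi \circ \chi$ via Lickorish's theorem, transport the cobordism by $\Id_\C \times \psi$, and absorb the isotopy $\chi$ (and the choice of representative of $[\phi]$) with Lemma \ref{lemma:isotopiccurvesareimmlagcob}. The only cosmetic difference is that you phrase well-definedness via a cobordism between two tuples while the paper uses a cobordism to $\emptyset$; these are equivalent.
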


\begin{proof}
First we check that if $[\gamma_1] + \ldots + [\gamma_N] = 0$ in $\Gimm$, then $[\phi \circ \gamma_1] + \ldots + [\phi \circ \gamma_N]$. For this, write $\phi = \psi \circ \chi$ with $\psi$ symplectic and $\chi$ isotopic to the identity. There is an immersed oriented Lagrangian cobordism $ V : (\gamma_1, \ldots, \gamma_N) \leadsto \emptyset$. Then $\psi(V)$ is a Lagrangian cobordism between the curves $\psi(\gamma_1), \ldots, \psi(\gamma_N)$ which are isotopic (hence Lagrangian cobordant by \ref{lemma:isotopiccurvesareimmlagcob}) to $\phi(\gamma_1), \ldots, \phi(\gamma_N)$.

Similarly, Lemma \ref{lemma:isotopiccurvesareimmlagcob} implies that if $\phi$ is isotopic to $\psi$, then $[\phi \circ \gamma_1] + \ldots + [\phi \circ \gamma_N] = [\psi \circ \gamma_1] + \ldots + [\psi \circ \gamma_N]$ in $\Gimm$.
\end{proof}

We also have the following proposition:

\begin{prop}
\label{prop:ActionofaDehntwistonGimm}
Let $\beta$ be an embedded curve in $S_g$. Then in $\Gimm$
\[ \left[ T_\alpha (\beta) \right] = \left( \beta \cdot \alpha \right) [\alpha] + [\beta].\]
Here, $ \beta \cdot \alpha $ is the homological intersection number of $\beta$ and $\alpha$.
\end{prop}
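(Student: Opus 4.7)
The plan is to realize $T_\alpha(\beta)$ as an iterated Lagrangian surgery of $\beta$ with parallel copies of $\alpha$, and then to invoke Proposition \ref{prop:Lagrangiansurgeryoftwocurves} to read off its class in $\Gimm$.

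First, by Lemma \ref{lemma:isotopiccurvesareimmlagcob} I would isotope $\beta$ to a representative meeting $\alpha$ transversely at a finite set of points $x_1, \ldots, x_n$, with signs $\epsilon_i \in \{+1,-1\}$ satisfying $\sum_i \epsilon_i = \beta \cdot \alpha$. Using the Weinstein embedding $\psi : [0,1] \times S^1 \to S_g$ from the definition of $T_\alpha$, I place $n$ parallel copies $\alpha_1, \ldots, \alpha_n$ of $\alpha$ at distinct heights inside the cylinder, arranged so that each $\alpha_i$ meets $\beta$ transversely in exactly one point close to $x_i$ and nowhere else. I orient $\alpha_i$ as $\alpha$ if $\epsilon_i = +1$ and as $\alpha^{-1}$ if $\epsilon_i = -1$, so that each intersection $(\beta, \alpha_i)$ has degree $1$ in the sense of Proposition \ref{prop:Lagrangiansurgeryoftwocurves}. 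Applying that proposition once at each $x_i$ yields a sequence of surgery cobordisms whose concatenation gives the equality
\[ [\beta \#_{x_1} \alpha_1 \#_{x_2} \cdots \#_{x_n} \alpha_n] = [\beta] + \sum_{i=1}^{n}[\alpha_i] = [\beta] + (\beta \cdot \alpha)[\alpha] \]
in $\Gimm$, where the last step uses $[\alpha^{-1}] = -[\alpha]$.

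The remaining task, and the step I expect to be the main obstacle, is to identify the iterated surgery above with $T_\alpha(\beta)$ up to isotopy in $S_g$. This reduces to a local verification inside $\psi([0,1] \times S^1)$: both curves coincide with $\beta$ outside the cylinder, and inside each one decomposes into $n$ arcs, each winding exactly once around the $S^1$-factor in the direction dictated by the sign of the corresponding crossing. A direct computation in the coordinates $(t,\theta)$ of the Weinstein cylinder --- using that Lagrangian surgery locally drags a strand of $\beta$ once around $\alpha_i$ in the direction fixed by the orientation of $\alpha_i$ and the sign of the surgery parameter --- produces an explicit isotopy rel the boundary of the cylinder between $T_\alpha(\beta)$ and the iterated surgery. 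One last application of Lemma \ref{lemma:isotopiccurvesareimmlagcob} then identifies their classes in $\Gimm$ and yields the stated formula.
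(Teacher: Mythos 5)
Your overall strategy is the same as the paper's: realize a curve isotopic to $T_\alpha(\beta)$ as an iterated surgery of $\beta$ with suitably oriented copies of $\alpha$, apply Proposition \ref{prop:Lagrangiansurgeryoftwocurves} once per intersection point, and conclude with Lemma \ref{lemma:isotopiccurvesareimmlagcob}. But the step you flag as "the main obstacle" hides a genuine error. Your setup requires each parallel copy $\alpha_i = \psi(\{t_i\}\times S^1)$ to meet $\beta$ in exactly one point near $x_i$ and nowhere else. This is impossible once $\beta$ crosses the annulus more than once: each arc of $\beta$ running from one boundary circle of $\psi([0,1]\times S^1)$ to the other crosses \emph{every} level circle $\{t\}\times S^1$, so every parallel copy of $\alpha$ inside the cylinder meets $\beta$ in (at least) as many points as there are crossing strands. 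Equivalently, $\alpha_i$ is isotopic to $\alpha$, so its geometric intersection with $\beta$ cannot drop to $1$.

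The consequence is not that the cobordism algebra fails --- surgering one degree-$1$ point of an intersection with several points still gives $[\gamma_1\#_x\gamma_2]=[\gamma_1]+[\gamma_2]$ in $\Gimm$ --- but that your final identification collapses: the iterated surgery you describe is an \emph{immersed} curve carrying all the unsurgered intersection points as double points, so it cannot be isotopic to the embedded curve $T_\alpha(\beta)$, and your proposed "isotopy rel the boundary of the cylinder" does not exist. This is exactly the difficulty the paper's proof is engineered to handle: at stage $k$ it replaces $\alpha$ by a specific perturbation $\tilde\alpha_{k+1}$ creating controlled extra intersection points $y_1,\ldots,y_k$, and then chooses the surgery handle at $x_{k+1}$ large enough to absorb (delete) those points, so that after $N$ steps the result is an embedded curve isotopic to $T_\alpha(\beta)$ (Figure \ref{figure:surgeryforDehnTwist}). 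Without that mechanism --- or some substitute for it --- your proof does not close.
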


\begin{proof}
Up to isotopy, we assume that $\alpha$ and $\beta$ are in \emph{minimal position} (i.e.\ the number of intersection points is minimal in their respective isotopy class).

There is a geometric procedure which produces a curve isotopic to $T_\alpha \beta$ after a sequence of surgeries such as in \ref{subsubsection:surgeryofdoublepoints}. The whole process is represented in Figure \ref{figure:surgeryforDehnTwist}.

\input{Figures/FigureC.tex}

Call $x_1, \ldots, x_N$ the intersection points of $\alpha$ and $\beta$ ordered according to the orientation of $\alpha$ (here $N$ is the number of intersection points between $\alpha$ and $\beta$). For $k \in \{ 1, \ldots, N \}$, we fix a Darboux chart $\phi_k : B(0,r) \to S_g$ around $x_k$ such that
\begin{itemize}
	\item in this chart, $\beta$	 is the oriented line $\R$,
	\item in this chart, $\alpha$ has image $i\R$.
\end{itemize}

The first step consists of the surgery between $\alpha$ and $\gamma$ if $x_1$ is of degree $1$ and of the surgery between $\alpha^{-1}$ and $\gamma$ if $x_1$ is of degree $0$. This yields a curve $c_1$

In the second step, we perturb $\alpha$ to a curve $\tilde{\alpha}_2$ as in the second row of Figure \ref{figure:surgeryforDehnTwist}. The main features of $\tilde{\alpha}$ are as follows
\begin{itemize}
	\item there are $x_2^2, \ldots x_N^2$ intersection points lying close to $x_2, \ldots, x_N$.
	\item there is one other intersection point $y_1$ \emph{above} $\beta$ in the Darboux chart $\phi_2$.
\end{itemize}
Now, we perform the surgery between $c_1$ and $\tilde{\alpha_2}$ at $x_2^2$ if it is of degree $1$ and between $c_1$ and $\tilde{\alpha_2}^{-1}$ otherwise. 

Assume that we performed the surgery of $\beta$ with $k$ curves $\alpha, \tilde{\alpha}_2, \ldots, \tilde{\alpha}_k$ isotopic to $\alpha$ to obtain a curve $c_k$. We perturb $\alpha$ to a curve $\tilde{\alpha}_{k+1}$ such as in Figure \ref{figure:surgeryforDehnTwist}. It satisfies the following assumptions.
\begin{itemize}
	\item There are $x_k^k, \ldots, x_k^N$ intersection points between $c_k$ and $\tilde{\alpha}_{k+1}$	 close to $x_k, \ldots, x_N$.
	\item There are intersections points $y_1, \ldots, y_{k}$ which lie above $\beta$ in the chart $\phi_k$.
\end{itemize}
Now, we perform the surgery between $c_k$ and $\tilde{\alpha}_{k+1}$ at $x_k$ according to the orientation of $\tilde{\alpha}_{k+1}$. The handle is big enough to delete the intersection points $y_1, \ldots, y_k$.

Notice that each surgery produces an oriented immersed Lagrangian cobordism by Propositions \ref{prop:Lagrangiansurgeryoftwocurves}. Composing these cobordisms and using Lemma \ref{lemma:isotopiccurvesareimmlagcob} about isotopic curves, we obtain an immersed Lagrangian cobordism
\[ \left( \alpha, \beta, \ldots, \beta, \beta^{-1}, \ldots, \beta^{-1} \right) \leadsto \gamma , \]
with as many copies of $\alpha$ as there are intersection points of degree $0$ and as many copies of $\alpha^{-1}$ as there are intersection points of degree 1. Hence in the Lagrangian cobordism group $\Gimm$
\[ [\gamma] = [\beta] + \left( \alpha \cdot \beta \right) [\alpha]. \]
This concludes the proof since $\gamma$ is Lagrangian cobordant to $T_\alpha (\beta)$ (Lemma \ref{lemma:isotopiccurvesareimmlagcob}).
\end{proof}

\subsubsection{Tori and pairs of pants}
We describe a set of generators for $\Gimm$ using the action of the mapping class group described above.

First, let $\gamma_1$ and $\gamma_2$ be two embedded curves in $S_g$. We suppose that each of these is the oriented boundary of an embedded torus. By the change of coordinates principle (\cite[1.3]{FM012}), there is a product of Dehn twists $\phi$ which maps $\gamma_1$ to a curve isotopic (hence immersed Lagrangian cobordant) to $\gamma_2$. By Proposition \ref{prop:ActionofaDehntwistonGimm}, we have $[\gamma_1] = [\gamma_2]$ in $\Gimm$. 
We conclude that there is a well-defined element 
	\begin{equation}
T \in \Gimm
\label{eqn:defTunobgroup}
	\end{equation}
which represent any oriented boundary of a torus in $S_g$.

First, we compute the Maslov index of the class $T$.

\begin{lemma}
\label{lemma:Maslovindexofatorus}
For any choice of isomorphism 
\[ \Phi : Z^{\otimes \chi(S_g)} \tilde{\rightarrow} TS_g, \] we have
\[ \mu_\Phi (T) = -1 \in \Z / \chi(S_g)\Z, \]
($T$ is the class defined in \ref{eqn:defTunobgroup}). 	
\end{lemma}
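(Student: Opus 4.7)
Let $\Sigma\subset S_g$ be an embedded torus with one boundary component, oriented so that its boundary is $\gamma$ (a representative of the class $T$ with the boundary orientation). Note $\chi(\Sigma)=-1$, and that what we must show is
\[ \mu_\Phi([\gamma]) = \chi(\Sigma) \pmod{\chi(S_g)}. \]

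Since $\Sigma$ is a compact oriented surface with non-empty boundary, every complex line bundle over $\Sigma$ is trivial; in particular $Z\vert_\Sigma$ admits a nowhere vanishing smooth section $\sigma$. The plan is to use $\sigma$ to produce a preferred trivialization of $T S_g$ over $\Sigma$ and then reduce the computation of $\mu_\Phi(\gamma)$ to a rotation number on $\Sigma$. Explicitly, set $V:=\Phi(\sigma^{\otimes \chi(S_g)})$, a nowhere vanishing section of $TS_g\vert_\Sigma$, and choose as the auxiliary section $v$ (from the construction of $\mu_\Phi$ in \ref{subsubsection:definitionofpiandmu}) the restriction $\sigma\vert_\gamma$. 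By definition, $\mu_\Phi([\gamma])$ is then the degree of the map $\lambda:S^1\to\C^*$ characterized by
\[ \gamma'(t) = \lambda(t)\, V(\gamma(t)). \]

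The next step is the key geometric claim: if $V$ is a nowhere vanishing vector field on a compact oriented surface $\Sigma$ with boundary, and $\gamma=\partial\Sigma$ carries the induced orientation, then the winding number of $\gamma'$ relative to $V$ along $\gamma$ equals $\chi(\Sigma)$. I will justify this as a relative Chern class computation: the degree of $\gamma'/V$ is exactly the obstruction to extending the section $\gamma'$ of $TS_g\vert_\gamma$ to a nowhere vanishing section of $TS_g\vert_\Sigma$. Pick any smooth extension $X$ of $\gamma'$ to $\Sigma$ that is tangent to $\partial\Sigma$ and has only isolated zeros in the interior; then this degree equals the signed zero count of $X$. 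A version of the Poincaré–Hopf theorem for vector fields tangent to the boundary (the analogue of the Hopf Umlaufsatz) identifies that count with $\chi(\Sigma)$. Putting these together, $\mu_\Phi([\gamma]) = \chi(\Sigma) = -1$ in $\Z/\chi(S_g)\Z$.

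The main obstacle is the justification of the claim on winding numbers, i.e.\ verifying the boundary version of Poincaré–Hopf in the precise form used above; I expect the cleanest route is to double $\Sigma$ across $\partial\Sigma$ and produce a vector field on the closed surface whose index sum is $2\chi(\Sigma)$, or alternatively to invoke Gauss–Bonnet after normalizing $V$ to be a unit vector field with respect to an auxiliary metric. A small additional point is to check that the class $T$ does not depend on the choice of embedded torus (already handled by the change of coordinates principle used to define $T$), so that the computation for one concrete $\Sigma$ indeed gives $\mu_\Phi(T)$.
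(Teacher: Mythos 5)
Your proof is correct, and at its core it rests on the same computation as the paper's: extend $\gamma'$ to a vector field on the bounded torus $\Sigma$ and invoke Poincar\'e--Hopf to get an index sum of $\chi(\Sigma)=-1$. Where you differ is in how you reduce $\mu_\Phi(\gamma)$ to that winding number. The paper first proves separately that $\mu_\Phi(T)$ is independent of $\Phi$ (using that $\gamma$ is null-homologous), then builds explicit clutching-function models of $TS_g$ and $Z$ over $S_g\setminus D$ and $D$ together with a specific $\Phi$, and compares degrees of $\gamma'$ in the two local trivializations, with the clutching function contributing the $\chi(S_g)$ ambiguity. You instead exploit that $Z\vert_\Sigma$ is trivial because $\Sigma$ has boundary: taking $v=\sigma\vert_\gamma$ for a global nowhere vanishing $\sigma$ on $\Sigma$ turns $\Phi(\sigma^{\otimes\chi(S_g)})$ into a trivialization of $T\Sigma$, so the Maslov index becomes, for \emph{arbitrary} $\Phi$, the literal rotation number of $\gamma'$ relative to that frame. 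This is cleaner: it absorbs the independence-of-$\Phi$ step (and, since $v$ extends over $\Sigma$, the independence of the auxiliary section holds on the nose rather than mod $\chi(S_g)$), and it avoids the explicit bundle models. The one ingredient you rightly flag, the boundary Poincar\'e--Hopf theorem for a field tangent to and nonvanishing on $\partial\Sigma$, is exactly the fact the paper also uses implicitly ("this zero has degree $-1$ since the Euler characteristic of $\tilde T$ is $-1$"), and your doubling argument for it is sound: the reflected field agrees with the original along $\partial\Sigma$, indices are diffeomorphism invariants, and $\chi$ of the double is $2\chi(\Sigma)$.
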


\begin{proof}
First, the index of $T$ does not depend on $\Phi$. To see this, let $\gamma$ be a representative of $T$ and $v$ a trivialization of $Z$ along $\gamma$. Let $\Psi : TS_g \tilde{\longrightarrow} Z^{\otimes \chi(S_g)} $	be an another complex isomorphism. Then $\Psi \circ \Phi^{-1}$ has the form $(z,v) \mapsto (z, \mu(z)v)$ where $\mu : S_g \to \C^*$ is a nowhere vanishing function. 
If 
\[ \gamma'(t) = \lambda(t) \Phi^{-1}(v \otimes \ldots \otimes v), \]
then 
\begin{eqnarray*} \gamma'(t) & = & \Psi^{-1} \circ \Psi \circ \Phi^{-1} (v \otimes \ldots \otimes v)\\
& = &  \lambda(t) \mu(\gamma(t)) \Psi^{-1} (v \otimes \ldots \otimes v). 
\end{eqnarray*}
But since $\mu$ extends to $S_g$ and $\gamma$ is homologically trivial, we have $\deg(\mu \circ \gamma) = 0$.

Let us turn to the computation of $\mu(T)$. It is a quick application of the Poincaré-Hopf theorem. Let $\tilde{T}$ be the torus bounded by $\gamma$ and $D$ be a disk embedded in $\tilde{T}$.

We choose trivializations of $T S_g$ over $S_g \backslash D$ and over $D$ so that $T S_g$ is identified with the fiber bundle obtained by gluing
$ \left( S_g \backslash D \right) \times \C$ on $D \times \C$ along the map
\[ f: \begin{array}{ccc}
\left( S_g \backslash D \right ) \times \C  & \to &  D \times \C \\
\left( \phi(e^{i\theta}), z \right) & \mapsto & \left (e^{i\theta}, e^{i \chi(S_g)\theta} z \right).
\end{array} \]
Here $\phi : \partial D \to \partial(S_g \backslash D)$ is an orientation reversing diffeomorphism.

Similarly, we define the line bundle $Z$ as the gluing of $ \left( S_g \backslash D \times \C \right) $ on $ \left( D \times \C \right)$ along the map
\[ g: \begin{array}{ccc}
\left( S_g \backslash D \right ) \times \C  & \to &  D \times \C \\ 	
\left( \phi(e^{i\theta}), z \right) & \mapsto & \left (e^{i\theta}, e^{i \theta} z \right).
\end{array} \]
The isomorphism $\Phi : Z^{\otimes \chi(S_g)} \to TS_g$ is given by $(a,\lambda_1 \otimes \ldots \otimes \lambda_n) \mapsto (a, \lambda_1 \ldots \lambda_n)$. Moreover, a non-zero section of $Z$ over $\gamma$ is given by $z \in \Im(\gamma) \mapsto (z,1)$. So the Maslov index of $\gamma$ is just the index of $\gamma'$ read in the trivialization above.

Choose a vector field $X$ on $\tilde{T}$ which coincides with $\gamma'$ over $\gamma$ and has a unique zero in $D$. This zero has degree $-1$ since the Euler characteristic of $\tilde{T}$ is $-1$. The degree of $\gamma'$ in the trivialization above is equal to the degree of $X$ over the boundary of $S_g \ D$ since this is a homological invariant. Given the expression of $-1$, this degree is also the degree of $X$ in the trivialization over $D$ plus $\chi(S_g)$. Hence it is $-1 + \chi(S_g)$ since $X$ has a zero of degree $-1$ on $D$. 
\end{proof}

\begin{rk}
The same proof also shows that if $\gamma$ is the oriented boundary of an embedded surface of genus $\tilde{g}$, then its index satisfies
\[ \mu_\Phi (\gamma) = \chi(S_1) \mod \chi(S_g), \]
for any trivialization $\Phi$.  
\end{rk}

We now express the class of any separating curve with $T$. The proof uses the surgeries of \cite[Lemma 7.6]{ab07}.

\begin{lemma}
\label{lemma:classofanonseparatingcurve}
Let $\gamma$ be the oriented boundary of an embedded surface $S_1$. Then in $\Gimm$
\[ 	[\gamma] = \chi(S_1) \cdot T. \]
\end{lemma}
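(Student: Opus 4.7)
The plan is to argue by induction on the genus $\tilde{g}$ of $S_1$. The base case $\tilde{g} = 1$ is essentially the definition of the class $T$ in \eqref{eqn:defTunobgroup} (after fixing a sign convention matched to the Maslov index calculation of Lemma \ref{lemma:Maslovindexofatorus}, which gives $\mu(T) = -1 = \chi(\text{torus with one boundary})$).

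For the inductive step, the key geometric input I would establish first is a \emph{pair-of-pants relation} in $\Gimm$: if three disjoint oriented curves $\delta_1, \delta_2, \delta_3 \subset S_g$ are the boundary components of an embedded pair of pants $P \subset S_g$, endowed with the orientations induced from $\partial P$, then
\[ [\delta_1] + [\delta_2] + [\delta_3] = \pm T \text{ in } \Gimm. \]
Given this identity, I would fix a pair-of-pants decomposition of $S_1$ into $|\chi(S_1)|$ embedded pairs of pants $P_1, \ldots, P_{|\chi(S_1)|}$, and sum the pair-of-pants relation over all $P_i$. Each curve interior to the decomposition appears as a boundary of exactly two adjacent pants with opposite induced orientations, and therefore cancels in $\Gimm$ using $[\delta^{-1}] = -[\delta]$. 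The only surviving boundary on the left-hand side is the single curve $\gamma = \partial S_1$, yielding $[\gamma] = \pm|\chi(S_1)| \cdot T = \chi(S_1) \cdot T$ once the signs are reconciled with Lemma \ref{lemma:Maslovindexofatorus}. (Note that the base case fits neatly into this framework: cutting the torus-with-boundary $S_1$ along a non-separating curve $\alpha$ produces a single pair of pants whose boundary components in $S_g$ are $\gamma$, $\alpha$, and $\alpha^{-1}$, so the relation collapses to $[\gamma] = \pm T$.)

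The main obstacle will be establishing the pair-of-pants relation, and this is precisely where Abouzaid's surgery from \cite[Lemma 7.6]{ab07} enters. The idea is to realize the embedded pair of pants $P \subset S_g$ itself as the underlying surface of an oriented immersed Lagrangian cobordism $P \looparrowright \C \times S_g$ with three cylindrical ends over $\delta_1, \delta_2, \delta_3$. A naive Lagrangian suspension of the inclusion $P \hookrightarrow S_g$ is obstructed because $P$ is not simply connected; the resolution is to construct the cobordism by performing a controlled Lagrangian surgery simultaneously on three cylindrical ends, modeled on the two-curve surgery of Proposition \ref{prop:Lagrangiansurgeryoftwocurves} and using the Weinstein neighborhood theorem (Lemma \ref{lemma:Weinsteinnbhdthmforimmersions}) to attach cylindrical ends. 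The extra double point that arises in this surgery contributes Maslov index $-1 = \mu(T)$, which is exactly the Maslov contribution $\chi(P) = -1$ and accounts for the $T$ correction in the pair-of-pants relation. Once this surgery construction is in place, the rest of the proof is bookkeeping of orientations and Euler characteristics across the pants decomposition.
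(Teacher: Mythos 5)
Your overall architecture --- reduce to a pair-of-pants relation $[\delta_1]+[\delta_2]+[\delta_3]=\pm T$ and then telescope over a pants decomposition --- is essentially the paper's strategy (the paper peels off two pairs of pants per inductive step rather than summing over a full decomposition, but that difference is cosmetic). The genuine gap is in your proposed proof of the pair-of-pants relation itself. You want to realize the embedded pair of pants $P$ as an oriented immersed Lagrangian cobordism $P \looparrowright \C\times S_g$ whose only ends are $\delta_1,\delta_2,\delta_3$. Such a cobordism cannot exist: by Proposition \ref{prop:invarianceofmuundercobordism} it would force $\mu(\delta_1)+\mu(\delta_2)+\mu(\delta_3)=0$ in $\Z/\chi(S_g)\Z$, whereas the Poincar\'e--Hopf argument of Lemma \ref{lemma:Maslovindexofatorus} gives $\mu(\delta_1)+\mu(\delta_2)+\mu(\delta_3)=\chi(P)=-1$. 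Put differently, the relation a cobordism imposes in $\Gimm$ is determined entirely by its ends, so your cobordism would prove the false identity $[\delta_1]+[\delta_2]+[\delta_3]=0$. Your attempt to recover the $T$-correction from ``the extra double point contributing Maslov index $-1$'' does not repair this: double points of the cobordism surface simply do not enter the relation in $\Gimm$. To produce the $T$ on the right-hand side, the cobordism must have a fourth end lying in the class $T$, i.e.\ a torus-bounding curve must appear among the ends.

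That is exactly how the paper proceeds. It compares two sequences of surgeries (Figure \ref{figure:FigureE}) producing isotopic curves: one sequence surgers an auxiliary embedded curve $\alpha$ with $\gamma_2$ (suitably oriented) and with a curve $\beta$ whose class is $T$ because it bounds a torus; the other surgers $\alpha$ with $\gamma_1$ and with $\gamma$. Each surgery is a genuine oriented immersed cobordism by Proposition \ref{prop:Lagrangiansurgeryoftwocurves}, and equating the classes of the two end-products yields $[\gamma]+[\gamma_1]+[\gamma_2]=T$; the auxiliary torus boundary $\beta$, not a double point, is the source of the $T$. If you replace your direct Lagrangian immersion of $P$ by this comparison-of-surgeries argument, the remainder of your proposal (cancellation of interior curves and the Euler characteristic count) goes through.
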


\begin{proof}
The proof follows from induction over the genus of the surface bounded by $\gamma$.
If $\gamma$ bounds a torus, there is nothing to prove. 

We assume that the formula is true for any curve which bounds a surface of genus less than $g_1 - 1$. We assume that $\gamma$ is the oriented boundary a surface $S_1$ of genus $g_1 \geqslant 2$.

Choose three curves $\gamma_1$, $\gamma_2$ and $\gamma_3$ such that the following hold (see Figure \ref{figure:FigureD}).

\begin{figure}
\captionsetup{justification=centering,margin=2cm}

\begin{tikzpicture}[y=0.80pt, x=0.80pt, yscale=-1.000000, xscale=1.000000, inner sep=0pt, outer sep=0pt]
  \path[draw=black,line join=miter,line cap=butt,miter limit=4.00,even odd
    rule,line width=0.326pt] (362.5113,122.2744) .. controls (362.5113,122.2744)
    and (368.5213,118.3082) .. (372.8734,118.3082) .. controls (377.9624,118.3082)
    and (383.2356,122.2744) .. (383.2356,122.2744)(358.0691,116.7955) .. controls
    (358.0691,116.7955) and (364.3680,127.3942) .. (373.6050,127.0210) .. controls
    (381.5156,126.7014) and (389.1409,116.7955) ..
    (389.1409,116.7955)(261.7823,122.4598) .. controls (261.7823,122.4598) and
    (267.7923,118.4936) .. (272.1444,118.4936) .. controls (277.2333,118.4936) and
    (282.5065,122.4598) .. (282.5065,122.4598)(257.3400,116.9809) .. controls
    (257.3400,116.9809) and (263.6389,127.5796) .. (272.8759,127.2064) .. controls
    (280.7866,126.8868) and (288.4118,116.9809) ..
    (288.4118,116.9809)(418.6523,74.8131) .. controls (418.6523,74.8131) and
    (390.0317,57.2463) .. (374.0363,57.2463) .. controls (358.3145,57.2463) and
    (338.7589,74.2122) .. (323.0467,74.8131) .. controls (303.2694,75.5694) and
    (292.2083,57.2463) .. (272.0570,57.2463)(272.0570,186.7688) .. controls
    (292.1085,186.7688) and (305.3245,170.2507) .. (323.0467,170.2507) .. controls
    (340.7689,170.2507) and (358.4630,186.7688) .. (374.0363,186.7688) .. controls
    (383.6913,186.7688) and (418.6523,170.2507) .. (418.6523,170.2507);
  \path[draw=black,line join=miter,line cap=butt,miter limit=4.00,even odd
    rule,line width=0.326pt] (181.7160,122.2420) .. controls (181.7160,122.2420)
    and (175.7060,118.2758) .. (171.3539,118.2758) .. controls (166.2649,118.2758)
    and (160.9918,122.2420) .. (160.9918,122.2420)(186.1583,116.7631) .. controls
    (186.1583,116.7631) and (179.8594,127.3618) .. (170.6224,126.9886) .. controls
    (162.7117,126.6689) and (155.0864,116.7631) ..
    (155.0864,116.7631)(125.5751,74.7807) .. controls (125.5751,74.7807) and
    (154.1956,57.2139) .. (170.1910,57.2139) .. controls (185.9128,57.2139) and
    (205.4684,74.1798) .. (221.1807,74.7807) .. controls (240.9580,75.5370) and
    (252.0191,57.2139) .. (272.1704,57.2139)(272.1704,186.7363) .. controls
    (252.1188,186.7363) and (238.9029,170.2183) .. (221.1807,170.2183) .. controls
    (203.4585,170.2183) and (185.7644,186.7363) .. (170.1910,186.7363) .. controls
    (160.5361,186.7363) and (125.5751,170.2183) .. (125.5751,170.2183);
  \path[draw=black,dash pattern=on 3.91pt off 3.91pt,line join=round,line
    cap=round,miter limit=4.00,fill opacity=0.067,line width=0.326pt]
    (221.3155,122.4724) ellipse (0.2241cm and 1.3427cm);
  \path[draw=black,line join=round,line cap=round,miter limit=4.00,fill
    opacity=0.067,line width=0.326pt]
    (323.3443,170.0500)arc(90.000:150.000:7.941321 and
    47.578)arc(150.000:210.000:7.941321 and 47.578)arc(210.000:270.000:7.941321
    and 47.578);
  \path[draw=black,line join=round,line cap=round,miter limit=4.00,fill
    opacity=0.067,line width=0.326pt]
    (271.7283,186.6372)arc(90.000:150.000:6.924429 and
    29.760)arc(150.000:210.000:6.924429 and 29.760)arc(210.000:270.000:6.924429
    and 29.760);
  \path[draw=black,dash pattern=on 3.91pt off 3.91pt,line join=round,line
    cap=round,miter limit=4.00,fill opacity=0.067,line width=0.326pt]
    (271.7283,87.8343) ellipse (0.2007cm and 0.8625cm);
  \path[draw=black,line join=round,line cap=round,miter limit=4.00,fill
    opacity=0.067,line width=0.326pt]
    (221.3155,170.0500)arc(90.000:150.000:7.941321 and
    47.578)arc(150.000:210.000:7.941321 and 47.578)arc(210.000:270.000:7.941321
    and 47.578);
  \path[draw=black,dash pattern=on 3.91pt off 3.91pt,line join=round,line
    cap=round,miter limit=4.00,fill opacity=0.067,line width=0.326pt]
    (323.3443,74.8949)arc(270.000:360.000:7.941321 and
    47.578)arc(0.000:90.000:7.941321 and 47.578);
  \path[draw=black,line join=round,line cap=round,miter limit=4.00,fill
    opacity=0.067,line width=0.326pt]
    (271.7283,118.3940)arc(90.000:150.000:7.110409 and
    30.560)arc(150.000:210.000:7.110409 and 30.560)arc(210.000:270.000:7.110409
    and 30.560);
  \path[draw=black,dash pattern=on 3.91pt off 3.91pt,line join=round,line
    cap=round,miter limit=4.00,fill opacity=0.067,line width=0.326pt]
    (271.7283,127.1164)arc(-90.000:0.000:6.924429 and
    29.760)arc(-0.000:90.000:6.924429 and 29.760);
  \path[draw=black,fill=black,line join=round,line cap=round,miter limit=4.00,line
    width=0.268pt] (215.3597,122.6569) .. controls (214.2449,121.9434) and
    (213.8045,120.2621) .. (213.3585,118.4168) .. controls (212.8980,120.3219) and
    (212.4338,121.9680) .. (211.3573,122.6569) -- cycle;
  \path[draw=black,fill=black,line join=round,line cap=round,miter limit=4.00,line
    width=0.268pt] (266.6249,83.1877) .. controls (265.5101,83.9013) and
    (265.0698,85.5826) .. (264.6237,87.4279) .. controls (264.1633,85.5228) and
    (263.6990,83.8767) .. (262.6225,83.1877) -- cycle;
  \path[draw=black,fill=black,line join=round,line cap=round,miter limit=4.00,line
    width=0.268pt] (266.8775,152.2569) .. controls (265.7627,152.9704) and
    (265.3223,154.6518) .. (264.8763,156.4971) .. controls (264.4158,154.5920) and
    (263.9516,152.9459) .. (262.8751,152.2569) -- cycle;
  \path[draw=black,fill=black,line join=round,line cap=round,miter limit=4.00,line
    width=0.268pt] (317.3851,126.5713) .. controls (316.2703,125.8578) and
    (315.8299,124.1765) .. (315.3839,122.3311) .. controls (314.9235,124.2362) and
    (314.4592,125.8823) .. (313.3827,126.5713) -- cycle;
\draw (205,123) node {$\gamma_3$};
\draw (255,157) node {$\gamma_1$};
\draw (255,87) node {$\gamma_2$};
\draw (310,123) node {$\gamma$};
\end{tikzpicture}

\caption{Two pair of pants}
\label{figure:FigureD}
\end{figure}
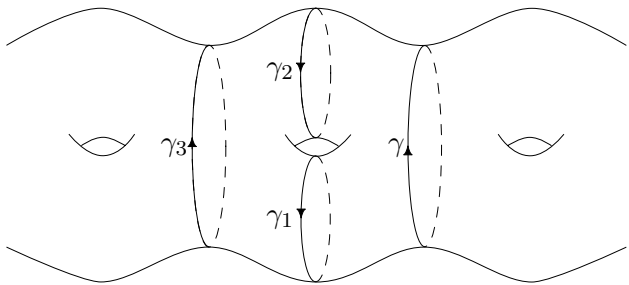

\begin{itemize}
	\item The curves $\gamma_1$ and $\gamma_2$ are non-separating and $\gamma$, $\gamma_1$ and $\gamma_2$ form the oriented boundary of a pair of pants.
	\item The curve $\gamma_3$ is separating and bounds a surface $S_3$ of genus $g_1 - 1$.
	\item The curves $\gamma_1$, $\gamma_2$ and $\gamma_3$ bound another pair of pants.
\end{itemize}

\input{Figures/FigureE.tex}

Now, choose two embedded curves $\alpha$ and $\beta$ as in Figure \ref{figure:FigureE}. We first perform surgeries of $\alpha$ with $\gamma_2$ and $\beta$ as indicated in the left-hand side of Figure \ref{figure:FigureE}. This yields an immersed curve $c$. If we perform surgeries of $\alpha$ with $\gamma_2$ and $\gamma$ as in the right-hand side of Figure \ref{figure:FigureE}, we obtain an immersed curve isotopic to $c$. Therefore,
\[ - \gamma_2 + \alpha + T = \gamma_1 + \alpha + \gamma, \]
so 
\[ \gamma + \gamma_1 + \gamma_2 = T. \]
The same argument yields
\[ T = -\gamma_3 - \gamma_1 - \gamma_2. \]
Hence,
\[ \gamma = \gamma_3 + 2T. \]
But $\gamma_3$ is the oriented boundary of a surface of genus $g_1 -1$, so
\[ [\gamma_3] = \chi(S_3) \cdot T. \]
Hence,
\[ \gamma = (\chi(S_3) + 2) \cdot T = \chi(S_1) T .\]
\end{proof}

We now have the following Lemma.

\begin{lemma}
\label{lemma:subgroupgeneratedbyseparatingcurves}
The restriction of $\mu$ to the subgroup $H$ of $\Gimm$ generated by separating curves is an isomorphism.	
\end{lemma}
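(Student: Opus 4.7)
The plan is to first show that $H$ is the cyclic subgroup generated by $T$, and then to pin down the order of $T$ in $\Gimm$. By Lemma \ref{lemma:classofanonseparatingcurve}, every oriented separating curve represents an integer multiple of $T$, so $H \subset \langle T \rangle$; conversely $T \in H$ by definition, and so $H = \langle T \rangle$. Moreover, by Lemma \ref{lemma:Maslovindexofatorus}, $\mu(T) = -1 \in \Z/\chi(S_g)\Z$ is a generator of the target, and therefore $\mu|_H$ is automatically surjective. It remains to prove that $\mu|_H$ is injective, which amounts to showing that the order of $T$ in $\Gimm$ is exactly $|\chi(S_g)|$ (interpreted as infinite when $\chi(S_g) = 0$).

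The key step is to establish the relation $\chi(S_g) \cdot T = 0$ in $\Gimm$ when $g \geqslant 2$. I will do this by cutting $S_g$ along a single torus boundary. Pick an embedded once-punctured torus $S_1 \subset S_g$ with oriented boundary $\gamma_T$; its closed complement $\Sigma := S_g \setminus \mathrm{int}(S_1)$ is then an embedded oriented surface of genus $g-1 \geqslant 1$ whose oriented boundary is $-\gamma_T$. Since both $S_1$ and $\Sigma$ have genus at least one, Lemma \ref{lemma:classofanonseparatingcurve} applies on each side and yields
\[ [\gamma_T] = \chi(S_1) \cdot T \qquad \text{and} \qquad [-\gamma_T] = \chi(\Sigma) \cdot T. \]
Summing the two identities and using $[\gamma_T] + [-\gamma_T] = 0$ together with the additivity $\chi(S_1) + \chi(\Sigma) = \chi(S_g)$ (the circles in the common boundary contribute $0$ to the Euler characteristic), I obtain the required relation $\chi(S_g) \cdot T = 0$.

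Combined with the fact that $\mu(T) = -1$ has order $|\chi(S_g)|$ in $\Z/\chi(S_g)\Z$, this forces the cyclic group $H = \langle T \rangle$ to have order exactly $|\chi(S_g)|$, and $\mu|_H$ to be an isomorphism. The remaining case $g = 1$ needs no extra work: there $\chi(S_g) = 0$ and $\Z/\chi(S_g)\Z \cong \Z$, while $\mu(T) = -1$ already has infinite order in $\Z$, which forces $T$ itself to have infinite order in $\Gimm$; hence $H \cong \Z$ and $\mu|_H$ is the obvious isomorphism. The only point that needs a little care is to ensure that the complement $\Sigma$ really is an embedded oriented subsurface of genus at least one, so that Lemma \ref{lemma:classofanonseparatingcurve} applies on both sides of $\gamma_T$; this is precisely why the argument naturally splits into the cases $g \geqslant 2$ and $g = 1$.
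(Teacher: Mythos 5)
Your proof is correct and follows essentially the same route as the paper: identify $H$ with the cyclic subgroup generated by $T$ via Lemma \ref{lemma:classofanonseparatingcurve}, use $\mu(T)=-1$ to see that $\mu|_H$ is surjective and that the order of $T$ is infinite or a multiple of $\norm{\chi(S_g)}$, and derive $\chi(S_g)\cdot T=0$ by applying that lemma to both sides of a curve bounding a torus. Your bookkeeping via additivity of the Euler characteristic under gluing along the boundary circle is, if anything, cleaner than the paper's explicit computation.
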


\begin{proof}
Lemma \ref{lemma:classofanonseparatingcurve} implies that $T$ generates the group $H$. Moreover, $\mu(T) = -1$ implies that the order of $T$ is either infinite or a multiple of $\chi(S_g)$.

On the other hand, let $\gamma$ be the oriented boundary of a torus. Then, $\gamma^{-1}$ is the oriented boundary of a surface of genus $g-1$. Hence by Lemma \ref{lemma:classofanonseparatingcurve}, we have
\begin{eqnarray*}
 T & = & - \left(1-2(g-1) \right) T \\ 
& = & (-3 + 2g) T. 
\end{eqnarray*}
So $\chi(S_g)T = 0$. This concludes the proof.
\end{proof}

Finally, we compute the class of the Lickorish generator $\gamma_i$ (see \ref{Theo:Lickorishgeneratorsofthe MCG}) in $\Gimm$.

\begin{lemma}
\label{lemma:classofgammai}
Recall that we denoted by $\alpha_1, \ldots, \alpha_g$, $\beta_1, \ldots, \beta_g$ and $\gamma_1, \ldots, \gamma_{g-1}$.

Let $i \in \{ 1, \ldots , g-1 \}$. Then we have
\[	[\gamma_i]  = [ \alpha_{i+1} ] - [\alpha_i] - T . \]
\end{lemma}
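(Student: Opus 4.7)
The plan is to recognize that $\gamma_i$, $\alpha_i$, $\alpha_{i+1}$ cobound an embedded pair of pants $P \subset S_g$ and apply the pair-of-pants identity implicitly established in the proof of Lemma \ref{lemma:classofanonseparatingcurve}. Recall that the key output of that proof --- namely the equation $\gamma + \gamma_1 + \gamma_2 = T$ --- is in fact a general fact: for \emph{any} embedded pair of pants $P \subset S_g$ with oriented boundary curves $\delta_1 \sqcup \delta_2 \sqcup \delta_3$ (oriented as the boundary of $P$, with $P$ carrying the orientation induced from $S_g$), the identity $[\delta_1] + [\delta_2] + [\delta_3] = T$ holds in $\Gimm$. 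The surgery comparison in Figure \ref{figure:FigureE} uses only the pair-of-pants topology; the specific homology classes of the curves there play no role.

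The geometric input is then easy to check from Figure \ref{figure:Lickorishgenerators}: the three curves $\alpha_i$, $\alpha_{i+1}$, $\gamma_i$ are pairwise disjoint, and they together cobound an embedded three-holed sphere $P$ sitting between the $i$-th and $(i{+}1)$-st handles. I would next track the orientation induced on $\partial P$ by the orientation of $S_g$; a direct inspection of the figure identifies $\partial P$ (as a formal sum of oriented embedded curves) with $-\gamma_i + \alpha_i - \alpha_{i+1}$, up to the choice of which side of $\gamma_i$ one takes as $P$.

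Applying the pair-of-pants identity to this oriented boundary gives
\[ -[\gamma_i] + [\alpha_i] - [\alpha_{i+1}] = T , \]
which rearranges immediately to $[\gamma_i] = [\alpha_{i+1}] - [\alpha_i] - T$, as claimed.

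The main technical hurdle is the orientation bookkeeping rather than any new idea: one has to ensure that the induced boundary orientations of $P$ agree with the orientations fixed for $\alpha_i$, $\alpha_{i+1}$, $\gamma_i$ in Figure \ref{figure:Lickorishgenerators}. A useful consistency check is obtained by projecting to $H_1(S_g,\Z)$ via $\pi$: the claimed formula reduces to $[\gamma_i] = [\alpha_{i+1}] - [\alpha_i]$, which is exactly the homological relation $[\partial P] = 0$ with the stated orientations. This fixes the coefficients of $[\alpha_i]$ and $[\alpha_{i+1}]$; only the sign of the $T$-term needs to be confirmed, and this is settled by verifying the orientation of $P$ against the figure (or, if one prefers an algebraic check, by applying the Maslov index $\mu$ of subsection \ref{subsubsection:definitionofpiandmu} to both sides and using $\mu(T) = -1$ from Lemma \ref{lemma:Maslovindexofatorus}).
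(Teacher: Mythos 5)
Your overall strategy is the right one, and in fact it is what the paper does: the proof of Lemma~\ref{lemma:classofgammai} is another instance of the ``compare two chains of surgeries'' argument from Lemma~\ref{lemma:classofanonseparatingcurve}. But there are two issues, one concrete and one methodological.

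\textbf{The sign error.} From the equation you wrote, $-[\gamma_i] + [\alpha_i] - [\alpha_{i+1}] = T$, the rearrangement is $[\gamma_i] = [\alpha_i] - [\alpha_{i+1}] - T$, which is \emph{not} the claimed formula: the coefficients of $[\alpha_i]$ and $[\alpha_{i+1}]$ are swapped. Your own consistency check exposes this: with $\partial P = -\gamma_i + \alpha_i - \alpha_{i+1}$, the relation $[\partial P]=0$ in $H_1$ gives $[\gamma_i] = [\alpha_i] - [\alpha_{i+1}]$, whereas the target formula requires $[\gamma_i] = [\alpha_{i+1}] - [\alpha_i]$. So the orientation bookkeeping is not just fiddly, it is wrong as stated. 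Re-inspecting Figure~\ref{figure:Lickorishgenerators} gives $\partial P = -\gamma_i - \alpha_i + \alpha_{i+1}$, and with that input your pair-of-pants identity does yield $[\gamma_i] = [\alpha_{i+1}] - [\alpha_i] - T$ as desired. Note also that, as you flag it, the Maslov-index check only pins down the $T$-coefficient; it cannot catch a swap of $\alpha_i$ and $\alpha_{i+1}$ since $\mu(\alpha_i)$ and $\mu(\alpha_{i+1})$ are not a priori known separately, whereas the homological check can and does detect it.

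\textbf{The abstraction.} You assert that the surgery comparison in Figure~\ref{figure:FigureE} ``uses only the pair-of-pants topology; the specific homology classes of the curves there play no role.'' This needs justification. The pair of pants in Lemma~\ref{lemma:classofanonseparatingcurve} has one separating boundary component, while the pair of pants bounded by $\alpha_i,\alpha_{i+1},\gamma_i$ has all three boundary components non-separating; these are different topological types of embedded pair of pants in $S_g$, not related by a diffeomorphism of the surface. The auxiliary curves $\alpha$ and $\beta$ in Figure~\ref{figure:FigureE} are chosen to intersect the pair of pants and its complement in a particular way, and one has to exhibit analogous auxiliary curves in the new configuration. This is exactly why the paper supplies a separate figure (Figure~\ref{figure:FigureN}) rather than just citing Lemma~\ref{lemma:classofanonseparatingcurve}. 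Your route is morally the same and arguably cleaner to state, but it implicitly asserts a general pair-of-pants identity for $\Gimm$ that the paper never formulates and that would require its own (short, figure-based) proof covering each topological type of embedded pair of pants; this cannot be extracted for free from the single case handled in Lemma~\ref{lemma:classofanonseparatingcurve}.
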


\begin{proof}
\input{Figures/FigureN.tex}

By the change of coordinates principle, we can assume that $\alpha_i$, $\gamma_{i}$ and $\alpha_{i+1}^{-1}$ are as in Figure \ref{figure:FigureN}. We also fix two curves $\alpha$ and $\beta$ as in Figure \ref{figure:FigureN}.

The proof proceeds as in the proof of Lemma \ref{lemma:classofanonseparatingcurve}. We perform the surgeries indicated on the left to obtain a curve $c$ and the surgeries on the right to obtain a curve isotopic to $c$. So
\[ \alpha + \gamma_i - \alpha_{i+1} = \alpha + \beta - \alpha_i .\]
Moreover, we have $\beta = -T$, so
\[ \alpha_{i+1} - \alpha_i - \gamma_i = T . \] 
\end{proof}

With this preparation, we can pass to the
 
\subsubsection{Proof of Theorem \ref{Theo:immersedcobordismgroup}}
\begin{proof}
We use the action of the Mapping Class group of $S_g$ to conclude. Let $\gamma$ be a non-separating curve. By the change of coordinates principle (\cite[1.3]{FM012}), there is a product of Dehn twists about the $\alpha_i, \gamma_i$ and $\beta_i$ which maps $\gamma$ to $\alpha_1$. Hence, $\gamma$ lies in the group generated by the $\alpha_i, \beta_i$ and $\gamma_i$ by Proposition \ref{prop:ActionofaDehntwistonGimm}. By Lemma \ref{lemma:classofgammai}, $\gamma$ lies in the subgroup generated by the $\alpha_i, \beta_i$ and $T$.

Since any separating embedded curve is a multiple of $T$, we conclude that the group $\Gimm$ is generated by the $\alpha_i, \beta_i$ and $T$.

The map $\pi : \Gimm \to H_1(S_g,\Z)$ is surjective. Furthermore, we have $\pi \oplus \mu (T) = (0,-1)$. So the map $\pi \oplus \mu$ is surjective.

On the other hand, $\pi \oplus \mu$ is injective. To see this, let 
\[ x = \sum_{i=1}^g n_i \alpha_i + \sum_{i=1}^g m_i \beta_i + k T.\] such that $\pi(x) = 0$ and $\mu(x) = 0$. Take the image under $\pi$ to obtain 
\[ \sum_{i=1}^g n_i \alpha_i + \sum_{i=1}^g m_i \beta_i \]
in $H_1(S_g,\Z)$. So the $n_i$ and the $m_i$ are zero. Furthermore, $0 = \mu(x) = - k$, so $\chi(S_g) \vert k$. Hence, $x = k T = 0$.
\end{proof}

\section{Fukaya categories of surfaces}
\label{section:Fukaycategoriessurface}

There is a well-defined Fukaya category whose objects are the unobstructed immersed curves. Its construction follows Seidel's perturbation scheme \cite{Sei08}. Such a category has already been studied for exact manifolds with convex boundary by Alston and Bao \cite{AB14}. Abouzaid (\cite{ab07}) also constructed a pre-category with immersed objects using combinatorial methods.

In this section, we recall the main steps of the construction of $\Fuk(S_g)$, highlighting the parts that need special care due to the immersed setting. We do this in Subsections \ref{subsection:definition} and \ref{subsection:Preliminaries}.

In Subsection \ref{subsection:Propertiesfukcat}, we shall briefly explain why $\Fuk(S_g)$ recovers the pre-$A_\infty$ category defined in Abouzaid's paper.

\subsection{Preliminaries}
\label{subsection:Preliminaries}

\subsubsection*{Floer datum}
\label{subsubsection:Floerdatum}
Recall from definition \ref{defi:unobimmersion} that an unobstructed curve is an immersed curve with no triple points, transverse double points and which lifts to an embedding in the universal cover. For each ordered pair of unobstructed immersions $(\gamma_1, \gamma_2)$, we fix a \emph{Floer datum} $H_{\gamma_1,\gamma_2}$. This is a smooth, time-independent, hamiltonian $H_{\gamma_1,\gamma_2} : S_g \to \R$ such that $\gamma_1$ and $\phi^{-1}_{H_{\gamma_1,\gamma_2}}\left( \gamma_2 \right)$ are in general position. If $\gamma_1$ and $\gamma_2$ already are in general position, we make the choice $H_{\gamma_1,\gamma_2} = 0$.

We also fix a smooth complex structure $j$ on $S_g$ which is compatible with $\omega$.

\subsubsection*{Coherent perturbations}
\label{subsubsection:Coherentpertubations}
Recall from \cite[(9f)]{Sei08} that for $d \geqslant 2$, there is a compactified universal family of pointed disks
\[ \mathcal{S}^{d+1} \to \mathcal{R}^{d+1}. \]
We fix a coherent universal choice of strip-like ends (see \cite[9g]{Sei08}) for these families. We denote these ends by $\left( \varepsilon^i_r \right)_{0 \leqslant i \leqslant d, r \in \mathcal{S}^{d+1} } $.

Let $(\gamma_0, \ldots, \gamma_d)$ be a $d+1$ tuple of unobstructed curves. We fix a \emph{perturbation datum} for the family $\mathcal{S}^{d+1} \to \mathcal{R}^{d+1}$ labeled by the tuple $(\gamma_0, \ldots, \gamma_d )$. This is a family of one form 
\[ K_{\gamma_0, \ldots, \gamma_d} \in \Omega^1(\mathcal{S}, \mathcal{H}) \] 
with values in the space $\mathcal{H}$ of hamiltonians on $S_g$. Additionally, we assume that for any $r \in \mathcal{S}^{d+1}$, 
\[ K_{\vert T \partial \pi^{-1}(r)} = 0. \] 
We require that this family satisfies the following hypothesis.

\begin{center}
 (H) : If the curves $\gamma_0, \ldots, \gamma_d$ are in general position, then 
 	\[ K_{\gamma_0, \ldots, \gamma_d} = 0.\]
\end{center}

We have the following proposition.
\begin{prop}
\label{prop:existenceofacoherentchoiceofperturbation}
There is a coherent choice of perturbation datum\footnote{See \cite[section (9i)]{Sei08} for the definition} which satisfies the hypothesis $(H)$.
\end{prop}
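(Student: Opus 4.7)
The plan is to construct the perturbation data by induction on $d$, exploiting the fact that the ``general position'' condition on a tuple of unobstructed curves is \emph{hereditary}: pairwise transversality and the absence of triple intersections are obviously preserved upon discarding any of the $\gamma_i$. This observation is what makes hypothesis $(H)$ compatible with the coherence axiom, and is the key input.

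For the base case $d = 2$, the moduli space $\mathcal{R}^3$ is a single point, and the task reduces to choosing a one-form on $\mathcal{S}^3$ with values in $\mathcal{H}$ whose restrictions to the three strip-like ends are the prescribed Floer data $H_{\gamma_i, \gamma_j}$ and which vanishes on the disk boundary. When the triple $(\gamma_0, \gamma_1, \gamma_2)$ is in general position, all three Floer data vanish by the convention of Subsection \ref{subsubsection:Floerdatum}, so $K_{\gamma_0, \gamma_1, \gamma_2} = 0$ is admissible; otherwise we pick any smooth extension by a standard partition-of-unity argument.

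For the inductive step, assume a coherent family satisfying $(H)$ has been chosen on $\mathcal{R}^{k+1}$ for every $k < d$. Given a $(d+1)$-tuple $(\gamma_0, \ldots, \gamma_d)$, the coherence axiom prescribes $K_{\gamma_0, \ldots, \gamma_d}$ on the compactifying strata of $\mathcal{R}^{d+1}$ in terms of the previously chosen data on the irreducible components of the nodal degenerations, together with the strip-like ends data. If the full tuple is in general position, then by heredity every sub-tuple appearing in such a boundary stratum is in general position, so by induction the entire boundary prescription vanishes identically and we simply set $K_{\gamma_0, \ldots, \gamma_d} = 0$. If the tuple is not in general position, we extend the prescribed boundary one-form to a smooth one-form on $\mathcal{S}^{d+1}$ by the usual collar-and-cutoff argument (the space of such extensions is non-empty and affine).

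The only real obstacle is the bookkeeping that $(H)$ and coherence remain simultaneously satisfiable at each inductive step, and this is exactly what the hereditary property delivers: it legitimizes the ``extension by zero'' step in precisely the case where $(H)$ demands it, and in all remaining cases the construction is the standard one from \cite[\S(9i)]{Sei08}.
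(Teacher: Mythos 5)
Your proposal is correct and follows essentially the same route as the paper: both arguments run Seidel's inductive construction from \cite[(9i)]{Sei08} and use the hereditary nature of the general position condition (every sub-tuple of a pairwise transverse tuple is pairwise transverse, and the corresponding Floer data vanish) to justify setting $K_{\gamma_0,\ldots,\gamma_d}=0$ exactly when $(H)$ requires it. You simply spell out the base case and the boundary-prescription bookkeeping in more detail than the paper does.
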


\begin{proof}
We start with $K_{\gamma_0,\gamma_1,\gamma_2} = 0$ for every $3$-uple of two by two transverse lagrangians. Then we use the induction process described in \cite[(9i)]{Sei08} to obtain a coherent pertubation perturbation datum.

For every $d+1$-tuple $(\gamma_0, \ldots, \gamma_d)$ of two by two transverse unobstructed curves, any $m$-tuple of the form $(\gamma_{i_1}, \ldots, \gamma_{i_m})$ (with $m < d$) consists of two by two transverse unobstructed curves. Therefore the gluing induction process provides a perturbation with $K_{\gamma_0, \ldots, \gamma_d} = 0$. 
\end{proof}

\subsubsection*{Definition and regularity of the relevant moduli spaces}
\label{subsubsection:definition/regularitymodulispaces}

We introduce the moduli spaces of (perturbed) holomorphic curves that we consider throughout this section.

First, let $Z = \R \times [0,1]$ be the standard strip with coordinates $s$ and $t$ and equipped with the standard complex structure. 

Let $\gamma_0$ and $\gamma_1$ be two unobstructed curves. Then the set
 $\mathcal{P}_{H_{\gamma_0,\gamma_1}}$ of hamiltonian chords from $\gamma_0$ to $\gamma_1$ is finite. We choose two such chords $c_-$ and $c_+$.

We consider continuous strips $u : Z \to S_g$ from $c_-$ to $c_+$ satisfying the following conditions.
\begin{enumerate}[label=(\roman*)]
	\item There is a continuous lift $u^- : \R \times \{0\} \to S^1$ such that $u(s,0) = \gamma_0 \circ u_0 $	.
	\item There is a continuous lift $u^+ : \R \times \{0\} \to S^1$ such that $u(s,1) = \gamma_1 \circ u_1 $	.
	\item The map $u$ converges uniformly to $c_-$ and $c_+$ when $s$ goes to infinity:
		\[ \lim_{s \to -\infty} u(s,\cdot) = c_- , \ \lim_{s \to + \infty} u(s,\cdot) = c_+. \]
\end{enumerate}

Let $u_0$ and $u_1$ be two continuous strips with lifts given by $u_0^+, u_0^-$ and $u_1^+, u_1^-$ respectively. We say that $u_0$ and $u_1$ are \emph{homotopic} if there are
\begin{itemize}
	\item a continuous family $(v_t)_{t \in [0,1]}$ of maps $Z \to S_g$,
	\item continuous families $\left(v_t^{\pm} \right)_{t \in [0,1]}$ of maps $\R \to S^1$, 	
\end{itemize}
such that
\begin{itemize}
	\item for each $t \in [0,1]$, $v_t$ is a continuous strip from $c_-$ to $c_+$ with continuous lifts given by $v_t^{\pm}$,
	\item we have $\left(u_0,u_0^-,u_0^+ \right) = \left(v_0,v_0^-,v_0^+ \right)$ and $\left(u_1,u_1^-,u_1^+ \right) = \left( v_1,v_1^-,v_1^+ \right)$.
\end{itemize}
We fix such a homotopy class $A$.

\begin{defi}
We let $\widetilde{\mathcal{M}} \left( c_-,c_+,A \right)$ be the set of \emph{Floer strips} from $c_-$ to $c_+$ in the homotopy class $A$. 

A map $u : Z \to S_g$ is an element of $\widetilde{\mathcal{M}}\left(c_-,c_+,A\right)$ if it satisfies the conditions  (i), (ii), (iii) above and the \emph{Floer equation}
\[ \frac{\partial u}{\partial s} + j \left( \frac{\partial u}{\partial t} - X_{H_{\gamma_0,\gamma_1}}(u) \right ) = 0. \]
This set admits a natural $\R$-action and we let
\[ \mathcal{M} \left (c_-,c_+,A \right) = \widetilde{\mathcal{M}}\left(c_-,c_+,A\right) / \R. \]
\end{defi}

Let $\gamma_0, \ldots, \gamma_d$ be a $d+1$-uple of unobstructed curves, $c_0 \in \mathcal{P}_{H_{\gamma_0,\gamma_d}}$ and $c_i \in \mathcal{P}_{H_{\gamma_{i-1},\gamma_i}}$ for $1 \leqslant i \leqslant d$. We also consider \emph{continuous polygons} with boundary conditions at $\gamma_0, \ldots, \gamma_d$.

To define these, fix a disk with $d+1$-marked points $s = \pi^{-1}(r)$ with $r \in \mathcal{R}^{d+1}$. A \emph{continuous polygon} is a continuous map $u : s \to S_g$ such that
\begin{enumerate}[label=(\roman*)]
	\item For each arc $C_i \subset \partial s$ between the $i$-th and $i+1$-th punctures, there is a continuous map $u_i : C_i \to S^1$ such that $u_{\vert C_i} = \gamma_i \circ u_i$.
	\item The map $u$ converges uniformly to $c_i$ on the $i$-th strip-like end,
	\[	\lim_{s \to - \infty} u \circ \varepsilon^0_r (s,\cdot) = c_0, \ \lim_{s \to + \infty} u \circ \varepsilon^i_r (s,\cdot) = c_i .\] 
\end{enumerate}
Let $u^0$ and $u^1$ be two continuous polygons with lifts given by $u_i^0$ and $u_i^1$ for $i = 0 \ldots d$ respectively. We say that $u^0$ and $u^1$ are \emph{homotopic} if there are
\begin{itemize}
	\item continuous families $(v^t)_{t \in [0,1]}$ of maps $s \to S_g$ for a fixed $s \in \pi^{-1}(r)$ with $r \in \mathcal{R}^{d+1}$,
	\item continuous families $(v^t_i)_{t \in [0,1]}$ of maps $C_i \to S^1$ for $i \in \{0, \ldots, d \}$,	
\end{itemize}
such that
\begin{itemize}
	\item for each $t \in [0,1]$, $v_t$ is continuous polygon with boundary lifts given by the $v_t^i$ for $i = 0 \ldots d$,
	\item we have $v_0 = u_0$ (resp. $v_1 = u_1$) and $v_0^i = u_0^i$ (resp. $v_0^1 = u_0^1$) for $i = 0 \ldots d$.
\end{itemize}
We fix such a homotopy class $B$.

\begin{defi}
We let $\mathcal{M} \left( c_0,\ldots,c_d,B \right)$ be the set of \emph{holomorphic polygons} in the homotopy class $A$. 

A map $u : s \to S_g$, with $s \in \pi^{-1}(r)$ for some $r \in \mathcal{R}^{d+1}$, is an element of the moduli space 
\[ \mathcal{M} \left( c_0,\ldots,c_d,B \right) \] 
if and only if it satisfies the conditions (i),(ii),(iii) above and the following equation
\[ \left(du - X_{K_{\gamma_0, \ldots, \gamma_d}} \right)^{(0,1)} = 0. \]
\end{defi}

Standard regularity arguments imply that for a generic choice of Floer and perturbation datum, these moduli spaces are regular. See for instance \cite[Section 5]{AB14} for a write up in the case of immersions.

However, we would like to keep perturbation data which verify the hypothesis $(H)$. The following lemma, due to Seidel, makes this possible. The proof contained in \cite{Sei08} goes through as stated in the book.

Let $u: (s, \partial s) \to (M,i(L))$ be a Floer strip or a holomorphic polygon. One can linearize the Cauchy-Riemann equation at $u$ to obtain an \emph{extended Cauchy-Riemann operator}
\[ D_{s,u} : T_s \mathcal{R} \times W^{1,p} \left(u^*TM,u^*TL \right) \to L^p \left( \Lambda^{0,1}T^*s \otimes u^*TM \right), \]
from suitable Sobolev completions of the space of sections of $u^*TM$. (cf \cite[(8i)]{Sei08}). We say that $u$ is \emph{regular} if this operator is surjective. In particular, this implies that the set of solutions is a manifold near $u$.

In particular notice that $D_{s,u}$ takes into account the \emph{variations} of the domain.

\begin{lemma}[Automatic regularity, \cite{Sei08}, Lemma 13.2]
In the above setting, assume that $u$ is either a non-constant Floer strip or a non-constant holomorphic polygon. Then it is automatically regular, meaning that the extended Cauchy-Riemann operator
\[ D_{s,u} : T_s \mathcal{R} \times W^{1,p} \left(u^*TM,u^*TL \right) \to L^p \left( \Lambda^{0,1}T^*s \otimes u^*TM \right), \]
is surjective.
\end{lemma}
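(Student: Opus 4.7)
The proof will follow Seidel's argument \cite[Lemma 13.2]{Sei08} essentially verbatim, so what I will describe is really an adaptation check: the argument exploits nothing beyond the fact that the target has complex dimension one, and the immersed setting introduces no substantive new difficulty thanks to the global boundary lifts $u_i$ built into our definition of the moduli spaces.

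First I would record the structural observation that makes automatic regularity available in this dimension: since $S_g$ is a surface, $u^*TS_g$ is a \emph{complex line bundle} over the domain $s$, and along each boundary arc $C_i$ the pullback $u_i^*\gamma_i^* TS_g$ is a totally real real-line sub-bundle. The existence of the continuous boundary lifts $u_i : C_i \to S^1$, which is part of the definition of the homotopy class $A$ (respectively $B$), is exactly what makes this sub-bundle well-defined even though $\gamma_i$ is only immersed. In any local trivialization the operator $D_{s,u}$ restricted to the section-variation factor $W^{1,p}(u^*TS_g,u^*TL)$ becomes a real-linear Cauchy--Riemann operator of the form $\bar{\partial} + a$ with a smooth zeroth order term $a$.

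Next, by elliptic regularity and the usual duality for Fredholm operators, any element of $\coker D_{s,u}$ is represented by a smooth $(0,1)$-form $\eta$ with values in $u^*TS_g$ that satisfies the formal adjoint equation $D_u^{*}\eta = 0$ in the interior together with the dual totally real boundary condition, and is $L^2$-orthogonal to the image of the extended differential. In our line-bundle setting the adjoint equation is again a real-linear Cauchy--Riemann equation with zeroth-order terms, so by the similarity principle (Carleman--Hartman--Wintner) a non-zero $\eta$ has only isolated zeros with well-defined positive local winding number. A similar analysis at the strip-like ends, using the non-degeneracy of the chords $c_i$, shows that $\eta$ decays exponentially so its zero set is finite and its total winding is bounded above in terms of the Maslov index.

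The final and key step is to exploit the non-constancy of $u$ to contradict the existence of a non-trivial $\eta$. For a non-constant Floer strip, the translation field $\partial_s u$ defines a non-trivial element in the kernel of $D_u$, and pairing it (together with appropriate variations in the section direction supported near a putative zero of $\eta$) against $\eta$ forces $\eta \equiv 0$; more concretely, one uses that $\partial_s u$ has at most isolated zeros by unique continuation, so near a regular point of $\partial_s u$ one can solve $D_u \xi = \beta$ for any compactly supported $\beta$, contradicting $\langle \beta , \eta \rangle = 0$ if $\eta \ne 0$ there. For a non-constant polygon with $d \geq 2$, the additional variations in $T_s \mathcal{R}^{d+1}$ together with $du$ provide enough pointwise directions in $u^*TS_g$ to run the same argument.

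The only point requiring care in our immersed setting is the behaviour near boundary points that map to double points of some $\gamma_i$ or to intersection points between different $\gamma_j$'s. At such points the boundary lift $u_i$ singles out one of the two local branches of the immersion, so the totally real boundary sub-bundle extends smoothly through the point and the similarity principle at the boundary applies as in the embedded case. This is where the properness of the lift to the universal cover in Definition \ref{defi:unobimmersion} is used: it guarantees that locally along $u$ the image of $\gamma_i$ genuinely behaves as an embedded Lagrangian, so no new teardrop-type degeneracy can occur in the linearized problem. With this verified, Seidel's argument transfers without further modification.
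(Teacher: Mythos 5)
The paper does not actually prove this lemma: it records it as Seidel's Lemma 13.2 and asserts that the proof in \cite{Sei08} goes through verbatim, the only genuinely new point in the immersed setting being that the boundary lifts $u_i$ built into the definition of the moduli spaces make the totally real boundary sub-bundle of the complex line bundle $u^*TS_g$ well defined. Your first two paragraphs correctly identify this as the key structural point, so your overall route agrees with the paper's. However, your final step misstates the mechanism by which non-constancy kills a cokernel element. It is not true that "near a regular point of $\partial_s u$ one can solve $D_u\xi=\beta$ for any compactly supported $\beta$" --- that assertion is essentially local surjectivity, i.e.\ the thing being proved, and having a nonzero kernel element does not yield it. The standard argument in complex dimension one (and the one in \cite{Sei08}) instead pairs the cokernel element $\eta$, which satisfies an equation of anti-holomorphic type and hence has isolated zeros, against the not-identically-zero section $\partial u$ of holomorphic type; the resulting function on the domain is of holomorphic type, has real boundary values because of the Lagrangian boundary condition, and decays on the strip-like ends by non-degeneracy of the chords, hence vanishes identically, forcing $\eta\equiv 0$ away from the isolated zeros of $\partial u$ and therefore everywhere. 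As written, your contradiction does not go through.

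A second, smaller misattribution: the properness of the lift to the universal cover in Definition \ref{defi:unobimmersion} plays no role in the linearized problem. That hypothesis is global and is used only in the compactness argument to exclude teardrops and discs. What makes the boundary condition extend smoothly through a double point of $\gamma_i$ (or an intersection of two of the curves) is solely that the lift $u_i : C_i \to S^1$ is part of the data defining the moduli space, which singles out one local branch of the immersion; this works for any generic immersed curve, obstructed or not.
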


From now on, we will fix a choice of Floer and perturbation datum which satisfy hypothesis $(H)$.

\subsubsection*{Indices}
\label{subsubsection:Indices}
Let $u$ be an element of one of the moduli spaces $\mathcal{M}\left(c_-,c_+,A \right)$ or $\mathcal{M} \left( c_0,\ldots,c_d,A \right)$. We call $\Ind(u)$ the virtual-dimension of its moduli space. Using (for instance) the index formula of \cite{AJ10}, it is easily seen to depend only on the homotopy class $A$ of $u$. Therefore, in what follows, we will denote this index by $\Ind(A)$.

\subsubsection{Spin structures and signs}
To take care of signs issues, we need some additional datum which we explain now. 

We need to equip each unobstructed curve $\gamma : S^1 \to S_g$ with a \emph{Spin structure}. Since the space of oriented orthonormal frames of a tangent space $T_x S^1$ has a unique point, a choice of Spin structure is a choice of a double covering of $S^1$. We choose the Spin structure given by the nontrivial double covering. This is called the \emph{bounding Spin structure} (see \cite[II.1]{LM89}).

Let $\gamma_1$ and $\gamma_2$ be two transverse unobstructed curves and $x$ an intersection point. We let
\[ \lambda_{\gamma_1,\gamma_2,x} : [0,1] \to \mathcal{G}(TS_g), \]
be the path represented in Figure \ref{figure:FigureO}.

\begin{figure}
\definecolor{cfc0d1b}{RGB}{252,13,27}
\definecolor{cff0000}{RGB}{255,0,0}

\begin{tikzpicture}[y=0.80pt, x=0.80pt, yscale=-1.000000, xscale=1.000000, inner sep=0pt, outer sep=0pt]
  \path[draw=cfc0d1b,line join=round,line cap=round,miter limit=4.00,fill
    opacity=0.357,line width=0.268pt] (203.0407,125.5601) -- (203.0407,261.9050);
  \path[draw=black,line join=round,line cap=round,miter limit=4.00,fill
    opacity=0.357,line width=0.268pt] (141.5464,193.7325) -- (264.5349,193.7325);
  \path[draw=cfc0d1b,line join=round,line cap=round,miter limit=4.00,fill
    opacity=0.357,line width=0.268pt] (356.5838,125.5601) -- (356.5838,261.9050);
  \path[draw=black,line join=round,line cap=round,miter limit=4.00,fill
    opacity=0.357,line width=0.268pt] (295.0896,193.7325) -- (418.0781,193.7325);
  \path[draw=black,line join=miter,line cap=butt,miter limit=4.00,even odd
    rule,line width=0.300pt] (238.2143,190.2193) .. controls (238.2143,190.2193)
    and (238.2143,190.2193) .. (229.6429,178.2550) .. controls (221.0714,166.2908)
    and (191.0742,171.4693) .. (181.2500,171.4693) .. controls (171.4258,171.4693)
    and (170.7143,166.2425) .. (170.7143,161.3801) .. controls (170.7143,156.5176)
    and (173.9724,151.2908) .. (181.2500,151.2908) -- (195.0000,151.2908);
  \path[draw=black,line join=round,line cap=round,miter limit=4.00,fill
    opacity=0.067,line width=0.300pt] (392.8571,198.6122)arc(0.000:90.000:32.321);
  \path[draw=black,fill=black,line join=round,line cap=round,miter limit=4.00,line
    width=0.268pt] (191.9871,149.4682) .. controls (192.7006,150.5830) and
    (194.3819,151.0233) .. (196.2273,151.4694) .. controls (194.3221,151.9298) and
    (192.6760,152.3941) .. (191.9871,153.4706) -- cycle;
  \path[draw=black,fill=black,line join=round,line cap=round,miter limit=4.00,line
    width=0.268pt] (363.7795,228.8260) .. controls (363.0660,229.9408) and
    (361.3847,230.3812) .. (359.5393,230.8272) .. controls (361.4444,231.2877) and
    (363.0905,231.7519) .. (363.7795,232.8284) -- cycle;
  \path[draw=cff0000,fill=cff0000,line join=round,line cap=round,miter
    limit=4.00,line width=0.268pt] (201.0711,153.5895) .. controls
    (202.1858,152.8760) and (202.6262,151.1946) .. (203.0723,149.3493) .. controls
    (203.5327,151.2544) and (203.9970,152.9005) .. (205.0735,153.5895) -- cycle;
  \path[draw=black,fill=black,line join=round,line cap=round,miter limit=4.00,line
    width=0.268pt] (251.5861,191.7683) .. controls (252.2996,192.8831) and
    (253.9809,193.3235) .. (255.8263,193.7695) .. controls (253.9211,194.2300) and
    (252.2750,194.6942) .. (251.5861,195.7707) -- cycle;
  \path[draw=cff0000,fill=cff0000,line join=round,line cap=round,miter
    limit=4.00,line width=0.268pt] (354.6425,256.4255) .. controls
    (355.7573,257.1390) and (356.1977,258.8203) .. (356.6437,260.6657) .. controls
    (357.1042,258.7606) and (357.5684,257.1145) .. (358.6449,256.4255) -- cycle;
  \path[draw=black,fill=black,line join=round,line cap=round,miter limit=4.00,line
    width=0.268pt] (413.5503,191.7683) .. controls (414.2639,192.8831) and
    (415.9452,193.3235) .. (417.7905,193.7695) .. controls (415.8854,194.2300) and
    (414.2393,194.6942) .. (413.5503,195.7707) -- cycle;
    
  \draw (207,197) node {\tiny $x$};  
  \draw (360,197) node {\tiny $x$};
  \draw (252,199) node {\tiny $\gamma_1$};  
  \draw (405,199) node {\tiny $\gamma_1$};  
  \draw (363,129) node {\tiny $\gamma_2$};
  \draw (210,129) node {\tiny $\gamma_2$};

\end{tikzpicture}
\caption{The path $\lambda_{\gamma_1,\gamma_2,x}$ when $x$ is of degree $1$ (left) and of degree $0$ (right)}
\label{figure:FigureO}
\end{figure}
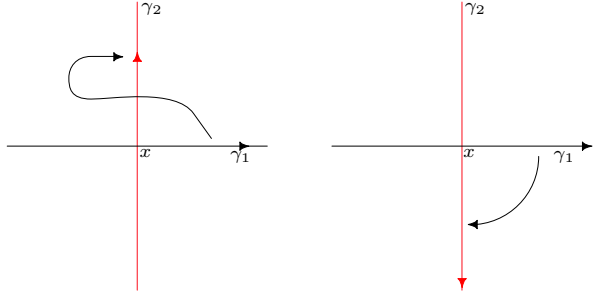 

If $\gamma_1$ and $\gamma_2$ are not transverse, we choose a hamiltonian chord $c \in \mathcal{P}_{H_{\gamma_1,\gamma_2}}$. This corresponds to a unique intersection point $x \in \gamma_1 \cap \phi^{-1}_{H_{\gamma_1,\gamma_2}}\left( \gamma_2 \right)$. We define, for $t \in [0,1]$, the vector space $\lambda_{\gamma_1,\gamma_2,c} (t) \in \mathcal{G} (T_c(t) S_g)$ by
\[ \lambda_{\gamma_1,\gamma_2,c} (t)  = d\phi^t_{H_{\gamma_1,\gamma_2}} (\lambda_{x, \gamma_1, \phi^{-1}_{H_{\gamma_1,\gamma_2}}\left( \gamma_2 \right)}(t)). \]
For each Hamiltonian chord $c \in \mathcal{P}_{H_{\gamma_1,\gamma_2}}$, define a one dimensional real vector space $o(c)$ as follows. Consider the Poincaré half plane $\HP = \ens{z = x+ iy \in \C}{y \leqslant 0}$. Equip this with the incoming strip-like end 
\[ \varepsilon :
\begin{array}{ccc} 
\R  \times [0,1] & \to & \HP \\
(s,t) & \mapsto  &-e^{-\pi(s+it)}	
\end{array}.
\] 
As explained, for instance, in \cite{Sei08}, there is a complex bundle pair $(E,F)$ associated to $\lambda_{\gamma_1,\gamma_2,c}$. In particular, there is a Cauchy-Riemann operator $D$ associated to it.

The \emph{orientation line} of $c$ is the real one-dimensional vector space
\[ o(c) := \det(D) . \]
We choose once and for all an orientation of each vector space $o(c)$.

Further, assume that $\gamma_1$ and $\gamma_2$ are unobstructed curves. Let $u \in \mathcal{M}\left(c_-,c_+,A \right)$ be a holomorphic strip which satisfies $\Ind(A) = 0$. Since $\gamma_1$ and $\gamma_2$ are equipped with spin structure, gluing induces an isomorphism
\[ \Lambda T_u \widetilde{\mathcal{M}} \left(c_-,c_+,A \right) \tilde{\rightarrow} o(c_-)^{\vee} \otimes o(c_+). \]
 We orient the left side by the vector field generated by the $\R$ action. On the other hand, the right hand sign inherits an orientation from $o(c_)$ and $o(c_+)$. The difference between these two orientations yields a sign
\[ \Sign(u) \in \{ -1,1 \}. \]

Similarly let $v \in \mathcal{M} \left ( c_0, \ldots, c_d, A \right ) $ be a holomorphic polygon which satisfies $\Ind(A) = 0$. Now there is an isomorphism
\[ \Lambda T_u \mathcal{M} \left ( c_0, \ldots, c_d, A \right ) \tilde{\rightarrow} o(c_0)^{\vee} \otimes o(c_1) \otimes \ldots \otimes o(c_d). \]
But the left-hand side is naturally oriented as the determinant of a $0$-dimensional vector space. Comparing orientations yields a sign
\[ \Sign (v) \in \{ -1, 1 \}. \]
\subsubsection*{Gromov compactness}
In order to define the $A_\infty$ operations for our category, we need to describe a Gromov-type compactification of the moduli spaces introduced above. Gromov compactness for holomorphic curves with immersed boundaries has already been considered in Ivashkovich and Shevchishin's paper \cite{IS02}.

Here, our situation is slightly different since we considered the solutions of a perturbed Cauchy-Riemann equation. However, the relevant analysis is worked-out in Alston and Bao's article \cite[Proposition 4.4]{AB14}. We can summarize their results in our setting as follows.

As is usual by now, we let $\gamma_0, \gamma_1$ be unobstructed curves in $S_g$ and $c_-, c_+$ be Hamiltonian chords from $\gamma_0$ to $\gamma_1$. 

\begin{prop}[Gromov compactness for strips]
\label{prop:Gromovcompactnessforstrips}
\begin{enumerate}
\item Let $A$ be a homotopy class of strips satisfying $\Ind(A) = 1$. Then the topological space
 \[ \mathcal{M}\left(c_-,c_+,A \right) \]
 is a compact, $0$-dimensional manifold.
\item Assume that $A$ is a homotopy class of strips satisfying $\Ind(A) = 2$. Then the topological space
\[ \overline{\mathcal{M}}(c_-,c_+,A) \]
admits a natural compactification $\overline{\mathcal{M}} \left(c_-,c_+,A \right)$ given by
\[ \overline{\mathcal{M}} \left(c_-,c_+,A \right) := 
\coprod_{c \in \mathcal{P}_{H_{\gamma_-,\gamma_+}} } \mathcal{M} \left (c_-,c,A \right) \times \mathcal{M} \left (c,c_+,A \right)  
\bigcup \mathcal{M} \left (c_-,c_+,A \right). \]
The space $\overline{\mathcal{M}} \left(c_-,c_+,A \right)$ has a natural structure of $1$-dimensional manifold with boundary
\[ 
 \partial \overline{\mathcal{M}} \left(c_-,c_+,A \right) = 
 \coprod_{c \in \mathcal{P}_{H_{\gamma_-,\gamma_+}} } \mathcal{M} \left (c_-,c,A \right) \times \mathcal{M} \left (c,c_+,A \right).
 \]
\end{enumerate} 
\end{prop}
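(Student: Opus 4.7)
The plan is to follow the standard Floer-theoretic argument for compactness of moduli spaces of strips, adapted to the immersed setting as in Alston-Bao \cite{AB14}. By the automatic regularity Lemma, every non-constant Floer strip in $\mathcal{M}(c_-,c_+,A)$ is regular, so the moduli space carries the structure of a smooth manifold of the expected dimension $\Ind(A)-1$. For the energy bound, I would observe that the energy of any $u \in \mathcal{M}(c_-,c_+,A)$ depends only on the topological class $A$ and the action difference of the endpoints, so that sequences in $\mathcal{M}(c_-,c_+,A)$ have uniformly bounded energy. This allows one to invoke a Gromov-type compactness statement: any sequence $(u_n)$ in $\mathcal{M}(c_-,c_+,A)$ has a subsequence which converges in the Gromov topology to a stable map whose components are sphere bubbles, disc bubbles attached along the Lagrangian images (including possibly at self-intersection points, i.e.\ teardrops), and a broken Floer strip from $c_-$ to $c_+$.

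I would next rule out all types of bubbling. Since $\pi_2(S_g) = 0$ for $g \geqslant 1$, there are no non-constant $j$-holomorphic spheres. For disc and teardrop bubbles: any holomorphic disc with boundary on (a lift of) $\gamma_0$ or $\gamma_1$ lifts to a holomorphic disc in the universal cover $\tilde{S}_g$, where $\tilde{\gamma}_0$ and $\tilde{\gamma}_1$ are properly embedded by the unobstructedness hypothesis (Definition~\ref{defi:unobimmersion}). In particular, a putative teardrop bubble, whose corner at a self-intersection becomes a smooth boundary point in the cover, lifts to a holomorphic disc with boundary on an embedded arc in $\tilde{S}_g$. Since $\tilde{S}_g$ is either $\C$ or $\mathbb{H}$, it is exact, and Stokes' theorem forces such a disc to have zero symplectic area and hence to be constant. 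This is precisely the reason unobstructedness was imposed.

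With bubbling excluded, the only degeneration is strip breaking, yielding a concatenation of Floer strips whose homotopy classes concatenate to $A$ and whose indices sum to $\Ind(A)$. When $\Ind(A)=1$, no breaking is possible (both pieces would have index $\geqslant 1$ by regularity, giving total index $\geqslant 2$), so $\mathcal{M}(c_-,c_+,A)$ is already compact and hence a finite set. When $\Ind(A) = 2$, the only possible break is into two index-$1$ strips, which gives exactly the boundary stratum appearing in the statement. The converse, that every broken configuration arises as the limit of a unique one-parameter family in $\mathcal{M}(c_-,c_+,A)$, follows from the standard gluing theorem, whose hypotheses (transversality and the vanishing of obstruction spaces) are guaranteed by automatic regularity. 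Combining these provides the structure of a compact $1$-manifold with boundary on $\overline{\mathcal{M}}(c_-,c_+,A)$.

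The main technical obstacle is controlling the behavior of sequences of strips whose boundary values approach a self-intersection of $\gamma_0$ or $\gamma_1$: this is where the distinction between the immersed and embedded settings appears, because a priori one might lose track of which branch the boundary lift $u^\pm : \R \to S^1$ converges to. This is the content of the immersed Gromov compactness theorem of \cite{AB14}, and the unobstructedness hypothesis is exactly what guarantees that no non-trivial teardrop bubble can absorb this boundary data. Once one has this refined compactness, both assertions reduce to the standard Floer-theoretic machinery.
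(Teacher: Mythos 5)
Your proposal follows essentially the same route as the paper: a uniform energy bound permits the immersed Gromov compactness of Alston--Bao, the unobstructedness hypothesis excludes sphere, disc and teardrop bubbles, and a dimension count (plus gluing) identifies the remaining broken configurations as the boundary strata. The only difference is that you spell out, via lifting to the exact universal cover, why unobstructed curves bound no discs or teardrops — a point the paper asserts without elaboration — so the two arguments coincide in substance.
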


\begin{proof}
Let $(u_n)$ be a sequence of holomorphic strips in the homotopy class $A$. It is easy to check that their energy is finite so that we can apply \cite[Proposition 4.4]{AB14}. Hence, there is a subsequence $(u_{n_k})$ which converges in Gromov's sense to a set of broken holomorphic strips, polygons, spheres, and disks. Moreover, if there is one polygon in this decomposition, there must be a polygon with one corner.

Notice that there cannot be holomorphic disks or holomorphic polygon with one corner with boundary condition on $\gamma_+$ or $\gamma_-$ since both of these are unobstructed. Moreover, there are no holomorphic spheres. Hence the above set can only consist of strips which are all regular. A dimension counting argument finishes the proof.
\end{proof}

Similarly, let $\gamma_0, \ldots, \gamma_d$ (with $d \geqslant 2$) be unobstructed curves, $c_0$ be in $\mathcal{P}_{H_{\gamma_0,\gamma_d}}$ and $c_i$ be in $\mathcal{P}_{H_{\gamma_i,\gamma_{i+1}}}$ for $1 \leqslant i \leqslant d$. We fix a homotopy class $B$ of holomorphic polygons with corners at the $\gamma_i$.

\begin{prop}
\label{prop:Gromovcompactnessforstrips}
\begin{enumerate}
	\item Assume that $\Ind(B) = 0$, then the space $\mathcal{M} \left( c_0,\ldots,c_d,B \right)$ is compact.
	\item Assume that $\Ind(B) = 1$, then the topological space
	\[\mathcal{M} \left( c_0,\ldots,c_d,B \right) \] 
	admits a natural compactification $\overline{\mathcal{M}} \left( c_0,\ldots,c_d,B \right)$
	\begin{equation}
	\begin{split} 
	& \overline{\mathcal{M}} \left( c_0,\ldots,c_d,B \right) := \\
	& \mathcal{M} \left( c_0,\ldots,c_d,B \right) \coprod \\
	& \coprod_{ \substack {1 \leqslant i \leqslant d, \tilde{c}_{i+1} \in \mathcal{P}_{H_{\gamma_{i+1}, \gamma_{i+2}}}, \\ \Ind(B_k) =0    } }
	\mathcal{M} \left( c_0,\ldots,\tilde{c}_{i+1}, \ldots c_d,B_1 \right) \times
	\mathcal{M} \left( \tilde{c}_{i+1}, c_{i+1}, \ldots,c_d,B_2 \right).
	\end{split}
	\end{equation}
The space $\overline{\mathcal{M}} \left( c_0,\ldots,c_d,B \right)$ has a natural structure of $1$-dimensional manifold with boundary
\[ \begin{split}
& \partial \overline{\mathcal{M}} \left( c_0,\ldots,c_d,B \right) := \\
& \coprod_{ \substack {1 \leqslant i \leqslant d, \tilde{c}_{i+1} \in \mathcal{P}_{H_{\gamma_{i+1}, \gamma_{i+2}}}, \\ \Ind(B_k) =0    } }
	\mathcal{M} \left( c_0,\ldots,\tilde{c}_{i+1}, \ldots c_d,B_1 \right) \times
	\mathcal{M} \left( \tilde{c}_{i+1}, c_{i+1}, \ldots,c_d,B_2 \right).
	\end{split} \]
\end{enumerate}	
\end{prop}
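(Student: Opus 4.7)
The proof will mirror that of Proposition~\ref{prop:Gromovcompactnessforstrips} for strips, following the same recipe: Gromov compactness for immersed boundary conditions, exclusion of bubbling by unobstructedness, and conclusion by dimension count.

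First, I would extract a convergent subsequence. Given a sequence $(u_n)$ in $\mathcal{M}(c_0,\ldots,c_d,B)$, the topological energy is uniformly bounded since the homotopy class $B$ is fixed. By the immersed Gromov compactness result of Alston--Bao \cite[Proposition~4.4]{AB14}, a subsequence converges (possibly after reparameterization and after allowing the conformal class of the domain to degenerate in $\overline{\mathcal{R}}^{d+1}$) to a stable nodal configuration whose components are polygons, Floer strips broken off at corners, interior sphere bubbles, and boundary disk or teardrop bubbles.

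Second, I would rule out all bubbling components. Sphere bubbles are excluded because $\pi_2(S_g)=0$ for $g \geqslant 1$. Disk and teardrop bubbles with boundary on any $\gamma_i$ are excluded by hypothesis (ii) of Definition~\ref{defi:unobimmersion}: lifting any such disk to the universal cover $\tilde{S}_g$, its boundary would lie on the properly embedded curve $\tilde{\gamma}_i : \R \to \tilde{S}_g$, but a properly embedded copy of $\R$ in the (noncompact, simply-connected) universal cover bounds no compact region, so the disk must be constant. Hence the Gromov limit consists only of polygons and Floer strips glued at corners. Each non-constant component is regular by the automatic regularity lemma, so their indices add to $\Ind(B)$.

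Third, I would conclude by a dimension count. For part~(1), assume $\Ind(B)=0$ and suppose for contradiction that a nontrivial breaking occurs. Each Floer strip piece has index at least $1$ (so that its moduli space after the $\R$-quotient can contain a rigid element), and each polygon piece has index at least $0$. Since breaking at a corner preserves the total virtual dimension, some piece would carry strictly negative index, a contradiction. Thus $(u_n)$ converges in $\mathcal{M}(c_0,\ldots,c_d,B)$ itself, proving compactness. For part~(2), with $\Ind(B)=1$, the index budget forces exactly one broken corner with a rigid Floer strip of index~$1$ glued to a rigid polygon piece of index~$0$; no further breaking or bubbling is possible. Standard gluing at a single strip-like end (\cite[(10d)]{Sei08}) then endows $\overline{\mathcal{M}}(c_0,\ldots,c_d,B)$ with the structure of a compact $1$-dimensional topological manifold with the boundary described. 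The principal obstacle is the exclusion of disk and teardrop bubbles on immersed boundary components, which is precisely where hypothesis (ii) of unobstructedness is indispensable; everything else is a faithful transcription of the strip case.
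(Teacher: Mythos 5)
Your proposal follows exactly the strategy the paper uses for the strip version of this proposition (whose proof is the only one the paper actually writes out; the polygon version is left as an evident analogue): finite energy from the fixed homotopy class, Gromov compactness via Alston--Bao, exclusion of spheres, disks and teardrops by unobstructedness, and a dimension count plus gluing. Your justifications for the excluded bubbles (triviality of $\pi_2(S_g)$, lifting to the universal cover where the boundary curve is properly embedded) are in fact more explicit than what the paper records, and are the right reasons.

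There is, however, one point in part~(2) where the transcription from strips to polygons is not faithful and your conclusion is too narrow. For $d\geqslant 2$ the moduli space is fibered over $\mathcal{R}^{d+1}$, and its compactification has \emph{two} kinds of codimension-one degenerations: (a) breaking of a Floer strip at one of the strip-like ends, giving an index-$0$ polygon glued to a rigid strip, which is the only case you describe; and (b) degeneration of the conformal structure of the domain in $\overline{\mathcal{R}}^{d+1}$, producing a splitting into \emph{two} polygons, each with at least two marked points and each of index $0$, joined at a new Hamiltonian chord $\tilde{c}_{i+1}$. The boundary formula in the statement is precisely the union over both types (a two-marked-point ``polygon'' being a strip), and both are needed downstream for the $A_\infty$-relations. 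Your index budget argument (``each piece would otherwise carry negative index'') applies verbatim to case (b) as well, since a regular polygon component contributes index $\geqslant 0$ and the gluing parameter accounts for the extra dimension; so the fix is only to state the dichotomy correctly and to invoke gluing both at a broken strip end and at a boundary point of $\overline{\mathcal{R}}^{d+1}$. With that correction the argument is complete and coincides with the paper's intended proof.
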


\subsection{Definition}
\label{subsection:definition}
We now have all the ingredients to define a $A_\infty$ category $\Fuk(S_g)$. The coefficients are taken over the \emph{Novikov field}
\[ \Lambda : = \ens{\sum_{i=0}^{+\infty} a_i T^{\lambda_i}}{a_i \in \R, \lambda_i \in \R, \lambda_i \to +\infty} . \]

The objects of $\Fuk(S_g)$ are unobstructed curves. Given two unobstructed curves $\gamma_1, \gamma_2$, their morphism space is the $\Z / 2$-graded $\Lambda$-vector space generated by Hamiltonian chords between these
\[ \Hom^i_{\Fuk(S_g)}(\gamma_1,\gamma_2) = \bigoplus_{\substack{c \in \mathcal{P}_H \\ \norm{c} = i}} \Lambda \cdot c. \]
The $A_\infty$ operations are defined as follows
\[ \mu^d (c_1, \ldots, c_d) = (-1)^{ \bullet } \sum_{c_0, A} \sum_{u \in \mathcal{M}(c_0, \ldots, c_d,A) } \Sign(u) \cdot T^{\omega(A)} \cdot c_0.\] 
Here the sum is over the Hamiltonian chords $c_0 \in \mathcal{P}_{H_{\gamma_1,\gamma_2}}$ and over the homotopy classes $A$ of polygons such that $\Ind(A) = 0$. The sign $\bullet$ is given by
\[ \bullet = \sum_{k=1}^d k \norm{c_k} . \]
Proposition \ref{prop:Gromovcompactnessforstrips} implies that the operations $(\mu^d)_{d \geqslant 2}$ satisfy the $A_\infty$ relation modulo $2$. To see that these are satisfied over $\R$, one has to use that gluing is compatible with the isomorphisms. This is done in \cite[Sections (12b) and (12g)]{Sei08}.
 
The $A_\infty$-category $\Fuk(S_g)$ admits a triangulated envelope : this is the smallest triangulated $A_\infty$-category generated by $\Fuk(S_g)$. We call its $0$-th degree cohomology the \emph{derived category} of $\Fuk(S_g)$ and denote it by $\DFuk(S_g)$. We refer to Seidel's book \cite[(3j)]{Sei08} for more details.

\subsection{Properties of the Fukaya category}
\label{subsection:Propertiesfukcat}
The following theorem also immediately follows from the recipe presented in Seidel's book.

\begin{prop}
The $A_\infty$-category $\Fuk(S_g)$ is homologically unital and independent of the choice of perturbation datum and almost compatible complex structure.

Moreover, two Hamiltonian isotopic curves are quasi-isomorphic. 	
\end{prop}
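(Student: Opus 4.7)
The plan is to follow Seidel's general framework \cite[Chapters 8--10]{Sei08} for $A_\infty$-Fukaya categories and verify at each step that the arguments carry over to our immersed setting thanks to the unobstructedness hypothesis. All compactness steps will ultimately reduce to ruling out sphere and disk bubbling in limits of families of Floer solutions: since $S_g$ has genus at least one, $\pi_2(S_g) = 0$ and no sphere bubbles occur; since each object $\gamma$ lifts to a properly embedded arc in the universal cover $\tilde{S}_g$, the same asphericity argument rules out teardrops with boundary on $\gamma$ and, more generally, holomorphic polygons with at most one corner on any tuple of unobstructed curves. Combined with the regularity statements of Subsection \ref{subsection:Preliminaries}, this allows all the Gromov-type compactness results needed in what follows to go through with only cosmetic changes.

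For homological unitality, I would construct, for each unobstructed $\gamma$, a cocycle $e_\gamma \in CF^0(\gamma,\gamma)$ by counting solutions of a continuation-type Cauchy--Riemann equation on a disk with a single output boundary puncture, using a perturbation datum that agrees with the chosen Floer datum near the puncture and is turned off in the interior, as in \cite[Section 8]{Sei08}. The standard parametrized moduli space arguments then show that $e_\gamma$ is $\mu^1$-closed and that its action via $\mu^2$ induces the identity on $HF^*(\gamma,-)$ and $HF^*(-,\gamma)$, since the only bubbling configurations one would need to worry about are excluded by the input above.

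For independence of the choices of Floer and perturbation data and of the almost complex structure, I plan to produce an explicit $A_\infty$-quasi-equivalence between any two resulting Fukaya categories. Given two sets of choices, the functor on morphisms counts solutions of an inhomogeneous Cauchy--Riemann equation on $(d+1)$-punctured disks in which the auxiliary data interpolates from the first choice to the second along a fixed coordinate of the domain. The Hamiltonian invariance statement will then fall out as a special case: if $\gamma_2 = \phi_H^1(\gamma_1)$ for some Hamiltonian isotopy $(\phi_H^t)$, then interpolation between the Floer data for $(\gamma_1,\gamma_1)$ and for $(\gamma_1,\gamma_2)$ yields a continuation cocycle in $CF^0(\gamma_1,\gamma_2)$, and the usual continuation/inverse-continuation chain homotopy shows that its class is a two-sided inverse up to homotopy, hence a quasi-isomorphism.

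The main technical obstacle in carrying out this program is the Gromov compactness analysis for the parametrized moduli spaces appearing in all three steps above, where the perturbation data is allowed to vary over a family of domains and may take non-generic values at the boundary of the parameter space. One must ensure that no teardrop or low-corner polygon bubble can form on any of the unobstructed boundary curves and that no non-constant sphere appears in the interior. Both exclusions are guaranteed in our setting by Definition \ref{defi:unobimmersion} together with the asphericity of $S_g$, so the analytic arguments of \cite{Sei08} and \cite{AB14} carry over directly.
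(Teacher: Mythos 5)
Your proposal is correct and follows exactly the route the paper takes: the paper gives no detailed argument for this proposition, stating only that it ``immediately follows from the recipe presented in Seidel's book,'' and your sketch is precisely that recipe (units via cap-shaped domains, independence of data via continuation functors, Hamiltonian invariance via continuation cocycles), with the bubbling exclusions justified by unobstructedness and the asphericity of $S_g$ just as in the paper's compactness propositions. No discrepancy to report.
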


Since we chose our perturbation datum $(H)$, there is a combinatorial description of the operations of the category $\Fuk(S_g)$ with boundaries in a tuple $(\gamma_0, \ldots, \gamma_d)$ in general position. 

We let $c_0 \in \mathcal{P}_{H_{\gamma_0,\gamma_d}}$, $c_i \in \mathcal{P}_{H_{\gamma_i,\gamma_{i+1}}}$ for $1 \leqslant i \leqslant d$. We choose $s \in \mathcal{S}^{d+1}$ and label it by $(\gamma_0, \ldots, \gamma_d)$ and call $r$ its pre-image by $\pi : \mathcal{R}^{d+1} \to \mathcal{S}^{d+1}$. We fix a homotopy class $A$ of polygons with corners at $c_0, \ldots, c_d$ such that $\Ind(A) = 0$.

We let $\tilde{\Delta} (c_0, \ldots, c_d,A)$ be the set of orientation preserving immersions (up to the boundary) $f : s \to S_g$ with $s \in \pi^{-1}(r)$ such that
\begin{itemize}
	\item the map $f$ is a polygon with corners at $c_0, \ldots,c_d$,
	\item each corner of $f$ is convex (see Figure \ref{figure:rigidpolygon}).	
\end{itemize}

Here, a corner $x \in s$ of $f$ is convex if a neighborhood of $x$ has a convex image (see Figure \ref{figure:rigidpolygon}). 
\begin{figure}
\captionsetup{justification=centering,margin=2cm}

\begin{tikzpicture}[y=0.80pt, x=0.80pt, yscale=-1.000000, xscale=1.000000, inner sep=0pt, outer sep=0pt]
  \path[draw=black,fill=black,line join=round,line cap=round,miter limit=4.00,fill
    opacity=0.156,line width=0.400pt] (272.7622,192.8957) .. controls
    (272.7622,192.8957) and (256.5009,234.8718) .. (246.2328,244.9562) .. controls
    (235.9647,255.0407) and (201.6269,252.7053) .. (201.6269,252.7053) --
    (137.3439,254.7959) -- (129.2323,214.1995) .. controls (129.2323,214.1995) and
    (133.3431,209.4731) .. (136.7610,196.1347) .. controls (140.1788,182.7964) and
    (143.4823,133.4491) .. (143.4823,133.4491) .. controls (143.4823,133.4491) and
    (169.6403,136.5829) .. (183.3733,134.7477) .. controls (197.1063,132.9125) and
    (224.6840,122.0485) .. (224.6840,122.0485) .. controls (224.6840,122.0485) and
    (217.2451,151.6593) .. (225.5769,165.7348) .. controls (233.9086,179.8104) and
    (272.7622,192.8957) .. (272.7622,192.8957) -- cycle;
  \path[draw=black,line join=miter,line cap=butt,miter limit=4.00,even odd
    rule,line width=0.400pt] (272.6149,193.1012) -- (284.4842,162.2915);
  \path[draw=black,line join=miter,line cap=butt,miter limit=4.00,even odd
    rule,line width=0.400pt] (137.2545,254.7205) -- (107.2024,255.8569);
  \path[draw=black,line join=miter,line cap=butt,miter limit=4.00,even odd
    rule,line width=0.400pt] (137.3808,254.4680) -- (140.1587,267.7262) --
    (140.1587,267.7262);
  \path[draw=black,line join=miter,line cap=butt,miter limit=4.00,even odd
    rule,line width=0.400pt] (129.2364,214.3775) -- (127.6580,191.7754);
  \path[draw=black,line join=miter,line cap=butt,miter limit=4.00,even odd
    rule,line width=0.400pt] (143.5048,133.4706) -- (126.3322,131.4188);
  \path[draw=black,line join=miter,line cap=butt,miter limit=4.00,even odd
    rule,line width=0.400pt] (143.5048,133.5338) -- (144.1993,114.7197);
  \path[draw=black,line join=miter,line cap=butt,miter limit=4.00,even odd
    rule,line width=0.400pt] (224.4433,122.0748) -- (233.0927,117.7817);
  \path[draw=black,line join=miter,line cap=butt,miter limit=4.00,even odd
    rule,line width=0.400pt] (224.5695,122.2958) -- (226.2742,113.9305);
  \path[draw=black,line join=miter,line cap=butt,miter limit=4.00,even odd
    rule,line width=0.400pt] (129.4643,213.8801) .. controls (122.9167,221.0638)
    and (118.1548,224.5348) .. (112.6786,226.7372);
  \path[draw=black,line join=miter,line cap=butt,miter limit=4.00,even odd
    rule,line width=0.400pt] (272.7412,192.8487) -- (290.4189,198.4045);

\end{tikzpicture}

\caption{An immersed polygon with convex corners}
\label{figure:rigidpolygon}
\end{figure}
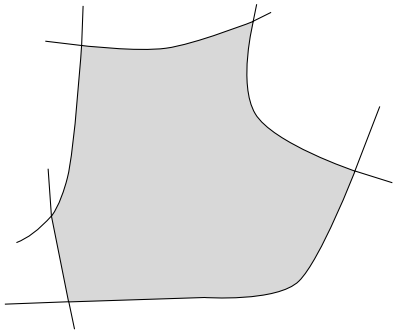

We let $\Delta(c_0, \ldots, c_d, A)$ be the quotient of $\tilde{\Delta} (c_0, \ldots, c_d,A)$ by the group of diffeomorphisms of $r$ which preserve the marked points.

\begin{prop}
\label{prop:Fuk(S_g)iscombinatorial}
Each holomorphic polygon $u \in \overline{\mathcal{M}} \left( c_0, \ldots, c_d , A\right)$ is (up to reparameterization) an element of $\tilde{\Delta}(c_0,\ldots,c_d)$.

Moreover, the inclusion
\[ \mathcal{M} \left( c_0, \ldots, c_d, A \right ) \hookrightarrow \Delta \left(c_0, \ldots, c_d, A \right) \]
is bijective.
\end{prop}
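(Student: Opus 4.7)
The plan is to exploit hypothesis $(H)$: by assumption the corners $c_i$ are honest transverse intersection points, so after a preliminary reduction we may work with tuples $(\gamma_0, \ldots, \gamma_d)$ in general position; then $K_{\gamma_0, \ldots, \gamma_d} = 0$ and the Floer equation reduces to the classical $\bar{\partial}$-equation. Any $u \in \mathcal{M}(c_0, \ldots, c_d, A)$ is thus an honest $j$-holomorphic map from a disk $s \in \pi^{-1}(r)$ into the Riemann surface $(S_g, j)$, and the rest of the argument is purely two-dimensional complex analysis.

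First I would show such a $u$ is an immersion with convex corners. Since $u$ is a non-constant holomorphic map between two real two-dimensional complex manifolds, the open mapping theorem and the local normal form $z \mapsto z^k$ at critical points imply $u$ is a branched covering onto its image. Using the fact that each $\gamma_i$ lifts to an embedding in the universal cover $\tilde S_g$, I can pass to local lifts and apply the Schwarz reflection principle along each boundary arc to extend $u$ across the $\gamma_i$. A critical point would then contribute a positive local winding; combined with the asymptotic description at the corners this would raise the Fredholm index of $u$ strictly above $\Ind(A) = 0$, contradicting the rigidity. Hence $u$ is an immersion up to the boundary.

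For the corners themselves: a holomorphic map from a half-disk to a neighborhood of a transverse intersection point of two embedded curves, sending the two boundary half-lines into (local branches of) these curves, is conformally modelled on $z \mapsto z^{k + \alpha/\pi}$ for some integer $k \geq 0$, where $\alpha$ is the interior angle swept by the image; since $u$ is an immersion the exponent must equal $1$, forcing the image to sweep exactly one of the two sectors. The index formula for strips/polygons with boundary on immersed curves (cf.\ \cite{AJ10}), together with $\Ind(A) = 0$, singles out the convex sector at each corner. This produces a well-defined map $\mathcal{M}(c_0, \ldots, c_d, A) \to \Delta(c_0, \ldots, c_d, A)$.

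Injectivity of this map is essentially tautological: if $u_1, u_2$ represent the same class in $\Delta$, there is an orientation-preserving diffeomorphism $\phi$ of the underlying marked disk with $u_1 = u_2 \circ \phi$; since $u_2$ is a holomorphic immersion, $\phi$ must itself be holomorphic, so $u_1$ and $u_2$ differ by a biholomorphism of marked disks and define the same point of $\mathcal{M}$. For surjectivity, given $f \in \tilde{\Delta}(c_0, \ldots, c_d, A)$, I pull back the complex structure on $S_g$ through the immersion $f$ to obtain a complex structure $j_f$ on the underlying marked disk; convexity at the corners guarantees that $j_f$ has the standard local model of a disk with $d+1$ boundary punctures, hence $(s, j_f)$ is biholomorphic to a unique fiber of $\mathcal{S}^{d+1} \to \mathcal{R}^{d+1}$ via a diffeomorphism $\psi$ preserving marked points, and $f \circ \psi^{-1}$ is the desired holomorphic polygon. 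The main subtlety I expect is the convexity argument, specifically showing that the combination of orientation-preservation, $\Ind(A) = 0$, and the transverse Lagrangian boundary conditions rules out the reflex sector at every corner uniformly — the rest reduces to classical one-variable complex analysis once $K = 0$ is invoked.
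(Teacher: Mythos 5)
Your proposal is correct and follows essentially the same route as the paper: reduce to $K=0$ via hypothesis $(H)$, exclude interior and boundary branch points by a dimension count against $\Ind(A)=0$, identify the convex sector at each corner, and obtain surjectivity from uniformization and injectivity from the fact that a reparameterization between holomorphic immersions is itself holomorphic. The one imprecision is the claim that a branch point ``raises the Fredholm index'' --- the index is an invariant of the homotopy class $A$, so it cannot change; the correct contradiction, as in the paper's citation of Etnyre--Ng--Sabloff, is that a branch point forces a one- or two-parameter family of genuinely distinct solutions in the same class, which is incompatible with the (automatically regular) moduli space being zero-dimensional.
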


\begin{proof}
This result is well-known (see \cite[(13b)]{Sei08}, \cite{ENS02}, \cite{RSdS14}). Let us quickly recall the idea of the proof.

Let $u \in \overline{\mathcal{M}} \left( c_0, \ldots, c_d , A\right)$. If $u$ has an interior branch point, there is a two-dimensional continuous family in $\overline{\mathcal{M}} \left( c_0, \ldots, c_d , A\right)$ (see the proof of \cite[Proposition 7.8]{ENS02}). But $\dim(\overline{\mathcal{M}} \left( c_0, \ldots, c_d , A\right) ) \leqslant 1$, a contradiction. Similarly, if there is a branch point on the boundary, there is a contribution of $1$ to the dimension. So $u$ is an immersion up to the boundary. Moreover, it is easy to see that $u$ cannot have non convex corners at the boundary.

Now if $u \in \tilde{\Delta}(c_0, \ldots, c_d, A)$, an easy application of the uniformization theorem shows that $u$ can be reparameterized to a holomorphic curve. Hence, the inclusion $\mathcal{M} \left( c_0, \ldots, c_d, A \right ) \hookrightarrow \Delta \left(c_0, \ldots, c_d, A \right)$ is surjective.

If $u_1$ and $u_2$ are such that $u_1 \circ \phi = u_2$ with $\phi$ a diffeomorphism $r \to r$, $\phi$ is holomorphic since $u_1$ and $u_2$ are immersions. Hence, the inclusion $\mathcal{M} \left( c_0, \ldots, c_d, A \right ) \hookrightarrow \Delta \left(c_0, \ldots, c_d, A \right)$ is injective.
\end{proof}

In particular, one can define a $A_\infty$ pre-category $\Fuk_{\text{comb}}(S_g)$ whose objects are unobstructed immersed curves, whose morphisms spaces are given by the Floer complexes and whose higher operations are given by a count of elements of $\Delta \left(c_0, \ldots, c_d, A \right)$. This is done, using combinatorial arguments, in Abouzaid's paper (\cite{ab07}). We conclude that there is a pre-$A_\infty$ quasi-isomorphism
\[  \Fuk_{\text{comb}} (S_g) \hookrightarrow \Fuk(S_g).\]

\section{Immersed Lagrangian cobordisms and iterated cones}
\label{section:immersedcobordismandcones}
In this section, we study immersed Lagrangians which are well-behaved for Floer theory. We have already seen that the main obstruction to this is the existence of teardrops, that is polygons with one corner points. Our object of interest are cobordisms which do not these for a particular choice of complex structure.

In what follows, we will consider only compatible almost complex structures on $\C \times S_g$ such that the projection on the first factor $\pi_\C : \C \times S_g $ is holomorphic. If this holds, we say that the almost complex structure is \emph{adapted}.

\begin{defi}
\label{defi:unobstructedLagcob}
Let $(\gamma_1, \ldots, \gamma_n)$ and $(\tilde{\gamma}_1, \ldots, \tilde{\gamma}_m)$ be \emph{embedded} curves in $S_g$. An \emph{unobstructed Lagrangian cobordism} from $(\gamma_1, \ldots, \gamma_N)$ to $(\tilde{\gamma}_1, \ldots, \tilde{\gamma}_m)$ is an  oriented immersed Lagrangian cobordism 
\[ V : \left( \gamma_1, \ldots, \gamma_N \right) \leadsto \left( \tilde{\gamma}_1, \ldots, \tilde{\gamma}_m \right)  \]
which satisfies the following conditions.
\begin{enumerate}[label=(\roman*)]
	\item The immersion $V$ has no triple points, and all its double points are transverse.
	\item There are no topological teardrops with boundary on $V$.
\end{enumerate}
\end{defi}

Biran and Cornea proved that, in the monotone setting, an embedded, monotone Lagrangian cobordism induces a cone relation between its end in the Fukaya category (\cite{BC14}). This result still holds for unobstructed Lagrangian cobordisms.

\begin{Theo}
\label{Theo:DecompositionforLagrangiancobordisms}
Let 
\[ V : (\gamma_1, \ldots, \gamma_n) \leadsto \gamma \] 
be an unobstructed Lagrangian cobordism. Then, there is an isomorphism in $\DFuk(S_g)$
\[ \gamma \simeq \Cone \left( \gamma_1[1] \to \Cone \left( \gamma_2[1] \ldots \to \Cone \left(\gamma_{n-1}[-1] \to \gamma_n \right) \right) \right) . \]
\end{Theo}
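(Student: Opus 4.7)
The plan is to adapt the argument of Biran--Cornea \cite{BC14} to the unobstructed immersed setting. I would work within a Fukaya-type category on $\C \times S_g$ whose objects include the cobordism $V$ together with product Lagrangians of the form $\ell \times L'$, where $\ell \subset \C$ is an embedded curve with horizontal ends at appropriate heights and $L'$ is an unobstructed curve in $S_g$. Only adapted almost complex structures are considered, so an open-mapping argument applied to $\pi_\C$ controls the projection of any holomorphic curve with boundary on $V$, supplying the necessary energy bounds. The hypothesis that $V$ admits no topological teardrops ensures that the obstruction to Floer theory with $V$ as an object vanishes.

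Next, I would introduce horizontal arcs $h_1, \ldots, h_n \subset \C$ placed at distinct heights chosen so that $h_i$ meets $\pi_\C(V)$ transversely in the single strand corresponding to the $i$-th negative end. Using that $\pi_\C$ is holomorphic, one verifies the identification
\[ CF(V, h_i \times L') \cong CF(\gamma_i, L') \]
as chain complexes, since the projection to $\C$ of any holomorphic strip with these boundary conditions is a strip between $h_i$ and a vertical line, and all such strips are constant. By deforming the height of each $h_i$ downward through a Hamiltonian isotopy so that it sweeps across lower ends of $V$, one produces continuation morphisms which, through the $A_\infty$-structure, assemble into the connecting morphisms of the iterated cone of the theorem.

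Iterating this construction and applying the Yoneda lemma, one identifies the module represented by $V$ with the module of the iterated cone. Since the $\gamma_i$ and $\gamma$ are embedded unobstructed objects of $\Fuk(S_g)$, and Yoneda is cohomologically fully faithful on such objects, this identification of modules descends to an isomorphism in $\DFuk(S_g)$.

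The main obstacle will be establishing compactness and transversality for the Floer moduli spaces on $\C \times S_g$ with boundary on $V$ and on product Lagrangians. Without an embeddedness hypothesis on $V$, one must rule out bubbling of holomorphic disks and teardrops at the self-intersection points of $V$, as well as at the intersections with the probes $h_i \times L'$; the absence of teardrops on $V$ together with the unobstructedness of the ends and the open-mapping behaviour of $\pi_\C$ supplies the needed estimates. Finally, the Floer and perturbation data for the mixed moduli spaces must be chosen coherently with the data already fixed on the ends of $V$, extending the coherent perturbation scheme of Proposition \ref{prop:existenceofacoherentchoiceofperturbation}.
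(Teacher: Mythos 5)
Your proposal follows exactly the route the paper takes: the paper's proof simply defers to Haug's Theorem 4.1 (itself an adaptation of Biran--Cornea), and your outline — probing $V$ with product Lagrangians $\ell \times L'$, using adapted almost complex structures and the open mapping property of $\pi_\C$ for energy and compactness control, invoking the no-teardrop hypothesis for unobstructedness of $V$, and assembling the iterated cone via Yoneda — is precisely that argument transplanted to the immersed setting. The approach and the key points you flag (compactness, coherence of perturbation data with the ends) match what the cited proof requires.
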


In our setting, the proof is the same as \cite[Theorem 4.1]{Hau15}.

There are two natural corollaries to this result, which we now give.

\begin{coro}
\label{coro:shiftequalstooppositeorientation}
Let $\gamma$ be an unobstructed curve. In $\DFuk(S_g)$ we have the isomorphism
\[ \gamma[1] \simeq \gamma^{-1}. \] 
\end{coro}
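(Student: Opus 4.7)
The plan is to construct an unobstructed Lagrangian cobordism of the form $V = C \times \gamma$ whose two ends are $\gamma$ and $\gamma^{-1}$ (both on the negative side of the $\C$ factor), and to then extract the desired isomorphism from Theorem \ref{Theo:DecompositionforLagrangiancobordisms}.

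First, I will choose a smoothly embedded arc $C \subset \C$ with both ends at $-\infty$: one end asymptotic to $(-\infty, -R] \times \{0\}$ and the other asymptotic to $(-\infty, -R] \times \{1\}$, smoothly joined by a $U$-shaped curve in the right half-plane. Set $V := C \times \gamma \subset \C \times S_g$ with the product orientation. Since the tangent vector to $C$ points in opposite horizontal directions at the two ends, the induced orientations on the two copies of $\gamma$ at the ends are opposite, so $V$ is an immersed oriented Lagrangian cobordism from $(\gamma, \gamma^{-1})$ to $\emptyset$.

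Next, I will verify that $V$ is unobstructed in the sense of Definition \ref{defi:unobstructedLagcob}. Condition (i) is immediate: the double points of $V$ lie along $C \times \{\text{double points of }\gamma\}$, are transverse, and no triple points appear. For condition (ii), suppose $u : (D, \partial D) \to (\C \times S_g, V)$ were a topological teardrop with corner at some $(c, d)$, where $d$ is a double point of $\gamma$. Then $\pi_{S_g} \circ u : (D, \partial D) \to (S_g, \gamma)$ would be a topological teardrop on $\gamma$ with corner at $d$. Lifting to the universal cover $\tilde{S}_g$, where $\tilde{\gamma}$ is properly embedded by the unobstructedness of $\gamma$, the disk lifts uniquely to $\tilde{u} : D \to \tilde{S}_g$; its boundary traces a path along $\tilde{\gamma}$ whose two endpoints project to $d$ but lie at distinct points of $\tilde{\gamma}$, contradicting the fact that $\partial \tilde u$ is a closed loop.

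Finally, I will apply Theorem \ref{Theo:DecompositionforLagrangiancobordisms} in a form that allows an empty positive end; the proof in \cite{BC14} and \cite{Hau15} adapts without change to this case, the right-hand side then being the zero object of $\DFuk(S_g)$. This yields the relation
\[ 0 \simeq \Cone\bigl(\gamma[1] \to \gamma^{-1}\bigr) \]
in $\DFuk(S_g)$. A mapping cone that is a zero object forces its structure morphism to be a quasi-isomorphism, whence $\gamma[1] \simeq \gamma^{-1}$.

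The principal obstacle is the extension of Theorem \ref{Theo:DecompositionforLagrangiancobordisms} to cobordisms with empty positive end; although this follows the same Seidel-style iterated cone construction, some bookkeeping is required. A secondary delicate point is the orientation check which determines which end of $V$ corresponds to $\gamma$ and which to $\gamma^{-1}$, fixing the direction of the resulting quasi-isomorphism.
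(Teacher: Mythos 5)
Your proposal is correct and is essentially the paper's own argument: the paper likewise takes a properly embedded $U$-shaped arc in $\C$ (with both ends running to $-\infty$ at heights $0$ and $1$) times $\gamma$ to obtain a cobordism $(\gamma,\gamma^{-1}) \leadsto \emptyset$ and then invokes Theorem \ref{Theo:DecompositionforLagrangiancobordisms}. The extra points you verify (unobstructedness of the product via lifting to the universal cover, and the empty-positive-end form of the cone theorem) are details the paper leaves implicit.
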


\begin{proof}
Consider a properly embedded path $\alpha : \R \to \C$ such that
\begin{itemize}
	\item for $s < 0$, we have $\alpha(s) = (s,0)$,
	\item for $s > 1$, we have $\alpha(s) = (1-s, 1)$,
	\item for $s \in [0,1]$, the derivative $y'$ satisfies $y'(s) > 0$.
\end{itemize}
The immersed manifold 
\[ \begin{array}{ccc}
 \R \times S^1 & \to & \C \times S_g \\
 (s,t) & \mapsto & (\alpha(s), \gamma(t)) 	
 \end{array}. \]
 is a Lagrangian cobordism $(\gamma,\gamma^{-1}) \leadsto \emptyset$. Therefore, Theorem \ref{Theo:DecompositionforLagrangiancobordisms} gives the desired isomorphism.
\end{proof}

\begin{coro}
\label{coro:theBiranCorneamapexists}
There is a natural group morphism
\[ \Theta_{BC} : \Gimmunob \to K_0(\DFuk(S_g)), \]
which maps the class of an embedded curve $\gamma$ to its representative in $K_0(\DFuk(S_g))$.
Further, this morphism is surjective.	
\end{coro}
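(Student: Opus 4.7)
The plan is to define $\Theta_{BC}$ on the free abelian group generated by embedded curves via $\gamma \mapsto [\gamma] \in K_0(\DFuk(S_g))$ and then verify that the relations of $\Gimmunob$ are respected, and that the resulting morphism is surjective.

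For well-definedness, let $V : (\gamma_1, \ldots, \gamma_n) \leadsto (\tilde{\gamma}_1, \ldots, \tilde{\gamma}_m)$ be an unobstructed cobordism between embedded curves; I need to show that $\sum_i [\gamma_i] = \sum_j [\tilde{\gamma}_j]$ in $K_0$. Using the Lagrangian-suspension bending of Remark \ref{rk:Lagrangiansuspension}, carried out entirely inside the $\C$-factor of $\C \times S_g$, I can reshape $V$ into an unobstructed cobordism
\[ V' : (\gamma_1, \ldots, \gamma_n, \tilde{\gamma}_1^{-1}, \ldots, \tilde{\gamma}_{m-1}^{-1}) \leadsto \tilde{\gamma}_m. \]
Since this bending takes place in the cylindrical ends and creates no new self-intersections or teardrops, unobstructedness is preserved. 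Applying Theorem \ref{Theo:DecompositionforLagrangiancobordisms} to $V'$ yields an iterated cone decomposition of $\tilde{\gamma}_m$ in $\DFuk(S_g)$. Passing to $K_0$, using the standard identity $[\Cone(X \to Y)] = [Y] - [X]$ together with Corollary \ref{coro:shiftequalstooppositeorientation} to rewrite $[\tilde{\gamma}_j^{-1}] = [\tilde{\gamma}_j[1]] = -[\tilde{\gamma}_j]$, this iterated cone collapses to exactly the desired relation.

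For surjectivity, note that $\DFuk(S_g)$ is generated as a triangulated category by its objects, the unobstructed immersed curves, so $K_0(\DFuk(S_g))$ is generated as an abelian group by their classes. It therefore suffices to express the class of each unobstructed immersed curve as a $\Z$-linear combination of classes of embedded curves. I plan to do this by iterated Lagrangian self-surgery: at each transverse double point of an unobstructed curve $\gamma$, a surgery produces a curve $\gamma'$ with one fewer double point, and each self-surgery step yields a mapping-cone relation between the immersed and embedded configurations in $\DFuk(S_g)$. Such relations can be read off from the combinatorial description of $\Fuk(S_g)$ supplied by Proposition \ref{prop:Fuk(S_g)iscombinatorial}, or deduced from the two-curve surgery formula applied to two nearby parallel perturbations of $\gamma$. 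Iterating terminates at embedded curves and expresses $[\gamma]$ in the required form.

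The main obstacle is the surjectivity step: a self-surgery cobordism has an immersed negative end, so it is not an unobstructed cobordism in the strict sense of Section \ref{section:immersedcobordismandcones}, and the cone relation in $\DFuk(S_g)$ requires a standalone justification. Fortunately this is essentially the Lagrangian-surgery exact triangle of Fukaya--Oh--Ohta--Ono, carried out in the $\Z/2$-graded surface setting constructed in Section \ref{section:Fukaycategoriessurface}. The well-definedness step, by contrast, is a direct consequence of Theorem \ref{Theo:DecompositionforLagrangiancobordisms} combined with the bending trick, once one verifies that bending preserves the no-teardrop condition, which is essentially automatic because the new cylindrical ends lie in the $\C$-factor and are embedded.
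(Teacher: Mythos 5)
Your treatment of well-definedness is essentially the paper's: the relation in $K_0$ is extracted from the iterated cone decomposition of Theorem \ref{Theo:DecompositionforLagrangiancobordisms}, with the bending of negative ends into positive ones (a standard manoeuvre, performed in the $\C$-factor and harmless for the no-teardrop condition since the double points of the bent cobordism are the same as those of the original) left implicit in the paper's one-line ``immediate from Theorem \ref{Theo:DecompositionforLagrangiancobordisms}''. Spelling it out as you do is fine.

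The surjectivity step is where you diverge, and where your argument has a genuine gap that you yourself flag. The self-surgery exact triangle you invoke --- resolving a double point of an unobstructed immersed curve and obtaining a cone relation in $\DFuk(S_g)$ --- is not established anywhere in the framework of Sections \ref{section:Fukaycategoriessurface} and \ref{section:immersedcobordismandcones}: the surgery cobordism has an immersed end, so Theorem \ref{Theo:DecompositionforLagrangiancobordisms} does not apply to it as stated, and importing the Fukaya--Oh--Ohta--Ono surgery triangle into this $\Z/2$-graded immersed setting would be a substantial piece of work, not a remark. The paper avoids all of this by citing Abouzaid's computation in \cite{ab07} that $K_0(\DFuk(S_g))$ is already generated by the classes of \emph{embedded} curves; surjectivity is then immediate since every such class is by construction in the image of $\Theta_{BC}$. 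You should replace your surgery-based reduction by this citation (or, if you insist on a self-contained geometric argument, you must actually prove the self-surgery cone relation, which is a separate project).
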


\begin{proof}
The existence of the morphism $\Theta_{BC} : \Gimmunob \to K_0(\DFuk(S_g))$ is immediate from Theorem \ref{Theo:DecompositionforLagrangiancobordisms}.

Moreover, recall from \cite{ab07} that the group $K_0(\DFuk(S_g))$ is generated by embedded curves. Hence, the image of $\Theta_{BC}$ is the whole group $K_0(\DFuk(S_g))$.
\end{proof}

\section{Computation of the unobstructed Lagrangian Cobordism Group}
In this section, we compute the unobstructed Lagrangian cobordism group $\Gimmunob$.

First, notice that by the results of subsection \ref{subsubsection:definitionofpiandmu}, the homology class and the Maslov class yield maps
\[ \pi : \Gimmunob \to H_1(S_g,\Z), \ \mu : \Gimmunob \to \Z / \chi(S_g) \Z . \]

Our main tool is the following
\begin{Theo}
\label{Theo:Computationofunobgroup}
There is a long exact sequence
\[ 0 \to \R \xrightarrow[]{i} \Gimmunob \xrightarrow{\pi \oplus \mu} H_1(S_g,\Z) \oplus \Z / \chi(S_g) \Z \to 0. \]
Furthermore, this exact sequence is split.	
\end{Theo}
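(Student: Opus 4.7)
The plan is to establish the exact sequence by parallelling, with modifications for the stricter class of cobordisms, the computation of $\Gimm$ from Theorem \ref{Theo:immersedcobordismgroup}, and by using the Biran--Cornea map of Corollary \ref{coro:theBiranCorneamapexists} to detect non-triviality in the conjectured $\R$-summand. I expect the decomposition into four steps: well-definedness and surjectivity of $\pi\oplus\mu$, the construction of $i$ landing in the kernel, equality of the kernel with $\im(i)$, and the splitting.

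First I would check that the maps $\pi$ and $\mu$ descend to $\Gimmunob$ and that $\pi\oplus\mu$ is surjective. The obstruction invariants of subsection \ref{subsubsection:definitionofpiandmu} only require the existence of an \emph{oriented immersed} cobordism, and in particular they vanish on any class that is trivial in $\Gimmunob$; this produces the morphism $\pi\oplus\mu : \Gimmunob \to H_1(S_g,\Z)\oplus\Z/\chi(S_g)\Z$. Surjectivity follows from the surjectivity of the same map on $\Gimm$ (Theorem \ref{Theo:immersedcobordismgroup}): embedded representatives of a symplectic basis of $H_1(S_g,\Z)$ and the class $T$ of an embedded torus boundary all lie in $\Gimmunob$, and by Lemma \ref{lemma:Maslovindexofatorus} we have $\mu(T)=-1$.

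Next I would define $i$. Given $t\in\R$, I fix once and for all an embedded separating curve $\gamma_0$ bounding an embedded torus and a smooth family $(\gamma_s)_{s\in\R}$ of embedded curves isotopic to $\gamma_0$ whose symplectic flux against $\gamma_0$ is $s$. Because this isotopy is realised by symplectic vector fields, the Lagrangian suspension (Remark \ref{rk:Lagrangiansuspension}) produces an \emph{embedded} --- hence unobstructed --- Lagrangian cobordism from $\gamma_0$ to $\gamma_t$, so the difference $[\gamma_t]-[\gamma_0]\in\Gimmunob$ is well-defined once a base curve is chosen. Setting $i(t) := [\gamma_t]-[\gamma_0]$ gives a group morphism (concatenation of suspensions corresponds to addition of fluxes), and $\pi\oplus\mu$ annihilates its image since $\gamma_t$ and $\gamma_0$ are homologous and isotopic. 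Independence of the auxiliary choices follows, once exactness has been established, from a Mapping Class Group argument as in subsubsection \ref{subsubsection:actionoftheMCGonGimm}.

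The heart of the argument is showing that $\ker(\pi\oplus\mu)\subseteq\im(i)$; this is where I expect the main obstacle. My plan is to repeat Abouzaid's MCG strategy of Section~2, but to bookkeep the area discrepancies at each step. Proposition \ref{prop:ActionofaDehntwistonGimm} and Lemmas \ref{lemma:classofanonseparatingcurve}, \ref{lemma:classofgammai} used only isotopies (Lemma \ref{lemma:isotopiccurvesareimmlagcob}) and surgeries (Proposition \ref{prop:Lagrangiansurgeryoftwocurves}). The surgery cobordisms produced in Proposition \ref{prop:Lagrangiansurgeryoftwocurves} are embedded whenever the intersection point has degree $1$, hence \emph{a priori} unobstructed; on the other hand, the isotopies used in Lemma \ref{lemma:isotopiccurvesareimmlagcob} go through the h-principle and need not be unobstructed --- however a \emph{symplectic} isotopy between the two curves always produces an unobstructed cobordism via Lagrangian suspension, and Lickorish's generators are symplectic by Theorem \ref{Theo:Lickorishgeneratorsofthe MCG}. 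Carrying this through, for any $x\in\ker(\pi\oplus\mu)$ the Section~2 argument reduces $x$ in $\Gimmunob$ to a flux-difference of an embedded curve with a Hamiltonian displacement thereof, which is exactly of the form $i(t)$. To then prove that $i$ is injective --- the crucial remaining point --- I would compose with the Biran--Cornea map $\Theta_{BC}:\Gimmunob\to K_0(\DFuk(S_g))$ of Corollary \ref{coro:theBiranCorneamapexists}; by Abouzaid's computation of $K_0(\DFuk(S_g))\cong\R\oplus H_1(S_g,\Z)\oplus\Z/\chi(S_g)\Z$, the $\R$-summand is detected by a symplectic-area functional which evaluates $\Theta_{BC}\circ i(t)$ nontrivially whenever $t\neq 0$.

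Finally, the splitting is automatic: $H_1(S_g,\Z)\oplus\Z/\chi(S_g)\Z$ decomposes as the direct sum of a free abelian group and a cyclic group, and the section constructed in the proof of Theorem \ref{Theo:immersedcobordismgroup} (sending basis vectors to the corresponding $\alpha_i,\beta_i$, and the generator of $\Z/\chi(S_g)\Z$ to $T$) consists of embedded Lagrangians, hence lifts canonically to $\Gimmunob$ and provides the desired splitting.
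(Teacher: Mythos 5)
Your overall strategy is in line with the paper's (Mapping Class Group reduction, Biran--Cornea map for injectivity, holonomy section for splitting), but there are two substantial gaps that sit exactly at the technically hard points.

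First, your treatment of $i$ is internally inconsistent. You assert that because the isotopy $(\gamma_s)$ is ``realised by symplectic vector fields,'' the Lagrangian suspension of Remark \ref{rk:Lagrangiansuspension} produces an embedded cobordism from $\gamma_0$ to $\gamma_t$. The suspension construction applies only to \emph{Hamiltonian} isotopies, and by Lemma \ref{lemma:holonomyequalshamiltonianinvariant} isotopic curves with different holonomies are precisely the ones that are \emph{not} Hamiltonian isotopic. If such an unobstructed cobordism $\gamma_0\leadsto\gamma_t$ existed it would impose the relation $[\gamma_0]=[\gamma_t]$ in $\Gimmunob$, forcing $i(t)=0$ for all $t$ and making exactness fail. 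The actual content here is the well-definedness statement of Proposition \ref{prop:twocylinderofsameareahavethesameclass} --- that a flux-difference of a fixed size determines a single class independent of the base curve and of the subdivision used --- and this is proved by a chain of surgeries (Figure \ref{figure:FigureM}), not by suspension. It cannot be deferred until after exactness is established, since you need $i$ to be a well-defined morphism to even state the sequence.

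Second, your assertion that the surgery cobordisms of Proposition \ref{prop:Lagrangiansurgeryoftwocurves} are ``embedded whenever the intersection point has degree $1$, hence a priori unobstructed'' is false in the generality needed: when $\gamma_1$ and $\gamma_2$ intersect at points other than the surgered one --- which is unavoidable when iterating surgeries to realise a Dehn twist as in Proposition \ref{prop:ActionofaDehntwistonGimm} --- the resulting cobordism is immersed, with double points coming from those extra intersections and from self-intersections of the $\gamma_i$. Establishing that these immersed cobordisms are unobstructed is the technical core of the computation: Propositions \ref{prop:asurgeryisnonobstructed} through \ref{prop:cobordismassociatedtosuccessivesurgeries} combine teardrop analysis, minimal-position arguments, fundamental-group constraints (Jaco's theorem), and degeneration of the handle to do this. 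Without that analysis you have no guarantee that the relations you wish to impose actually hold in $\Gimmunob$, so the reduction of $\ker(\pi\oplus\mu)$ to $\Im(i)$ does not go through.
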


\subsection{Holonomy and the map $i$}
In this subsection, we define an injection 
\[ i : \R \to \Gimmunob.\] 
This map has a simple geometric interpretation: to a real number $x$, it associates the oriented boundary of a cylinder of area $x$. We have to check that the map is indeed well-defined.

Let $p: S(TS_g) \to S_g$ be the unit tangent bundle with respect to the metric $g_j = \omega(\cdot, j \cdot)$. We choose a one-form $A \in \Omega^1(S(TS_g))$ such that 
\[ p^* \omega = dA. \]

An immersed curve $\gamma : S^1 \to S_g$ admits a canonical lift to $S(TS_g)$:
\[\tilde{\gamma} : \begin{array}{ccc}
 	S^1 & \to & S(TS_g) \\
 	t & \mapsto & \left(\gamma(t), \frac{\gamma'(t)}{\norm{\gamma'(t)}  } \right)
 \end{array} \]
We define the \emph{holonomy} of $\gamma$ with respect to $A$ by
\[ \Hol_A(\gamma) := \int_{S^1} \tilde{\gamma}^*A . \] 
We shall use the following properties of this number.

\begin{prop}
\label{prop:holonomyproperties}
The following assertions are true.
\begin{enumerate}[label=(\roman*)]
\item Let $F : [0,1] \times S^1 \to S_g$ be an isotopy between the two immersed curves $\gamma_0$ and $\gamma_1$. Then
	\[ \Hol_A(\gamma_1) - \Hol_A(\gamma_0) = \int_{[0,1] \times S^1} F^* \omega. \] 
\item There is a well-defined group morphism
 \[	\Hol_A : \Gimmunob \to \R \]
whose value on the class of an embedded curve $\gamma$ is $\Hol_A(\gamma)$.
\end{enumerate}
\end{prop}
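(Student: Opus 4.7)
}

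Part (i) is a direct Stokes computation once we lift the isotopy. Since every $\gamma_s = F(s,\cdot)$ is immersed, $\partial_t F$ never vanishes, so I would define
\[
\tilde F : [0,1] \times S^1 \to S(TS_g), \qquad \tilde F(s,t) = \Bigl(F(s,t),\, \partial_t F(s,t)/\|\partial_t F(s,t)\|\Bigr),
\]
which restricts to $\tilde\gamma_i$ on $\{i\} \times S^1$ and covers $F$. Since $dA = p^*\omega$ and $p \circ \tilde F = F$, Stokes yields
\[
\Hol_A(\gamma_1) - \Hol_A(\gamma_0) \;=\; \int_{\partial([0,1]\times S^1)} \tilde F^*A \;=\; \int_{[0,1]\times S^1} \tilde F^* dA \;=\; \int_{[0,1]\times S^1} F^*\omega.
\]

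For part (ii), I would extend $\Hol_A$ by linearity to the free abelian group on immersed curves; well-definedness on $\Gimmunob$ then reduces, via Lemma~\ref{lemma:isotopiccurvesareimmlagcob} and the standard argument in Lemma~1.4, to showing that for any unobstructed Lagrangian cobordism $V:(\gamma_1,\ldots,\gamma_n) \leadsto \emptyset$ with embedded ends one has $\sum_i \Hol_A(\gamma_i) = 0$. The key input is that $V \subset \C \times S_g$ is Lagrangian for $\omega_\C + \omega_{S_g}$ and that $\omega_\C = d(x\,dy)$ globally on $\C$; hence the pullback $(\pi_{S_g})^*\omega|_V = \omega_{S_g}|_V$ is exact with explicit primitive $-x\,dy|_V$. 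Because each end of $V$ lies over $\R\times\{0\} \subset \C$, the function $y$ vanishes on $\partial V$ (after truncation at large $|\mathrm{Re}\,\pi_\C|$), so Stokes on $V$ gives $\int_V (\pi_{S_g})^*\omega = -\int_{\partial V} x\,dy = 0$.

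The remaining step is to tie this area integral to the sum of holonomies. The plan is to construct a lift $\Sigma : V \to S(TS_g)$ of $\pi_{S_g}|_V$ which restricts to the canonical framing $\tilde\gamma_i$ on each boundary circle. Granted $\Sigma$, one more application of Stokes together with $dA = p^*\omega$ and $p\circ\Sigma = \pi_{S_g}|_V$ yields
\[
\sum_i \Hol_A(\gamma_i) \;=\; \int_{\partial V} \Sigma^* A \;=\; \int_V \Sigma^* dA \;=\; \int_V (\pi_{S_g})^*\omega \;=\; 0,
\]
which is exactly what we need.

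\paragraph*{Main obstacle.} Building the lift $\Sigma$ is the delicate point. Since $V$ is a surface with non-empty boundary, $H^2(V)=0$, so the circle bundle $(\pi_{S_g}|_V)^*S(TS_g) \to V$ is trivializable and global sections exist; but extending the prescribed canonical framing from $\partial V$ has an \emph{a priori} obstruction in $H^2(V,\partial V;\Z)\cong \Z^{\pi_0 V}$ measuring the rotation number of the boundary framings against any trivialization. I expect the unobstructedness hypothesis to be exactly what kills this obstruction: a teardrop on $V$ would contribute a unit of relative rotation at its corner, so the absence of teardrops (together with the embeddedness and unobstructedness of each $\gamma_i$, which forces the boundary lifts to have vanishing winding in the appropriate framing) allows one to piece together local lifts around each Morse-type critical point of $\mathrm{Re}\,\pi_\C|_V$ and around each transverse double point into a globally defined $\Sigma$. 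Working this out carefully, with explicit local models provided by the Lagrangian constraint analyzed in Section~\ref{subsubsection:surgeryofdoublepoints}, is what I would flesh out to complete the proof.
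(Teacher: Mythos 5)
Your argument for part (i) is essentially the paper's. For part (ii) you take a genuinely different route: the paper's proof is a one-liner that composes the Biran--Cornea morphism $\Theta_{BC} : \Gimmunob \to K_0(\DFuk(S_g))$ from Corollary~\ref{coro:theBiranCorneamapexists} with the holonomy morphism $K_0(\DFuk(S_g)) \to \R$ established by Abouzaid \cite[Proposition 6.1]{ab07}. Your plan --- showing $\int_V (\pi_{S_g})^* \omega = 0$ from the Lagrangian condition and the cylindrical-end structure, then converting this area identity into the holonomy identity by lifting $\pi_{S_g}\circ i_V$ to $S(TS_g)$ with the prescribed canonical boundary framings --- is more direct, and if completed it would make the well-definedness of $\Hol_A$ independent of any Fukaya-categorical input, which has independent value.

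However, the gap you flag is genuine and the mechanism you propose for closing it is not the right one. Observe first that the fiber integral $c := \int_{S^1_x} A$ over a circle fiber of $S(TS_g)$ is \emph{nonzero}: applying Stokes to a normalized vector field with a single zero of index $\chi(S_g)$ gives $\chi(S_g)\cdot c = -\mathrm{area}(S_g)$. Consequently, if $\sigma$ is any global section of $(\pi_{S_g}\circ i_V)^* S(TS_g)$ over $V$ (which exists since $H^2(V;\Z)=0$) and $d_i$ is the relative winding of $\tilde\gamma_i$ against $\sigma|_{\partial_i V}$, then your Stokes computation yields $\sum_i \Hol_A(\gamma_i) = \pm\, c\sum_i d_i$ rather than zero, so that $\sum_i d_i = 0$ must still be proved. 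This is a $\Z$-valued vanishing, strictly stronger than the $\Z/\chi(S_g)\Z$-valued vanishing furnished by Proposition~\ref{prop:invarianceofmuundercobordism}. Teardrops are not the right lever here: they constrain the \emph{immersion} $i_V$ by excluding certain disk boundaries, whereas $\sum_i d_i$ is a homotopy-theoretic obstruction attached to the \emph{projection} $\pi_{S_g}\circ i_V : (V, \partial V) \to S_g$ together with the canonical boundary lifts, and it is governed by the Lagrangian condition and the cylindrical-end structure, not by the presence or absence of holomorphic disks. What your route actually requires is a Lagrangian Gauss--Bonnet type identity for the surface $V$ inside $\C\times S_g$; that is precisely the statement the paper avoids having to establish by passing through $K_0(\DFuk(S_g))$.
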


\begin{proof}
There is a natural lift of $F$ to $S(TS_g)$:
\[ \tilde{F} : \begin{array}{ccc}
 [0,1] \times S^1 & \to & S(TS_g) \\
 (t,s) & \mapsto & \left (F(t,s), \frac{\partial_t F (t,s)}{\norm{\partial_t F(t,s)}} \right)	
 \end{array}.
 \]
We now apply Stokes Theorem:
\begin{align*}
\Hol_A(\gamma_1) - \Hol_A(\gamma_0) & = \int_{S^1 \times \{1\}} \tilde{F}^*A - \int_{S^1 \times \{0\}} \tilde{F}^*A	\\
& = \int_{S^1 \times [0,1]} \tilde{F}^* dA \\
& = \int_{S^1 \times [0,1]} \tilde{F}^* p^* \omega \\
& = \int_{S^1 \times [0,1]} F^* \omega .
\end{align*}

Recall from Corollary \ref{coro:theBiranCorneamapexists} that there is a natural group morphism
\[ \Theta_{BC} : \Gimmunob \to K_0(\DFuk(S_g)). \] 
On the other hand, it is a result of Abouzaid (\cite[Proposition 6.1]{ab07}) that the holonomy induces a group morphism 
\[ K_0(\DFuk(S_g)) \to \R. \] 
Therefore, the composition of these two is a group morphism. This proves $(ii)$. 
\end{proof}

The following lemma seems to be a well-known fact (\cite[Section 6]{Sei11}). I learned its proof from Jordan Payette.
\begin{lemma}
\label{lemma:holonomyequalshamiltonianinvariant}	
Let $\gamma_0$ and $\gamma_1$ be two isotopic embedded curves with 
\[ \Hol_A(\gamma_1) = \Hol_A(\gamma_0). \] 
Then the curves $\gamma_0$ and $\gamma_1$ are Hamiltonian isotopic to each other.
\end{lemma}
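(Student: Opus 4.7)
The plan is to lift the smooth isotopy between $\gamma_0$ and $\gamma_1$ to a symplectic ambient isotopy, exploit the equality of holonomies to control its flux in $H^1(S_g, \R)$, and correct the flux to zero using a symplectic flow preserving $\gamma_0$. Banyaga's characterization of Hamiltonian diffeomorphisms as the kernel of the flux homomorphism will then deliver the desired Hamiltonian isotopy.

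First, I would choose a smooth isotopy $\gamma_s : S^1 \to S_g$ from $\gamma_0$ to $\gamma_1$ and extend it to a symplectic ambient isotopy $(\Phi_s)_{s \in [0,1]}$ of $S_g$ with $\Phi_0 = \Id$ and $\Phi_s \circ \gamma_0 = \gamma_s$. Since curves on a surface are automatically Lagrangian, such an extension exists: one extends the 1-form $\iota_{\partial_s \gamma_s}\omega$ along $\gamma_s$ to a smoothly varying family of closed 1-forms on $S_g$, using a dual class to $[\gamma_s]$ in the non-separating case and using that equal holonomies force equal bounded areas in the separating case. Pairing the resulting flux with $[\gamma_0]$ computes the area swept:
\[ \langle \Flux(\Phi), [\gamma_0] \rangle = \int_{[0,1] \times S^1} \gamma^* \omega = \Hol_A(\gamma_1) - \Hol_A(\gamma_0) = 0, \]
by Proposition \ref{prop:holonomyproperties} (i) and the hypothesis. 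So $\Flux(\Phi)$ lies in the kernel of the restriction map $H^1(S_g, \R) \to H^1(\gamma_0, \R)$.

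The central step is to produce a symplectic isotopy $(\Psi_s)_{s \in [0,1]}$ with $\Psi_0 = \Id$, $\Psi_s(\gamma_0) = \gamma_0$ for every $s$, and $\Flux(\Psi) = -\Flux(\Phi)$. Starting from any closed 1-form $\alpha_0$ representing $-\Flux(\Phi)$, the vanishing $\int_{\gamma_0} \alpha_0 = 0$ gives $\alpha_0 \vert_{\gamma_0} = dg$ for some $g : \gamma_0 \to \R$; extending $g$ to $\tilde g \in C^\infty(S_g, \R)$, the form $\alpha := \alpha_0 - d\tilde g$ still represents $-\Flux(\Phi)$ but satisfies $\alpha \vert_{\gamma_0} = 0$. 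The associated symplectic vector field $X_\alpha$ defined by $\iota_{X_\alpha}\omega = \alpha$ is therefore $\omega$-orthogonal to $T\gamma_0$ along $\gamma_0$, and since $\gamma_0$ is Lagrangian this forces $X_\alpha$ to be tangent to $\gamma_0$. Its flow $\Psi_s := \phi^s_{X_\alpha}$ then preserves $\gamma_0$ setwise and has $\Flux(\Psi) = [\alpha]$.

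To conclude, the pointwise composition $\tilde \Phi_s := \Phi_s \circ \Psi_s$ still satisfies $\tilde \Phi_s(\gamma_0) = \gamma_s$, and a direct computation using that elements of $\Symp_0(S_g)$ act trivially on cohomology gives $\Flux(\tilde \Phi) = \Flux(\Phi) + \Flux(\Psi) = 0$. Banyaga's theorem then identifies $\tilde \Phi_1$ as a Hamiltonian diffeomorphism, and the resulting Hamiltonian isotopy sends $\gamma_0$ to $\gamma_1 = \tilde \Phi_1(\gamma_0)$. The most delicate point is the construction of $\Psi_s$: here the Lagrangian nature of $\gamma_0$ is essential, since it is what upgrades the cohomological constraint $\langle \Flux(\Phi), [\gamma_0] \rangle = 0$ into the geometric statement that $X_\alpha$ be tangent to $\gamma_0$.
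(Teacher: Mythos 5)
Your argument is correct in its main steps but follows a genuinely different route from the paper's. The paper never kills the global flux class: it builds the ambient symplectic isotopy $\Psi^t$ by hand (a smooth ambient isotopy, a correction $\psi^t$ normalizing the pulled-back area forms along $\gamma_0$, then Moser), identifies the holonomy difference with the integral over $\gamma_0$ of the flux one-form $\int_0^1 \omega\left( (\Psi^t)^* X_t, \cdot \right) dt$, uses the hypothesis only to make this \emph{pullback to $\gamma_0$} exact, corrects by the Hamiltonian flow of a primitive $F$, and then reruns the construction of \cite[Theorem 10.2.5]{McDS17}, checking that the correction vector field $Z_1$ is tangent to $\Im(\gamma_0)$ so that the resulting Hamiltonian isotopy still carries $\gamma_0$ to $\gamma_1$. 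You instead kill the whole class $\Flux(\Phi) \in H^1(S_g,\R)$ by a $\gamma_0$-preserving symplectic flow and then quote the flux criterion as a black box. Both proofs pivot on the same geometric fact --- a symplectic vector field whose dual one-form vanishes on $T\gamma_0$ is tangent to $\gamma_0$ because $\gamma_0$ is Lagrangian --- but your version is cleaner in that it avoids reproving the McDuff--Salamon construction, at the cost of having to realize the full class $-\Flux(\Phi)$ by a closed form vanishing along $\gamma_0$, which your step with $\alpha = \alpha_0 - d\tilde g$ handles correctly.

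The one step you should tighten is the very first one. Write $\beta_s := \gamma_s^*\left( \iota_{\partial_s \gamma_s}\omega \right)$. For a separating curve the restriction map $H^1(S_g,\R) \to H^1(\gamma_s,\R)$ is zero, so $\beta_s$ extends to a closed one-form on $S_g$ only if it is exact on $\gamma_s$, i.e.\ only if the isotopy sweeps zero area at each instant $s$; equality of the enclosed areas at $s=0$ and $s=1$ gives only $\int_0^1 \bigl( \int_{\gamma_s} \beta_s \bigr) ds = 0$, not the pointwise vanishing you need. Indeed, no symplectomorphism can carry $\gamma_0$ onto an intermediate separating curve $\gamma_s$ whose complementary regions have different areas, so a symplectic ambient extension of the \emph{given} isotopy may simply not exist. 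You must first replace the isotopy by an exact one, e.g.\ by composing with the flow of a vector field transverse to the curves so as to normalize the enclosed area along the way --- possible precisely because the endpoints bound equal areas --- or follow the paper and extend smoothly first and then Moser-correct. With that adjustment the rest of your argument goes through.
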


\begin{proof}
We fix an isotopy $(\gamma_t)_{t \in [0,1]}$ from $\gamma_0$ to $\gamma_1$. 

Let $\phi^t : S_g \to S_g$ be a global isotopy of diffeomorphisms such that $\phi^t \circ \gamma_0 = \gamma_t$. Choose a symplectic embedding $\psi : S^1 \times (- \varepsilon, \varepsilon) \to S_g$ such that $\psi(s,0) = \gamma_0(s)$. In the coordinates $(s,u) \in S^1 \times (- \varepsilon, \varepsilon)$ there is a smooth function $f > 0$ such that $(\phi^t)^* \omega = f(s,u,t) ds \wedge du$. We let $\beta : S^1 \times (-\varepsilon,\varepsilon) \times [0,1] \to \R $ be a smooth function such that
\begin{itemize}
	\item we have $\beta(s,u,t) = \frac{1}{f(s,u,t)}$ for $\norm{u} < \frac{\varepsilon}{3}$,
	\item we have $\beta(s,u,t) = 1$ for $\norm{u} > \frac{2\varepsilon}{3}$.
\end{itemize}
Define, for $t \in [0,1]$
\[\psi^t : 
\begin{array}{ccc}
 S^1 \times (-\varepsilon,\varepsilon) & \to & S^1 \times (-\varepsilon,\varepsilon) \\
 (s,u) & \mapsto & \left(s,u\beta(u,s,t) \right)
 \end{array}
 \]	
 Since this map coincides with the identity on the open set $\ens{(s,u)}{\norm{u} > \frac{2\varepsilon}{3}}$, it extends to a diffeomorphism $\psi^t : S_g \to S_g$.

Consider $\chi^t = \phi^t \circ \psi^t$ and let $\omega_t = (\chi^t)^* \omega$. This isotopy satisfies $\chi^t \circ \gamma_0 = \gamma_t$. Moreover, the family $(\omega_t)_{t \in [0,1]}$ is constant along $\gamma_0$. One can easily apply Moser's trick to find a family of transformation $\Phi^t$ such that $(\Phi^t)^* \omega_t = \omega$ and $\Phi^t(\gamma_0) = \gamma_0$.

The composition $\chi^t \circ \Phi^t$ is a symplectic isotopy $\Psi^{t} : S_g \to S_g$ such that
\[ \Psi^t \circ \gamma_0 = \gamma_t . \]
Moreover, this has zero flux along the path $\gamma_0$. We call $X_t$ the symplectic vector field generated by $\Psi^t$.

 We adapt the construction of \cite[Theorem 10.2.5]{McDS17} to obtain a Hamiltonian isotopy between $\gamma_0$ and $\gamma_1$.

The difference of holonomy between $\gamma_1$ and $\gamma_0$ is
\begingroup
\allowdisplaybreaks
\begin{align*}
\Hol_A(\gamma_1) - \Hol_A(\gamma_0) & = \int_0^1 \int_0^1 \omega \left(\frac{d}{dt} (\Psi^t \circ \gamma_0) (s), \frac{d}{ds} (\Psi^t \circ \gamma_0)(s) \right) dt ds \\
& = \int_0^1 \int_0^1 \omega \left( X_t \circ \Psi^t \circ \gamma_0(s), d \Psi^t ( \gamma_0' (s) ) \right ) dt ds \\
& = \int_0^1 \int_0^1 \omega \left( \left(\Psi^t \right)^* X_t (\gamma_0(s)), \gamma_0'(s) \right) dt ds \\
& = \int_{S^1} \gamma_0^* \left[ \int_0^1 \omega \left( \left(\Psi^t \right)^* X_t, \cdot \right) dt \right ]
\end{align*}
\endgroup
So the one-form 
\[ \gamma_0^* \left[ \int_0^1 \omega \left( \left(\Psi^t \right)^* X_t, \cdot \right) dt \right ] \] 
is exact on $S^1$. Hence, there is a smooth function
\[ F : \Im(\gamma_1) \to \R \]
such that
\[ \forall v \in T (\Im(\gamma_0)), \ \int_0^1 \omega \left( \left(\Psi^t \right)^* X_t, v \right) dt = -dF \cdot d\Psi^1 (v). \]
We extend $F$ to a smooth function 
\[ F: S_g \to \R. \] 
We define a new isotopy $\left( \Phi^t \right)_{t \in [0,1]}$ by
\[ \Phi^t = \left\{
\begin{array}{cc}
	\psi^{2t} & \text{ for } t \in \left[0, \frac{1}{2}\right] \\
	\phi_F^{1-2t}  \circ \psi^1 &\text{ for } t \in \left[\frac{1}{2},1\right]
\end{array}
 \right. .\] 
 Call $Y_t$ its associated vector field. We compute for $v \in \Im(\gamma_0)$,
 \begin{align*}
 \int_0^1 \omega \left(\left(\Phi^t \right)^* Y_t, v \right) &= \int_0^{\frac{1}{2}} \omega \left( \left( \Psi^{2t} \right)^* X_{2t}, v \right) 2dt - \int_{\frac{1}{2}}^1 \omega \left( \left( \phi_F^{1-2t} \circ \Psi^1 \right)^* X_F, v \right) 2dt  \\
 & = \int_0^1 \omega \left( (\Psi^t)^* X_t, v \right) dt - \int_{\frac{1}{2}}^1 \omega \left( X_{F \circ \Psi^1}, v \right) 2dt \\
 & = -d \left(F \circ \Psi^1 \right) (v) + d \left(F \circ \Psi^1 \right) (v). 
 \end{align*}
So 
\begin{equation}
\int_0^1 \omega \left(\left(\Phi^t \right)^* Y_t, v dt \right) = 0 
\label{eqn:equation flux}	
\end{equation}
We let
\[ Z_t = - \int_0^t \left( \Phi^\lambda \right)^*Y_\lambda d \lambda \]
and $\theta_t^s$ be the flow associated to $Z_t$. Then the isotopy $\mu^t = \Phi^t \circ \theta^1_t$ is Hamiltonian (see the proof of \cite[Theorem 10.2.5]{McDS17}).

Moreover, from \ref{eqn:equation flux}, we have for all $v \in T(\Im(\gamma_0))$
\[ \omega(Z_1,v) = 0 .\]
Hence, $Z_1$ is tangent to $\Im(\gamma_0)$. Therefore, $\theta_1^1(\gamma_0) \subset \Im(\gamma_0)$. We deduce
\begin{align*} 
\mu^1 (\Im(\gamma_0)) & = \Phi^1(\theta_1^1(\Im(\gamma_0))) \\
 & = \phi_F^{-1} (\Im(\gamma_1)) .
\end{align*}
So the Hamiltonian isotopy $\left( \phi_F^t \circ \mu^t \right)_{t \in [0,1]}$ maps the image of the curve $\gamma_0$ to the image of $\gamma_1$.
\end{proof}

Here is the main result of this section.

\begin{prop}
\label{prop:twocylinderofsameareahavethesameclass}
Let $\gamma_1, \gamma_2$ (resp. $\tilde{\gamma}_1, \tilde{\gamma}_2$) be two isotopic non-separating embedded curves such that
\[ \Hol_A(\gamma_2) - \Hol_A(\gamma_1) = \Hol_A(\tilde{\gamma}_2) - \Hol_A(\tilde{\gamma}_1) .\]
Then, in $\Gimmunob$, we have
\[ [\gamma_2] - [\gamma_1] = [\tilde{\gamma}_1] - [\tilde{\gamma}_2]. \] 	
\end{prop}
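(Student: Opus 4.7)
The strategy is a reduction, via the action of the symplectic mapping class group on $\Gimmunob$, to the special case $\gamma_1 = \tilde{\gamma}_1$. Once we are in that case, the curves $\gamma_2$ and $\tilde{\gamma}_2$ are both isotopic to $\gamma_1 = \tilde{\gamma}_1$ and the hypothesis $\Hol_A(\gamma_2) - \Hol_A(\gamma_1) = \Hol_A(\tilde{\gamma}_2) - \Hol_A(\tilde{\gamma}_1)$ forces $\Hol_A(\gamma_2) = \Hol_A(\tilde{\gamma}_2)$. Lemma \ref{lemma:holonomyequalshamiltonianinvariant} then makes $\gamma_2$ and $\tilde{\gamma}_2$ Hamiltonian isotopic, and the Lagrangian suspension construction recalled in Remark \ref{rk:Lagrangiansuspension} produces an \emph{embedded} Lagrangian cobordism between them. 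Any embedded cobordism is automatically unobstructed (there are no double points, hence no teardrops), so $[\gamma_2] = [\tilde{\gamma}_2]$ in $\Gimmunob$, which (up to the sign convention in the statement) is the conclusion.

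To perform the reduction, I would first invoke the change of coordinates principle together with Lickorish's theorem (Theorem \ref{Theo:Lickorishgeneratorsofthe MCG}): since $\gamma_1$ and $\tilde{\gamma}_1$ are both non-separating embedded curves, there exists a product $\phi$ of Dehn twists, and in particular a symplectomorphism of $S_g$, such that $\phi(\gamma_1)$ is smoothly isotopic to $\tilde{\gamma}_1$. Next I would adjust $\phi$ by post-composing with a symplectomorphism supported in a Weinstein neighborhood $\psi : S^1 \times (-\varepsilon,\varepsilon) \to S_g$ of $\tilde{\gamma}_1$, which acts by a translation in the $p$-direction cut off to the identity outside the neighborhood; such a modification shifts the holonomy of a curve of the form $\psi(S^1 \times \{t\})$ by a controllable amount, so after shrinking $\varepsilon$ if necessary I may arrange $\Hol_A(\phi(\gamma_1)) = \Hol_A(\tilde{\gamma}_1)$. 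Lemma \ref{lemma:holonomyequalshamiltonianinvariant} then places $\phi(\gamma_1)$ and $\tilde{\gamma}_1$ in the same Hamiltonian isotopy class, so their images in $\Gimmunob$ agree.

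It remains to see that $\phi$ induces a well-defined map on $\Gimmunob$: this is the analogue of Lemma \ref{lemma:theactionoftheMCGiswelldefined}, and is immediate because $\phi$ is symplectic (so it preserves the no-teardrops condition for the adapted almost complex structure $i \oplus \phi_\ast j$ on $\C \times S_g$) and because it sends isotopies to isotopies. Applying $\phi$ gives
\[
[\gamma_2] - [\gamma_1] \;=\; [\phi(\gamma_2)] - [\phi(\gamma_1)] \;=\; [\phi(\gamma_2)] - [\tilde{\gamma}_1],
\]
and Proposition \ref{prop:holonomyproperties}(i) applied to the isotopy $\phi \circ F$ obtained from an isotopy $F$ between $\gamma_1$ and $\gamma_2$ gives $\Hol_A(\phi(\gamma_2)) - \Hol_A(\phi(\gamma_1)) = \Hol_A(\gamma_2) - \Hol_A(\gamma_1)$. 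Combining this with the hypothesis and $\Hol_A(\phi(\gamma_1)) = \Hol_A(\tilde{\gamma}_1)$ yields $\Hol_A(\phi(\gamma_2)) = \Hol_A(\tilde{\gamma}_2)$; since $\phi(\gamma_2)$ and $\tilde{\gamma}_2$ are smoothly isotopic (both are isotopic to $\tilde{\gamma}_1$), the first paragraph applies to this pair and gives $[\phi(\gamma_2)] = [\tilde{\gamma}_2]$, completing the argument.

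The main obstacle I anticipate is the holonomy adjustment step, namely the concrete construction of the symplectomorphism supported in a Weinstein neighborhood that shifts the holonomy by a prescribed amount. Care is required because after the Dehn twist composition $\phi$, the curve $\phi(\gamma_1)$ is only isotopic, not Hamiltonian isotopic, to $\tilde{\gamma}_1$, so one first has to bring $\phi(\gamma_1)$ into the standard form $\psi(S^1 \times \{t_0\})$ inside the Weinstein neighborhood by a preliminary ambient isotopy supported in a slightly larger tubular neighborhood, and then verify that the shift $(q,p) \mapsto (q, p - t_0)$ extends to a genuine symplectomorphism of $S_g$ — this is a standard but non-vacuous check.
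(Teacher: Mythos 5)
The special case you treat in your first paragraph --- all four curves in the same isotopy class, so that after normalizing one has $\Hol_A(\gamma_2)=\Hol_A(\tilde{\gamma}_2)$, whence Lemma \ref{lemma:holonomyequalshamiltonianinvariant} plus the Lagrangian suspension give equality in $\Gimmunob$ --- is correct, and it is also how the paper closes its own argument. The reduction to that case, however, has a genuine gap at the line
\[
[\gamma_2] - [\gamma_1] \;=\; [\phi(\gamma_2)] - [\phi(\gamma_1)].
\]
Well-definedness of the mapping class group action only gives you a group endomorphism $\phi_*$ of $\Gimmunob$ with $\phi_*[\gamma]=[\phi\circ\gamma]$; it does not give $\phi_*(x)=x$, and for $x=[\gamma_2]-[\gamma_1]$ that identity is essentially the statement being proved. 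In $\Gimmunob$, unlike in $\Gimm$, even a diffeomorphism isotopic to the identity can move the class of a curve --- isotopic curves with different holonomies are \emph{not} equal, which is exactly the $\R$-summand this proposition is designed to control --- and for the genuinely nontrivial mapping class $\phi$ you need (since $\gamma_1$ and $\tilde{\gamma}_1$ may lie in different isotopy classes), Proposition \ref{prop:ActionofaDehntwistonGimmunob} shows that $[\phi(\gamma)]$ differs from $[\gamma]$ by an uncontrolled term $i(x)$ even after the homological contributions cancel. That proposition is moreover proved \emph{after} the present one and depends on it (via Corollary \ref{coro:iisinjective}), so it cannot be invoked here without circularity.

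The paper avoids this by never transporting a class through a mapping class: after the change of coordinates principle puts the disjoint curves in the standard configuration of Figure \ref{figure:FigureM}, it performs explicit surgeries of $\gamma_1^{-1},\gamma_2,\tilde{\gamma}_1$ with auxiliary curves, each at a single transverse intersection point, so the resulting cobordisms are \emph{embedded} and hence tautologically unobstructed; the output $\gamma_7$ is isotopic to $\tilde{\gamma}_1$ with $\Hol_A(\gamma_7)=\Hol_A(\tilde{\gamma}_2)$, and only then is the Hamiltonian-isotopy step used. The general case follows by chaining through pairwise disjoint non-separating curves. A secondary point: your proposed holonomy adjustment, a $p$-translation cut off inside a Weinstein neighborhood, is not symplectic (a compactly supported symplectomorphism of the annulus cannot change the area between the core circle and the boundary --- this is the flux obstruction); this particular step is repairable by a global symplectomorphism obtained from Moser's argument, but the unjustified invariance $\phi_*(x)=x$ is not repairable without an argument of the paper's type.
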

	 
\begin{proof}
First, we assume that the curves $\gamma_1$, $\gamma_2$, $\tilde{\gamma}_1$, $\tilde{\gamma}_2$ are pairwise disjoint. Furthermore, we assume that the pair $(\gamma_1, \gamma_2)$ does not bound any oriented surface. Then, by the change of coordinates principle (\cite[1.3]{FM012}), we are in the situation of Figure \ref{figure:FigureM}. We do the successive surgeries indicated in this figure. The third red curve is a Hamiltonian perturbation of $\gamma_1$ while the fourth red curve is a Hamiltonian perturbation of $\gamma_3$. This process produces a curve $\gamma_7$ isotopic to $\tilde{\gamma}_1$.

\input{Figures/FigureM.tex}

Moreover, we compute
\begin{align*}
\Hol_A(\gamma_7) & = \Hol_A(\gamma_6) - \Hol_A(\gamma_3) \\
& = \Hol_A(\gamma_5) - \Hol_A(\gamma_1) - \Hol_A(\gamma_3) \\
& = \Hol_A(\gamma_4) + \Hol_A(\gamma_2) - \Hol_A(\gamma_1) - \Hol_A(\gamma_3) \\
& = \Hol_A(\tilde{\gamma}_1) + \Hol_A(\gamma_3) + \Hol_A(\gamma_2) - \Hol_A(\gamma_1) - \Hol_A(\gamma_3) \\
& = \Hol_A(\tilde{\gamma}_1) - \Hol_A(\gamma_1) + \Hol_A(\gamma_2) \\
& = \Hol_A(\tilde{\gamma}_2)
\end{align*}
since $\Hol_A(\gamma_2) - \Hol_A(\gamma_1) = \Hol_A(\tilde{\gamma}_2) - \Hol_A(\tilde{\gamma}_1 )$. Hence, $\gamma_7$ is Hamiltonian isotopic to $\tilde{\gamma}_2$ by Proposition \ref{prop:holonomyproperties}. Since two Hamiltonian isotopic curves are embbedded Lagrangian cobordant (see Remark \ref{rk:Lagrangiansuspension}), we have
\[ \tilde{\gamma_2} = \gamma_7 . \]
in $\Gimmunob$.

The surgery procedure produces several Lagrangian cobordisms that are all embedded and oriented. We can glue these together to obtain an embedded oriented Lagrangian cobordism $(\gamma_3^{-1},\tilde{\gamma}_1,\gamma_3,\gamma_2,\gamma_1^{-1}) \to \gamma_7$. Hence, in $\Gimmunob$
\begin{align*}
	\tilde{\gamma}_2 & =  \gamma_7 \\
		& =  - \gamma_3 + \tilde{\gamma_1} + \gamma_3 + \gamma_2 - \gamma_1 \\
		& =  \tilde{\gamma}_1. 
\end{align*}
In the general case, by \cite[Theorem 4.3]{FM012} there exists a sequence of non-separating embedded curves $ \gamma_1 = \alpha_1, \ldots, \alpha_k = \gamma_2$ such that for each $i \in \{1, \ldots, k\}$, $\alpha_i$ and $\alpha_{i+1}$ have no intersection points. Now, apply the first case iteratively to conclude.
\end{proof}

There is a direct definition for the application $i$. Let $\gamma$ be a non-separating embedded curve. There is $\varepsilon > 0$ such that if $\norm{x} < \varepsilon$, there exists an embedded curve $\tilde{\gamma}$ isotopic to $\gamma$ such that 
\[ \Hol_A(\tilde{\gamma}) - \Hol_A(\gamma) = x . \]
Now, let $x \in \R$ and $x_1, \ldots, x_m \in \R$ with $\left \{ \begin{array}{c} x_1 + \ldots x_m = x \\ \forall i \in \{ 1 \ldots m \}, \ \norm{x_i} < \varepsilon \end{array} \right.$.

 For $i = 1 \ldots m$, we choose an embedded curve $\gamma_i$ isotopic to $\gamma$ such that 
 \[ \Hol_A(\gamma_i) - \Hol_A(\gamma) = x_i. \]
We put
\[ i(x) = \sum_{i=1}^m \left( [\gamma_i] - [\gamma] \right). \]

\begin{coro}
\label{coro:iisinjective}
In the setting above, this defines an injective group morphism
\[ i : \R \to \Gimmunob. \]	
\end{coro}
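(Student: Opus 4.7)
The plan is to verify, in order, that $i$ is well-defined, that it is additive (hence a group morphism), and that it is injective. The key inputs are Proposition~\ref{prop:twocylinderofsameareahavethesameclass} together with the holonomy morphism $\Hol_A : \Gimmunob \to \R$ from Proposition~\ref{prop:holonomyproperties}; granted those, the argument is largely formal.

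For well-definedness, the construction of $i(x)$ involves two a priori choices: a decomposition $x = x_1 + \cdots + x_m$ with $\norm{x_i} < \varepsilon$, and, for each $i$, an embedded curve $\gamma_i$ isotopic to $\gamma$ with $\Hol_A(\gamma_i) - \Hol_A(\gamma) = x_i$. For the second choice, if $\gamma_i$ and $\gamma_i'$ are two such curves then Proposition~\ref{prop:twocylinderofsameareahavethesameclass} applied to the pairs $(\gamma, \gamma_i)$ and $(\gamma, \gamma_i')$ gives $[\gamma_i] - [\gamma] = [\gamma_i'] - [\gamma]$ in $\Gimmunob$. For independence of the decomposition, the essential step is a local additivity statement: whenever $\norm{x}, \norm{y}, \norm{x+y} < \varepsilon$ and $\gamma_x, \gamma_y, \gamma_{x+y}$ are associated curves,
\[
[\gamma_{x+y}] - [\gamma] = \bigl( [\gamma_x] - [\gamma] \bigr) + \bigl( [\gamma_y] - [\gamma] \bigr),
\]
which rewrites as $[\gamma_{x+y}] - [\gamma_x] = [\gamma_y] - [\gamma]$ and again follows from Proposition~\ref{prop:twocylinderofsameareahavethesameclass} because both holonomy differences equal $y$. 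Given two decompositions of the same $x$, I would pass to a common refinement, subdividing each piece further so that all terms remain below $\varepsilon$, and iterate local additivity to conclude that the two decompositions yield the same class.

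Additivity is now immediate from the definition: given $x, y \in \R$, concatenating chosen decompositions of $x$ and $y$ produces a valid decomposition of $x + y$, whence $i(x + y) = i(x) + i(y)$. For injectivity I would compose with $\Hol_A : \Gimmunob \to \R$. On each generator $[\gamma_i] - [\gamma]$ it returns $\Hol_A(\gamma_i) - \Hol_A(\gamma) = x_i$, so
\[
\Hol_A \bigl( i(x) \bigr) = \sum_{i=1}^m \bigl( \Hol_A(\gamma_i) - \Hol_A(\gamma) \bigr) = \sum_{i=1}^m x_i = x,
\]
that is $\Hol_A \circ i = \Id_\R$, which forces $i$ to be injective. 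The only substantive ingredient has already been established by Proposition~\ref{prop:twocylinderofsameareahavethesameclass}; the remaining subtlety, the common-refinement argument for independence of the decomposition, is entirely routine.
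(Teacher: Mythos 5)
Your argument is correct and is essentially the paper's own proof (which consists of exactly your two observations: Proposition~\ref{prop:twocylinderofsameareahavethesameclass} gives independence of the choices, and $\Hol_A \circ i = \Id_\R$ gives injectivity), with the routine refinement/additivity bookkeeping spelled out. One small omission: you list two a priori choices, but there is a third, namely the base curve $\gamma$ itself, and independence of that choice is also part of well-definedness; it follows from the same Proposition~\ref{prop:twocylinderofsameareahavethesameclass}, whose hypotheses were set up precisely so that the pair $(\tilde{\gamma}_1,\tilde{\gamma}_2)$ need not be isotopic to $(\gamma_1,\gamma_2)$.
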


\begin{proof}
It is easy to see, using Proposition \ref{prop:twocylinderofsameareahavethesameclass}, that $i(x)$ does not depend on the choice of $x_1, \ldots, x_m$, nor on the choice of $\gamma$. Therefore, it defines a group morphism $i : \R \to \Gimmunob$.

Moreover, notice that by definition 
\[ \Hol_A(i(x)) = x, \]  
so $i$ is injective.
\end{proof}

\subsection{Surgery of immersed curves and obstruction}
\label{subsection:surgeryofimmersedcurves}
Throughout this subsection we let $\alpha : S^1 \to S_g$ and $\gamma : S^1 \to S_g$ be immersed curves such that
\begin{enumerate}[label = (\roman*)]	
	\item $\alpha$ is embedded,
	\item $\gamma$ is unobstructed,
	\item $\alpha$ and $\gamma$ are transverse. 	
\end{enumerate}

A \emph{bigon} is an immersed polygon with one boundary arc on $\alpha$ and one boundary arc on $\gamma$. Notice that a bigon is \emph{not necessarily injective}. We say that $\alpha$ and $\gamma$ are in \emph{minimal position} if there are no bigons between $\alpha$ and $\gamma$. There is a useful criterion to detect minimal position for transverse curves.

\begin{lemma}
\label{lemma:criterionforminimalposition}
In the above setting, if there are $s_0 \neq \overline{s_0}$ and $t_0 \neq \overline{t_0}$ such that the loop $\gamma_{\vert [t_0,\overline{t_0}]} \cdot \alpha_{\vert [s_0,\overline{s_0}]}^{-1}$ is homotopic to a constant, then $\gamma$ and $\alpha$ are not in minimal position.
\end{lemma}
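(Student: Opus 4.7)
The approach is to adapt the classical bigon criterion for simple closed curves (see \cite[Proposition 1.7]{FM012}) to the present immersed setting, working in the universal cover. Let $p\colon \tilde{S}_g \to S_g$ denote the universal covering. Since $\alpha$ is embedded and $\gamma$ is unobstructed, each connected component of $L_\alpha := p^{-1}(\alpha)$ and of $L_\gamma := p^{-1}(\gamma)$ is a properly embedded line in $\tilde{S}_g$. The null-homotopy hypothesis means that some lift $\tilde\alpha$ of $\alpha$ and some lift $\tilde\gamma$ of $\gamma$ share two transverse intersection points $p_1, p_2$ (the lifts of the endpoints of the given arcs), and these cut off a compact topological disk $D \subset \tilde{S}_g$ bounded by arcs $A \subset \tilde\alpha$ and $G \subset \tilde\gamma$.

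Among all such disks in $\tilde{S}_g$ bounded by an arc of $L_\alpha$ and an arc of $L_\gamma$, I would choose one of minimal complexity, where the complexity counts the number of arc components of $(L_\alpha \cup L_\gamma)$ crossing the interior of $D$. If this complexity is zero, then $p|_{D}$ is an immersion of a disk whose boundary projects to one arc on $\alpha$ and one arc on $\gamma$, providing the desired immersed bigon and contradicting the assumption of minimal position.

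It remains to verify that this minimal complexity is indeed zero. For this I would examine an outermost interior arc $\sigma$ in $D$ and perform a case-check. If $\sigma \subset L_\alpha$, then since distinct lifts of the embedded curve $\alpha$ are disjoint, both endpoints of $\sigma$ must lie on $G$, and $\sigma$ together with a sub-arc of $G$ bounds a strictly smaller disk bounded by an arc of $L_\alpha$ and an arc of $L_\gamma$, contradicting minimality. The symmetric case, where $\sigma \subset L_\gamma$ with both endpoints on $A$, is handled analogously. The remaining sub-cases, in which $\sigma$ lies in a lift of $\gamma$ distinct from $\tilde\gamma$ with both endpoints on $G$, or with one endpoint on each of $A$ and $G$, constitute the main obstacle: these do not immediately yield a smaller $(\alpha,\gamma)$-disk. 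Here one must exploit the unobstructedness hypothesis: the plan is to show that an innermost such region projects to a teardrop of $\gamma$ in $S_g$, contradicting unobstructedness, so that these cases cannot occur in the minimizer. This completes the innermost-disk reduction and yields the required bigon.
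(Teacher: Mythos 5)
Your opening move --- lifting to the universal cover and using that $\tilde\alpha$ and $\tilde\gamma$ are properly embedded lines --- matches the paper's. But there are two problems. First, your claim that the two lifted intersection points ``cut off a compact topological disk $D$'' is unjustified as stated: the arcs $A\subset\tilde\alpha$ and $G\subset\tilde\gamma$ may intersect each other at further interior points, in which case $A\cup G$ is not an embedded loop and bounds no disk. The paper's proof is devoted precisely to this point: transversality rules out accumulation of intersection points, so $A\cap G$ is finite and one can pass to an \emph{innermost} pair of intersection points, for which the corresponding loop is embedded and hence bounds an embedded disk in $\tilde S_g\cong\R^2$. Your complexity minimization does not address this, since it only counts arcs crossing the interior of a disk you have not yet produced.

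Second, your ``main obstacle'' --- clearing the interior of $D$ of arcs of other lifts --- is both unnecessary and, as you propose to resolve it, incorrect. It is unnecessary because the paper defines a bigon to be an \emph{immersed} polygon that is explicitly ``not necessarily injective''; once you have an embedded disk in $\tilde S_g$ bounded by one arc of $\tilde\alpha$ and one arc of $\tilde\gamma$, composing with the covering projection (a local diffeomorphism) already yields the required immersed bigon, no matter how other lifts cross its interior. It is incorrect because an innermost arc $\sigma$ of a lift $\tilde\gamma'\neq\tilde\gamma$ with both endpoints on $G$ projects not to a teardrop but to an immersed disk with \emph{two} corners at double points of $\gamma$, i.e.\ a self-bigon of $\gamma$; unobstructedness forbids only teardrops (self-intersections of a single lift), not intersections between distinct lifts, so this configuration cannot be excluded and your minimization would get stuck there.
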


\begin{proof}
The hypothesis implies that the loop $\gamma_{\vert [t_0,\overline{t_0}]} \cdot \alpha_{\vert [s_0,\overline{s_0}]}^{-1}$ lifts to the universal cover $\tilde{S}_g$ of $S_g$. Denote this loop by $f : S^1 \to \tilde{S}_g$. 

So there are two lifts $\tilde{\alpha} : \R \to \tilde{S}_g$ and $\tilde{\gamma} : \R \to \tilde{S}_g$ of $\alpha$ and $\gamma$ respectively such that
\[ \tilde{\gamma}_{\vert [t_0,\overline{t_0}]} \cdot \tilde{\alpha}_{\vert [s_0,\overline{s_0}]}^{-1} = f .\]

Assume there are
\begin{itemize}
	\item two increasing sequences $(s_n)_{n \in \N}$, $(t_n)_{n \in \N}$,
	\item two decreasing sequences $(\overline{s}_n)_{n \in \N}$, $(\overline{t}_n)_{n \in \N}$,
\end{itemize}
such that
\begin{align*}
\forall n \in \N, \ s_0 < s_n < \overline{s}_n < \overline{s}_0, \\
\forall n \in \N, \ t_0 < t_n < \overline{t}_n < \overline{t}_0, 	
\end{align*}
and such that the path $\tilde{\gamma}_{\vert [t_n,\overline{t_n}]} \cdot \tilde{\alpha}_{\vert [s_n,\overline{s_n}]}^{-1}$ is a loop.

If $\overline{s}_n - s_n \to 0$, then the adjacent sequences $(s_n)$ and $(\overline{s}_n)$ converge to a common limit, say $l$. So there is a sequence of intersection points between $\tilde{\alpha}$ and $\tilde{\gamma}$ which accumulates at $\tilde{\alpha}(l)$. This is absurd since $\alpha$ and $\gamma$ are transverse. We conclude that the sequence $(\overline{s}_n - s_n)$ has a positive limit. 
The same argument shows that $(\overline{t}_n - t_n)$ has a positive limit.

From this we infer that there are $s < \overline{s}$ and $t < \overline{t}$ such that the path
\[ \tilde{\gamma}_{\vert [t,\overline{t}]} \cdot \tilde{\alpha}_{\vert [s,\overline{s}]}^{-1} \]
is an embedded loop. Hence, this bounds an embedded bigon, say $u$. Now since the projection $p : \tilde{S}_g \to S_g$ is an immersion, the map $p \circ u$ is an immersed bigon with boundary arcs on $\gamma$ and $\alpha$. 
\end{proof}

Moreover, we fix $x$ an intersection point of degree $1$ between $\gamma$ and $\alpha$. 

\begin{prop}
\label{prop:asurgeryisnonobstructed}
We let $\alpha$ and $\gamma$ be as in the beginning of Subsection \ref{subsection:surgeryofimmersedcurves}. Further, we assume
\begin{enumerate}[label=(\roman*)]
	\item the curves $\gamma$ and $\alpha$ are in minimal position,
	\item the curve $\gamma \#_x \alpha$ is not homotopic to a constant.	
\end{enumerate}
 Then, the surgery $\gamma \#_x \alpha $ is unobstructed.
\end{prop}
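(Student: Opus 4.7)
I would verify the two conditions of Definition \ref{defi:unobimmersion} for $\beta := \gamma \#_x \alpha$. Condition (i) is straightforward: the surgery is a local modification in a Darboux chart around $x$, so the double points of $\beta$ are exactly the self-intersections of $\gamma$ (transverse and without triple points since $\gamma$ is unobstructed) together with the intersection points of $\gamma$ and $\alpha$ distinct from $x$ (transverse by hypothesis); $\alpha$ being embedded contributes nothing, and no new triple point can appear provided the surgery chart is chosen small enough.

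For Condition (ii), let $p \colon \widetilde{S}_g \to S_g$ denote the universal cover. Since $\beta$ is not null-homotopic and $\pi_1(S_g)$ is torsion-free, the class $[\beta]$ has infinite order, so any lift $\tilde{\beta} \colon \R \to \widetilde{S}_g$ (satisfying $\tilde{\beta}(s+1) = [\beta] \cdot \tilde{\beta}(s)$) is automatically proper. The real task is to prove injectivity of $\tilde{\beta}$, which is equivalent to showing that for every self-intersection $\beta(s_1) = \beta(s_2)$ with $s_1 \neq s_2 \in [0,1)$, the sub-loop $\beta|_{[s_1,s_2]}$ is not null-homotopic in $S_g$.

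I would parametrize $\beta$ so that $\beta|_{[0,1/2]}$ traces $\gamma$ and $\beta|_{[1/2,1]}$ traces $\alpha$, with the two surgery handles concentrated near $t=0$ and $t=1/2$. A self-intersection of $\beta$ then falls into one of two cases. If both $s_1, s_2$ lie on the $\gamma$-half, then $\beta|_{[s_1,s_2]}$ is homotopic to a sub-arc of $\gamma$ joining two pre-images of a double point of $\gamma$, and this cannot be null-homotopic since $\gamma$ is itself unobstructed. If $s_1$ lies on the $\gamma$-half and $s_2$ on the $\alpha$-half, then $\beta|_{[s_1,s_2]}$ is homotopic to a concatenation $\gamma|_{[t_1, x_\gamma]} \cdot \alpha|_{[x_\alpha, t_2]}$, where $\gamma(t_1) = \alpha(t_2) = \beta(s_1) = \beta(s_2)$; the contrapositive of Lemma \ref{lemma:criterionforminimalposition} then forbids this loop from being null-homotopic, because $\gamma$ and $\alpha$ are assumed in minimal position. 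The $\alpha$-$\alpha$ case cannot occur since $\alpha$ is embedded.

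The main delicate point will be the second case: matching the sub-loop obtained from a crossing of $\beta$ with the precise format required by Lemma \ref{lemma:criterionforminimalposition}, and in particular checking that the parameters involved are genuinely distinct on the respective circles (this uses that the self-intersection lies outside the small surgery region around $x$, so $t_1 \neq x_\gamma$ on $S^1$ and $t_2 \neq x_\alpha$ on $S^1$). Once this identification is in place, the lemma provides the required contradiction with minimal position, and combining the two cases with the properness observation shows that $\tilde{\beta}$ is a proper embedding, completing the proof.
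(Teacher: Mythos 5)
Your overall strategy -- verify Definition \ref{defi:unobimmersion} directly by showing the lift $\tilde{\beta}$ of $\beta = \gamma \#_x \alpha$ to the universal cover is a proper embedding, reducing this to a $\pi_1$-condition on sub-loops at the double points of $\beta$ -- is in the right spirit, and your identification of the double points in condition (i) and the properness argument are fine. But there is a genuine gap in the reduction. The lift $\tilde{\beta}$ fails to be injective exactly when some sub-path $\beta|_{[s_1,s_2]}$ with $s_1 < s_2 \in \R$ is null-homotopic, and $s_2 - s_1$ may exceed $1$: writing $s_2 - s_1 = k + r$ with $k \in \Z_{\geqslant 0}$ and $0 < r < 1$, the condition is that the \emph{short} sub-loop between the two preimages of the double point equals $[\beta]^{-k}$ in $\pi_1(S_g)$, not merely that it equals the identity. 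Your two cases only treat $k = 0$. For $k \neq 0$ the teardrop in the universal cover projects to a disk whose boundary runs through the surgery handle several times, and neither the unobstructedness of $\gamma$ (which controls powers of $[\gamma]$, not of $[\beta] $) nor Lemma \ref{lemma:criterionforminimalposition} (which only forbids the trivial relation between one $\gamma$-arc and one $\alpha$-arc) rules these out.

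These wrapping cases are precisely where the paper's proof does its real work: after producing an immersed teardrop and converting it into a polygon $v$ with one corner at $y$ and an odd number of corners at $x$, the boundary relation reads $(\beta_- \cdot \beta_+)^k \beta_- = e$ (or its analogue), and one invokes Jaco's theorem that the subgroup generated by $\beta_-$ and $\beta_+$ is free. Freeness forces a relation $\beta_\pm = \beta_\mp^m$, the integer equation $k(m+1)+m=0$ (resp. $k(m+1)+1=0$) leaves only the solutions $(m,k)=(0,0)$ and $(m,k)=(-2,-2)$ (resp. $(1,-2)$, $(-1,0)$), and the nontrivial solutions are eliminated by a combinatorial analysis of the three corners of $v$ at $x$, which produces a bigon between $\alpha$ and $\gamma$ and contradicts minimal position. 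Without an argument covering $k \neq 0$ -- or some substitute for the free-group input -- your proof establishes only the easy half of the statement.
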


\begin{proof}[Proof of Proposition \ref{prop:asurgeryisnonobstructed}]	
The proof will proceed by contradiction. We assume that the surgery is obstructed.

Let us start with a quick outline of the proof of the Proposition.
\begin{enumerate}
	\item In Lemma \ref{lemma:Existenceofanimmersedteardrop}, we show that there is an \emph{immersed} teardrop $u$ on $\gamma \#_x \alpha$.
	\item In Lemma \ref{lemma:structureoftheteardropnearthesurgeredpoint} and Figure \ref{figure:behaviouroftheteardroparoundsurgeredpoint}, we describe precisely the behavior of the teardrop around the surgered point. From this we construct an immersed polygon $v$ with boundary on $\alpha$ and $\gamma$ in Lemma \ref{lemma:constructionofapolygonvwithcorners}.
	\item Using algebraic properties of the fundamental group of $S_g$, we bound the number of corners of the polygon $v$. Then by considering the connected components of $\D \backslash v^{-1}(\alpha)$, we conclude that there is a bigon between $\gamma$ and $\alpha$. This is a contradiction since $\gamma$ and $\alpha$ are in minimal position.
\end{enumerate}
We start with the following lemma.
\begin{lemma}
\label{lemma:Existenceofanimmersedteardrop}
Let $\gamma : S^1 \to S_g$ be a generic curve which is obstructed and non-homotopic to zero. Then there exists an immersed holomorphic teardrop with boundary on $\gamma$ and corner at a double point of $\gamma$.

Moreover, the corner of this teardrop covers one or three quadrants.
\end{lemma}

\begin{proof}
Let $\tilde{\gamma} : \R \to \tilde{S}_g$ be a lift of $\gamma$ to the universal cover $p : \tilde{S}_g \to S_g$ of $S_g$. Since $\gamma$ is obstructed, there are two reals $s_0 < t_0$ such that $\tilde{\gamma}(s_0) = \tilde{\gamma}(t_0)$.

We claim that there are $s < t$ such that $\tilde{\gamma}(s) = \tilde{\gamma} (t) $ and such that $\tilde{\gamma}_{\vert (s,t) }$ is injective. Assume by contradiction that there are sequences $(s_n)_{n \in \N}$ and $(t_n)_{n \in \N}$ with
\begin{itemize}
	\item $\forall n \in \N, \ s_0 < s_n < t_n < t_0$,
	\item the sequence $(s_n)_{n \in \N}$ is increasing, the sequence $(t_n)_{n \in \N}$ is decreasing and the sequence $(t_n - s_n)_{n \in \N}$ converges to $0$,
	\item $\forall n \in \N, \ \tilde{\gamma}(s_n) = \tilde{\gamma}(t_n)$.
\end{itemize}
Call $s_\infty$ the limit of the adjacent sequences $(s_n)_{n \in \N}$ and $(t_n)_{n \in \N}$. Since $\tilde{\gamma}$ is an immersion, it is in particular a local embedding around $s_\infty$. But there is a sequence of distinct double points converging to $S_\infty$.

Now, since $S^1$ is compact and $\tilde{\gamma}_{\vert (s,t)}$ is injective, $\tilde{\gamma}_{\vert (s,t)}$ is an embedded curve. Call $\Omega$ the bounded connected component of $\tilde{S}_g \backslash \Im \left( \tilde{\gamma}_{\vert (s,t)} \right)$. By the Riemann mapping Theorem, there is a biholomorphism $u : \D \to \Omega$ which extends to a homeomorphism $\tilde{u} : \overline{\D} \to \Omega$. Moreover, we assume that $\tilde{u}(-1) = \tilde{\gamma}(s) = \tilde{\gamma}(t)$. Since its image is of finite area, $\tilde{u}$ is of finite energy.
 
The map $\tilde{u}$ is a holomorphic teardrop. Let us call $\tilde{y} = \tilde{\gamma}(s) = \tilde{\gamma}(t)$ its corner. Choose $\varepsilon \ll 1$ and denote by $\alpha \in (0,\pi)$ the angle between $\tilde{\gamma}_{\vert (s-\varepsilon, s+ \varepsilon)}$ and $\tilde{\gamma}_{\vert (t-\varepsilon, t+\varepsilon)}$. Choose a local chart $\phi$ around $y$ such that $\phi(\tilde{\gamma}_{\vert (s-\varepsilon, s+ \varepsilon)}) = \R$ and $\phi(\tilde{\gamma}_{\vert (t-\varepsilon, t+\varepsilon)}) = e^{i\alpha} \R$. Then, by \cite[Proposition 5]{P18} , there is a local chart $\psi : \Omega \to \D$ around $-1$ with domain a neighborhood of $0$ in the Poincaré half-plane such that
\[ \phi \circ \tilde{u} \circ \psi (z) = z^{\alpha + m - 1}. \]
Here, $m$ is the number of quadrants covered by $\tilde{u}$ at its corner. Hence, if $m \geqslant 4$, $\tilde{u}$ is not injective.

Now, since the projection $p$ is an immersion, the map $p \circ \tilde{u}$ satisfies the conclusion of the lemma.
\end{proof}

We now return to the setting of Proposition \ref{prop:asurgeryisnonobstructed}. By Lemma \ref{lemma:Existenceofanimmersedteardrop} above, there is an immersed holomorphic teardrop $u$ with boundary on $\gamma \#_x \alpha$. We denote its corner by $y$. 

Recall from Subsubsection \ref{subsubsection:surgeryofintersectionpoints} that we denoted by $U_x$ the Darboux neighborhood in which we perform the surgery. We call $\gamma_+$ (resp. $\gamma_-$) the upper (resp. lower) connected component of $\Im(\gamma \#_x \alpha) \cap U_x$. See Figure \ref{figure:thecurvesgamma+andgamma-}.

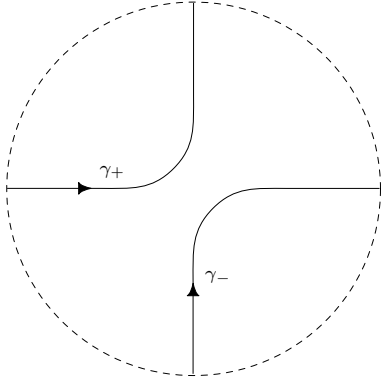
\begin{figure}
\captionsetup{justification=centering,margin=2cm}

\begin{tikzpicture}[y=0.80pt, x=0.80pt, yscale=-1.000000, xscale=1.000000, inner sep=0pt, outer sep=0pt]
  \path[xscale=1.000,yscale=-1.000,draw=black,dash pattern=on 2.40pt off
    2.40pt,line join=round,line cap=round,miter limit=4.00,fill opacity=0.156,line
    width=0.300pt] (338.3877,-207.8881) circle (2.4843cm);
  \path[draw=black,line join=miter,line cap=butt,miter limit=4.00,even odd
    rule,line width=0.300pt] (338.1384,295.4175) .. controls (338.1384,295.4175)
    and (338.1384,266.1787) .. (338.1384,251.5593) .. controls (338.1384,236.9399)
    and (337.2513,227.4156) .. (347.6145,217.0524) .. controls (357.9777,206.6893)
    and (367.5226,207.7010) .. (382.2148,207.7010) .. controls (396.9069,207.7010)
    and (426.2912,207.7010) .. (426.2912,207.7010);
  \path[draw=black,line join=miter,line cap=butt,miter limit=4.00,even odd
    rule,line width=0.300pt] (338.3598,119.9658) .. controls (338.3598,119.9658)
    and (338.3598,149.2046) .. (338.3598,163.8240) .. controls (338.3598,178.4434)
    and (339.2469,187.9676) .. (328.8837,198.3308) .. controls (318.5205,208.6940)
    and (308.9755,207.6822) .. (294.2834,207.6822) .. controls (279.5912,207.6822)
    and (250.2070,207.6822) .. (250.2070,207.6822);
  \path[draw=black,fill=black,line join=round,line cap=round,miter limit=4.00,line
    width=0.300pt] (284.0439,204.8869) .. controls (285.0327,206.4317) and
    (287.3626,207.0420) .. (289.9198,207.6600) .. controls (287.2798,208.2981) and
    (284.9987,208.9415) .. (284.0439,210.4332) -- cycle;
  \path[draw=black,fill=black,line join=round,line cap=round,miter limit=4.00,line
    width=0.300pt] (335.3818,258.4695) .. controls (336.9266,257.4808) and
    (337.5368,255.1509) .. (338.1549,252.5937) .. controls (338.7930,255.2337) and
    (339.4363,257.5148) .. (340.9281,258.4695) -- cycle;

	\draw (300,200) node {\tiny $\gamma_+$};
	\draw (350,250) node {\tiny $\gamma_-$};
\end{tikzpicture}

\caption{The curves $\gamma_+$ and $\gamma_-$ near the intersection point $x$.}
\label{figure:thecurvesgamma+andgamma-}
\end{figure}

The set $u_{\vert \partial \D}^{-1}(U_x)$ is a finite union of arcs. We label them clockwise by $A_1, \ldots, A_N$. Moreover, for $i= 1 \ldots N$, we call $C_i$ the connected component of $u^{-1}(U_x)$ which contains $A_i$.

\begin{lemma}
\label{lemma:structureoftheteardropnearthesurgeredpoint}
Let $i \in \{1, \ldots, N \}$. With the notations above, the open set $C_i$ is an embedded half-disk. Moreover, the map $u$ restricts to a biholomorphism from $C_i$ to one of the quadrants represented in Figure \ref{figure:behaviouroftheteardroparoundsurgeredpoint}.
\end{lemma}
\begin{figure}
\captionsetup{justification=centering,margin=2cm}

\begin{tikzpicture}[y=0.80pt, x=0.80pt, yscale=-1.000000, xscale=1.000000, inner sep=0pt, outer sep=0pt]
  \path[xscale=1.000,yscale=-1.000,draw=black,dash pattern=on 3.60pt off
    3.60pt,line join=round,line cap=round,miter limit=4.00,fill opacity=0.156,line
    width=0.300pt] (224.4844,-136.8466) circle (2.0474cm);
  \path[draw=black,line join=miter,line cap=butt,miter limit=4.00,even odd
    rule,line width=0.300pt] (224.2789,208.9811) .. controls (224.2789,208.9811)
    and (224.2789,184.8849) .. (224.2789,172.8368) .. controls (224.2789,160.7887)
    and (223.5478,152.9396) .. (232.0883,144.3991) .. controls (240.6288,135.8587)
    and (248.4950,136.6925) .. (260.6031,136.6925) .. controls (272.7111,136.6925)
    and (296.9272,136.6925) .. (296.9272,136.6925);
  \path[draw=black,line join=miter,line cap=butt,miter limit=4.00,even odd
    rule,line width=0.300pt] (224.4614,64.3883) .. controls (224.4614,64.3883) and
    (224.4614,88.4845) .. (224.4614,100.5326) .. controls (224.4614,112.5807) and
    (225.1925,120.4298) .. (216.6520,128.9703) .. controls (208.1115,137.5108) and
    (200.2453,136.6770) .. (188.1372,136.6770) .. controls (176.0292,136.6770) and
    (151.8131,136.6770) .. (151.8131,136.6770);
  \path[draw=black,fill=black,line join=round,line cap=round,miter limit=4.00,line
    width=0.300pt] (179.6987,134.3733) .. controls (180.5136,135.6464) and
    (182.4337,136.1493) .. (184.5411,136.6587) .. controls (182.3654,137.1845) and
    (180.4856,137.7147) .. (179.6987,138.9441) -- cycle;
  \path[xscale=1.000,yscale=-1.000,draw=black,dash pattern=on 3.60pt off
    3.60pt,line join=round,line cap=round,miter limit=4.00,fill opacity=0.156,line
    width=0.300pt] (428.5352,-136.8466) circle (2.0474cm);
  \path[draw=black,line join=miter,line cap=butt,miter limit=4.00,even odd
    rule,line width=0.300pt] (428.5122,64.3883) .. controls (428.5122,64.3883) and
    (428.5122,88.4845) .. (428.5122,100.5326) .. controls (428.5122,112.5807) and
    (429.2433,120.4298) .. (420.7028,128.9703) .. controls (412.1623,137.5108) and
    (404.2961,136.6770) .. (392.1881,136.6770) .. controls (380.0800,136.6770) and
    (355.8639,136.6770) .. (355.8639,136.6770);
  \path[draw=black,fill=black,line join=round,line cap=round,miter limit=4.00,line
    width=0.300pt] (388.5919,138.9441) .. controls (387.7770,137.6710) and
    (385.8569,137.1681) .. (383.7495,136.6587) .. controls (385.9252,136.1328) and
    (387.8050,135.6026) .. (388.5919,134.3733) -- cycle;
  \path[xscale=1.000,yscale=-1.000,draw=black,dash pattern=on 3.60pt off
    3.60pt,line join=round,line cap=round,miter limit=4.00,fill opacity=0.156,line
    width=0.300pt] (224.4844,-328.7756) circle (2.0474cm);
  \path[draw=black,line join=miter,line cap=butt,miter limit=4.00,even odd
    rule,line width=0.300pt] (224.2789,400.9101) .. controls (224.2789,400.9101)
    and (224.2789,376.8139) .. (224.2789,364.7658) .. controls (224.2789,352.7177)
    and (223.5478,344.8686) .. (232.0883,336.3281) .. controls (240.6288,327.7876)
    and (248.4950,328.6214) .. (260.6031,328.6214) .. controls (272.7111,328.6214)
    and (296.9272,328.6214) .. (296.9272,328.6214);
  \path[draw=black,fill=black,line join=round,line cap=round,miter limit=4.00,line
    width=0.300pt] (222.0071,370.4606) .. controls (223.2802,369.6458) and
    (223.7832,367.7257) .. (224.2925,365.6183) .. controls (224.8184,367.7940) and
    (225.3486,369.6738) .. (226.5779,370.4606) -- cycle;
  \path[xscale=1.000,yscale=-1.000,draw=black,dash pattern=on 3.60pt off
    3.60pt,line join=round,line cap=round,miter limit=4.00,fill opacity=0.156,line
    width=0.300pt] (428.5352,-328.7756) circle (2.0474cm);
  \path[draw=black,line join=miter,line cap=butt,miter limit=4.00,even odd
    rule,line width=0.300pt] (428.3297,400.9101) .. controls (428.3297,400.9101)
    and (428.3297,376.8139) .. (428.3297,364.7658) .. controls (428.3297,352.7177)
    and (427.5987,344.8686) .. (436.1391,336.3281) .. controls (444.6796,327.7876)
    and (452.5458,328.6214) .. (464.6539,328.6214) .. controls (476.7619,328.6214)
    and (500.9780,328.6214) .. (500.9780,328.6214);
  \path[draw=black,line join=miter,line cap=butt,miter limit=4.00,even odd
    rule,line width=0.300pt] (428.5122,256.3173) .. controls (428.5122,256.3173)
    and (428.5122,280.4135) .. (428.5122,292.4616) .. controls (428.5122,304.5097)
    and (429.2433,312.3588) .. (420.7028,320.8993) .. controls (412.1623,329.4398)
    and (404.2961,328.6060) .. (392.1881,328.6060) .. controls (380.0800,328.6060)
    and (355.8639,328.6060) .. (355.8639,328.6060);
  \path[draw=black,fill=black,line join=round,line cap=round,miter limit=4.00,line
    width=0.300pt] (430.6288,365.6183) .. controls (429.3557,366.4332) and
    (428.8527,368.3533) .. (428.3434,370.4607) .. controls (427.8175,368.2850) and
    (427.2873,366.4052) .. (426.0580,365.6183) -- cycle;
  \path[draw=black,fill=black,line join=round,line cap=round,miter limit=4.00,fill
    opacity=0.067,line width=0.000pt] (152.2232,129.1494) .. controls
    (152.7497,116.8315) and (161.1433,97.6453) .. (173.1337,85.6549) .. controls
    (185.1241,73.6646) and (200.8265,66.0648) .. (216.7175,64.8337) --
    (224.3202,64.2447) -- (224.4095,89.9825) .. controls (224.5061,117.8150) and
    (223.6050,120.7047) .. (218.1380,127.0916) .. controls (212.3714,133.8286) and
    (205.6740,136.6413) .. (192.9830,136.6530) .. controls (185.8324,136.6600) and
    (182.4900,136.6500) .. (181.2553,136.6500) -- (178.3825,136.6500) .. controls
    (177.4004,136.6500) and (175.2545,136.6627) .. (165.6250,136.6627) --
    (151.9020,136.6627) -- cycle;
  \path[draw=black,fill=black,dash pattern=on 0.00pt off 0.00pt,line
    join=round,line cap=round,miter limit=4.00,fill opacity=0.067,line
    width=0.000pt] (411.2851,207.4732) .. controls (381.6145,199.8676) and
    (358.9910,174.5662) .. (356.5418,144.8574) -- (355.8635,136.6296) --
    (371.5783,136.5403) .. controls (381.9677,136.4813) and (385.3492,136.2929) ..
    (386.1838,137.1275) .. controls (386.9346,137.8783) and (389.2710,137.5787) ..
    (389.2710,136.9216) .. controls (389.2710,136.3058) and (392.8714,137.0147) ..
    (398.2237,136.6348) .. controls (403.2950,136.2748) and (409.1394,135.7838) ..
    (411.4296,135.1071) .. controls (416.7186,133.5444) and (424.8035,125.3368) ..
    (426.6914,119.8448) .. controls (427.6719,116.9924) and (428.7371,108.5354) ..
    (428.6122,90.4355) -- (428.4297,63.9954) -- (436.5669,64.7537) .. controls
    (455.6649,66.5334) and (475.0343,77.9139) .. (486.5426,93.3971) .. controls
    (496.9345,107.3782) and (501.7323,123.3065) .. (500.8161,140.9012) .. controls
    (499.8312,159.8152) and (493.5149,174.1655) .. (479.9175,187.7629) .. controls
    (470.7599,196.9205) and (462.4119,202.1160) .. (450.0587,206.2806) .. controls
    (440.7779,209.4095) and (421.1410,209.9995) .. (411.2851,207.4732) -- cycle;
  \path[draw=black,fill=black,line join=round,line cap=round,miter limit=4.00,fill
    opacity=0.067,line width=0.000pt] (207.5022,399.3372) .. controls
    (183.1251,393.3124) and (163.2061,374.6731) .. (155.2273,350.6273) .. controls
    (152.8488,343.4595) and (151.7351,340.8624) .. (151.7351,328.7985) .. controls
    (151.7351,316.7226) and (152.9361,314.0577) .. (155.3198,306.9016) .. controls
    (159.1329,295.4541) and (164.2044,286.4239) .. (172.9551,277.6732) .. controls
    (181.7059,268.9225) and (191.1824,263.4046) .. (202.6300,259.5915) .. controls
    (209.7860,257.2078) and (212.2724,256.1854) .. (224.3482,256.1854) .. controls
    (236.4122,256.1854) and (240.2592,257.6562) .. (247.4271,260.0346) .. controls
    (266.8319,266.4735) and (282.1119,280.6459) .. (290.6083,298.9990) .. controls
    (294.0759,306.4892) and (296.7565,318.1889) .. (296.8726,324.0349) --
    (296.9619,328.5303) -- (270.2362,328.4118) .. controls (240.8531,328.2816) and
    (238.9373,328.9752) .. (231.5733,336.6814) .. controls (226.2406,342.2618) and
    (224.7754,347.2502) .. (224.2036,358.3607) .. controls (223.9320,363.6388) and
    (224.8057,368.5993) .. (224.1809,369.3842) .. controls (223.3288,370.4545) and
    (223.0926,370.7388) .. (224.0397,371.0575) .. controls (224.9718,371.3712) and
    (224.1682,375.1168) .. (224.2310,385.8670) -- (224.3203,401.1443) --
    (219.1343,401.1593) .. controls (216.7730,401.1184) and (211.3913,400.2985) ..
    (207.5022,399.3373) -- (207.5022,399.5159) -- cycle;
  \path[draw=black,fill=black,line join=round,line cap=round,miter limit=4.00,fill
    opacity=0.067,line width=0.000pt] (428.3337,384.5764) .. controls
    (428.5848,372.7754) and (427.8525,368.3393) .. (428.8646,367.1216) .. controls
    (430.0564,365.6876) and (430.7172,366.2742) .. (429.3194,366.2742) .. controls
    (427.9145,366.2742) and (427.7727,364.1364) .. (428.2768,357.3723) .. controls
    (429.0685,346.7493) and (430.0001,343.0399) .. (434.8723,337.7331) .. controls
    (442.1789,329.7746) and (444.4281,327.3365) .. (474.2797,328.6051) --
    (500.9412,328.5829) -- (500.5554,336.5876) .. controls (499.7888,352.4930) and
    (491.9781,368.0598) .. (479.7191,380.3188) .. controls (467.2617,392.7762) and
    (452.0303,401.5552) .. (435.7204,400.9961) -- (428.3053,401.2470) --
    (428.3337,386.2810) -- cycle;
   
   \draw (230,224) node {Type $A+$};
   \draw (430,224) node {Type $B+$}; 
   \draw (230,414) node {Type $A-$};
   \draw (430,414) node {Type $B-$};

\end{tikzpicture}

\caption{The four possibilities for the image of $u$	 around the surgered point. \\
The arrows correspond to the orientation of the boundary of $u$.}
\label{figure:behaviouroftheteardroparoundsurgeredpoint}
\end{figure}  	
 
\begin{proof}
Call $Q$ the shaded region in Figure \ref{figure:behaviouroftheteardroparoundsurgeredpoint}. We first show that $\Im(u) \subset Q$.
Assume the opposite, so there is $z \in C_i$ with $u(z) \notin Q$. Since $u$ is $j$-holomorphic and imnersed, there is $\tilde{z}$ such that $u(\tilde{z}) \in Q$. Since $C_i$ is path-connected, there is a point in the interior of $C_i$ which maps to $\Im(\gamma_+)$.

Since the map $u$ is immersed, the set $\Int(C_i) \cap u^{-1}(\gamma_+)$ is a finite union of embedded arcs. None of these arcs has ends on $\partial C_i$ since $\gamma_+$ is embedded. For the same reason, these arcs are disjoint. Call $B$ the closed region enclosed by the innermost arc. Then $u_{\vert B}$ is a local homeomorphism. Its image is a subset of $Q$. Since $B$ and $Q$ are compact, the map $u_{\vert B}$ is proper. Therefore, it is a connected covering of $Q$ which is simply connected: hence $u_{\vert B}$ is a homeomorphism. However, each point of $\gamma_+$ has two pre-images by $u_{\vert B}$, so that $u_{\vert B}$ can not be one to one.

Thus $u_{\vert C_i}: C_i \to Q$ is a proper local homeomorphism, hence a covering. Since $Q$ is simply connected, it is a homeomorphism.  
\end{proof}

The next step of the proof is the construction of an immersed holomorphic polygon $v$ with boundary on $\gamma$ and $\alpha$ by "filling the corners".

\begin{lemma}
\label{lemma:constructionofapolygonvwithcorners}
We use the notations above. There is an immersed holomorphic polygon 
\[ v : (\D,\partial \D) \to (S_g, \Im(\alpha) \cup \Im(\gamma)) \]
such that 
\begin{itemize}
	\item the map $v_{\vert v^{-1}(U_x)}$ is a reparameterization of $u_{\vert u^{-1}(U_x)}$,
	\item each connected component of $v^{-1}(U_x)$ is a disk,
	\item $v$ has a unique corner which maps to $x$ within each connected component of $v^{-1}(U_x)$,
	\item $v$ has one corner at $y$ and an odd number of corners at $x$.
\end{itemize}
\end{lemma}

\begin{proof}
The construction of $v$ is represented in Figure \ref{figure:FigureS}. Let $i \in \{1 \ldots N \}$. 

\input{Figures/FigureS.tex}
 
\paragraph{\textbf{First case:}} The map $u$ is of Type $A+$ or of Type $B-$ near  $A_i$ (see Figure \ref{figure:behaviouroftheteardroparoundsurgeredpoint} for the definition of Type). Consider the closed region $R_i$ at $x$ indicated in Figure \ref{figure:FigureS}. Choose a biholomorphism 
\[ v_i : \D \to R_i. \] 
This extends, by Caratheodory Theorem, to a homeomorphism
\[ v_i : \overline{\D} \to \overline{R}_i . \] 
We call $B_i$ the arc $v_i^{-1} (\gamma_{\pm})$. There is a unique map $\phi_i : B_i \to A_i$ such that
\[ \forall x \in B_i, \ u(\phi_i(x)) = v_i(x) . \]
We let $S = \D \cup_{\phi_i} \D$ be the surface obtained by gluing two copies of the disk along $\phi_i$. Then, $S$ can be endowed with a Riemann surface structure. For $x$ in the interior of one of the copies of $\D$, the chart is the natural one. If $z = \phi_i(w)$ belongs to $A_i$, choose a disk $D_x$ contained in $U_x$ and such that $u(z) \in D_x$. We put $U = u^{-1} \left (U_x \right) \cup_{\phi_i} v_i^{-1} \left(U_x \right)$. A chart around $x$ is given by the map
	\[ y \in U \mapsto \left\{ \begin{array}{c} u(y) \text{ if } y \in u^{-1} \left (U_x \right)  \\
 v(y) 	\text{ if } y \in v_i^{-1} \left (U_x \right) 
 \end{array} \right. .\]
Then, the applications $u$ and $v_i$ induce a map $v : S \to S_g$. By definition, it is indeed holomorphic. Since $v$ is holomorphic, its energy is the area of the image. Hence, it is finite. The surface $S$ is simply connected, hence biholomorphic to a disk. We conclude that $v$ is a holomorphic polygon with one more corner at $x$.

\paragraph{\textbf{Second case:}} The corner is of type $A-$ or $B+$. Denote by $A$ the red piecewise differentiable arc represented in Figure \ref{figure:FigureS}. Recall that $C_i$ is the connected component of $u^{-1}(U_x)$ which contains $A_i$. Then $Ar : = u^{-1}(A) \cap C_i$ is an embedded piecewise differentiable arc. Therefore, $\D \backslash Ar$ has two distinct simply-connected components $\Omega_1$ and $\Omega_2$. We can assume that $\Omega_1$ is the only one which contains $x$. The map $v$ is the restriction of $u$ to $\Omega_1$. 

We repeat this process for $i = 1 \ldots N$ to obtain an immersed holomorphic polygon with boundary on $\alpha$ and $\gamma$. It has one corner at $y$ and $N$ corners at $x$. Since the boundary of the polygon necessarily switches from $\alpha$ to $\gamma$ or $\gamma$ to $\alpha$ at each corner, the number $N$ is odd.
\end{proof}

Notice that the concatenation $\beta = \gamma \cdot \alpha$ is a continuous loop and as such can be regarded as an continuous map $\beta : S^1 \to S_g$. We assume that it is parameterized so that 
\begin{itemize}
	\item $\beta(i) = \beta(-i) =x $,
	\item $\beta(-1) = \beta(1) = y$, 
	\item $\beta$ restricted to the counterclockwise arc from $i$ to $-i$ is a reparameterization of $\gamma$,
	\item $\beta$ restricted to the counterclockwise arc from $-i$ to $i$ is a reparamaterization of $\alpha$.
\end{itemize}
Moreover, we introduce the following notations.
\begin{itemize}
	\item We let $x_-: [0,1] \to S^1$ be the counterclockwise arc of $S^1$ from $-1$ to $1$ and $\beta_-$ be the map $\beta \circ x_-$.
	\item We let $x_+: [0,1] \to S^1$ be the counterclockwise arc of $S^1$ from $1$ to $-1$ and $\beta_+$ be the map $\beta \circ x_+$.
\end{itemize}
From the construction of the polygon $v$, we see that we can lift its boundary to a continuous path $f : [0,1] \to S^1$ such that $\beta \circ f = v_{\vert \partial \D}$. Since $v$ has a corner at $x$, we have 
\begin{itemize}
	\item either $f(0) = -1, \ f(1) = 1 $,
	\item or $f(0) = 1, \ f(1) = -1$. 	
\end{itemize}

\paragraph{\bf{Case $f(0) = -1, f(1) = 1$}.} Since the concatenation $f \cdot x_-^{-1}$ is a loop based at $-1$, there is $k \in \Z$ such that $f \cdot x_-^{-1}$ is homotopic to $(x_- \cdot x_+)^k$ relative to $-1$. 

By a Theorem of Jaco (\cite{Jac70}), the subgroup of $\pi_1(S_g,y)$ generated by the classes of $\beta_-$ and $\beta_+$ is free. So there are three alternatives to consider.
\begin{enumerate}
	\item There are no relations between $\beta_-$ and $\beta_+$ in $\pi_1 \left(S_g,y \right)$ (so they generate a free group of rank $2$).
	\item There is $m \in \Z$ such that $\beta_+ = \beta_-^m$ in $\pi_1(S_g,y)$.
	\item There is $m \in \Z$ such that $\beta_- = \beta_+^m$ in $\pi_1(S_g,y)$.
\end{enumerate}
Since $\beta \circ f$ is the boundary of $v$, we have in $\pi_1(S_g,y)$ 
\begin{align*}
e  & = (\beta \circ f) \\
  & =(\beta_- \cdot \beta_+)^k \beta_-.	
\end{align*}
So the case $(1)$ can not hold. Therefore, we are in one of the cases $(2)$ or $(3)$.
\paragraph{\bf{Case $\beta_- = \beta_+^m$.}} Then, we have in $\pi_1(S_g,y)$
\begin{align}
e = & (\beta_- \cdot \beta_+)^k \beta_- \\
 = & \beta_+^{k(m+1) + m}.
\label{eqn:relation1} 	
\end{align}
If $\beta_+ = e$ in $\pi_1(S_g,y)$, then $\beta_- = e$ so that $\beta$ bounds a disk. Therefore, the surgery $\gamma \#_x \alpha$ bounds a disk and is contractible. This contradicts the hypothesis on $\gamma \#_x \alpha$. Therefore, 
\[ k(m+1) + m = 0. \] 
There are two solutions to this equation, $(m,k) = (0,0)$ or $(m,k) = (-2,-2)$.

If $m = k = 0$, then equation \ref{eqn:relation1} implies $\beta_- = e$. We conclude that $\gamma$ and $\alpha$ are not in minimal position by Lemma \ref{lemma:criterionforminimalposition}.

If $m = k = -2$, the conclusion follows from a combinatorial argument. Indeed, the boundary $f$ is homotopic to $x_+^{-1} \cdot x_-^{-1} \cdot x_+^{-1}$. We deduce that the polygon $v$ has three corners at $x$ of successive types $B-$, $B+$ and $B-$ (see Figure \ref{figure:behaviouroftheteardroparoundsurgeredpoint}). We assume that these corners are counterclockwise the image of $x_1$, $x_2$ and $x_3$.

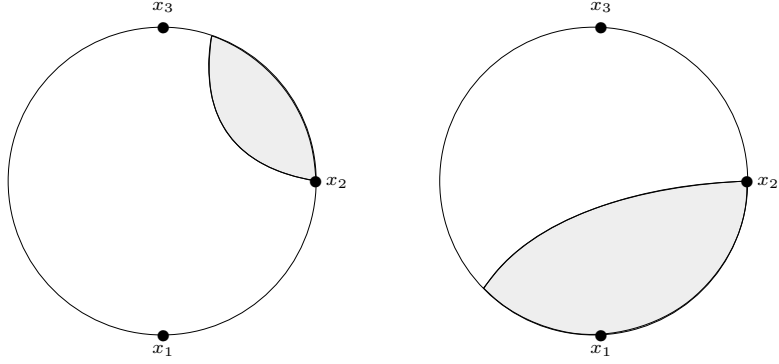
\begin{figure}
\captionsetup{justification=centering,margin=2cm}

\begin{tikzpicture}[y=0.80pt, x=0.80pt, yscale=-1.000000, xscale=1.000000, inner sep=0pt, outer sep=0pt]
  \path[xscale=1.000,yscale=-1.000,draw=black,line join=round,line cap=round,miter
    limit=4.00,fill opacity=0.156,line width=0.300pt] (259.4032,-712.3622) circle
    (2.0474cm);
  \path[xscale=1.000,yscale=-1.000,draw=black,line join=round,line cap=round,miter
    limit=4.00,fill opacity=0.156,line width=0.300pt] (463.4540,-712.3622) circle
    (2.0474cm);
  \path[draw=black,line join=miter,line cap=butt,miter limit=4.00,even odd
    rule,line width=0.300pt] (331.9614,712.3196) .. controls (294.4197,705.8017)
    and (276.5849,684.0752) .. (282.8427,643.7555) -- (282.8427,643.7555);
  \path[draw=black,line join=miter,line cap=butt,miter limit=4.00,even odd
    rule,line width=0.300pt] (535.9821,712.3622) .. controls (486.3541,714.3830)
    and (433.2244,728.6976) .. (411.5179,763.1658);
  \path[draw=black,fill=black,line join=round,line cap=round,miter limit=4.00,fill
    opacity=0.067,line width=0.000pt] (452.3600,784.1623) .. controls
    (438.5568,781.8975) and (424.6751,775.4381) .. (415.2538,766.6644) --
    (411.3230,763.0038) -- (416.3857,756.6121) .. controls (435.5564,732.4086) and
    (477.6484,715.5883) .. (527.4803,712.8197) -- (535.8594,712.3542) --
    (535.8338,716.7415) .. controls (535.7202,736.1919) and (524.7106,756.2176) ..
    (508.7683,768.9822) .. controls (493.5690,781.1519) and (471.0254,787.2249) ..
    (452.3600,784.1623) -- cycle;
  \path[draw=black,fill=black,line join=round,line cap=round,miter limit=4.00,fill
    opacity=0.067,line width=0.000pt] (329.2075,711.7048) .. controls
    (328.8147,711.6172) and (326.5949,711.2436) .. (324.2362,710.6817) .. controls
    (300.5681,705.0441) and (286.6869,691.1314) .. (282.6697,671.3514) .. controls
    (281.7531,666.8379) and (281.2075,659.0105) .. (281.4648,655.1474) .. controls
    (281.9452,647.9335) and (282.4545,643.9877) .. (282.9287,643.8058) .. controls
    (283.6567,643.5264) and (291.1391,646.9019) .. (295.3437,649.4065) .. controls
    (315.0279,661.1320) and (328.5070,680.6420) .. (331.3146,703.3545) .. controls
    (331.7494,706.8721) and (332.2822,712.2181) .. (331.8943,712.1581) .. controls
    (331.7470,712.1354) and (329.6004,711.7925) .. (329.2075,711.7048) -- cycle;
	\draw (260,786) node {$\bullet$};
	\draw (260,790) node[below] {\tiny $x_1$};
	\draw (260,640) node {$\bullet$};
	\draw (260,630) node {\tiny $x_3$};
	\draw (332,713) node {$\bullet$};
	\draw (342,713) node {\tiny $x_2$};
	
	\draw (467,786) node {$\bullet$};
	\draw (467,790) node[below] {\tiny $x_1$};
	\draw (467,640) node {$\bullet$};
	\draw (467,630) node {\tiny $x_3$};
	\draw (536,713) node {$\bullet$};
	\draw (546,713) node {\tiny $x_2$};
\end{tikzpicture}

\caption{The arc $A$ and the bigons delimited by $A$ (shaded)}
\label{figure:thebigonsandthearcA}
\end{figure}

Since $v$ is immersed and $\alpha$ is embedded and not contractible, the set $v^{-1}(\alpha)$ is a union of embedded arcs with endpoints on the boundary of $\D$. The corner at $x_2$ is of type $B+$, so there is one arc $A$ with an endpoint at $x_2$. Since $\alpha$ is embedded and $x_1$ and $x_2$ are of type $B-$, the other endpoint of $A$ belongs to one of the open boundary arcs $(x_2, x_3)$ or $[-1,x_1)$ (see Figure \ref{figure:thebigonsandthearcA}). In the left case, the map $v$ restricted to the bigon delimited by $A$ and the boundary (represented in Figure \ref{figure:thebigonsandthearcA}) yields a strip with boundary on $\alpha$ and $\gamma$. In the right case, since $x_2$ is of type $B-$, $v$ restricted to the shaded area is also a bigon. So $\alpha$ and $\gamma$ are not in minimal position.

\paragraph{\bf{Case $\beta_+ = \beta_-^m$.}} Then, in $\pi_1(S_g,y)$,
\begin{align}
	e = & (\beta_- \cdot \beta_+ )^k \beta_- \\
	  = & (\beta_-^{m+1})^k \beta_- \\
	  = & \beta_-^{k(m+1) + 1}.
\label{eqn:relation2}
\end{align}
If $\beta_- = e$, then $\beta_+$ also represents the neutral element. We deduce that the surgery $\gamma \#_x \alpha$ bounds a disk, which contradicts the hypothesis of the proposition. Hence, $k(m+1) +1 = 0 $ so $(k,m) = (1,-2)$ or $(k,m) = (-1,0)$. But if $m = 0$, then $\beta_+ = e$ so that $\alpha$ and $\gamma$ are not in minimal position by Lemma \ref{lemma:criterionforminimalposition}. Therefore, $(k,m) = (1,-2)$. The boundary $f$ of the polygon $v$ is homotopic to $x_- \cdot x_+ \cdot x_-$. So the polygon $v$ has three corners at $x$ of successive types $A_+$, $A_-$ and $A_+$. We call their respective pre-images $x_1$, $x_2$ and $x_3$

As before, $v$ is an immersion and $\alpha$ is embedded and not contractible, so $v^{-1}(\alpha)$ is a union of embedded arcs with endpoints on the boundary $\partial \D$. Since $x_2$ has type $A_-$, there is one arc with an endpoint at $x_2$ which we call $A$. Its other end lies either in the boundary arc $[-1,x_1)$ or in the boundary arc $(x_2,x_3)$ (see Figure \ref{figure:thebigonsandthearcA}). In each case $v$ restricts to an immersed bigon, so that $\gamma$ and $\alpha$ are not in minimal position, a contradiction.

\paragraph{\bf{Case $f(0) = 1, f(1) = -1$.}} Here, the concatenation $f \cdot x_-$ is a loop based at $1$. So there is $k \in \Z$ such that $f$ is homotopic relative endpoints to $(x_+ \cdot x_-)^k \cdot x_-^{-1}$.

Since we have, in $\pi_1(S_g,y)$ 
\[ (\beta_+ \cdot \beta_-)^k \beta_-^{-1} = e \]
We have either $\beta_- = \beta_+^m$ for some $m \in \Z$ or $\beta_+ = \beta_-^m$ for some $m \in \Z$.
 
Assume there is $m \in \Z$ such that $\beta_- = \beta_+^m$. The integer $m$ cannot be zero, otherwise $\gamma$ and $\alpha$ are not in minimal position. Then from $\beta \circ f = e$, we get $ (\beta_+)^{k(m+1) - m}  = e$. So $m = -2$ and $k = 2$. Therefore, $f$ is homotopic to $x_+ \cdot x_- \cdot x_+$. So the polygon $v$ has three successive corners at $x$ of successive types $A_-$, $A_+$ and $A_-$. We call their respective pre-images $x_1$, $x_2$ and $x_3$.

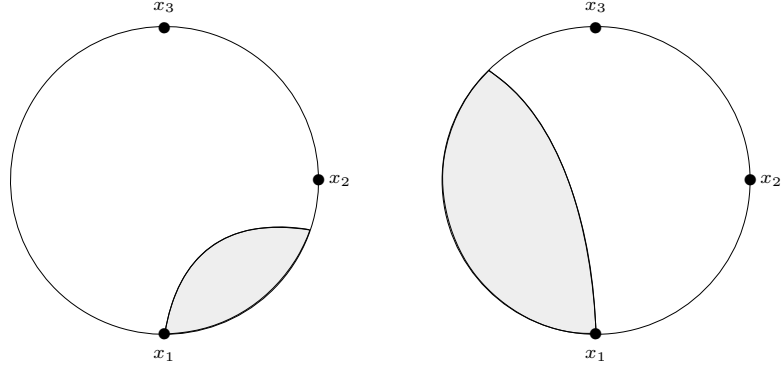
\begin{figure}
\captionsetup{justification=centering,margin=2cm}

\begin{tikzpicture}[y=0.80pt, x=0.80pt, yscale=-1.000000, xscale=1.000000, inner sep=0pt, outer sep=0pt]
  \path[cm={{0.5,-0.86603,-0.86603,-0.5,(0.0,0.0)}},draw=black,line
    join=round,line cap=round,miter limit=4.00,fill opacity=0.156,line
    width=0.300pt] (-57.5114,-154.5123) circle (2.0474cm);
  \path[cm={{0.0,1.0,1.0,0.0,(0.0,0.0)}},draw=black,line join=round,line
    cap=round,miter limit=4.00,fill opacity=0.156,line width=0.300pt]
    (127.0625,309.1067) circle (2.0474cm);
  \path[draw=black,line join=miter,line cap=butt,miter limit=4.00,even odd
    rule,line width=0.300pt] (105.0984,199.6207) .. controls (111.6164,162.0790)
    and (133.3429,144.2443) .. (173.6626,150.5020) -- (173.6626,150.5020);
  \path[draw=black,line join=miter,line cap=butt,miter limit=4.00,even odd
    rule,line width=0.300pt] (309.1067,199.5906) .. controls (307.0859,149.9626)
    and (292.7713,96.8329) .. (258.3031,75.1263);
  \path[draw=black,fill=black,line join=round,line cap=round,miter limit=4.00,fill
    opacity=0.067,line width=0.000pt] (237.3066,115.9685) .. controls
    (239.5714,102.1653) and (246.0308,88.2836) .. (254.8045,78.8623) --
    (258.4650,74.9315) -- (264.8568,79.9942) .. controls (289.0603,99.1649) and
    (305.8805,141.2569) .. (308.6491,191.0888) -- (309.1147,199.4679) --
    (304.7274,199.4423) .. controls (285.2770,199.3287) and (265.2513,188.3191) ..
    (252.4866,172.3768) .. controls (240.3170,157.1775) and (234.2439,134.6339) ..
    (237.3066,115.9685) -- cycle;
  \path[draw=black,fill=black,line join=round,line cap=round,miter limit=4.00,fill
    opacity=0.067,line width=0.000pt] (105.7132,196.8669) .. controls
    (105.8008,196.4740) and (106.1745,194.2543) .. (106.7363,191.8955) .. controls
    (112.3740,168.2274) and (126.2866,154.3463) .. (146.0666,150.3290) .. controls
    (150.5801,149.4124) and (158.4075,148.8668) .. (162.2707,149.1241) .. controls
    (169.4846,149.6045) and (173.4303,150.1138) .. (173.6123,150.5880) .. controls
    (173.8916,151.3160) and (170.5162,158.7984) .. (168.0116,163.0030) .. controls
    (156.2860,182.6873) and (136.7760,196.1664) .. (114.0636,198.9739) .. controls
    (110.5460,199.4087) and (105.1999,199.9415) .. (105.2599,199.5536) .. controls
    (105.2827,199.4063) and (105.6256,197.2597) .. (105.7132,196.8669) -- cycle;
	\draw (105,200) node {$\bullet$};
	\draw (178,127) node {$\bullet$};
	\draw (105,55) node {$\bullet$};
	
	\draw (105,210) node {\tiny $x_1$};
	\draw (188,127) node {\tiny $x_2$};
	\draw (105,45) node {\tiny $x_3$};
	
	\draw (309,200) node {$\bullet$};
	\draw (309,55) node {$\bullet$};
	\draw (382,127) node {$\bullet$};
	
	\draw (309,210) node {\tiny $x_1$};
	\draw (392,127) node {\tiny $x_2$};
	\draw (309,45) node {\tiny $x_3$};
\end{tikzpicture}

\caption{The arc $A$ and the bigons delimited by $A$ (shaded)}
\label{figure:thebigonsandthearcA2}
\end{figure}

The polygon $v$ is immersed and $\alpha$ is embedded an not contractible, so the set $v^{-1}(\alpha)$ is a union of embedded arcs with endpoints on $\partial \D$. Since $x_1$ is of type $A_-$, there is an arc $A$ with an endpoint at $x_1$. Since $\alpha$ is embedded, the other endpoint of $A$ is either on the boundary arc $[x_3,x_1]$ or on the arc $(x_1,x_2)$ (see Figure \ref{figure:thebigonsandthearcA2}). In both cases, $v$ restricted to the shaded area in Figure \ref{figure:thebigonsandthearcA2} is a bigon. So $\alpha$ and $\gamma$ are not in minimal position,  a contradiction.

Assume that there is $m \in \Z$ such that $\beta_+ = \beta_-^m$. This time, we have $(m,k) = (-2,-1)$. So the polygon $v$ has three successive corners at $x$ of types $B_-$, $B_+$, $B_-$. We call $x_1$, $x_2$ and $x_3$ their pre-images.

The arc $A \subset v^{-1}(\alpha)$ with an endpoint on $x_1$ has other endpoint either on $[x_3,x_1]$ or on the arc $(x_1,x_2)$ (see Figure \ref{figure:thebigonsandthearcA2}). In either case, $v$ restricted to the shaded area in Figure \ref{figure:thebigonsandthearcA2} is a bigon. So $\alpha$ and $\gamma$ are not in minimal position, a contradiction.
\end{proof}

\subsection{Obstruction of the surgery cobordisms}
We suppose that we are in the setting of Subsection \ref{subsection:surgeryofimmersedcurves}. There are 
\begin{itemize}
	\item an embedded curve $\alpha$,
	\item a generic curve $\gamma$ in minimal position with $\alpha$,
	\item an intersection point $x$ of degree $1$ between $\alpha$ and $\gamma$. 	
\end{itemize}

\begin{prop}
\label{prop:degenerationofthehandle}
Under the hypotheses of Subsection \ref{subsection:surgeryofimmersedcurves}, the immersed surgery cobordism
\[ V : (\gamma, \alpha) \leadsto \gamma \#_x \alpha ,\]
constructed in \ref{subsubsection:surgeryofintersectionpoints} does not bound a continuous polygon with a unique corner. 	
\end{prop}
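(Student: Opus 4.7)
The plan is to argue by contradiction. Suppose $u\colon (D,\partial D) \to (\C \times S_g, V)$ is a continuous polygon with a single corner at a double point $y$ of $V$. Let $W$ be the (pair-of-pants) domain of the immersion $V$; the boundary $u|_{\partial D}$ lifts through $V$ to a continuous path $\sigma\colon [0,1] \to W$ joining the two preimages $w_-$, $w_+$ of $y$, and $u$ itself provides a null-homotopy of $V \circ \sigma$ in $\C \times S_g$. Since $\pi_{S_g}\colon \C \times S_g \to S_g$ is a homotopy equivalence, the existence of such a teardrop is equivalent to the statement that the loop $\phi \circ \sigma$ is null-homotopic in $S_g$, where $\phi := \pi_{S_g} \circ V \colon W \to S_g$. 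The whole proof therefore reduces to the combinatorial claim: for every double point $y$ of $V$ and every path $\sigma$ in $W$ joining the preimages of $y$, the class $[\phi \circ \sigma] \in \pi_1(S_g)$ is nontrivial.

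Once a standard path $\sigma_0$ from $w_-$ to $w_+$ is fixed, all other choices differ by a loop in $\pi_1(W, w_+)$, so the classes $[\phi \circ \sigma]$ fill out the coset $[\phi \circ \sigma_0] \cdot \phi_*(\pi_1(W))$. Because $W$ is a pair of pants whose three boundary circles project to $\gamma$, $\alpha$, $\gamma \#_x \alpha$ respectively, one has $\phi_*(\pi_1(W)) = \langle [\gamma], [\alpha] \rangle \subset \pi_1(S_g)$, the third generator becoming redundant via the pair-of-pants relation. The proposition thus amounts to showing, for each double point $y$, that $[\phi \circ \sigma_0] \notin \langle [\gamma], [\alpha] \rangle$.

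I would then split the double points of $V$ into the four families described explicitly in Subsection~\ref{subsubsection:surgeryofintersectionpoints}: (i) self-intersections of $\gamma$ in the $\gamma$-end cylinder, (ii) self-intersections of $\gamma\#_x\alpha$ in the outgoing end cylinder, (iii) the isolated points coming from $(\gamma \cap \alpha)\setminus \{x\}$ in the bridge region, (iv) the smoothing double points along the segments of Figure~\ref{figure:projectionsurgerycobordism}. For case (i) a natural $\sigma_0$ stays in the $\gamma$-end cylinder and yields the sub-arc $\gamma|_{[t_-,t_+]}$; lifting to the universal cover $\tilde S_g$ and using unobstructedness of $\gamma$ (so that $\tilde\gamma$ is properly embedded and $t_- \neq t_+$ force distinct lifts of the double point) together with minimal position of $\gamma$ and $\alpha$, one shows that no word in $[\gamma]$ and $[\alpha]$ can identify these two lifts. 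Case (ii) is parallel, with $\gamma\#_x\alpha$ in place of $\gamma$: Proposition~\ref{prop:asurgeryisnonobstructed} provides unobstructedness of $\gamma\#_x\alpha$, which is exactly what is needed. Case (iii) is ruled out by Lemma~\ref{lemma:criterionforminimalposition} applied to the minimal position of $\gamma$ and $\alpha$. Finally case (iv) reduces to the explicit coordinate description of the handle near $x$, since the parameters of the smoothing make the projection of the handle contractible in $S_g$.

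The main obstacle will be the universal-cover argument in case (i) (and symmetrically (ii)). The essential input is that minimal position of two curves lifts to the statement that any two translates $\tilde\gamma$ and $g \tilde\alpha$ meet in at most one point of $\tilde S_g$; once this is available, one sets up the bookkeeping of basepoints, lifts and path homotopies, and shows that the deck transformation realizing $[\phi \circ \sigma_0]$ cannot be expressed in the subgroup generated by the translations associated with $\tilde\gamma$ and $\tilde\alpha$. Everything else in the proof is bookkeeping built on this single point.
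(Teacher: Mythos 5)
Your reformulation is correct as far as it goes: a continuous teardrop on $V$ with corner at a double point $y$ exists if and only if the identity lies in the coset $[\phi\circ\sigma_0]\cdot\phi_*\bigl(\pi_1(W)\bigr)$, so the proposition is indeed equivalent to the non-membership $[\phi\circ\sigma_0]\notin\phi_*\bigl(\pi_1(W)\bigr)$ for every double point. The gap is in how you propose to prove that non-membership. The inputs you invoke --- unobstructedness of $\gamma$ in case (i), unobstructedness of $\gamma\#_x\alpha$ in case (ii), Lemma \ref{lemma:criterionforminimalposition} in case (iii) --- only show that the single element $[\phi\circ\sigma_0]$ is nontrivial, i.e.\ they exclude the identity only for paths $\sigma$ that stay in one end of the cobordism. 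A teardrop's boundary lift may wind arbitrarily around the pair of pants, so you must exclude $[\phi\circ\sigma_0]\cdot w$ for every word $w$ in (conjugates of) $[\gamma]$ and $[\alpha]$. Unobstructedness of a curve $c$ controls only whether an arc of $c$ joining the two lifts of a double point is null-homotopic; it says nothing about whether that arc's class lies in the subgroup $\langle[\gamma],[\alpha]\rangle$. Likewise, minimal position forbids only relations of the bigon form $\gamma_{\vert[t_0,\overline{t_0}]}\cdot\alpha_{\vert[s_0,\overline{s_0}]}^{-1}=e$, not arbitrary words. Closing this would mean redoing, for each double-point type, the entire apparatus of Proposition \ref{prop:asurgeryisnonobstructed} (Jaco's theorem, the bound on the number of corners, the extraction of bigons from immersed polygons); your sketch asserts the conclusion of that analysis ("no word in $[\gamma]$ and $[\alpha]$ can identify these two lifts") without supplying a mechanism.

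The paper avoids the subgroup-membership problem altogether. Using the degeneration of Lemma \ref{lemma:degenerationofilambda} and the explicit projections $p$ and $p_t$, it collapses the cobordism onto $i^+\sqcup j$ and then collapses the handle, converting any continuous teardrop on $V$ --- whatever the type of its corner --- into a continuous teardrop on the surgered curve $\gamma\#_x\alpha$; the delicate point, handled in the fifth step, is that the corner (distinctness of the two boundary lifts) survives the collapse. A single application of Proposition \ref{prop:asurgeryisnonobstructed} then disposes of all cases at once. If you wish to keep your $\pi_1$ framing, the workable route is to realize this collapse as a homotopy of maps into $S_g$ and transport your coset condition to the outgoing end, rather than attacking $\langle[\gamma],[\alpha]\rangle$ directly.
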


\begin{proof}
Recall from Subsection \ref{subsection:surgeryofimmersedcurves}	that the cobordism $V: (\gamma,\alpha) \leadsto \gamma \#_x \alpha$ is an immersion $i : P \looparrowright \C \times S_g$ of a pair of pants $P$. The immersion $i$ is the smoothing of a piecewise smooth immersion $i^+ \sqcup j : P \looparrowright \C \times S_g$. Moreover, by Lemma \ref{lemma:degenerationofilambda} there is a homotopy $(i_\lambda)_{\lambda \in [0,1]}$ which interpolates between $i^+ \sqcup j$ and $i$ and is constant near the double points of $i$. See Figure \ref{figure:immersionofthesurgery} for the projections of these objects to the complex plane.

Assume there is a topological teardrop with boundary on $i$. So there are
\begin{itemize}
	\item a continuous map $u : (\D,\partial \D) \to (\C \times S_g,i(V))$,
	\item a continuous map $\tilde{u} :  (-\pi,\pi) \to S$ such that 
	\[ \forall \theta \in (-\pi,\pi), \  i \circ \tilde{u}(\theta) = u \left(e^{i\theta} \right), \]
	and
	\[ \lim_{t \to -\pi^+} \gamma(t) \ \neq \ \lim_{t \to \pi^-} \gamma(t). \]	
\end{itemize}
In particular $u(-1)$ is a double point of the immersion $i$ :
\[ u(-1) \in DP \cup DP_1 \cup DP_2 \cup DP_0,\] 
(see the end of \ref{subsubsection:surgeryofintersectionpoints} for the notations).

\paragraph{\bf{First step}} From Lemma \ref{lemma:degenerationofilambda}, we know that there is a continuous homotopy $(i_\lambda)_{\lambda \in [0,1]}$ from  $i_0 = i^+ \sqcup j$ to $i_1$ which is constant near the double points of $i$. For $\lambda \in [0,1]$, the path
\[ i_\lambda \circ \tilde{u} : (-\pi,\pi) \to S, \]
satisfies 
\[i_\lambda \circ \tilde{u}(-\pi) = i_\lambda \circ \tilde{u}(\pi).\]
We let $u_\lambda : \partial \D \to \C \times S_g$ be the path defined by
\[ \forall \theta \in (-\pi,\pi), \ u_\lambda \left( e^{i\theta} \right) = i_\lambda \circ \tilde{u}(\theta). \]
Let $\overline{A}(1,2) \subset \C$ be the closed annulus $\ens{z \in \C}{1 \leqslant \norm{z} \leqslant 2}$. We now glue the map
\[ \begin{array}{ccc}
 \overline{A}(1,2) & \to & \C \times S_g \\
 re^{i\theta} & \mapsto & u_{2-r} \left( e^{i\theta} \right)	
 \end{array},
 \]
along the boundary of $u$ to obtain a teardrop $v$ with boundary on $i_0$. Its lift to the domain of $i_0$ is the map $\tilde{u}$.

\paragraph{\bf{Second step}}
We let $C$ be the union of $\R^- \times \{0\}$ and $\{ 0 \} \times \R^+$. We let 
\[ p : \begin{array}{ccc}
 \R^2 & \to & C \\
 (x,y) & \mapsto &
 \left \{ \begin{array}{c}
 (0,x+y) \text{ if } y \leqslant -x \\
 (x+y,0) \text{ if } x \leqslant - y
 \end{array} \right. 	
 \end{array}. \]
The map $p$ is the continuous projection along the ray $\{ x = -y\}$ onto $C$.

We let 
\[p^- : \begin{array}{ccc}
 \R^2 & \to & \R^2 \\
 (x,y) & \mapsto & \left \{ \begin{array}{c}
 (x,y) \text{ if } x \leqslant 0, \ y \geqslant 0 \\
 p(x,y) \text{ else }
 \end{array} \right.	
 \end{array}
\] 
The map $w = p^- \circ v$ is a continuous teardrop with boundary on $i^+ \sqcup i^-$.

\paragraph{\bf{Third step}}

Recall from Subsubsection \ref{subsubsection:surgeryofdoublepoints} that, in the chart $\C \times U_x$, the immersion $i^+ \sqcup i^-$ coincides with the handle
\[ H_\varepsilon = \ens{\varepsilon c(t) z}{t \in \R, \ z = (x,y) \in S^1}, \] 
where $c$ is a smooth path interpolating between $\R$ and $i\R$.

We let
\[ A = \ens{\varepsilon c(0) z}{ \ z = (x,y) \in S^1} \]
be the core of the handle $H_\varepsilon$.

Denote by $\tilde{w} : (-\pi,\pi) \to S$ the lift of $w$ to the domain of $i^+ \sqcup i^-$ defined by
\[ \forall \theta \in (0,\pi), \ w \left(e^{i\theta} \right) =  i^+ \sqcup i^- (\tilde{w}(\theta)). \]
Both of the endpoints of $\tilde{w}$ do not belong to $A$. Therefore, we can homotope $\tilde{w}$ relative to its endpoints to a smooth path
\[ \tilde{a} : (-\pi,\pi) \to S  \]
which is transverse to the set $A$. Since it is homotopic to $\tilde{w}$, the map $i^+ \sqcup i^- \circ \tilde{a}$ bounds a smooth topological teardrop $a$.

\paragraph{\bf{Fourth step}} For $t \in [0,1]$, we let
\[ p_t :
\begin{array}{ccc}
\ens{(x,y)}{x \leqslant 0, \ y \geqslant 0} & \to & \R^2 \\
(x,y) & \mapsto &
\left\{ 
\begin{array}{c}
(x+ty, (1-t)y) \text{ if } x+y \leqslant 0 \\
((1-t)x, y + tx) \text{ if } x+y \geqslant 0 
\end{array}
\right.	
\end{array}
\]
This is a continuous family which interpolates between $p$ and the identity on the set 
\[ \ens{(x,y)}{x \leqslant 0, \ y \geqslant 0}. \] 
Notice that for $t \in [0,1)$ the paths
\[ c_t := p_t \circ c \]
are smooth.

Let $U$ be the set $\partial \D \cap a^{-1} (\C \times U_x)$. There are smooth functions 
\[ s : U_x \to \R, \ (x,y) : U \to S^1 \] 
such that
\[ \forall \theta \text{ such that } e^{i\theta} \in U_x, \ a(e^{i\theta}) = c \circ s(\theta) (x(\theta),y(\theta)). \]
Now, we attach the map
\[ \begin{array}{ccc}
 A(1,2) & \to & \C \times S_g \\
 re^{i\theta} & \mapsto & p_{r-1}\circ c \circ s (\theta) (x(\theta),y(\theta)) 
 \end{array}
 \]
along the boundary of $a$ to obtain a continuous polygon $\overline{b}$.

We let $b$ be the projection of $\overline{b}$ on the surface $S_g$
\[ b := p_{S_g} \circ \overline{b}. \]

\paragraph{\bf{Fifth step}} We build a non-constant teardrop on $\gamma_1 \#_x \gamma_2$ from $b$. Notice that for $t \in \partial \D$, we have $a(t) \in A$ if and only if $b(t) = x$. Let $t_0$ be such that $a(t_0) \in A$.

There is a connected open neighborhood $V$ of $t_0$ in $\partial \D$ such that $\forall t \in V, w(t) \in \C \times U_x$ and $b(t) \in U_x$. We write (in the Darboux chart for $U_x$ that we fixed earlier) $a(t) = (w_1(t),w_2(t))$ for $t \in V$. Then, there are smooth functions $s : V \to \R$, $x : V \to \R$ and $y: V \to \R$ with $s(t_0) = 0$ such that
\[ \forall t \in V, \ a(t) = c \circ s(t) (x(t),y(t)) . \]
Up to homotopy, we can always assume that $y'(t_0) \neq 0$. So 
\[ \forall t \in V, \ a'(t) = s'(t) c'\circ s(t) (x(t),y(t))  + c \circ s(t)(x'(t),y'(t)). \]  
At $a(t_o) = c(0) (x(0),y(0))$, the tangent space of $A$ is generated by $c(0) (-y(0),x(0))$. Hence, since $a'(t_0)$ is transverse to $A$, we have $s'(0) \neq 0$.

Assume $s'(t_0) > 0$. Then, $p \circ c(t) = (c_1(t) + c_2(t),0)$ for $ t_0 - \alpha < t < t_0$ and $p \circ c(t) = (0,c_1(t)+c_2(t)) $ for $t_0 + \alpha > t > t_0$ close enough to $t_0$. So we have
\[ b(t) = \left\{ \begin{array}{c} y (c_1 \circ s + c_2 \circ s) \text{ if } t_0 - \alpha <t < t_0 \\ i y (c_1 \circ s + c_2 \circ s) \text{ if } t_0 < t < t_0 + \alpha \end{array} \right. .\]  
In particular, the left and right derivative at $t_0$ are given by
\begin{align*}
b'(t_0^-) & =  y(t_0) (c_1' + c_2')(0) s'(t_0) \\
b'(t_0^+) & =  iy(t_0) (c_1' + c_2')(0) s'(t_0)	
\end{align*}
So if $y(t_0) < 0$, the path $b$ parameterizes the real line $\R_+$ in the opposite orientation followed by $i\R_-$ in the opposite orientation. If $y(t_0) > 0$, $b$ parameterizes $\R_-$ according to its orientation followed by $i \R_+$.

If $y(t_0) = 0$, we compute the second derivatives to get
\begin{align*}
b^{(2)}(t_0^-) & = 2y'(t_0) (c_1' + c_2')(0) s'(t_0) \\
b^{(2)}(t_0^+) & = 2 i y'(t_0) (c_1' + c_2')(0) s'(t_0)	
\end{align*}
So we easily see that the same conclusion holds.

Therefore, we can lift $b$ to a continuous path $d : \partial \D \to \gamma \#_x \alpha$ such that $p \circ d = b$. This path is homotopic (through the applications $p_t$) to $b$. Hence, it extends to a map $d : \D \to S_g$ with boundary on $\gamma \#_x \alpha$. The map $d$ is easily seen to be a teardrop. Hence, $\gamma \#_x \alpha$ is obstructed, a contradiction with the hypothesis.  
\end{proof}

Proposition \ref{prop:degenerationofthehandle} generalizes to the following.

\begin{prop}
\label{prop:degenerationofthehandle2}
Assume that $\gamma, \alpha_1, \ldots, \alpha_N$ are unobstructed curves. We, moreover, assume the following by induction.

We assume that $\gamma$ and $\alpha_1$ are transverse. We let $x_1$ be an intersection point between $\gamma$ and $\alpha_1$ of degree $1$. 

For $k \in \{ 1 \ldots n-1 \}$,	we assume that $(\gamma \#_{x_1} \alpha_1) \ldots \#_{x_k} \alpha_k$ is transverse to $\alpha_{k+1}$. We assume that these two curves are in minimal position. We let $x_k$ be an intersection point of degree $1$ between these curves.

Moreover, we assume that each of the curves $(\gamma \#_{x_1} \alpha_1) \ldots  \#_{x_k} \alpha_k$ is unobstructed for $k \in \{ 1 \ldots n \}$.

Then, the composition of the successive cobordisms 
\[\left ( (\gamma \#_{x_1} \alpha_1) \ldots  \#_{x_k} \alpha_k, \alpha_{k+1} \right ) \leadsto (\gamma \#_{x_1} \alpha_1) \ldots  \#_{x_{k+1}} \alpha_{k+1} \] 
does not admit any topological teardrop.
\end{prop}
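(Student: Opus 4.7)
The plan is to argue by induction on $N$, with the base case $N = 1$ being precisely Proposition \ref{prop:degenerationofthehandle}. For the inductive step, suppose by contradiction that the composed cobordism $V = V_1 \cup \ldots \cup V_N$ admits a continuous topological teardrop $u : (\D, \partial \D) \to (\C \times S_g, V)$ with corner at a double point $p$ of $V$. Since all double points of $V$ are contained in the interiors of the individual pair-of-pants pieces $V_k$, the corner $p$ lies in exactly one piece $V_k$ for some $k$. Write $\gamma_0 = \gamma$ and $\gamma_j = \gamma_{j-1} \#_{x_j} \alpha_j$ so that $V_k : (\gamma_{k-1}, \alpha_k) \leadsto \gamma_k$.

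First I would set up the geometric decomposition carefully. By the construction of the surgery cobordism recalled in Subsection \ref{subsubsection:surgeryofintersectionpoints}, each $V_k$ is, away from its cylindrical ends, a pair of pants. By a suitable reparameterization of the $\C$-factor, one may arrange that the projections $\pi_{\C}(V_k)$ are pairwise disjoint, except along thin cylindrical necks $[a_j, b_j] \times \gamma_j \subset V$ connecting $V_j$ to $V_{j+1}$ on which $V$ takes the product form.

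The core step is to extract from $u$ a continuous topological teardrop lying on the single piece $V_k$. Consider the preimages under $\pi_{\C} \circ u$ of vertical lines $\{s = c_j\}$ chosen in each separating neck. After a generic small perturbation of $u$ rel double points and corner, these preimages are embedded arcs and circles in $\D$ whose endpoints on $\partial \D$ correspond exactly to the crossings of $u_{\vert \partial \D}$ from one piece to the next. They cut $\D$ into a finite collection of topological disks, each mapped by $u$ into a single piece $V_\ell$. Call $D_0$ the closed disk containing the preimage of the corner $p$: it maps into $V_k$, its boundary consists of arcs mapping either to the original ends of $V$ or to the separating slices $\{c_j\} \times \gamma_j$. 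Since each such $\gamma_j$ is itself an end of $V_k$ (namely $\gamma_k$ or $\gamma_{k-1}$, depending on which separating neck is considered), one may cap $D_0$ off by attaching, along each boundary arc mapping to a slice $\{c_j\} \times \gamma_j$, the product strip $(-\infty, c_j] \times \gamma_j$ or $[c_j, +\infty) \times \gamma_j$ of the corresponding cylindrical end of $V_k$. The result is a continuous topological teardrop $u' : (\D, \partial \D) \to (\C \times S_g, V_k)$ whose unique corner is still at $p$.

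Applying the inductive hypothesis to the truncated composition $V_1 \cup \ldots \cup V_{k-1}$ ensures that $\gamma_{k-1}$ is unobstructed (this is already part of the hypotheses when $k = 1$). Together with the global hypotheses that $\gamma_{k-1}$ and $\alpha_k$ are in minimal position and that $\gamma_k$ is unobstructed, this lets us apply Proposition \ref{prop:degenerationofthehandle} to $V_k$, which forbids the existence of $u'$ and gives the desired contradiction. The main technical obstacle will be the cutting-and-capping step, which requires verifying that the separating-line preimages in $\D$ are transverse after perturbation and, more importantly, that the alternation of boundary arcs between the various $V_\ell$ is compatible with extending $D_0$ across the cylindrical ends of $V_k$ without introducing spurious corners (or losing the essential one at $p$). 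This is essentially a combinatorial analysis of how $u$ traverses the necks, analogous to the neck-analysis already carried out in the proof of Proposition \ref{prop:degenerationofthehandle}.
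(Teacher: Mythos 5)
Your inductive set-up (base case $N=1$ is Proposition \ref{prop:degenerationofthehandle}) is fine, but the cutting-and-capping step has a genuine structural gap, not merely a technicality.

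When you cut $\D$ along the preimages of vertical lines $\{s=c_j\}$ under $\pi_{\C}\circ u$, you obtain a decomposition of $\D$ into pieces, but these pieces are \emph{not} ``mapped by $u$ into a single piece $V_\ell$.'' The interior of a topological teardrop $u:\D\to\C\times S_g$ is only required to be continuous, so the region $D_0$ maps into the full slab $\{c_{k-1}\leq s\leq c_k\}\times S_g$, not into $V_k$. More seriously, the \emph{interior} cut arcs along which you propose to cap off do not map to the Lagrangian at all: they map into the three-manifold $\{c_j\}\times S_g$, with only their endpoints (which lie on $\partial\D$) landing on the Lagrangian slice $\{c_j\}\times\gamma_j$. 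Consequently, $D_0$ is not a polygon with boundary on $V_k$, and ``attaching the product strip $(-\infty,c_j]\times\gamma_j$'' along such an arc is not a well-defined operation: you cannot cap a disk by gluing a two-dimensional submanifold along an arc whose interior does not lie on that submanifold. Even if you reformulate the capping as deforming the boundary arc into the cylindrical end, you would need each such arc to be homotopic rel endpoints, inside $\{c_j\}\times S_g$, to a path in $\{c_j\}\times\gamma_j$; since $\pi_1(\gamma_j)\to\pi_1(S_g)$ is injective but far from surjective, this is false in general. This is the reason a maximum-principle--style localization argument, which is available for $J$-holomorphic disks, is simply not available for continuous teardrops, which is what Definition \ref{defi:unobstructedLagcob} concerns.

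The paper's intended argument runs in a genuinely different direction. Rather than trying to localize the given teardrop onto a single piece $V_k$, one degenerates the \emph{cobordism}: for each piece one shrinks the cutoff function $\beta$ as in Lemma \ref{lemma:degenerationofilambda} (and then applies the family $p_t$ of the fourth step of the proof of Proposition \ref{prop:degenerationofthehandle}). Since each $i_\lambda$ is constant near the double points of $i$, the corner survives the degeneration, and one tracks the teardrop along the degeneration by gluing annuli to its boundary exactly as in the first step of that proof. After degenerating handle by handle and projecting via $p^-$, $p_t$ and $\pi_{S_g}$, the teardrop descends to a teardrop on one of the intermediate curves $(\gamma\#_{x_1}\alpha_1)\cdots\#_{x_k}\alpha_k$, contradicting the unobstructedness hypothesis. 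This avoids any claim about where the \emph{interior} of the teardrop goes, which is exactly the point your cutting argument cannot control. Your intuition that the statement should reduce to the single-handle case is correct, but the mechanism must be a degeneration of the immersion, not a decomposition of the disk domain.
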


\begin{proof}
The proof is a repeated application of the proof of the preceding proposition.	
\end{proof}

	
We can deduce the following proposition.

\begin{prop}
\label{prop:cobordismassociatedtosuccessivesurgeries}
Assume that $\gamma$, $\alpha_1, \ldots, \alpha_N$ are as in Proposition \ref{prop:degenerationofthehandle2}. Then there is an unobstructed Lagrangian cobordism
\[ (\gamma, \alpha_1, \ldots, \alpha_N) \leadsto (\gamma \#_{x_1} \alpha_1) \ldots \#_{x_N} \alpha_N . \] 	
\end{prop}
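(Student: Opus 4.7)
The plan is to construct the cobordism by concatenating the individual surgery cobordisms produced by Proposition \ref{prop:Lagrangiansurgeryoftwocurves}, and then to invoke Proposition \ref{prop:degenerationofthehandle2} to rule out teardrops. Writing $\beta_0 = \gamma$ and $\beta_k = (\gamma \#_{x_1} \alpha_1) \cdots \#_{x_k} \alpha_k$, the inductive hypotheses of Proposition \ref{prop:degenerationofthehandle2} furnish, for each $k \in \{1,\ldots,N\}$, a pair of transverse curves in minimal position with a degree-$1$ intersection point $x_k$. Applying Proposition \ref{prop:Lagrangiansurgeryoftwocurves} to each such pair produces an oriented immersed Lagrangian cobordism
\[ V_k : (\beta_{k-1}, \alpha_k) \leadsto \beta_k \]
in $\C \times S_g$.

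Next I would assemble these into a single cobordism by the standard concatenation procedure in $\C \times S_g$. After translation in the $\C$-factor, arrange $V_k$ so that its nontrivial part lies in the strip $\{s \in [3k-2, 3k-1]\} \times \R$, glue the positive $\beta_k$-end of $V_k$ to the negative $\beta_k$-end of $V_{k+1}$ along the cylindrical bridge $[3k-1, 3k+1] \times \{y_k\} \times \beta_k$, and extend each $\alpha_k$-end (for $k \geq 2$) leftward to $s \to -\infty$ as a trivial cylinder $(-\infty, 3k-2] \times \{y_{\alpha_k}\} \times \alpha_k$, choosing the $y$-coordinates $y_{\alpha_k}$ pairwise distinct from each other and from the $y_k$'s so that different cylindrical ends do not create spurious intersections. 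By construction this produces an oriented immersed Lagrangian cobordism
\[ V : (\gamma, \alpha_1, \ldots, \alpha_N) \leadsto \beta_N \]
whose double points are either inherited from the self-intersections of the intermediate $\beta_k$'s (which are transverse and free of triple points by the unobstructedness hypothesis) or created in the action regions of each $V_k$ (which are transverse and non-triple by Proposition \ref{prop:Lagrangiansurgeryoftwocurves}), so $V$ is a generic immersed cobordism.

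Finally, unobstructedness of $V$ follows directly from Proposition \ref{prop:degenerationofthehandle2}, since $V$ is by construction the composition of the successive surgery cobordisms referred to in that statement: no continuous map from a disc into $\C \times S_g$ with boundary on $V$ and a single corner at a double point can exist. The main obstacle will simply be the careful bookkeeping in Step 2 to ensure that the distinct cylindrical ends do not accidentally meet and create extra teardrops, and that the orientation conventions on each $V_k$ agree after translation; both are standard for cobordism concatenation and can be handled by an appropriate choice of the parameters $y_{\alpha_k}$ and by reversing the orientation of $\alpha_k$ should some $x_k$ have been forced to have degree $0$ rather than $1$ (which never occurs under our hypotheses).
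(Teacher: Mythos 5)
There is a genuine gap at the point where you assert that the concatenated cobordism $V$ has only transverse double points and no triple points, ``so $V$ is a generic immersed cobordism.'' This is false for the surgery cobordisms as constructed in Subsection \ref{subsubsection:surgeryofintersectionpoints}: their double-point set is explicitly described there as a union of \emph{segments} of double points (the sets $DP$, $DP_1$, $DP_2$, e.g.\ the two local sheets $(x,a)\mapsto(x,f(x),a,0)$ and $(x,a)\mapsto(x,f(x),0,a)$ meet along the whole curve $a=0$), together with the isolated point $DP_0$. A one-parameter family of double points of a Lagrangian surface in a $4$-manifold is never transverse, so the composition $V$ does not satisfy condition (i) of Definition \ref{defi:unobstructedLagcob} as it stands, and Proposition \ref{prop:degenerationofthehandle2} alone does not finish the proof.

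This is exactly the issue the paper's proof is built around. It first establishes a perturbation lemma (Lemma \ref{lemma:PerturbationofthecobordismV}): using a Weinstein neighborhood of $V$ and the Hamiltonian flow of a carefully chosen compactly supported function that is transverse to $di(T_xV)+di(T_yV)$ along the double-point intervals, one obtains a family $(i_t)_{t\in[0,1]}$ with $i_0=V$ such that $i_1$ has transverse double points and, crucially, every double point of $i_1$ is connected by a continuous path of double points of the $i_{f(t)}$ back to a double point of $V$. Then, given a hypothetical teardrop on $i_1$, one glues on an annulus swept out by the homotopy and these paths to manufacture a \emph{topological} teardrop with boundary on the original composition $V$, contradicting Proposition \ref{prop:degenerationofthehandle2}. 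Your proposal is missing both the perturbation step and the argument transferring the no-teardrop property across the perturbation; without them the statement is not proved.
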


\begin{proof}
First, we prove the following lemma which is a refinement of the construction in the proof of Lemma \ref{lemma:immersedcurvescobordanttogenericcurves}.

\begin{lemma}
\label{lemma:PerturbationofthecobordismV}
Assume that $i : V \looparrowright \C \times S_g$ is an immersed oriented Lagrangian cobordism with embedded ends such that
\begin{enumerate}
\item the set of double points 
 \[ \ens{(x,y) \in V \times V}{i(x) = i(y), \ x \neq y} \] 
 is a finite disjoint union of embedded intervals $I_k$	 (with $k = 1 \ldots N$) and of points ${(x_p,y_p)}$ (with $p = 1 \ldots M$),
\item the immersion $i$ restricts to an embedding of the intervals $I_k$ for $k = 1, \ldots, N$,
\item if $(x,y) \in I_k$ for some $k$, we have \[ \dim(di_x(T_x V) \cap di_y(T_y V) ) =1, \]
\item the space \[ di_{x_p}(T_{x_p} V) \cap di_{y_p}(T_{y_p} V )\] is null for $p \in \{1, \ldots M \}$.
\end{enumerate}
Then there is a smooth family of immersions $i_t : V \looparrowright \C \times S_g$ for $t \in [0,1]$ such that the following properties hold.
\begin{enumerate}[label = (\roman*)]
	\item We have $i_0 = i$.
	\item For all $t \in [0,1]$, the immersion $i_t$ coincides with $V$ outside of a compact subset.
	\item For almost all $t \in [0,1]$, the double points of $i_t$ are transverse.
	\item If $(x,y) \in V$ are such that $i_1(x) = i_1(y)$, then there are
	\begin{itemize} 
		\item a smooth path $\gamma = (x,y) : [0,1] \to V \times V$ with $\gamma(1) = (x,y)$,
		\item  a function $f: [0,1] \to [0,1]$ with $f(0) = 0$ and $f(1) = 1$, 	
	\end{itemize}
such that $i_{f(t)}(x) = i_{f(t)}(y)$ for all $t \in [0,1]$.
\end{enumerate}
\end{lemma}

\begin{proof}[Proof of Lemma \ref{lemma:PerturbationofthecobordismV}]
We extend the immersion $i : V \to \C \times S_g$ to a Weinstein embedding $ \Phi : T_\varepsilon^*V \to \C \times S_g $ . We let $K \subset V$ be a compact subset such that the image of $i_{\vert V \backslash K}$ is the disjoint union 
\[ \bigcup_{i=1 \ldots n} (\R^- \times \gamma_i) \cup \bigcup_{j=1 \ldots m} (\R^+ \times \gamma_j). \]

There is a smooth function $f : V \to \R$ such that the following holds.
\begin{enumerate}[label=(\Alph*)]
	\item The function $f$ is null outside of $K$.
	\item Let $\pi : T_\varepsilon^* V \to V$ be the standard projection. We let $X_{f \circ \pi}$ be the hamiltonian vector field of $f \circ \pi$. For $(x,y) \in I_k$ for some $k \in \{ 1, \ldots N \}$, the vector
	\[ d\Phi_x(X_{f \circ \pi}(x)) - d\Phi_y(X_{f \circ \pi}(y)) \] 
	is transverse to the vector space $di(T_x V) + di(T_y V)$.
\end{enumerate}
To see this, choose a vector field $X$ on each $\sqcup_{k} i(I_k)$ such that 
\[ di(T_x V)+ di(T_y V) + X(i(x)) = T_{i(x)} (\C \times S_g) \] 
for each $(x,y) \in I_k$. Choose disjoint neighborhoods $D_k$ of $I_k$ diffeomorphic to disks. We extend $X$ to a vector field on $\sqcup D_k$. There is a smooth function $f$ such that $X_{f \circ \pi} = X$ on $\sqcup D_k$. Now extend it to $V$ using a smooth cut-off function.

We choose an increasing, smooth, cut-off function $\beta : [0,\varepsilon] \to [0,1]$ such that $\beta(t) = 1$ for $t \in \left[0,\frac{\varepsilon}{3} \right]$ and $\beta(t) = 0$ for $t \in \left[ \frac{2 \varepsilon}{3},1 \right]$. We let $g$ be the smooth function given by 
\[ g: \begin{array}{ccc}
T^*_\varepsilon V & \to & \R \\
 	(x,v) & \mapsto & \beta(\norm{v}) f \circ \pi(x,v)
 \end{array}
 .\] 
We denote by $\phi^t_g : T^*_\varepsilon V \to T^*_\varepsilon V$ the flow of $X_g$ at the time $t \in [0,1]$. 

We claim that for $\eta > 0$ small enough, the map
\[ \Psi : \begin{array}{ccc} L \times L \times [- \eta,\eta] & \to & (\C \times S_g) \times (\C \times S_g) \\ 
(x,y,t) & \mapsto & \left(\Phi \left(\phi^t_g (x) \right),\Phi \left(\phi^t_g (y) \right) \right) 
\end{array}. \]
 is transverse to the diagonal $\Delta = \ens{(z,z)}{z \in \C \times S_g}$.

Indeed, there is $\eta >0$ such that
\begin{enumerate}[resume*]
	\item if $\Psi(x) = \Psi(y)$ with $x \neq y$, then $\dim \left[ d\Phi \left(T_x V \right) \cap  d\Phi \left(T_y V \right) \right] \leqslant 1$,
	\item if $\Psi(x) = \Psi(y)$ with $x \neq y$, then $d\Phi(X_g(\phi^t(x))) - d\Phi(X_g(\phi^t(y))) \notin d\Phi(T_xV) +  d\Phi(T_y V) $. 	
\end{enumerate}
This is seen by using assertions $(A)$ and $(B)$, the compactness of $K$ and lower semi-continuity of the rank. 

Let $(x,y,t) \in \Psi^{-1}(\delta)$ and $v \in T_{\Psi(x,y,t)}(\C \times S_g)$. Due to the preceding assumption, there are $v_1 \in T_x V, v_2 \in T_y V $ and $\lambda \in \R$ such that
\[ d\Phi (d\phi^t_g(v_1)) - d\Phi (d \phi^t_g(v_2)) + \lambda \left [d\Phi \left( X_g \circ \phi^t_g(x) \right) - d\Phi\left(X_g \circ \phi^t_g(y) \right) \right] \]
is equal to $v$.

Now we conclude by the following claim (which we learned from \cite{McDS12}) whose proof is an easy exercise.
\paragraph{\bf{Claim}} Let $h = (h_1,h_2) : M \to N \times N $ be smooth map. Let $x$ be such that $h_1(x) =h_2(x)$. $h$ is transverse to the diagonal if and only if for all $v \in T_{h_1(x)} N$, there is $w \in t_xM$ such that $d(h_1)_x(w) - d(h_2)(x) (w) = v$.

So the the set $\Psi^{-1}(\Delta)$ has a natural structure of compact 1-dimensional manifold. In particular, it has a finite number of connected components which are compact as well. Hence, for $\alpha > 0$ small enough, all the connected components of $V \times V \times [-\alpha,\alpha] \cap \Psi^{-1}(\Delta)$ have non empty intersection with $V\times V \times {0}$.

The projection 
\[\begin{array}{ccc} \Psi^{-1}( \Delta ) & \to  & \R \\
	(x,y,t) & \mapsto & t \\
\end{array}
 \]
is smooth. Hence, there is a regular value $ 0 < t_0 < \alpha$. The family $\Phi \circ \phi^t_g$ for $0 \leqslant t \leqslant t_0$ satisfies the conclusion of the lemma.
\end{proof}

We let $i : V \looparrowright \C \times S_g$ be the immersed Lagrangian cobordism
\[ (\gamma, \alpha_1, \ldots, \alpha_N) \leadsto (\gamma \#_{x_1} \alpha_1) \ldots \#_{x_N} \alpha_N, \]
given by Proposition \ref{prop:degenerationofthehandle2}. From Subsection \ref{subsection:surgeryofimmersedcurves}, the immersion $V$ satisfies the hypotheses of Lemma \ref{lemma:PerturbationofthecobordismV}. Hence, there is a family $(i_t)_{t \in [0,1]}$ which satisfies the above properties $(i)$, $(ii)$, $(iii)$ and $(iv)$.

Assume there is a continuous teardrop
\[ u : (\D, \partial \D) \to (M,i_1(V)) \]
with boundary on the immersion $i_1$. In particular, there is a path
\[ \tilde{u} : (-\pi,\pi) \to V \]
such that
\begin{align*} \forall \theta \in (-\pi,\pi), \ u \left(e^{i\theta} \right) = i_1 \circ \tilde{u} (\theta), \\ 
x = \lim_{\theta \to \pi^-} \tilde{u} (\theta) \neq \lim_{\theta \to -\pi^+} \tilde{u} (\theta) = y .
\end{align*}
We call $f$, $\gamma = (x,y)$ the smooth functions provided by the point $(iv)$ of Lemma \ref{lemma:PerturbationofthecobordismV}. There is a continuous family of paths $(\gamma_t)_{t \in [0,1]} : [-\pi,\pi] \to V$ such that
\[ \forall t \in [0,1], \ \gamma_t(-\pi) = x(t), \ \gamma_t(\pi) = y(t), \]
and
\[	\gamma_1 = \tilde{u}.
\]
We glue the map
\[ \begin{array}{ccc}
 A(1,2) & \to & \C \times S_g \\
 re^{i\theta} & \mapsto & i_{f(2-r)} \left(\gamma_{2-r}(\theta) \right)	
 \end{array}, \]
 along the teardrop $u$ to obtain a topological teardrop with boundary on the immersion $i$. This does not exist by hypothesis. Therefore, there are no topological teardrops on $i_1$.
\end{proof}

\subsection{Action of the Mapping Class Group and proof of Theorem \ref{Theo:Computationofunobgroup}}

First, we need the following lemma.
\begin{lemma}
\label{lemma:Cylinderoverseparatingcurves}
Let $\gamma_1$ and $\gamma_2$ be two isotopic separating curves. There is $x \in \R$ such that
\[ [\gamma_2] - [\gamma_1] = i(x), \]
in $\Gimmunob$. 	
\end{lemma}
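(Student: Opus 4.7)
The plan is to reduce to the non-separating case handled by Proposition \ref{prop:twocylinderofsameareahavethesameclass}, via a surgery construction with a non-separating curve. After possibly replacing $\gamma_2$ by a Hamiltonian isotopic curve (using Lemma \ref{lemma:holonomyequalshamiltonianinvariant}), I may assume $\gamma_2$ lies in a tubular neighborhood of $\gamma_1$, cobounding a thin cylinder $C$. Since $g \geq 2$, both components of $S_g \setminus \gamma_1$ have positive genus, so there exists an embedded non-separating curve $\alpha$ in $S_g$ crossing $\gamma_1$ transversely. After a small isotopy, I would put $\alpha$ in minimal position with $\gamma_1$, choose an intersection point $p$ of degree $1$, and let $p' \in \gamma_2 \cap \alpha$ be the corresponding point joined to $p$ by an arc of $\alpha \cap C$.

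Next, I would form the surgeries $c_1 = \gamma_1 \#_{p,\varepsilon} \alpha$ and $c_2 = \gamma_2 \#_{p',\varepsilon} \alpha$ with the same parameter $\varepsilon$. Both have homology class $[\alpha] \neq 0$ in $H_1(S_g,\Z)$, hence are non-contractible. By Proposition \ref{prop:asurgeryisnonobstructed} they are unobstructed immersions, and by Proposition \ref{prop:cobordismassociatedtosuccessivesurgeries} the surgery cobordisms $(\gamma_i, \alpha) \leadsto c_i$ are unobstructed, yielding in $\Gimmunob$
\[
[c_i] = [\gamma_i] + [\alpha], \qquad i = 1, 2.
\]
The cylinder $C$ induces an ambient isotopy of $S_g$ (supported in a neighborhood of $C$) sliding $\gamma_1$ onto $\gamma_2$ and carrying $p$ to $p'$ along the arc of $\alpha \cap C$. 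Tracking $c_1$ under this isotopy yields a curve isotopic to $c_2$, so $c_1$ and $c_2$ are isotopic non-separating immersed curves. Applying Proposition \ref{prop:twocylinderofsameareahavethesameclass} produces $y \in \R$ with $[c_2] - [c_1] = i(y)$, and subtracting the two surgery identities gives $[\gamma_2] - [\gamma_1] = i(y)$, proving the lemma.

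The main obstacle is the careful verification that $c_1$ and $c_2$ are genuinely isotopic as immersed curves rather than merely having the same homology class. The subtlety is that $\alpha$ may meet $\gamma_1$ at additional points (with total algebraic intersection zero, since $\gamma_1$ is separating), and each of these becomes a self-intersection of the $c_i$. One must show that the ambient isotopy induced by $C$ preserves the combinatorial pattern of these self-intersections, so that the image of $c_1$ under the isotopy is isotopic to $c_2 = \gamma_2 \#_{p',\varepsilon} \alpha$ as an immersed curve. This can be ensured by choosing the ambient isotopy supported in a small neighborhood of $C$, so that it moves only the portions of $\alpha$ inside $C$, and by using that $\alpha \cap C$ is a disjoint union of embedded arcs.
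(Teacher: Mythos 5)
Your reduction breaks down at the final step. The curves $c_1=\gamma_1\#_{p,\varepsilon}\alpha$ and $c_2=\gamma_2\#_{p',\varepsilon}\alpha$ are necessarily \emph{immersed}, not embedded: since $\gamma_1$ is separating, $\alpha$ meets it in at least two points, and every intersection point other than the surgered one becomes a self-intersection of $c_i$ (you acknowledge this yourself). But Proposition \ref{prop:twocylinderofsameareahavethesameclass}, Corollary \ref{coro:iisinjective}, and the underlying Lemma \ref{lemma:holonomyequalshamiltonianinvariant} are all stated and proved only for \emph{embedded} non-separating curves; the proof of Proposition \ref{prop:twocylinderofsameareahavethesameclass} uses the change of coordinates principle and the fact that two isotopic embedded curves with equal holonomy are Hamiltonian isotopic, neither of which is available for immersed curves. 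To conclude $[c_2]-[c_1]=i(y)$ for isotopic unobstructed \emph{immersed} curves you would need to know that an isotopy between them can be upgraded to an unobstructed Lagrangian cobordism realizing the holonomy shift --- a statement essentially as strong as the lemma you are trying to prove, and not established anywhere in the paper. So the chain $[\gamma_2]-[\gamma_1]=[c_2]-[c_1]=i(y)$ has an unsupported middle equality.

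The paper takes a different route that stays within the embedded world at both ends: it performs \emph{four} successive surgeries of $\gamma_1$ with non-separating curves $\alpha_1,\dots,\alpha_4$ (Figure \ref{figure:FigureT}) whose net effect is a curve $\beta$ again \emph{isotopic to $\gamma_1$} but with holonomy shifted by $\sum_k \Hol_A(\alpha_k)$; Proposition \ref{prop:cobordismassociatedtosuccessivesurgeries} makes the composite cobordism unobstructed, and the $\alpha_k$ contribute only terms in the image of $i$ by Lemma \ref{lemma:classofanonseparatingcurve}. This proves the identity $[\gamma]-[\gamma_1]=i(\Hol_A(\gamma)-\Hol_A(\gamma_1))$ for all $\gamma$ isotopic to $\gamma_1$ with sufficiently small holonomy difference (using Lemma \ref{lemma:holonomyequalshamiltonianinvariant} for the embedded curve $\beta$), and a compactness argument along the isotopy from $\gamma_1$ to $\gamma_2$ then removes the smallness restriction. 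If you want to salvage your approach, you would need to either arrange the surgeries so that the output is again embedded, or separately prove an immersed analogue of Proposition \ref{prop:twocylinderofsameareahavethesameclass}.
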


\begin{figure}

\begin{tikzpicture}[y=0.80pt, x=0.80pt, yscale=-1.000000, xscale=1.000000, inner sep=0pt, outer sep=0pt]
  \path[draw=black,line join=miter,line cap=butt,miter limit=4.00,even odd
    rule,line width=0.600pt] (161.0146,167.2858)arc(90.000:30.000:23.676151 and
    27.146)arc(30.000:-30.000:23.676151 and 27.146)arc(330.000:270.000:23.676151
    and 27.146);
  \path[draw=black,line join=miter,line cap=butt,miter limit=4.00,even odd
    rule,line width=0.600pt] (335.3685,169.1659)arc(90.000:150.000:23.676151 and
    27.146)arc(150.000:210.000:23.676151 and 27.146)arc(210.000:270.000:23.676151
    and 27.146);
  \path[draw=black,fill=black,line join=round,line cap=round,miter limit=4.00,fill
    opacity=0.065,line width=0.600pt] (161.5039,193.6260) ellipse (0.2423cm and
    0.7412cm);
  \path[draw=black,line join=round,line cap=round,miter limit=4.00,line
    width=0.600pt] (335.2111,61.9805)arc(267.151:328.982:8.586297 and
    26.264)arc(-31.018:30.813:8.586297 and 26.264)arc(30.813:92.644:8.586297 and
    26.264);
  \path[draw=black,line join=round,line cap=round,miter limit=4.00,line
    width=0.598pt] (334.9422,169.1908)arc(270.000:360.000:8.588736 and
    26.087)arc(-0.000:90.000:8.588736 and 26.087);
  \path[draw=black,dash pattern=on 1.80pt off 1.80pt,line join=round,line
    cap=round,miter limit=4.00,line width=0.600pt] (252.0064,143.1759) ellipse
    (0.2630cm and 1.5531cm);
  \path[draw=black,fill=black,line join=round,line cap=round,miter limit=4.00,fill
    opacity=0.065,line width=0.600pt] (161.7763,86.7861) ellipse (0.2423cm and
    0.7412cm);
  \path[draw=black,line join=miter,line cap=butt,miter limit=4.00,even odd
    rule,line width=0.600pt] (161.7348,60.5044) .. controls (194.6981,60.5044) and
    (220.4304,88.3055) .. (252.1479,88.3055) .. controls (285.8040,88.3055) and
    (302.4833,61.9457) .. (335.4855,61.9457);
  \path[draw=black,line join=miter,line cap=butt,miter limit=4.00,even odd
    rule,line width=0.600pt] (161.3866,219.8407) .. controls (199.6313,219.8407)
    and (218.0031,198.2030) .. (251.7168,198.2030) .. controls (284.1451,198.2030)
    and (307.3566,221.2973) .. (335.0417,221.2973);
  \path[draw=black,line join=round,line cap=round,miter limit=4.00,line
    width=0.600pt] (252.0064,88.1445)arc(270.000:360.000:9.318095 and
    55.031)arc(-0.000:90.000:9.318095 and 55.031);
  \path[draw=black,line join=round,line cap=round,miter limit=4.00,fill
    opacity=0.000,line width=0.620pt]
    (322.4255,165.0661)arc(0.000:60.000:75.750778 and
    5.322)arc(60.000:120.000:75.750778 and 5.322)arc(120.000:180.000:75.750778 and
    5.322);
  \path[draw=black,line join=round,line cap=round,miter limit=4.00,fill
    opacity=0.000,line width=0.602pt]
    (312.4094,150.9326)arc(0.000:60.000:64.641510 and
    5.412)arc(60.000:120.000:64.641510 and 5.412)arc(120.000:180.000:64.641510 and
    5.412);
  \path[draw=black,line join=round,line cap=round,miter limit=4.00,fill
    opacity=0.000,line width=0.603pt]
    (312.8598,132.7284)arc(4.758:88.992:64.510658 and
    5.357)arc(88.992:173.226:64.510658 and 5.357);
  \path[draw=black,line join=round,line cap=round,miter limit=4.00,fill
    opacity=0.000,line width=0.538pt]
    (324.4296,117.6542)arc(6.879:89.263:75.553932 and
    5.320)arc(89.263:171.648:75.553932 and 5.320);
  \path[draw=black,dash pattern=on 2.50pt off 5.00pt,line join=round,line
    cap=round,miter limit=4.00,fill opacity=0.000,line width=0.625pt]
    (173.8657,117.0170)arc(180.000:270.000:75.553932 and
    5.320)arc(-90.000:0.000:75.553932 and 5.320);
  \path[draw=black,dash pattern=on 2.41pt off 4.83pt,line join=round,line
    cap=round,miter limit=4.00,fill opacity=0.000,line width=0.603pt]
    (184.0608,132.2840)arc(180.000:270.000:64.510658 and
    5.357)arc(270.000:360.000:64.510658 and 5.357);
  \path[draw=black,dash pattern=on 2.41pt off 4.81pt,line join=round,line
    cap=round,miter limit=4.00,fill opacity=0.000,line width=0.602pt]
    (183.1264,150.9326)arc(180.000:270.000:64.641510 and
    5.412)arc(-90.000:0.000:64.641510 and 5.412);
  \path[draw=black,dash pattern=on 2.48pt off 4.96pt,line join=round,line
    cap=round,miter limit=4.00,fill opacity=0.000,line width=0.620pt]
    (170.9240,165.0661)arc(180.000:270.000:75.750778 and
    5.322)arc(-90.000:0.000:75.750778 and 5.322);
  \path[draw=black,fill=black,line join=round,line cap=round,miter limit=4.00,line
    width=0.268pt] (255.4423,98.9005) .. controls (256.3314,97.9200) and
    (256.3161,96.1820) .. (256.2634,94.2843) .. controls (257.2066,96.0023) and
    (258.0857,97.4694) .. (259.3051,97.8525) -- cycle;
  \path[draw=black,fill=black,line join=round,line cap=round,miter limit=4.00,line
    width=0.268pt] (288.7096,167.7695) .. controls (287.9502,168.8535) and
    (286.2520,169.2234) .. (284.3896,169.5920) .. controls (286.2739,170.1316) and
    (287.8992,170.6641) .. (288.5426,171.7684) -- cycle;
  \path[draw=black,fill=black,line join=round,line cap=round,miter limit=4.00,line
    width=0.268pt] (276.0121,153.6803) .. controls (276.7731,154.7633) and
    (278.4719,155.1307) .. (280.3347,155.4967) .. controls (278.4513,156.0389) and
    (276.8267,156.5738) .. (276.1849,157.6790) -- cycle;
  \path[draw=black,fill=black,line join=round,line cap=round,miter limit=4.00,line
    width=0.268pt] (270.6701,135.2300) .. controls (271.3871,136.3426) and
    (273.0698,136.7777) .. (274.9166,137.2179) .. controls (273.0129,137.6844) and
    (271.3683,138.1538) .. (270.6827,139.2324) -- cycle;
  \path[draw=black,fill=black,line join=round,line cap=round,miter limit=4.00,line
    width=0.268pt] (275.3585,120.0146) .. controls (274.6414,121.1272) and
    (272.9588,121.5623) .. (271.1120,122.0025) .. controls (273.0157,122.4689) and
    (274.6603,122.9384) .. (275.3459,124.0170) -- cycle;
    
  \draw (327,162) node {\tiny $\alpha_1$};  
  \draw (320,133) node {\tiny $\alpha_3$};
  \draw (175,151) node {\tiny $\alpha_2$};
  \draw (168,120) node {\tiny $\alpha_4$};

\end{tikzpicture}
\caption{The curves $\alpha_1$, $\alpha_2$, $\alpha_3$, $\alpha_4$ }
\label{figure:FigureT}
\end{figure}
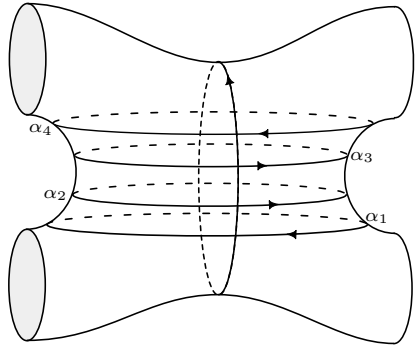

\begin{proof}
We are in the situation of Figure \ref{figure:FigureT}. 
We perform four successive surgeries along four isotopic curves $\alpha_1$, $\alpha_2$, $\alpha_3$ and $\alpha_4$  represented in Figure \ref{figure:FigureT}. The surgeries with $\alpha_1$ and $\alpha_3$ resolve the intersection point in the front of the surface. Meanwhile, the surgeries with $\alpha_2$ and $\alpha_4$ resolve the intersection point in the back of the surface.

The end-product is a curve $\beta$ isotopic to $\gamma_1$ whose holonomy is
\[ \Hol_A(\gamma_1) + \Hol_A(\alpha_1) + \Hol_A(\alpha_2) + \Hol_A(\alpha_3) + \Hol_A(\alpha_4). \]
Furthermore, by Proposition \ref{prop:cobordismassociatedtosuccessivesurgeries}, there is an unobstructed Lagrangian cobordism
\[ V : (\alpha_3, \alpha_4, \gamma_1, \alpha_1, \alpha_2 ) \leadsto \beta . \]
Since $\alpha_k$ for $k \in \{1, \ldots, 4 \}$ are non-seperating, we have by Lemma \ref{lemma:classofanonseparatingcurve},
\begin{align*}
\alpha_1 + \alpha_2 = i \left[ \Hol_A(\alpha_1) + \Hol_A(\alpha_2) \right], \\
\alpha_3 + \alpha_4 = i \left[ \Hol_A(\alpha_3) + \Hol_A (\alpha_4) \right]. 	
\end{align*}
So
\[ \beta = \gamma_1 + i \left[\Hol_A(\alpha_3 + \alpha_4) + \Hol_A(\alpha_2 + \alpha_1) \right].\]
From this and Lemma \ref{lemma:holonomyequalshamiltonianinvariant}, we deduce that there is $\varepsilon > 0$ such that all curve $\gamma$ isotopic to $\gamma_1$ with 
\[ \norm{\Hol_A(\gamma) - \Hol_A(\gamma_1)} < \varepsilon \]
satisifies in $\Gimmunob$
\[ \gamma - \gamma_1 = i(\Hol_A(\gamma) - \Hol_A(\gamma_1)) .\]
Let $S$ be the set of $\varepsilon > 0$ with this property. We let $\varepsilon_{\gamma_1}$ be the supremum of $S$.

Choose a smooth isotopy $ t \mapsto \gamma_t$ between $\gamma_0$ and $\gamma_1$. Since $[0,1]$ is compact, one can build a finite sequence 
\[ 0 = t_0 < t_1 < \ldots < t_N = 1\]
such that
\[ \forall i \in \{1, \ldots, N-1 \} , \ \norm{\Hol_A(\gamma_{t_i}) -\Hol_A(\gamma_{t_{i+1}})} < \varepsilon_{\gamma_{t_i}} .\]
So the conclusion follows. 
\end{proof}

The following Proposition is the analog of Proposition \ref{prop:ActionofaDehntwistonGimm} for $\Gimmunob$.

\begin{prop}
\label{prop:ActionofaDehntwistonGimmunob}
Let $\alpha: S^1 \to S_g$ and $\beta: S^1 \to S_g$ be two embedded curves. Then, there is $x \in \R$ such that
\[ [T_\alpha(\beta)] = [\beta] + (\beta \cdot \alpha) [\alpha] + i(x) \]
in $\Gimmunob$. 
\end{prop}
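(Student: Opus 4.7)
The plan is to reproduce the iterated surgery argument of Proposition \ref{prop:ActionofaDehntwistonGimm}, but to verify at each step that the surgery cobordism used is unobstructed, so that the resulting relation holds in $\Gimmunob$ rather than only in $\Gimm$. After an initial Hamiltonian isotopy we may assume $\alpha$ and $\beta$ are in minimal position, absorbing the change of holonomy into the final $i(x)$ term. Label the intersection points $x_1, \ldots, x_N$ (with $N = |\beta \cdot \alpha|$) along $\alpha$ and choose small Hamiltonian perturbations $\tilde{\alpha}_2, \ldots, \tilde{\alpha}_N$ of $\alpha$ exactly as in Figure \ref{figure:surgeryforDehnTwist}. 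At each step $k$, if $x_k$ has degree $1$ we surger with $\tilde{\alpha}_k$; if of degree $0$, with $\tilde{\alpha}_k^{-1}$; in either case we perform a surgery at a degree-$1$ point.

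Next, I would check inductively that the intermediate curves $c_k$ obtained after $k$ successive surgeries satisfy the hypotheses of Proposition \ref{prop:cobordismassociatedtosuccessivesurgeries}. Namely: $c_k$ is unobstructed, $c_k$ is transverse to and in minimal position with $\tilde{\alpha}_{k+1}^{\pm}$, and the next surgery $c_k \#_{x_{k+1}} \tilde{\alpha}_{k+1}^{\pm}$ is not null-homotopic. Unobstructedness at each step follows from Proposition \ref{prop:asurgeryisnonobstructed} provided we know the two previous conditions. Minimal position is ensured because the $\tilde{\alpha}_k$ are chosen to be Hamiltonian perturbations of $\alpha$ so small that the only new bigons with $c_{k-1}$ are those at the points $y_1, \ldots, y_{k-1}$, and these are removed by the handle of the $k$-th surgery (as in the proof of Proposition \ref{prop:ActionofaDehntwistonGimm}). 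Non-contractibility follows because $c_N$ is isotopic to $T_\alpha(\beta)$, which is nontrivial in $\pi_1(S_g)$ (as $\alpha$ and $\beta$ are in minimal position), and intermediate $c_k$ project to nontrivial homology classes. Proposition \ref{prop:cobordismassociatedtosuccessivesurgeries} then yields an unobstructed cobordism
\[ V : (\tilde{\alpha}_1^{\epsilon_1}, \ldots, \tilde{\alpha}_N^{\epsilon_N}, \beta) \leadsto c_N, \]
where $\epsilon_k \in \{\pm 1\}$ records the sign of $x_k$ and $\sum_k \epsilon_k = \beta \cdot \alpha$.

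Finally, since $c_N$ and $T_\alpha(\beta)$ are isotopic embedded curves, they differ in $\Gimmunob$ by a term in the image of $i$: in the separating case by Lemma \ref{lemma:Cylinderoverseparatingcurves}, and in the non-separating case by the very definition of $i$ following Corollary \ref{coro:iisinjective}. The same observation applied to each perturbation gives $[\tilde{\alpha}_k^{\epsilon_k}] = \epsilon_k[\alpha] + i(y_k)$. Summing these contributions and reading off the relation from the cobordism $V$ produces
\[ [T_\alpha(\beta)] = [\beta] + (\beta \cdot \alpha)[\alpha] + i(x) \]
in $\Gimmunob$ for some $x \in \R$ that collects all the holonomy terms.

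The main obstacle is the inductive verification that minimal position and unobstructedness persist through the sequence of surgeries; everything else is a bookkeeping of holonomies and a direct application of Proposition \ref{prop:cobordismassociatedtosuccessivesurgeries}. Once minimal position is maintained, Proposition \ref{prop:asurgeryisnonobstructed} takes care of unobstructedness automatically, and the cobordism machinery of Subsection \ref{subsection:surgeryofimmersedcurves} assembles the pieces.
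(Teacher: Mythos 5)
Your overall strategy matches the paper's: perform the iterated surgeries of Proposition \ref{prop:ActionofaDehntwistonGimm}, verify unobstructedness at each stage via Proposition \ref{prop:asurgeryisnonobstructed} and Proposition \ref{prop:cobordismassociatedtosuccessivesurgeries}, and absorb all holonomy discrepancies into the $i(x)$ term using Lemma \ref{lemma:Cylinderoverseparatingcurves} and Corollary \ref{coro:iisinjective}. However, there is a genuine gap at precisely the point you identify as the main obstacle.

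Your justification that $\tilde{\alpha}_{k+1}$ and $c_k$ are in minimal position --- namely that ``the $\tilde{\alpha}_k$ are chosen to be Hamiltonian perturbations of $\alpha$ so small that the only new bigons with $c_{k-1}$ are those at the points $y_1, \ldots, y_{k-1}$'' --- does not hold up. First, the $y_i$ are intersection points, not bigons, so the sentence conflates two different things. Second, and more importantly, the curve $c_k$ is built from $\beta$ and $k$ copies of $\alpha$ joined by surgery handles; a bigon between $\tilde{\alpha}_{k+1}$ and $c_k$ can have its boundary arc on $c_k$ passing through several of these handles and through arcs of both $\beta$ and the earlier $\tilde{\alpha}_m$'s. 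Shrinking the Hamiltonian perturbation controls where $\tilde{\alpha}_{k+1}$ sits relative to $\alpha$, but it does not preclude such bigons a priori. This is exactly why the paper needs the standalone Lemma \ref{lemma:curvesinsuccessivesurgeriesareinminimalposition}, whose proof is a nontrivial combinatorial analysis: it argues by contradiction, assumes a bigon $v$ exists, tracks the ``switches'' of the boundary arc of $v$ through the surgery handles (Figures \ref{figure:FigureP}, \ref{figure:FigureQ}), bounds the number of switches, and then cuts along arcs of $\beta$ in the relevant Darboux charts (Figure \ref{figure:FigureR}) to manufacture a bigon between $\alpha$ and $\beta$, contradicting their minimal position. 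Without some version of this argument (or an alternative), you cannot invoke Proposition \ref{prop:asurgeryisnonobstructed}, and the chain of unobstructed surgeries does not get off the ground. The appeal to intermediate homology classes for non-contractibility is also shaky (consider both $\alpha$ and $\beta$ separating), though that issue is secondary to the minimal position gap.
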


\begin{proof}
Assume that $\alpha$ and $\beta$ are in minimal position. We construct a representative $\gamma$, up to isotopy, of $T_\alpha(\beta)$ using the procedure of Proposition \ref{prop:ActionofaDehntwistonGimm}.

We show that the successive surgeries of the proof of Proposition \ref{prop:ActionofaDehntwistonGimm} are unobstructed. 

Recall that for $k \in \{ 1, \ldots, N \}$, the curve $c_k$ is obtained from $k$ surgeries along the curves $\tilde{\alpha}_1, \ldots, \tilde{\alpha}_k$. The curve $\tilde{\alpha}_{k+1}$ is a perturbation of $\alpha$.  We also fixed Darboux charts $\phi_m$ near each intersection point $x_m$.

\begin{lemma}
\label{lemma:curvesinsuccessivesurgeriesareinminimalposition}
In the above setting, the curves $\tilde{\alpha}_{k+1}$ and $c_k$ are in minimal position.
\end{lemma}

\begin{proof}
The proof proceeds by contradiction. Let us start with an outline of the proof.
\begin{enumerate}
	\item We first show that there is a bigon $v$ between $\tilde{\alpha}_{k+1}$ and $\beta$ whose boundary arc on $\tilde{\alpha}_{k+1}$ is embedded.
	\item This bigon has a precise behavior near the surgered points described in Figure \ref{figure:FigureP}.  
	\item Then, we build from $v$ a bigon between $\alpha$ and $\beta$ following the procedure represented in \ref{figure:FigureR}.
\end{enumerate}
So $\alpha$ and $\beta$ are not in minimal position, this contradicts the hypothesis.

Assume there is a bigon $u$ between $\tilde{\alpha}_{k+1}$ and $c_l$. Since $\tilde{\alpha}_{k+1}$ is embedded, the set $u^{-1}(\tilde{\alpha}_{k+1})$ is a union of embedded arcs. We take an innermost such arc $A$. We let $v : \D \to S_g$ be the immersed bigon delimited by this arc. We parameterize $v$ so that the preimages of its corners are $-1$ and $1$. 

Notice that the bigon $v$ is immersed. If one of its corners is non convex, the set $v^{-1}(\tilde{\alpha}_{k+1})$ has non-empty intersection with $\Int(\D)$, so $A$ is not innermost. Hence, $v$ has convex corners. 

We fix a (regular) parameterization $\gamma : S^1 \to S_g$ of the curve $c_k$. There are two by two disjoint arcs of $A_1, \ldots, A_k, B_1, \ldots, B_k \subset S^1$ satisfying the following properties
\begin{itemize}
	\item for $m \in \{ 1, \ldots, k \}$, $\gamma_{\vert A_m}$ parameterizes an arc of $\tilde{\alpha}_m$ ,
	\item for $m \in \{ 1, \ldots, l \}$, $\gamma_{\vert B_m}$ parameterizes an arc of $\beta$ 
\end{itemize}
Notice that the arcs $A_1, \ldots, A_k$ and $B_1, \ldots, B_k$ are intertwined. Moreover, the complements of these in $S^1$ is a union of arcs which parameterize the handles of the successive surgeries. We call these arcs $C_1, \ldots, C_{2k}$.

Let $A$ be the arc of $\partial \D$ which maps into $c_k$ through $v$. Choose an immersed lift $\lambda : A \to S^1$ such that $\gamma \circ \lambda = v_{\vert A}$. Whenever the map $\lambda$ parameterizes one of the arcs $C_i$, we say that $v$ has a switch.

There is at least one switch. Assume otherwise. Then the image of $\lambda$ is contained in one of the $A_i$ or one of the $B_i$. If it is a subset of one of the $A_i$, then $v$ yields an immersed strip with convex corners between $\tilde{\alpha}_i$ and $\tilde{\alpha}_{k+1}$. There are only two such strips, and both of these are not strips on the surgeries. If the image of $\lambda$ is a subset of one of the $B_i$, then $v$ is an immersed bigon between $\tilde{\alpha}_{k+1}$ and $\beta$. Hence these curves are not in minimal position, a contradiction.

There are at most two branch switches. If not, there is $m$ such that $A_m \subset \Im(\lambda)$. So $\Im(c_l \circ\lambda )$ contains one of the vertical lines in the charts $\phi_k$. Hence, it must intersect $\tilde{\alpha}_{k+1}$ in this chart, a contradiction.

\input{Figures/FigureP.tex}

The possible behaviors of the curve $v$ at a switch are described in Figure \ref{figure:FigureP}. Using the techniques of the proof of Proposition \ref{prop:asurgeryisnonobstructed}, it is an easy exercise to show that these are indeed the only possible cases.

\paragraph{\textbf{First case}} One of the corners maps to a point $x_m^k$ for some $m$. Moreover, if we parameterize $v$ so that $v(-1) = x_k^m$, the lower arc is mapped to $c_k$ and the upper boundary arc to $\tilde{\alpha}_{k+1}$.

\begin{figure}
\definecolor{cff0000}{RGB}{255,0,0}
\captionsetup{justification=centering,margin=2cm}

\begin{tikzpicture}[y=0.80pt, x=0.80pt, yscale=-1.000000, xscale=1.000000, inner sep=0pt, outer sep=0pt]
    \path[xscale=1.000,yscale=-1.000,draw=black,line join=round,line cap=round,miter
      limit=4.00,fill opacity=0.156,line width=0.500pt] (163.6447,-140.1945) circle
      (2.0474cm);
    \path[draw=black,line join=miter,line cap=butt,miter limit=4.00,even odd
      rule,line width=0.500pt] (163.8973,67.4635) -- (163.8973,212.6729);
    \path[draw=cff0000,line join=miter,line cap=butt,miter limit=4.00,even odd
      rule,line width=0.500pt] (182.4549,210.1497) .. controls (182.4549,140.8309)
      and (145.0668,144.2529) .. (145.0668,69.9535);
    \path[fill=black,line join=round,line cap=round,miter limit=4.00,fill
      opacity=0.067,line width=0.500pt] (157.6106,130.0569) .. controls
      (155.2133,125.0480) and (152.7925,118.8426) .. (152.0129,116.4855) .. controls
      (148.0259,104.4315) and (145.6048,89.1299) .. (145.3032,77.2138) --
      (145.1246,70.1582) -- (149.6782,69.0680) .. controls (151.7220,68.5787) and
      (155.6758,67.7784) .. (158.3389,67.8627) -- (163.7853,67.7476) --
      (163.7853,103.3665) .. controls (163.7853,122.8587) and (163.8566,141.1283) ..
      (163.7255,141.1283) .. controls (163.5944,141.1283) and (160.0079,135.0658) ..
      (157.6106,130.0569) -- cycle;
    \path[fill=black,line join=round,line cap=round,miter limit=4.00,fill
      opacity=0.067,line width=0.500pt] (164.0009,177.1105) -- (163.9486,142.6462)
      -- (169.5845,152.8195) .. controls (174.5757,161.8291) and (177.8590,172.9309)
      .. (179.6291,181.6316) .. controls (181.1883,189.2962) and (181.9785,200.5838)
      .. (182.1325,205.6819) -- (182.2674,210.1462) -- (177.6246,211.6845) ..
      controls (175.5289,212.3788) and (171.5076,212.3225) .. (169.0531,212.3988) --
      (163.9638,212.5569) -- (164.0008,178.8069) -- cycle;
    \path[xscale=1.000,yscale=-1.000,draw=black,line join=round,line cap=round,miter
      limit=4.00,fill opacity=0.156,line width=0.500pt] (361.5019,-140.1945) circle
      (2.0474cm);
    \path[draw=black,line join=miter,line cap=butt,miter limit=4.00,even odd
      rule,line width=0.500pt] (361.7544,67.4635) -- (361.7544,212.6729);
    \path[draw=cff0000,line join=miter,line cap=butt,miter limit=4.00,even odd
      rule,line width=0.500pt] (380.3121,210.1497) .. controls (380.3121,140.8309)
      and (342.9239,144.2529) .. (342.9239,69.9535);
    \path[fill=black,line join=round,line cap=round,miter limit=4.00,fill
      opacity=0.067,line width=0.500pt] (347.1304,211.1298) .. controls
      (328.9557,207.4844) and (312.3394,196.5373) .. (301.7860,181.1483) .. controls
      (297.5330,174.9465) and (292.9087,164.5049) .. (291.0951,157.2570) .. controls
      (288.8254,148.1860) and (288.5663,132.6145) .. (290.8284,123.5786) .. controls
      (294.2724,109.8211) and (299.6586,99.3143) .. (309.8427,89.1302) .. controls
      (313.7942,85.1788) and (319.9156,80.6282) .. (323.0819,78.6673) .. controls
      (328.2539,75.4644) and (340.9419,70.0703) .. (342.8790,70.0420) .. controls
      (343.5125,70.0327) and (343.1405,72.2292) .. (343.1405,75.9063) .. controls
      (343.1405,83.5595) and (344.5578,98.7018) .. (346.5243,106.4736) .. controls
      (348.9893,116.2154) and (351.6795,122.3754) .. (357.0342,132.6558) --
      (361.7751,141.8159) -- (361.2069,176.8477) -- (361.5857,212.3215) --
      (356.4900,212.5070) .. controls (354.0360,212.5963) and (349.8733,211.8945) ..
      (347.3198,211.3824) -- (347.2567,211.3193) -- cycle;
    \path[fill=black,line join=round,line cap=round,miter limit=4.00,fill
      opacity=0.067,line width=0.500pt] (380.1455,201.1355) .. controls
      (379.4490,182.0367) and (375.8196,168.6767) .. (367.0542,151.9165) --
      (361.8657,141.9956) -- (361.7764,104.9047) -- (361.8210,67.9030) --
      (367.5799,68.0106) .. controls (377.6209,68.1982) and (386.6086,71.3550) ..
      (395.0921,75.8836) .. controls (413.7403,85.8384) and (428.0707,104.3041) ..
      (432.5011,125.7311) .. controls (434.3870,134.8518) and (433.8719,150.1977) ..
      (431.5336,158.7623) .. controls (427.2160,174.5766) and (417.3344,189.4178) ..
      (404.2034,199.0682) .. controls (398.5661,203.2111) and (392.7670,206.7615) ..
      (385.8329,208.4222) -- (380.4697,210.0266) -- cycle;
      
  \draw (162,226) node {Type 1};   
  \draw (361,226) node {Type 2};

\end{tikzpicture}
\caption{The different possibilities at a corner $y_j$, \\
The image of the disk $v$ is one of the four shaded areas }
\label{figure:FigureQ}
\end{figure}
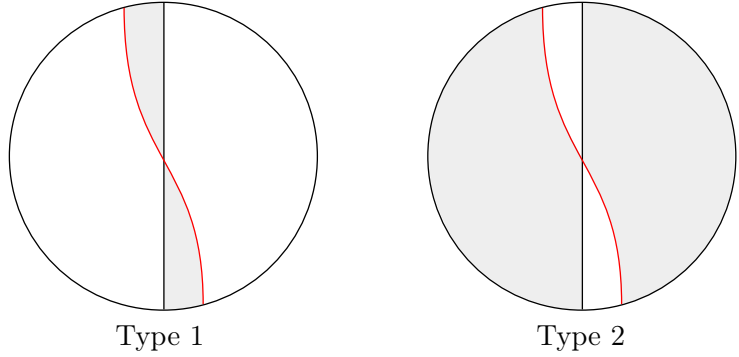
Then there is a unique switch of type $A\pm$ (Figure \ref{figure:FigureP}). The other corner is one of the points $y_i$ and must be of type $1 \pm$ (Figure \ref{figure:FigureQ}). It is then easy, following the procedure of the proof of Proposition \ref{prop:asurgeryisnonobstructed}, to produce a non-constant bigon with arcs on $\alpha$ and $\beta$ (see Figure \ref{figure:FigureR}).
  
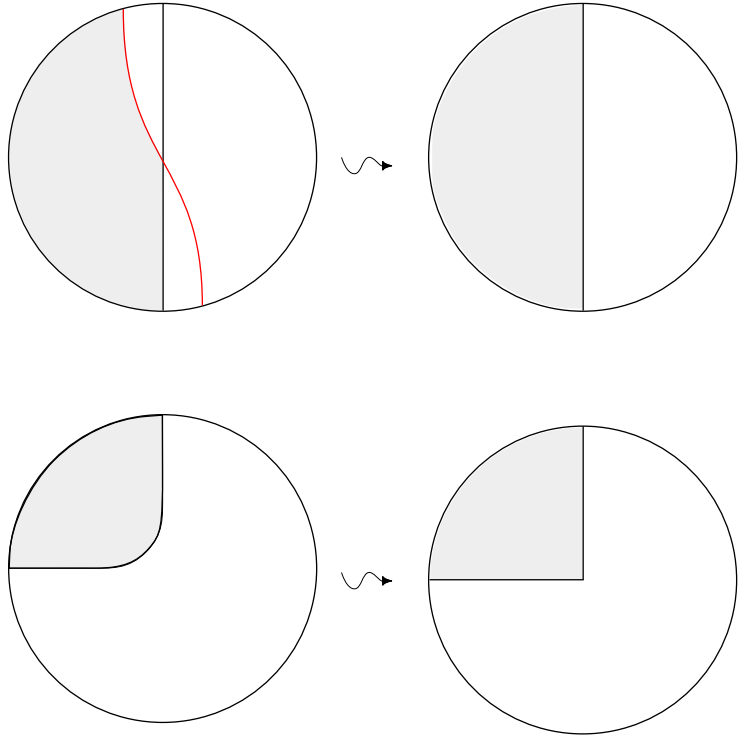
\begin{figure}
\definecolor{cff0000}{RGB}{255,0,0}
\captionsetup{justification=centering,margin=2cm}

\begin{tikzpicture}[y=0.80pt, x=0.80pt, yscale=-1.000000, xscale=1.000000, inner sep=0pt, outer sep=0pt]
  \begin{scope}[shift={(9.23985,0)}]
    \path[xscale=1.000,yscale=-1.000,draw=black,line join=round,line cap=round,miter
      limit=4.00,fill opacity=0.156,line width=0.500pt] (107.1429,-107.0717) circle
      (2.0474cm);
    \path[draw=black,line join=miter,line cap=butt,miter limit=4.00,even odd
      rule,line width=0.500pt] (107.3954,34.3407) -- (107.3954,179.5501);
    \path[draw=cff0000,line join=miter,line cap=butt,miter limit=4.00,even odd
      rule,line width=0.500pt] (125.9531,177.0270) .. controls (125.9531,107.7081)
      and (88.5649,111.1301) .. (88.5649,36.8307);
    \path[fill=black,line join=round,line cap=round,miter limit=4.00,fill
      opacity=0.067,line width=0.500pt] (92.7714,178.0070) .. controls
      (74.5967,174.3616) and (57.9804,163.4145) .. (47.4270,148.0255) .. controls
      (43.1740,141.8238) and (38.5497,131.3822) .. (36.7361,124.1342) .. controls
      (34.4664,115.0632) and (34.2073,99.4917) .. (36.4694,90.4558) .. controls
      (39.9134,76.6983) and (45.2996,66.1915) .. (55.4837,56.0074) .. controls
      (59.4352,52.0560) and (65.5566,47.5054) .. (68.7229,45.5445) .. controls
      (73.8949,42.3416) and (86.5829,36.9475) .. (88.5200,36.9192) .. controls
      (89.1535,36.9099) and (88.7815,39.1064) .. (88.7815,42.7835) .. controls
      (88.7815,50.4367) and (90.1988,65.5791) .. (92.1653,73.3508) .. controls
      (94.6303,83.0927) and (97.3205,89.2527) .. (102.6752,99.5330) --
      (107.4161,108.6931) -- (106.8479,143.7250) -- (107.2267,179.1988) --
      (102.1310,179.3842) .. controls (99.6770,179.4735) and (95.5143,178.7718) ..
      (92.9608,178.2596) -- (92.8977,178.1965) -- cycle;
  \end{scope}
  \begin{scope}[shift={(25.0,0)}]
    \path[xscale=1.000,yscale=-1.000,draw=black,line join=round,line cap=round,miter
      limit=4.00,fill opacity=0.156,line width=0.500pt] (290.0000,-107.0717) circle
      (2.0474cm);
    \path[draw=black,line join=miter,line cap=butt,miter limit=4.00,even odd
      rule,line width=0.500pt] (290.2525,34.3407) -- (290.2525,179.5501);
    \path[fill=black,line join=round,line cap=round,miter limit=4.00,fill
      opacity=0.067,line width=0.500pt] (275.5714,177.5387) .. controls
      (258.4429,173.8658) and (242.5410,163.5812) .. (231.9952,149.3557) .. controls
      (227.6653,143.5150) and (222.6762,133.1898) .. (220.4361,125.4336) .. controls
      (218.9752,120.3755) and (218.7903,118.2430) .. (218.8252,106.8622) .. controls
      (218.8617,94.9628) and (219.0101,93.5262) .. (220.8167,87.5765) .. controls
      (225.2405,73.0081) and (233.6113,60.3711) .. (244.8086,51.3571) .. controls
      (256.4723,41.9676) and (269.1049,36.9103) .. (284.6786,35.3956) --
      (289.8571,34.8919) -- (289.8571,106.7699) -- (289.8571,178.6479) --
      (285.0357,178.5911) .. controls (282.3839,178.5599) and (278.1250,178.0863) ..
      (275.5714,177.5387) -- cycle;
  \end{scope}
    \path[xscale=1.000,yscale=-1.000,draw=black,line join=round,line cap=round,miter
      limit=4.00,fill opacity=0.156,line width=0.500pt] (116.4286,-301.6479) circle
      (2.0474cm);
    \path[draw=black,line join=miter,line cap=butt,miter limit=4.00,even odd
      rule,line width=0.300pt] (116.4056,229.1896) .. controls (116.4056,229.1896)
      and (116.4056,253.2858) .. (116.4056,265.3340) .. controls (116.4056,277.3821)
      and (117.1366,285.2311) .. (108.5961,293.7716) .. controls (100.0556,302.3121)
      and (92.1895,301.4783) .. (80.0814,301.4783) .. controls (67.9734,301.4783)
      and (43.7572,301.4783) .. (43.7572,301.4783);
    \path[draw=black,fill=black,line join=round,line cap=round,miter limit=4.00,fill
      opacity=0.067,line width=0.500pt] (44.1673,293.9507) .. controls
      (44.6939,281.6329) and (53.0875,262.4466) .. (65.0779,250.4563) .. controls
      (77.0682,238.4659) and (92.7707,230.8661) .. (108.6616,229.6350) --
      (116.2644,229.0460) -- (116.3537,254.7838) .. controls (116.4503,282.6163) and
      (115.5492,285.5060) .. (110.0822,291.8930) .. controls (104.3155,298.6300) and
      (97.6181,301.4426) .. (84.9272,301.4544) .. controls (77.7766,301.4614) and
      (74.4341,301.4514) .. (73.1995,301.4514) -- (70.3266,301.4514) .. controls
      (69.3446,301.4514) and (67.1987,301.4641) .. (57.5692,301.4641) --
      (43.8462,301.4641) -- cycle;
  \path[xscale=1.000,yscale=-1.000,draw=black,line join=round,line cap=round,miter
    limit=4.00,fill opacity=0.156,line width=0.500pt] (315.0000,-307.0717) circle
    (2.0474cm);
  \path[draw=black,line join=miter,line cap=butt,miter limit=4.00,even odd
    rule,line width=0.500pt] (315.2525,234.3407) -- (315.2525,306.9162);
  \path[draw=black,line join=miter,line cap=butt,miter limit=4.00,even odd
    rule,line width=0.500pt] (315.5358,306.9454) -- (242.6478,306.9454);
  \path[fill=black,line join=round,line cap=round,miter limit=4.00,fill
    opacity=0.067,line width=0.500pt] (242.9999,304.3108) .. controls
    (242.9999,299.5694) and (244.2355,291.9814) .. (245.9146,286.4118) .. controls
    (252.9233,263.1633) and (271.0291,245.0397) .. (294.2818,237.9972) .. controls
    (299.9189,236.2898) and (307.5207,235.0537) .. (312.3832,235.0537) --
    (314.7208,235.0537) -- (314.7208,270.7879) -- (314.7208,306.5220) --
    (278.8603,306.5220) -- (242.9999,306.5220) -- (242.9999,304.3108) -- cycle;
  \begin{scope}[cm={{0.0,-1.0,1.0,0.0,(-43.15504,436.63133)}}]
    \path[draw=black,line join=miter,line cap=butt,miter limit=4.00,even odd
      rule,line width=0.268pt] (329.5622,244.1495) .. controls (329.5622,244.1495)
      and (321.3086,246.4926) .. (321.8351,250.5742) .. controls (322.2293,253.6295)
      and (328.9884,253.9786) .. (329.5622,256.9988) .. controls (329.9870,259.2352)
      and (325.5377,262.0745) .. (325.5377,263.2360) -- (325.5377,265.2080) --
      (325.4571,265.2080);
    \path[draw=black,fill=black,line join=round,line cap=round,miter limit=4.00,line
      width=0.268pt] (327.5796,263.6342) .. controls (326.4648,264.3477) and
      (326.0245,266.0290) .. (325.5784,267.8744) .. controls (325.1180,265.9693) and
      (324.6537,264.3232) .. (323.5772,263.6342) -- cycle;
    \path[draw=black,fill=black,line join=round,line cap=round,miter limit=4.00,line
      width=0.268pt] (327.5796,263.6342) .. controls (326.4648,264.3477) and
      (326.0245,266.0290) .. (325.5784,267.8744) .. controls (325.1180,265.9693) and
      (324.6537,264.3232) .. (323.5772,263.6342) -- cycle;
  \end{scope}
  \begin{scope}[cm={{0.0,-1.0,1.0,0.0,(-43.15504,633.0599)}}]
    \path[draw=black,line join=miter,line cap=butt,miter limit=4.00,even odd
      rule,line width=0.268pt] (329.5622,244.1495) .. controls (329.5622,244.1495)
      and (321.3086,246.4926) .. (321.8351,250.5742) .. controls (322.2293,253.6295)
      and (328.9884,253.9786) .. (329.5622,256.9988) .. controls (329.9870,259.2352)
      and (325.5377,262.0745) .. (325.5377,263.2360) -- (325.5377,265.2080) --
      (325.4571,265.2080);
    \path[draw=black,fill=black,line join=round,line cap=round,miter limit=4.00,line
      width=0.268pt] (327.5796,263.6342) .. controls (326.4648,264.3477) and
      (326.0245,266.0290) .. (325.5784,267.8744) .. controls (325.1180,265.9693) and
      (324.6537,264.3232) .. (323.5772,263.6342) -- cycle;
    \path[draw=black,fill=black,line join=round,line cap=round,miter limit=4.00,line
      width=0.268pt] (327.5796,263.6342) .. controls (326.4648,264.3477) and
      (326.0245,266.0290) .. (325.5784,267.8744) .. controls (325.1180,265.9693) and
      (324.6537,264.3232) .. (323.5772,263.6342) -- cycle;
  \end{scope}

\end{tikzpicture}
\caption{The procedure to obtain a bigon between $\alpha$ and $\beta$}
\label{figure:FigureR}
\end{figure}

\paragraph{\textbf{Second case}} One of the corners maps to a point $x_m^k$ for some $m$. Moreover, if we parameterize $v$ so that $v(-1) = x_k^m$, the upper arc is mapped to $c_k$ and the lower boundary arc to $\tilde{\alpha}_{k+1}$.

Following the upper boundary arc from $-1$ to $1$, there is a first switch of type $B \pm$ or $C \pm$. There cannot be another switch. Otherwise, the boundary condition $\lambda$ parameterizes one of the vertical arcs in the chart $\phi_k$. Hence, the other corner maps to one of the $y_i$ and must be of type $2$ (Figure \ref{figure:FigureQ}). From this, we deduce that the switch was of type $B \pm$.

In the chart near the switch, $\beta$ yields an arc in the image of $v$ from the handle to $\tilde{\alpha}_{k+1}$. Cutting the disk along this arc, we obtain a bigon between $\tilde{\alpha}_{k+1}$ and $\beta$.

\paragraph{\textbf{Third case}} Both of the corners map to points $y_i$ and $y_j$ for some $i$ and some $j$. We assume that the lower boundary arc of the bigon maps to $\tilde{\alpha}_{k+1}$ and the upper boundary arc maps to $c_k$.

Following the upper boundary arc from $-1$ to $1$, there is a first switch point of type $A$ and one second of type $B$ or $C$. However, the corner at $-1$ is of type $2$, and the corner at $1$ is of type $1$. So the second switch is of type $B$.

In the chart near the first switch point, there is an arc along $\beta$ from $c_k$ to $\tilde{\alpha}_{k+1}$ which cuts the image of $v$ in half. We cut $v$ along this arc and solve the corners as in Figure \ref{figure:FigureR} to obtain a bigon with boundary on $\alpha$ and $\beta$.

In all three cases, we obtain that $\alpha$ and $\beta$ are not in minimal position. Therefore, the lemma must hold. 
\end{proof}

Now, an induction and Proposition \ref{prop:asurgeryisnonobstructed} show that each of the curve $c_k$ is unobstructed. Recall that we denoted by $\gamma$ the curve obtained by the successive surgeries of the proof of Proposition \ref{prop:ActionofaDehntwistonGimm} and that it is isotopic to $T_\alpha(\beta)$.

Hence, by Proposition \ref{prop:cobordismassociatedtosuccessivesurgeries}, there is an unobstructed Lagrangian cobordism 
\[ \left( \alpha, \beta, \ldots, \beta, \beta^{-1}, \ldots, \beta^{-1} \right) \leadsto \gamma , \]
with as many copies of $\alpha$ as there are intersection points of degree $0$ and as many copies of $\alpha^{-1}$ as there are intersection points of degree 1. On the other hand, $\gamma$ and $T_\alpha (\beta)$ are isotopic curves. By Corollary \ref{coro:iisinjective} when $\beta$ is non-separating and Lemma \ref{lemma:Cylinderoverseparatingcurves} when $\beta$ is, there is $x \in \R$ such that
\[  \gamma = i(x) + T_\alpha (\beta).\]
Hence, in $\Gimmunob$,
\begin{align*} 
T_\alpha(\beta) & =  \gamma + i(x)\\
	& = \beta + (\alpha \cdot \beta) \alpha + i(x). 
\end{align*}
This concludes the proof.

In general, isotope $\alpha$ to a curve $\tilde{\alpha}$ in minimal position with $\alpha$. So there is $x \in \R$ such that
\[ T_{\tilde{\alpha}}(\beta) = \beta + (\alpha \cdot \beta) \tilde{\alpha} + i(x). \]
The conclusion follows by Lemma \ref{lemma:Cylinderoverseparatingcurves} since $T_{\tilde{\alpha}}(\beta)$ and $\tilde{\alpha}$ are isotopic to $\alpha$ and $T_{\alpha}(\beta)$ respectively.
\end{proof}

As a first consequence, let $\gamma$ be the oriented boundary of an embedded torus. We let
\begin{align}
  T = [\gamma] - i(\Hol_A(\gamma)).
\label{eqn:definitionofT}  	
\end{align}

\begin{lemma}
\label{lemma:theclass$T$iswelldefined}
The class $T$ defined in equation \ref{eqn:definitionofT} does not depend on the choice of $\gamma$.	
\end{lemma}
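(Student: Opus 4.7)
The plan is to show that for any two oriented boundaries $\gamma_1, \gamma_2$ of embedded tori in $S_g$, the class $[\gamma_2] - [\gamma_1]$ lies in the image of $i$, and then pin down the exact value using the holonomy morphism.

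First, I would invoke the change of coordinates principle (\cite[1.3]{FM012}): since both $\gamma_1$ and $\gamma_2$ are separating curves that bound subsurfaces of the same topological type (genus $1$), there exists an orientation-preserving diffeomorphism $\phi$ of $S_g$ with $\phi(\gamma_1)$ isotopic to $\gamma_2$. By Theorem \ref{Theo:Lickorishgeneratorsofthe MCG}, the class of $\phi$ in $\Mod(S_g)$ decomposes as a product of Dehn twists $T_{\alpha_{i_1}} \cdots T_{\alpha_{i_N}}$ about the Lickorish generators. Apply Proposition \ref{prop:ActionofaDehntwistonGimmunob} iteratively along this product: at each step we pick up a term of the form $(\beta \cdot \alpha_{i_k})[\alpha_{i_k}] + i(x_k)$ for some $x_k \in \R$. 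Crucially, the curves we twist at each stage are separating (they bound tori, possibly after earlier twists), so their homological intersection with any simple closed curve vanishes, and the $(\beta \cdot \alpha_{i_k})[\alpha_{i_k}]$ term drops out. Combining this with Lemma \ref{lemma:Cylinderoverseparatingcurves} to account for the remaining isotopy between $\phi(\gamma_1)$ and $\gamma_2$, we obtain
\[
[\gamma_2] - [\gamma_1] = i(x)
\]
in $\Gimmunob$ for some $x \in \R$.

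To identify $x$, I would apply the holonomy homomorphism $\Hol_A : \Gimmunob \to \R$ from Proposition \ref{prop:holonomyproperties} to both sides. By construction of $i$, the composition $\Hol_A \circ i$ is the identity on $\R$, so
\[
\Hol_A(\gamma_2) - \Hol_A(\gamma_1) = x.
\]
Substituting back yields $[\gamma_2] - [\gamma_1] = i\bigl(\Hol_A(\gamma_2) - \Hol_A(\gamma_1)\bigr)$, and since $i$ is a group morphism we may rearrange this as
\[
[\gamma_2] - i\bigl(\Hol_A(\gamma_2)\bigr) = [\gamma_1] - i\bigl(\Hol_A(\gamma_1)\bigr),
\]
which is exactly the claim that $T$ does not depend on the choice of representative $\gamma$.

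The main subtlety is ensuring that at every stage of the inductive twist argument the relevant intersection numbers with the twist curves are zero. This is where the assumption that $\gamma$ bounds a torus, hence is separating and therefore null-homologous, is used decisively: since $[\gamma_1] = 0$ in $H_1(S_g, \Z)$, the class $[T_{\alpha_{i_1}}(\gamma_1)] - [\gamma_1]$ is purely of the $i(x)$ type, and the same remains true at each subsequent stage because the property of being null-homologous is preserved under the action of the mapping class group.
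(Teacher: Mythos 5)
Your proposal is correct and follows essentially the same route as the paper: obtain a product of Dehn twists carrying $\gamma_1$ to a curve isotopic to $\gamma_2$, note that the intersection-number terms in Proposition \ref{prop:ActionofaDehntwistonGimmunob} vanish because the twisted curve stays null-homologous, absorb the final isotopy via Lemma \ref{lemma:Cylinderoverseparatingcurves}, and identify the resulting $i(x)$ by applying $\Hol_A$. The only difference is cosmetic — you spell out the change-of-coordinates and Lickorish decomposition that the paper leaves implicit.
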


\begin{proof}
Let $\gamma_1$ and $\gamma_2$ be two embedded curves which bound a torus. There is a sequence of Dehn Twists $T_{\delta_1}, \ldots, T_{\delta_n}$ about the curves $\delta_1, \ldots, \delta_n$ such that
\[  T_{\delta_1} \ldots T_{\delta_n} (\gamma_1) \]
is isotopic to $\gamma_2$. Since $\gamma_1$ is null-homologous, by Proposition \ref{prop:ActionofaDehntwistonGimmunob}, there is $x \in \R$ such that
\[ T_{\delta_1} \ldots T_{\delta_n} (\gamma_1) = \gamma_1 + i(x). \]
Since $\gamma_2$ is isotopic to $T_{\delta_1} \ldots T_{\delta_n} (\gamma_1)$, by Lemma \ref{lemma:Cylinderoverseparatingcurves}, there is $y \in \R$ such that
\[ \gamma_2 = T_{\delta_1} \ldots T_{\delta_n} (\gamma_1) + i(y). \]
Hence,
\[ \gamma_2 = \gamma_1 + i(y + x). \]
We apply the holonomy morphism to obtain
\[ \Hol_A(\gamma_2) - \Hol_A(\gamma_1) = y +x . \]
So
\[ \gamma_2 - \Hol_A(\gamma_2) = \gamma_1 - \Hol_A(\gamma_1). \] 
\end{proof}

The following is the analog of Lemma \ref{lemma:classofanonseparatingcurve}.
\begin{lemma}
\label{lemma:classofanonseparatingcurveunob}
Let $\gamma$ be the oriented boundary of an embedded surface $S_1$. Then there is $x \in \R$ in $\Gimm$ such that
\[ 	[\gamma] = \chi(S_1) \cdot T + i(x). \]	
\end{lemma}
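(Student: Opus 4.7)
The proof proceeds by induction on the genus $g_1$ of $S_1$, mirroring the argument of Lemma \ref{lemma:classofanonseparatingcurve} while now tracking holonomy corrections as elements of $i(\R)$ and verifying unobstructedness of every intermediate surgery. For the base case, $\gamma$ bounds a torus, so by the very definition of $T$ in equation (\ref{eqn:definitionofT}) and the well-definedness established in Lemma \ref{lemma:theclass$T$iswelldefined}, one has $[\gamma] = T + i(\Hol_A(\gamma))$, and the conclusion holds with the appropriate choice of $x$.

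For the inductive step, I would re-use the geometric setup of Lemma \ref{lemma:classofanonseparatingcurve}: choose non-separating embedded curves $\gamma_1, \gamma_2 \subset S_1$ so that $(\gamma, \gamma_1, \gamma_2)$ bounds a pair of pants, and a separating curve $\gamma_3$ (bounding a surface $S_3$ of genus $g_1 - 1$) so that $(\gamma_1, \gamma_2, \gamma_3)$ bounds a second pair of pants. I would then introduce the auxiliary curves $\alpha, \beta$ of Figure \ref{figure:FigureE} and carry out the two parallel sequences of surgeries used there, which in $\Gimm$ produced the identity $\gamma = \gamma_3 + 2T$.

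The new ingredient is that each surgery cobordism must be promoted to an unobstructed Lagrangian cobordism. At each stage I would verify that the two curves being surgered are in minimal position, using the bigon-elimination argument of Lemma \ref{lemma:curvesinsuccessivesurgeriesareinminimalposition}, and that the resulting surgered curve is not null-homotopic; Proposition \ref{prop:asurgeryisnonobstructed} then gives unobstructedness of every intermediate curve, and Proposition \ref{prop:cobordismassociatedtosuccessivesurgeries} packages each full surgery sequence as a single unobstructed Lagrangian cobordism. The small isotopies needed to identify the ends of each cobordism with the relevant fixed curves contribute holonomy corrections which, by Lemma \ref{lemma:Cylinderoverseparatingcurves} and Proposition \ref{prop:twocylinderofsameareahavethesameclass}, lie in the image of $i$. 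Combining the two resulting identities yields
\[ [\gamma] = [\gamma_3] + 2T + i(y) \]
in $\Gimmunob$ for some $y \in \R$; the inductive hypothesis applied to $\gamma_3$ gives $[\gamma_3] = \chi(S_3) T + i(z)$, and combining with $\chi(S_1) = \chi(S_3) + 2$ yields $[\gamma] = \chi(S_1) T + i(x)$ with $x = y+z$.

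The main obstacle is the unobstructedness verification. In the embedded setting of Lemma \ref{lemma:classofanonseparatingcurve} the surgeries were automatically embedded and no teardrop analysis was needed, whereas here I must separately check minimal position and non-contractibility of \emph{every} intermediate curve appearing in the two surgery sequences. This amounts to running the classification of bigon-switch configurations in Lemma \ref{lemma:curvesinsuccessivesurgeriesareinminimalposition} after each surgery step, and is the delicate part of the argument; with a suitably generic choice of $\alpha$ and $\beta$, however, the same geometric reductions apply.
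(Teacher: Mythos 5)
Your proposal follows essentially the same route as the paper: both use the pair-of-pants decomposition and the two surgery sequences of Figure~\ref{figure:FigureE}, appeal to Proposition~\ref{prop:cobordismassociatedtosuccessivesurgeries} to upgrade the composed surgery cobordism to an unobstructed one, absorb the isotopy corrections between the two sequences' outputs into $i(\R)$ via Lemma~\ref{lemma:Cylinderoverseparatingcurves}/Proposition~\ref{prop:twocylinderofsameareahavethesameclass}, and then induct on the genus of the bounded subsurface. If anything you are more explicit than the paper: you spell out the base case (which the paper leaves implicit), state the combined relation $[\gamma] = [\gamma_3] + 2T + i(y)$ directly rather than stopping at $\gamma_1+\gamma_2+\gamma = T + i(y)$, and flag that the minimal-position and non-contractibility verifications must be run on every intermediate curve — a point the paper's proof elides when it invokes Proposition~\ref{prop:cobordismassociatedtosuccessivesurgeries}. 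The one place to be slightly careful is your citation of Lemma~\ref{lemma:curvesinsuccessivesurgeriesareinminimalposition}: that lemma is stated specifically for the Dehn-twist surgery sequence, not for the Figure~\ref{figure:FigureE} configuration, so it cannot be quoted verbatim; its bigon-switch analysis must be rerun on the present configuration, which you acknowledge, and which the paper itself does not carry out.
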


\begin{proof}
As in the proof of \ref{lemma:classofanonseparatingcurve}, we choose $\gamma_1$ and $\gamma_2$ such as in Figure \ref{figure:FigureE}. Now, let us call $c$ (resp. $\overline{c}$) the curve given by the successive surgeries on the left (resp. right) of Figure \ref{figure:FigureE}. There is an homeomorphism $h : S_g \to S_g$, isotopic to the identity, such that $h(\overline{c}) = c$.

In particular, there are curves $\tilde{\gamma}_1^{-1}$ and $\tilde{\gamma}^{-1}$ respectively isotopic to $\gamma_1^{-1}$ and $\gamma^{-1}$ such that the successive surgeries on the left of Figure \ref{figure:FigureE} produce the curve $c$.

Composing these cobordisms, we obtain an immersed cobordism
\[ V : \left (\gamma_2^{-1}, \alpha, \beta, \gamma_1^{-1}, \alpha^{-1}, \gamma^{-1}  \right) \leadsto \emptyset. \]
By Proposition \ref{prop:cobordismassociatedtosuccessivesurgeries}, there is a immersed unobstructed cobordism between these curves. Hence,
\[ - \gamma_2 + \alpha + \beta - \tilde{\gamma_1} - \tilde{\alpha} + \tilde{\gamma} = 0 . \]

The curves $\tilde{\gamma_1}^{-1}$, $\tilde{\gamma}^{-1}$ and $\tilde{\alpha}$ are embedded and isotopic to $\gamma_1^{-1}, \gamma^{-1}$ and $\alpha$ respectively. So there is $x \in \R$ such that
\[ - \tilde{\gamma_1} - \tilde{\alpha} + \tilde{\gamma} = i(x) + - \alpha + \gamma . \]

So there is $y \in \R$ such that
\[ \gamma_1 + \gamma_2 + \gamma = T + i(y) . \]
Now the proof follows by induction on the genus of the surface bounded by $\gamma$.
\end{proof}

\begin{lemma}
The class $T \in \Gimmunob$ defined in equation \ref{eqn:definitionofT} is of order $\chi(S_g)$. 	
\end{lemma}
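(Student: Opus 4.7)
My plan is to establish the order of $T$ by combining the two auxiliary morphisms already at our disposal: the Maslov index $\mu : \Gimmunob \to \Z/\chi(S_g)\Z$ from Subsubsection \ref{subsubsection:definitionofpiandmu} (for the lower bound on the order), and the holonomy $\Hol_A : \Gimmunob \to \R$ from Proposition \ref{prop:holonomyproperties} (for the upper bound), together with the formula in Lemma \ref{lemma:classofanonseparatingcurveunob}.

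For the lower bound, I will observe that $\mu \circ i = 0$: any $i(x)$ is by construction a difference of isotopic embedded curves, and isotopic curves have the same Maslov index. Consequently $\mu(T) = \mu([\gamma]) = -1 \in \Z/\chi(S_g)\Z$ by Lemma \ref{lemma:Maslovindexofatorus}, so any integer $n$ with $nT = 0$ satisfies $-n \equiv 0 \pmod{\chi(S_g)}$, i.e.\ is a multiple of $\chi(S_g)$.

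For the upper bound, the key geometric input is that $\gamma$ and its inverse $\gamma^{-1}$ bound complementary subsurfaces $S_1$ (a torus with an open disk removed) and $S_2$ (a genus $g-1$ surface with an open disk removed) of $S_g$, so additivity of Euler characteristics across their common boundary circle gives $\chi(S_1) + \chi(S_2) = \chi(S_g)$. I will apply Lemma \ref{lemma:classofanonseparatingcurveunob} to each of $\gamma$ and $\gamma^{-1}$ and sum the two resulting equations; using $[\gamma] + [\gamma^{-1}] = 0$, the two $T$-coefficients combine into $\chi(S_g)$, yielding
\[
\chi(S_g)\, T \;=\; -\, i(x_1 + x_2)
\]
for some $x_1, x_2 \in \R$.

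To eliminate the $i$-contribution I will apply $\Hol_A$. Directly from the definition $T = [\gamma] - i(\Hol_A(\gamma))$ and from $\Hol_A \circ i = \mathrm{Id}_\R$ (Corollary \ref{coro:iisinjective}), we have $\Hol_A(T) = 0$, hence $\Hol_A(\chi(S_g) T) = 0$, forcing $x_1 + x_2 = 0$. Injectivity of $i$ then delivers $\chi(S_g) T = 0$, completing the proof. I do not anticipate any real obstacle here; the whole argument reduces to previously established lemmas, and the possible sign ambiguity in Lemma \ref{lemma:classofanonseparatingcurveunob} is absorbed automatically by the additivity $\chi(S_1) + \chi(S_2) = \chi(S_g)$.
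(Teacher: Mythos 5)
Your proposal is correct and follows essentially the same route as the paper: apply Lemma \ref{lemma:classofanonseparatingcurveunob} to $\gamma$ and $\gamma^{-1}$, use $[\gamma]+[\gamma^{-1}]=0$ to get $\chi(S_g)T = i(c)$ for some $c\in\R$, and then kill $c$ with the holonomy morphism. The only difference is that you spell out the lower bound via $\mu(T)=-1$, which the paper leaves implicit (it appears explicitly only in the analogous Lemma \ref{lemma:subgroupgeneratedbyseparatingcurves} for $\Gimm$).
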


\begin{proof}
As in the proof of Lemma \ref{lemma:subgroupgeneratedbyseparatingcurves}, consider a curve $\gamma$ which is the oriented boundary of a torus. By the definition of $T$, there is $x \in \R$ such that $\gamma = T + i(x)$. On the other hand, by lemma \ref{lemma:classofanonseparatingcurveunob}, there is $y \in \R$ such that $\gamma^{-1} = (-3 + 2g) \cdot T + i(y)$.

We conclude that $\chi(S_g) \gamma = i(y-x)$. Now, apply the holonomy morphism to both sides of this equation to obtain $y-x = 0$. 	
\end{proof}

\begin{lemma}
\label{lemma:classofgammaiunob}
Recall that in our notations, $\alpha_1, \ldots, \alpha_g$, $\beta_1, \ldots, \beta_g$ and $\gamma_1, \ldots, \gamma_{g-1}$ are the Lickorish generators represented in Figure \ref{figure:Lickorishgenerators}.

Let $i \in \{ 1, \ldots , g-1 \}$. Then, there is $x \in \R$ such that 
\[	[\gamma_i] = [ \alpha_{i+1} ] - [\alpha_i] -T + i(x) . \]	
\end{lemma}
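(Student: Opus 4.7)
The plan is to mimic the proof of Lemma \ref{lemma:classofgammai} while keeping track of the holonomy corrections that appear when passing to $\Gimmunob$. By the change of coordinates principle we place $\alpha_i$, $\gamma_i$ and $\alpha_{i+1}^{-1}$ in the configuration of Figure \ref{figure:FigureN} and fix the auxiliary embedded curves $\alpha$ and $\beta$ shown there. Note that $\alpha$, $\beta$, $\alpha_i$, $\gamma_i$, $\alpha_{i+1}$ are all embedded and hence unobstructed.

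The first step is to verify that the two sequences of surgeries indicated in Figure \ref{figure:FigureN} remain within the class of unobstructed curves. Since the starting curves are embedded, we may arrange, by a small isotopy inside the Darboux charts at the surgered intersection points, that consecutive pairs are transverse and in minimal position. An inductive application of Lemma \ref{lemma:curvesinsuccessivesurgeriesareinminimalposition} together with Proposition \ref{prop:asurgeryisnonobstructed} then shows that each intermediate curve, and in particular the two end-products $c$ (from the left-hand sequence) and $c'$ (from the right-hand sequence) is unobstructed. By construction $c$ and $c'$ are isotopic, non-separating embedded curves.

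Next, Proposition \ref{prop:cobordismassociatedtosuccessivesurgeries} promotes each composition of surgery cobordisms to an unobstructed Lagrangian cobordism, giving the relations
\[ [\alpha] + [\gamma_i] - [\alpha_{i+1}] = [c], \qquad [\alpha] + [\beta] - [\alpha_i] = [c'] \]
in $\Gimmunob$. Since $c$ and $c'$ are isotopic non-separating embedded curves, Corollary \ref{coro:iisinjective} (via the direct definition of $i$) provides some $y \in \R$ with $[c] - [c'] = i(y)$. Moreover, $\beta$ is (up to isotopy) the oriented boundary of a torus in $S_g$, so by the definition \eqref{eqn:definitionofT} of $T$ and Lemma \ref{lemma:theclass$T$iswelldefined} there exists $z \in \R$ such that $[\beta] = -T + i(z)$. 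Combining the two relations above and substituting,
\[ [\gamma_i] = [\alpha_{i+1}] - [\alpha_i] + [\beta] + i(y) = [\alpha_{i+1}] - [\alpha_i] - T + i(y+z), \]
and setting $x := y+z$ yields the claim.

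The only delicate point is the first step, namely the verification that each curve produced along the way is unobstructed; the rest is bookkeeping using the morphism $i$ and the class $T$. This is handled by the combination of Lemma \ref{lemma:curvesinsuccessivesurgeriesareinminimalposition}, which guarantees minimal position at each stage, and Proposition \ref{prop:asurgeryisnonobstructed}, which then rules out teardrops on the surgered curve. Once unobstructedness is established along the whole chain, Proposition \ref{prop:cobordismassociatedtosuccessivesurgeries} produces the relevant unobstructed cobordisms automatically, and the identity then follows from the algebraic manipulation above.
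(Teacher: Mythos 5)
Your proposal is correct and follows essentially the same route as the paper, whose own proof of this lemma is just a one-line reference to the surgeries of Figure \ref{figure:FigureN} and Proposition \ref{prop:cobordismassociatedtosuccessivesurgeries}; your version simply makes explicit the unobstructedness check and the holonomy bookkeeping via $i$ and $T$, which is what the paper does in the analogous Lemma \ref{lemma:classofanonseparatingcurveunob}. Two small points: since $[\beta]=-T$ up to $i(\cdot)$, it is $\beta^{-1}$ (not $\beta$) that is the oriented boundary of a torus, though your conclusion $[\beta]=-T+i(z)$ is the correct one; and when comparing the two end-products $c$ and $c'$ you should either note that they are embedded (so that Proposition \ref{prop:twocylinderofsameareahavethesameclass} applies) or, as the paper does in Lemma \ref{lemma:classofanonseparatingcurveunob}, pre-isotope the embedded inputs of one sequence so that both sequences produce literally the same curve, pushing all holonomy corrections onto the embedded inputs.
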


\begin{proof}
This follows from the sequence of surgeries in Figure \ref{figure:FigureN} and from Proposition \ref{prop:cobordismassociatedtosuccessivesurgeries}. 	 
\end{proof}

\begin{proof}[Proof of Theorem \ref{Theo:Computationofunobgroup}]
It only remains to see that
\[ \Ker(\pi \oplus \mu) \subset \Im(i). \]
To see this, let $\gamma$ be a non-separating curve, there is a product of Dehn Twists about $\alpha_1, \ldots, \alpha_g$, $\beta_1, \ldots, \beta_g$, $\gamma_1, \ldots, \gamma_{g-1}$ which maps $\gamma$ to a curve isotopic to $\alpha_1$. Therefore, $\gamma$ belongs to the subgroup generated by the image of $i$ and the Lickorish generators. By Lemma \ref{lemma:classofgammaiunob}, this is the subgroup generated by $\alpha_1, \ldots, \beta_g$ and the image of $i$.

Hence, by Lemma \ref{lemma:classofanonseparatingcurveunob}, the group $\Gimmunob$ is generated by $T$, $\alpha_1, \ldots, \beta_g$ and the image of $i$.

Let $g = \sum_i n_i \alpha_i + \sum_j m_j \beta_j + i(x) + kT$ be a element of $\Ker(\pi \oplus \mu)$. Composing this by $\mu$, we get $k = 0 \mod \chi(S_g) $. Moreover, taking homology classes, the $n_i$ and $m_j$ are zero. Hence, $g$ is in the image of $i$.

Moreover, the holonomy map
\[ \Hol_A : \Gimmunob \to \R , \]
is a section of the map $i : \R \to \Gimmunob$. So the exact sequence is split.
\end{proof}

\begin{proof}[Proof of Theorem \ref{theo:theBiranCorneamapisanisomorphism}]
In \cite{ab07}, Abouzaid shows that the Maslov index and homology class induce well-defined map
\[ \pi : K_0(\DFuk(S_g)) \to H_1(S_g,\Z), \ \mu : K_0(\DFuk(S_g)) \to \Z / \chi(S_g) \Z. \]
Therefore, there is a commutative diagram 
\[
\begin{CD}
    0     @>>> \R @>>>       \Gimmunob     @>>>          H_1(S_g, \Z) \oplus \Z /  \chi(S_g) \Z      @>>>       0 \\
@.                     @.     @VVV                                                          @VV \Id V                                                       @. \\
     	     @.  @.	K_0(\DFuk(S_g))  		   @>>> 		H_1(S_g, \Z) \oplus \Z /  \chi(S_g) \Z @. 
\end{CD}.
\]
Assume that $x \in \Gimmunob$ maps to $0$ in $K_0(\DFuk(S_g))$. Then, from the above diagram, we have $\pi \oplus \mu (x) = 0$. So $x \in \Im(i)$. Composing with the holomomy map, we obtain that $x = 0$.	We conclude that the map \ref{eqn:Birancorneamap} is injective.
\end{proof}

\bibliographystyle{alpha}

\end{document}